\pgfplotsset{compat=1.11}
\newcolumntype{e}{>{\displaystyle}r @{\,} >{\displaystyle}c @{\,} >{\displaystyle}l}
\theoremstyle{plain}
\newtheorem{Theorem}{Theorem}[section]
\newtheorem{Lemma}[Theorem]{Lemma}
\newtheorem{Proposition}[Theorem]{Proposition}
\theoremstyle{definition}
\newtheorem{Remark}[Theorem]{Remark}
\newcommand{\E}{\mathbb{E}}
\newcommand{\R}{\mathbb{R}}
\newcommand{\N}{\mathbb{N}}
\newcommand{\Q}{\mathbb{Q}}
\newcommand{\Z}{\mathbb{Z}}
\renewcommand{\P}{\mathbb{P}}
\newcommand{\range}[2]{[\![#1,#2]\!]}
\numberwithin{equation}{section}
\begin{document}

\title{The Directed Spanning Forest: coalescence versus dimension}
\author{Tom Garcia-Sanchez\footnote{IMT Nord Europe, Lille - France}}
\maketitle

\begin{abstract}
For $p\in[1,\infty]$, the $\ell^p$ \emph{directed spanning forest} (DSF) of dimension $d\geq 2$ is an oriented random geometric graph whose vertex set is given by a \emph{homogeneous Poisson point process} $\mathcal N$ on $\R^d$ and whose edges consist of all pairs $(x, y)\in\mathcal N^2$ such that $y$ is the closest point to $x$ in $\mathcal N$ for the $\ell^p$ distance among points with a strictly larger $e_d$ coordinate. First introduced by Baccelli and Bordenave in 2007 in the case $p=d=2$, this graph has a natural \emph{forest} structure: it is a collection of \emph{unrooted directed trees}. In this work, we study the number of disjoint trees in the $\ell^p$ DSF for arbitrary dimensions $d\geq2$ and various values of $p\in [1,\infty]$. We prove that for $p\in\{1, 2,\infty\}$, the graph is almost surely a tree when $d=3$, and consists of infinitely many disjoint trees when $d\geq 4$. Additionally, we show that for all $p\in[1,\infty]$, the DSF in dimension $2$ is almost surely a tree and, under appropriate diffusive scaling, converges weakly to the Brownian web, generalizing the result of Coupier, Saha, Sarkar, and Tran (2021) for $p=2$, and thus also covering the recent advance of Pal and Saha (2025) for $p=\infty$. Although these results were expected from a heuristic point of view, and the main strategies and tools were largely understood, analyzing the DSF beyond dimension 2 presented a significant challenge. Notably, in the absence of \emph{planarity}, which plays a crucial role in existing arguments, delicate and innovative techniques were required to manage the complex geometric dependencies of the model. We develop substantially new ideas to handle arbitrary dimension $d\geq2$ and various values of $p\in [1,\infty]$ within a unified framework. In particular, we introduce a novel stochastic domination argument that allows us to compare the fully dependent model with a simplified version in which the geometric constraints are ignored. This relation had long been sought, but until now had never been established.
\end{abstract}

\textbf{Keywords}: Stochastic geometry, Random graph, Random walk, Lyapunov function, Poisson point process, Renewal time, Brownian web.

\paragraph{Acknowledgments.}
The author warmly thanks his PhD advisors, David Coupier and Viet Chi Tran, for their continuous guidance and support throughout this work. This research was partly supported by the ANR project \emph{GrHyDy} (ANR-20-CE40-0002), the CEFIPRA project \emph{Directed random networks and their scaling limits} (No.\ 6901), the CNRS RT \emph{MAIAGES} (Action 2179), and by a doctoral fellowship from ENS Paris.

\section{Introduction}

In this paper, we study the $\ell^p$ \emph{Directed Spanning Forest} (DSF) in $\R^d$, for arbitrary fixed dimension $d\geq 2$ and exponent $p\in[1, \infty]$. The model is constructed as follows. We consider a \emph{homogeneous Poisson point process} $\mathcal N$ of unit intensity in $\R^d$. For each $x\in\R^d$, we define $\Psi(x)$ as the point of $\mathcal N$ that realizes
    \[\|\Psi(x)-x\|=\min \{\|y-x\|:y\in\mathcal N,~y\cdot e_d>x\cdot e_d\},\]
where $\|\cdot\|$ denotes the $\ell^p$ norm, $e_d$ denotes the $d$-th canonical basis vector, and the dot represents the standard scalar product. It is a.s.\ well defined as there are almost surely no two distinct points of $\mathcal N$ at the same distance from $x$. In words, $\Psi(x)$ is the closest point to $x$ in $\mathcal N$ having strictly larger $e_d$ coordinate, for the $\ell^p$ distance. The $\ell^p$ DSF of dimension $d$ is then defined as the \emph{random geometric graph} whose vertex set is $\mathcal N$ and set of directed edges is given by
    \[\{(x,\Psi(x)):x\in\mathcal N\}.\]
By construction, all vertices have \emph{out-degree} one and for all edge $(u, v)$ we have $(v-u)\cdot e_d>0$, preventing the presence of \emph{cycles} in the graph. This confers on the DSF a \emph{forest} structure, justifying the nomenclature. In particular, the graph consists of a collection of \emph{unrooted directed trees}. A natural question is then: \emph{how many such trees are there?} The main goal of this work is to answer this question. To this end, it is convenient to consider \emph{trajectories} of the DSF, i.e.\ infinite sequences $(v_n)_{n\geq 0}$, with $v_0\in\mathcal N$ and $v_{n+1}=\Psi(v_n)$ for all $n\geq 0$. The DSF consists of a single tree if and only if all trajectories \emph{coalesce}, i.e.\ eventually coincide. Similarly, there are at least $k\geq 1$ disjoint trees in the DSF if and only if we can find $k$ trajectories that do not coalesce. A realization of the DSF in a finite rectangle with some trajectories is presented in Figure~\ref{fig_dsf}.

\begin{figure}[h]
    \centering
    \includegraphics[width=0.33\linewidth]{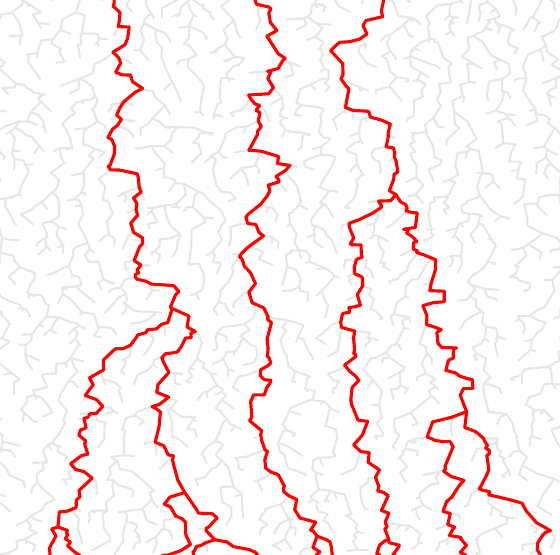}
    \caption{Simulation of a realization of the DSF in a finite box with $p=d=2$. We represent the edges of the DSF in gray and some trajectories in red. Note that $e_d$ is pointing to the top in this picture, so all the edges are directed upward.}
    \label{fig_dsf}
\end{figure}

We now give an overview of the origin of the model and existing results. The DSF was introduced in 2007 by Baccelli and Bordenave \cite{BacceliBordenave} in the \emph{planar Euclidean} case $p=d=2$, as an auxiliary model in their analysis of the \emph{Radial Spanning Tree} (RST), another geometric random graph originally introduced to model telecommunication networks. However, the DSF quickly proved to be of independent interest, due to both its natural geometric construction and the challenging conjectures it raises, some of which were already mentioned in the original paper.

In the original setting $p=d=2$, the first conjecture posed was that the DSF is almost surely a tree. It was confirmed in 2013 by Coupier and Tran \cite{Dsf2dtree} using an efficient percolation technique that relies on the planar structure, namely the so-called Burton and Keane argument (see \cite{BurtonKeane}). In the same setting, they also showed that there is almost surely no \emph{bi-infinite} path, i.e.\ infinite sequence $(v_n)_{n\in\Z}$ with $v_0\in\mathcal N$ and $\Psi(v_n)=v_{n+1}$ for all $n\in\Z$.

In 2021, Coupier, Saha, Sarkar, and Tran \cite{Dsf2d} proved a strong conjecture of Baccelli and Bordenave concerning the planar Euclidean DSF: under appropriate \emph{diffusive scaling}, it \emph{converges weakly} to the \emph{Brownian web}. Notably, their proof involves sharp estimates for the coalescence time of two trajectories. Intuitively, the Brownian web can be thought of as a collection of coalescing $1$-dimensional \emph{Brownian motions} starting at every point of space and time $\R\times\R$. We refer to \cite{schertzer2017brownian} for an overview of the Brownian web.

More recently, Pal and Saha \cite{DsfInfinity} extended the coalescing time estimates of \cite{Dsf2d} to the $\ell^\infty$ DSF in dimension $2$, and proved in particular that it is almost surely a tree. 

It is also worth mentioning that the DSF has been studied in hyperbolic space by Flammant \cite{flammant2024directed}, who showed that the corresponding graph is almost surely a tree in any dimension by exploiting the distinctive properties of hyperbolic geometry.

In fact, the DSF belongs to a broader family of random geometric directed trees and forests, sometimes referred to as \emph{drainage networks}. This family includes, in particular, an $\ell^1$ discrete analog of the DSF \cite{DiscreteDsf}, other models on $\Z^d$ such as \cite{gangopadhyay2004random} and \cite{athreya2008random}, as well as different constructions on a Poisson point process in Euclidean space, such as \cite{Ferrari2004}. We will discuss these related works in more detail below.\\

To summarize, while the behavior of the Euclidean DSF is well understood in the planar setting, little is known in higher dimensions. The main result of this paper is the following theorem, which addresses the fundamental question of how many disjoint trees the DSF contains in arbitrary dimension $d$, for different values of the exponent $p$.

\begin{Theorem}
    \label{thm_main}
    If $d=2$ and $p\in[1, \infty]$ or $d=3$ and $p\in\{1, 2, \infty\}$, the DSF is almost surely a tree. Furthermore, if $d\geq 4$ and $p\in\{1, 2, \infty\}$, the DSF consists almost surely of an infinite collection of disjoint trees.
\end{Theorem}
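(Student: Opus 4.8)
The proof splits into two regimes: showing coalescence (a single tree) in low dimensions, and showing non-coalescence (infinitely many trees) in high dimensions. I would treat these by entirely different methods.

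For the \emph{high-dimensional case} $d\geq 4$, the plan is to exhibit, with positive probability, two trajectories that never coalesce, and then upgrade to infinitely many by ergodicity. The natural strategy mirrors the transience argument for the discrete model of Roy, Saha, and Sarkar \cite{DiscreteDsf}: consider two trajectories $(u_n)$ and $(w_n)$ started far apart in the $e_d$-hyperplane, and track the difference $X_n := (u_n - w_n) - ((u_n-w_n)\cdot e_d)e_d$ of their projections onto the orthogonal complement of $e_d$. The heuristic is that as each trajectory advances one step in the $e_d$ direction, its transverse displacement behaves like an increment with some fluctuation of order constant; the difference process $X_n$ then behaves like a $(d-1)$-dimensional random walk, which is transient precisely when $d-1\geq 3$, i.e.\ $d\geq 4$. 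Making this rigorous is the crux: the steps of $X_n$ are neither independent nor identically distributed, and the two trajectories interact once they come close (sharing the same Poisson points). The key tool here is the stochastic domination argument advertised in the abstract, comparing the dependent model to a simplified one with the geometric correlations suppressed; I would use it to dominate the return probability of $X_n$ to a bounded region by that of a genuine transient random walk, uniformly in the starting separation, so that for two trajectories started sufficiently far apart the probability of ever coalescing is strictly less than one. The restriction to $p\in\{1,2,\infty\}$ enters through the explicit control of the one-step transverse increment distribution (regularity and tail bounds of $\Psi$), which is where the geometry of the $\ell^p$ ball is used concretely; for general $p$ these estimates are not established.

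For the \emph{low-dimensional cases} ($d=2$, any $p$; and $d=3$, $p\in\{1,2,\infty\}$), the plan is to prove that any two trajectories coalesce almost surely. In $d=2$ this is the content of the Brownian web convergence part of the paper (which yields, as a by-product, sharp coalescence-time tail bounds à la Coupier, Saha, Sarkar, and Tran \cite{Dsf2d} and Pal and Saha \cite{DsfInfinity}), so here I would simply invoke those estimates: if two trajectories coalesce with probability one, a straightforward union/ergodicity argument shows all trajectories coalesce and the DSF is a single tree. In $d=3$, the difference process $X_n$ is $2$-dimensional; a genuine planar random walk is recurrent, so the heuristic predicts coalescence, but recurrence of the \emph{difference} is not enough — one must show the two trajectories actually meet (merge at a common Poisson point) rather than merely returning near each other infinitely often. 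The argument I would run: use the stochastic domination to show $X_n$ returns to a bounded region infinitely often almost surely; then show that each time the two trajectories are within bounded distance there is a uniformly positive probability that they coalesce within a bounded number of further steps (a local "merging" event depending only on the Poisson process in a bounded window, where $\ell^p$ geometry is again used to lower-bound the probability); conclude by a conditional Borel–Cantelli / second-moment argument that coalescence occurs almost surely. Ergodicity then promotes "any two coalesce a.s." to "a.s.\ a single tree."

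\textbf{Main obstacle.} The hard part is establishing the stochastic domination rigorously and in a form strong enough to serve \emph{both} directions: it must give transience of $X_n$ when $d\geq 4$ \emph{and} recurrence plus effective return estimates when $d=3$, despite the transverse increments being dependent across steps and the two trajectories being correlated through shared Poisson points. The delicate point is decorrelating the increments far enough into the past while retaining enough structure to compare with an honest i.i.d.-increment walk — this is precisely the novel coupling the abstract describes as "long sought." A secondary, more technical obstacle is the passage from recurrence of the transverse difference to actual coalescence in $d=3$: one needs a renewal-type decomposition (exploiting the "renewal times" mentioned in the keywords) so that the successive merging attempts are close to independent, which requires showing the trajectories regenerate geometrically infinitely often.
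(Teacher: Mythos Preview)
Your broad strategy is correct---random-walk heuristic for the transverse difference, recurrence/transience dichotomy at $d-1=2$, positive merging probability on close approach, ergodicity to upgrade---and this is indeed the skeleton of the paper's argument. But two technical points are misplaced, and one key tool is missing.

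First, you misidentify what the stochastic domination does. It is \emph{not} a comparison of the transverse difference process with an i.i.d.\ walk. The paper's domination (Theorem~\ref{thm_sto_dom_poisson}) is a one-step statement about the \emph{vertical} progress: conditionally on the past, $(y_{n+1}-x_n)\cdot e_d$ stochastically dominates what it would be with an empty history set. This is used only to show the history set height $L_n$ returns below a threshold infinitely often (Section~\ref{section_good_steps}), which is the input to constructing \emph{renewal times}. You relegate renewal to a ``secondary obstacle'' for $d=3$; in fact the renewal decomposition (Section~\ref{section_renewal}) is the central engine for \emph{both} regimes. It produces times $\beta_n$ at which the process effectively regenerates, and Theorem~\ref{thm_renewal_decomposition} then says that, when the trajectories are far apart, the increment $Z_{n+1}^{i,j}-Z_n^{i,j}$ (difference at renewal times) agrees in law with the increment of an explicitly constructed \emph{independent process}. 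This is the comparison you want; the stochastic domination is three layers below it.

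Second, the low-dimensional argument is not ``recurrence of a planar walk'' but a Lyapunov-function computation: the paper takes $V(z)=\log\log(e+\|z\|_2^2)$ and shows via a Taylor expansion (Lemma~\ref{lemma_RW}, using $d-1\leq 2$ exactly at the second-order term) that $\E[V(Z_{n+1})-V(Z_n)\mid\mathcal G_n]\leq 0$ far from the origin, so $Z_n\not\to\infty$ by Fatou; then your positive-merging-probability argument (Lemma~\ref{lemma_coalescence_proba}) finishes. For $d\geq 4$, the paper does not merely invoke transience but applies Kesten's quantitative rate-of-escape theorem \cite{KestenErickson} to the independent process to get $\|S_n^{i,j}\|\gg n^{1/3}$, which is strong enough to absorb the block-size error and keep $k$ trajectories separated with positive probability (Lemma~\ref{lemma_kesten}); triviality of the asymptotic $\sigma$-field of the Poisson process then gives infinitely many trees almost surely.
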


This result was expected and aligns with a natural heuristic that will guide our analysis throughout the paper. The trajectories of the DSF roughly behave like centered random walks in $\mathbb{R}^{d-1}$, where the $e_d$ direction plays the role of time. Moreover, two distant trajectories evolve approximately independently, so their difference behaves like a centered random walk in $\mathbb{R}^{d-1}$ when far from the origin. Since coalescence amounts to asking whether this difference walk eventually hits $0$, the question essentially reduces to the recurrence or transience of such walks. Recalling that under suitable assumptions, centered random walks are recurrent in dimensions one and two but transient in higher dimensions (see, e.g., \cite[Chapter~4]{Durett}), one may conjecture that DSF trajectories coalesce if and only $d-1\leq2$, that is, if and only if $d\leq3$.

In fact, such a dimensional dichotomy has been established for several related models. It is important to note, however, that these other constructions typically incorporate simplifying features by design, such as discrete settings (see \cite{gangopadhyay2004random, athreya2008random, DiscreteDsf}) or construction rules that allow one to directly exhibit a suitable Markovian structure (see \cite{Ferrari2004}). Although the overall strategy for proving such results is now well understood, implementing it without model-specific conveniences remains challenging. In particular, while the DSF is among the simplest and most natural constructions, significant technical obstacles have long prevented its analysis in arbitrary dimension.

The relatively recent work \cite{Dsf2d} marked a significant advance in the study of the DSF in dimension 2 and provided a framework that appeared promising for tackling the problem beyond the planar case. In this work, we generalize this framework to higher dimensions. This problem had remained open because, once planarity is lost, existing difficulties are exacerbated, crucial arguments break down, and entirely new issues arise. Notably, a key argument in \cite{Dsf2d}, which was crucial for ensuring suitable progress along the $e_d$ direction during the exploration of trajectories, does not extend to higher dimensions. Note that in discrete settings, such progress is typically guaranteed to be at least one unit, but in general it is a genuine challenge. This major obstacle led us to introduce a new and rather elegant stochastic domination argument (see Theorem \ref{thm_sto_dom_poisson}), allowing us to compare the fully dependent model to a simplified one in which geometric constraints are ignored. Beyond its central role in our proof, this argument constitutes a significant contribution, as such a stochastic domination result had long been sought but had never previously been established. Nevertheless, a key step in its proof relies on a specific geometric result, which surprisingly fails to hold for all pairs of exponent and dimension, notably because non-Euclidean balls are not isotropic (see Remark \ref{remark_fail}). This limitation explains why Theorem \ref{thm_main} does not cover the full range of exponents $p$ when $d\geq 3$, even though it is natural to expect that the dimensional dichotomy should hold in full generality. To the best of our knowledge, there is no direct way to extend our ideas to overcome this restriction. In particular, there is no apparent monotonicity in $p$ that could be exploited to fill the gap.\\

As a secondary result, we establish the following theorem concerning the dimension $d=2$. This extends the main result of \cite{Dsf2d}, which treated the case $p=2$. In fact, the case $p<\infty$ can be obtained through relatively straightforward adaptations of \cite{Dsf2d}, and the case $p=\infty$ was already nearly settled in \cite{DsfInfinity}. Still, our framework provides a unified and simplified treatment of all values of $p$. We defer the formal definition of convergence under appropriate diffusive scaling toward the Brownian web to a dedicated section.

\begin{Theorem}
    \label{thm_main2}
    For all $p\in[1, \infty]$, the $\ell^p$ DSF of dimension $2$ converges weakly, under appropriate diffusive scaling, to the Brownian web.
\end{Theorem}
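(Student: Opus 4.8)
The plan is to follow the general strategy for convergence to the Brownian web, which reduces the problem to verifying a small set of conditions on the collection of coalescing DSF trajectories (in the formulation of Fontes--Isopi--Newman--Ravishankar, or the variant used in \cite{Dsf2d}): (I) existence of a limiting set of paths starting from any deterministic point (tightness plus convergence of finite-dimensional distributions of a single diffusively rescaled trajectory to a Brownian motion); (B) an upper bound on the expected number of limit paths crossing a space--time rectangle, controlling the density of distinct trajectories; (T) tightness of the rescaled path ensemble, typically via a modulus-of-continuity estimate uniform over starting points; and (Coal) coalescence of any two trajectories in finite time, with the right scaling of the coalescence time (the crucial quantitative input). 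Since Theorem~\ref{thm_main} already gives, for $d=2$ and all $p\in[1,\infty]$, that the DSF is a.s.\ a single tree, all trajectories coalesce almost surely; what remains is to upgrade this qualitative statement to the quantitative tail bounds on coalescence time needed for the Brownian web limit.

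First I would set up the one-trajectory analysis: fix a deterministic starting height, follow $(v_n)_{n\ge0}$, and identify a renewal structure along the $e_d$-axis. The horizontal displacement increments $v_{n+1}-v_n$ projected onto $\R^{d-1}=\R^{1}$ are, by the symmetry of the $\ell^p$ ball under reflection of the non-$e_d$ coordinate, centered, and one shows they have enough moments (the tails of $\|\Psi(x)-x\|$ decay fast, uniformly in $p$, because the region of admissible points has positive density above any slab). A martingale/Lyapunov argument or a direct renewal-CLT then yields that a single rescaled trajectory converges to a Brownian motion with an explicit (possibly $p$-dependent) variance; this handles (I) and the finite-dimensional distributions. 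For (B), I would bound the expected number of distinct trajectories passing through a unit box by a first-moment computation: distinct trajectories at a given height correspond to points whose backward clusters are disjoint, and the expected number of such is controlled by the same renewal/coalescence estimates (a standard consequence of the (FKG)-type or counting arguments in \cite{Dsf2d}). For (T) I would prove a uniform modulus of continuity for trajectories started anywhere in a compact region, using the moment bounds on increments together with a maximal inequality.

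The coalescence estimate is the heart of the matter. I would study the difference process of two trajectories started at horizontal separation of order $\epsilon^{-1}$ and show that the coalescence time has the diffusive scaling $\P(\tau_{\mathrm{coal}} > t) \asymp (\text{separation})/\sqrt{t}$, matching the hitting-time asymptotics of one-dimensional Brownian motion. The mechanism: when the two trajectories are far apart their difference behaves like a centered random walk in one dimension (recurrent), and when they are close a positive-probability ``meeting event'' (both trajectories selecting the same successor, or entering a configuration forcing coalescence within a bounded number of steps) occurs at each renewal epoch. Assembling these via a comparison with a recurrent one-dimensional random walk and its hitting times of the origin gives the required bound. Here is where I expect the main obstacle: in \cite{Dsf2d} and \cite{DsfInfinity} such estimates lean on planar geometry and on explicit control of the joint law of the two successor maps, which is delicate because the two trajectories read overlapping regions of $\mathcal N$. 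I would control this dependence by the stochastic domination argument advertised in the introduction (the comparison with a ``correlation-free'' simplified model), reducing the joint evolution of the difference process to a tractable Markovian surrogate for which the hitting-time estimate is classical; the case $p=\infty$ requires extra care because the $\ell^\infty$ ball is not strictly convex, so ``ties'' in the $\mathrm{argmin}$ and the non-uniqueness of nearest directions must be handled by a perturbation or a separate combinatorial argument, but the domination framework absorbs this uniformly in $p$.

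Finally I would invoke the Brownian web convergence criteria: with (I), (B), (T), (Coal) in hand, the family of diffusively rescaled DSF trajectories is tight, every subsequential limit is a coalescing Brownian flow with the identified diffusion constant, and the density condition (B) plus coalescence (Coal) pin down the limit as the standard Brownian web (up to the deterministic diffusive linear change of variables absorbing the variance constant). The statement for all $p\in[1,\infty]$ then follows since every step above was carried out uniformly in $p$, with the only $p$-dependence entering through the limiting variance and through the $p=\infty$ perturbation argument, neither of which affects the identification of the limit.
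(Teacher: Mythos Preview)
Your outline is correct and follows essentially the same route as the paper: Section~\ref{section_BW} reduces everything to verifying the hypotheses of \cite[Corollary~5.6]{Dsf2d} for the difference process $(Z_n)_{n\geq 0}$ at renewal times (Proposition~\ref{prop_assumption}), obtains the coalescence-time estimate $\P(T(\mathbf u_1,\mathbf u_2)>t)\leq K'\max\{1,|(\mathbf u_1-\mathbf u_2)\cdot e_1|\}/\sqrt t$, and then invokes \cite[Section~6]{Dsf2d} verbatim for the remaining Brownian-web criteria. One small misplacement worth flagging: the stochastic domination (Theorem~\ref{thm_sto_dom_poisson}) is not used directly in the coalescence estimate but upstream, in the construction of good steps and renewal times (Sections~\ref{section_good_steps}--\ref{section_renewal}); the coalescence bound itself comes from the renewal decomposition (Theorem~\ref{thm_renewal_decomposition}), which makes the increments of $(Z_n)$ away from the origin distributed as the symmetric variable $\Delta^{1,2}$ (Lemma~\ref{lemma_Delta}), combined with the block-size tail (Proposition~\ref{prop_block_size}) and the one-step coalescence probability (Lemma~\ref{lemma_coalescence_proba}).
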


Let us emphasize that studying the DSF presents significant challenges due to the complex geometrical dependencies that arise in its construction. For any $h\in\R$ we define the \emph{open half-space} of points whose $e_d$ coordinates is strictly greater than $h$ with
    \[\mathbb H^+(h)\coloneqq\{x\in\R^d:x\cdot e_d>h\}.\]
For any $x\in\R^d$ and $r\in\R_+$, we denote by $B(x, r)$ the \emph{$\ell^p$ open ball} of radius $r$ centered at $x$, and we define the \emph{open half-ball}
    \[B^+(x, r)\coloneqq B(x, r)\cap\mathbb H^+(x\cdot e_d).\]
In the DSF, for any $x\in\mathcal N$, the presence of the edge $(x,\Psi(x))$ imposes that $B^+(x,\|\Psi(x)-x\|)$ contains no points of $\mathcal N$. Since this half-ball typically overlaps $\mathbb H^+(\Psi(x)\cdot e_d)$, the edge $(\Psi(x), \Psi^2(x))$ is strongly constrained by the edge $(x,\Psi(x))$, where $\Psi^2\coloneqq\Psi\circ\Psi$. This heavy dependence phenomenon prevents us from directly exhibiting any Markov property while exploring iteratively a single trajectory in the DSF. Similarly, different trajectories in the DSF are significantly correlated, as the presence of one trajectory imposes strong geometric constraints for $\mathcal N$. An illustration of these complex dependencies is provided in Figure \ref{fig_dependencies}.

\begin{figure}[h]
    \centering
    \includegraphics[height=0.15\linewidth]{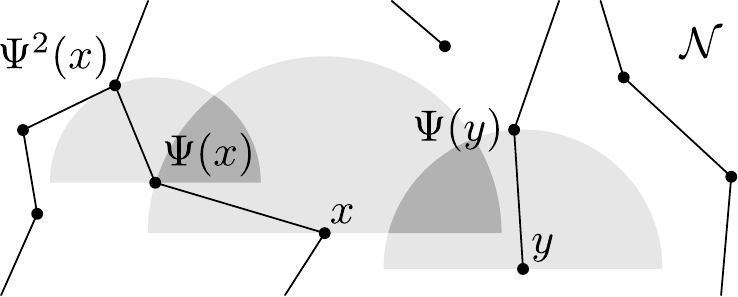}
    \caption{Visualization of the geometrical constraints imposed during the construction of the DSF for $p=d=2$. In gray the half-balls that must be empty of points.}
    \label{fig_dependencies}
\end{figure}

To tackle the complex correlations of the model, Coupier, Saha, Sarkar, and Tran \cite{Dsf2d} proposed a method to decompose the trajectories of the DSF into quasi-independent blocks of random size. This method is at the core of their proof that the planar Euclidean DSF is almost surely a tree, and converges, under appropriate diffusive scaling, to the Brownian web. In fact, their arguments extend with minimal modification to the case $d=2$ and $p\in[1,\infty)$. A delicate adaptation to the singular case $p=\infty$ was recently carried out in \cite{DsfInfinity}. %However, substantial mathematical barriers prevent a straightforward extension to higher dimensions, where planarity is lost, existing difficulties are exacerbated, and entirely new challenges arise.

\subsection*{Structure of the paper and strategies of the proofs}

In the first part of this work, we build on the ideas of \cite{Dsf2d}, introducing a unified framework that extends the analysis beyond the planar Euclidean setting. This involves new techniques and arguments to address the challenges posed by higher dimensions and general values of $p$, including the singular case $p=\infty$. Notably, we establish a novel stochastic domination result of independent interest. In the second part, we combine this framework with random walk techniques to prove Theorem~\ref{thm_main}, and show how Theorem~\ref{thm_main2} follows from the results of \cite{Dsf2d} within our setting.\\

The main notation is introduced in section \ref{section_exploration}, where we define the \emph{joint exploration process} of trajectories in the DSF. Following the approach of \cite{Dsf2d}, we use \emph{history sets} to record the geometrical constraints imposed during the exploration and to provide a useful characterization of the process. 

In Section \ref{section_good_steps}, we focus on controlling the size of the history sets, demonstrating that their size will return infinitely often below a specific threshold. To achieve this, we develop a novel approach, which includes proving a new stochastic domination argument. This is where the condition $p\in\{1, 2, \infty\}$ of Theorem \ref{thm_main} will arise.

In Section \ref{section_renewal}, we use the control over the size of the history set to show that certain \emph{renewal events} occur at random times, as in \cite{Dsf2d}. This allows us to properly formalize the heuristic that trajectories of the DSF behave like random walks. A key ingredient of this section will be to control the fluctuations of the trajectories between two consecutive \emph{renewal times}. While inspired by the ideas in \cite{Dsf2d}, the proofs we develop are substantially different and rely on new techniques tailored to our more general setting.

In Section \ref{section_coalescence}, we focus on establishing the coalescence part of Theorem \ref{thm_main}, i.e., that when either $d=2$ and $p\in[1, \infty]$, or $d=3$ and $p\in\{1, 2, \infty\}$, the DSF is almost surely a tree. This will be achieved by combining the renewal decomposition with the use of \emph{Lyapunov functions}.

In Section \ref{section_non_coalescence}, we address the high-dimensional regime of Theorem \ref{thm_main}, showing that when $p\in\{1, 2, \infty\}$ and $d\geq 4$, the DSF consists almost surely of infinitely many disjoint trees. The proof relies on the renewal decomposition along with a general result by Kesten \cite{KestenErickson} on the rate of escape to infinity of random walks.

Finally, in Section \ref{section_BW}, we prove Theorem \ref{thm_main2} by defining and establishing the convergence toward the Brownian web. This step follows directly from the results in \cite{Dsf2d}, which we apply without modification.

\section{The exploration process}

\label{section_exploration}

In this section, we define the central process that will allow us to explore trajectories in the DSF iteratively. Recall that $d\geq2$ and $p\in[1, \infty]$ are arbitrary fixed. From now on, we fix $k\geq1$ as the number of trajectories we will consider. We also fix deterministic \emph{starting points} $(\mathbf u_i)_{i\in\range{1}{k}}\in(\R^d)^k$ for our trajectories having the same $e_d$ coordinate. The \emph{exploration process}
    \[((g_n(\mathbf u_i))_{i\in\range{1}{k}})_{n\geq 0}\]
is constructed as follows. We set $g_0(\mathbf u_i)\coloneqq\mathbf u_i$ for each $i\in\range{1}{k}$. Now let $n\geq 0$ and assume that the process is constructed until step $n$. We define
    \[m_n\coloneqq\min\{g_n(\mathbf u_i)\cdot e_d~:~i\in\range{1}{k}\},\]
and we fix $x_n\in\{g_n(\mathbf u_i)~:~i\in\range{1}{k}\}$ such that $x_n\cdot e_d=m_n$, which will represent the moving vertex of the step. In case of multiple possibilities, we choose one deterministically. Then, for each $i\in\range{1}{k}$ we set
    \[g_{n+1}(\mathbf u_i)\coloneqq\begin{cases}
        \Psi(x_n) & \text{if $g_n(\mathbf u_i)=x_n$}\\
        g_n(\mathbf u_i) & \text{otherwise.}
    \end{cases}\]
In words, at step $n$, we explore one step forward along the trajectory whose current vertex $x_n$ has the lowest $e_d$ coordinate. As explained in the introduction, this exploration process is highly non-Markovian, as the next step depends in a complex way on the entire previously explored region. Each edge $(x,\Psi(x))$ revealed during the exploration imposes that the corresponding half-ball $B^+(x,\|\Psi(x)-x\|)$ must be empty of Poisson points, and such constraints can influence arbitrarily distant edges.\\

To record the geometrical constraints imposed by the observation of the first steps of the exploration process, we define the \emph{history sets} $(H_n)_{n\geq 0}$ as follows. For each $n\geq 0$, we set
    \begin{equation}
        \label{eq_history_set}
        H_n\coloneqq \mathbb H^+(m_n)\cap\bigcup_{i=0}^{n-1}B^+(x_i,\|\Psi(x_i)-x_i\|)
    \end{equation}
In words, for all $n\geq0$, $H_n$ corresponds to the intersection of the half-space $\mathbb H^+(m_n)$ with the union of all the half-balls that must be empty of Poisson points due to the edges revealed during the $n$ first steps of exploration. By construction, for all $n\geq0$, 
    \[\mathcal N\cap H_n=\emptyset,\]
and
    \[\mathcal N\cap\partial H_n=\{g_n(\mathbf u_i):i\in\range{1}{k}\}\cap\mathbb H^+(m_n),\]
where $\partial H_n$ denotes the boundary of $H_n$. An illustration of the construction of the exploration process together with history sets is provided in Figure \ref{fig_exploration_process}.

\begin{figure}[h]
    \centering
    \includegraphics[height=0.33\linewidth]{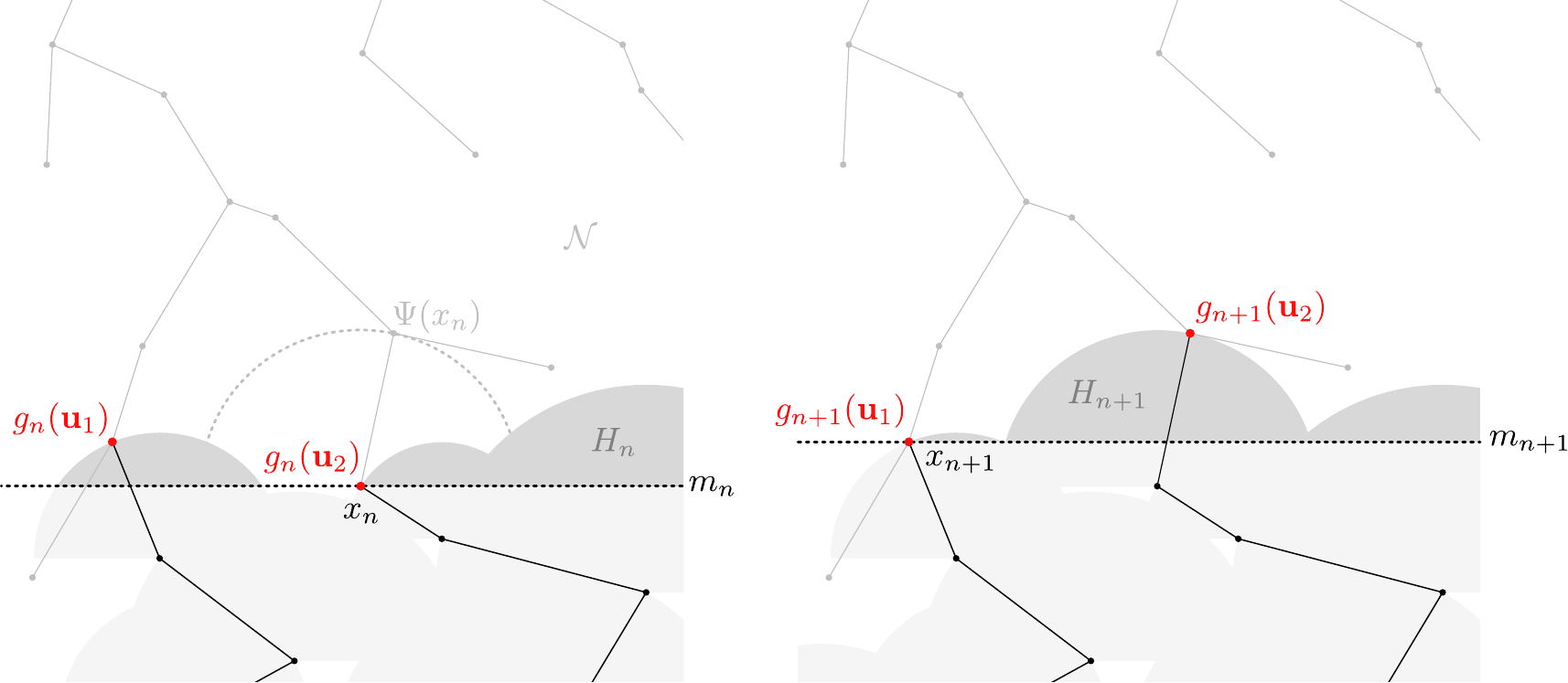}
    \caption{Illustration of the construction of the exploration process with history sets.}
    \label{fig_exploration_process}
\end{figure}

We now define a natural filtration associated with the exploration process. For all $n\geq 0$, let
    \[\mathcal F_n\coloneqq\sigma\big(\mathcal N\cap\mathbb H^-(m_n),~\mathcal N\cap\overline{H}_n\big),\]
where $\overline{H}_n$ denotes the closure of $H_n$ and $\mathbb H^-(h)\coloneqq\R^d\setminus\mathbb H^+(h)$ for any $h\in\R$. By construction, the \emph{exploration process with history} $((g_n(\mathbf u_i))_{i\in\range{1}{k}}, H_n)_{n\geq 0}$ is adapted to the filtration $(\mathcal F_n)_{n\geq0}$. It is worth noting that, for technical reasons to be addressed later, this filtration contains more information than what is strictly used by the exploration process.

We can now state the key proposition of this section, which justifies the importance of the history sets. It formalizes the intuitive idea that after $n\geq 0$ steps of exploration, no information about the Process point $\mathcal N$ has been revealed outside the region $\mathbb H^-(m_n)\cup\overline{H}_n$. implying that the process behaves as if $\mathcal N$ were independently \emph{resampled} outside the region $\mathbb H^-(m_n)\cup H_n$ (the closure can be omitted as $\partial H_n$ has a.s.\ measure $0$). This crucial property was previously used implicitly in \cite{BacceliBordenave}, and explicitly in \cite{Dsf2d}. We omit its proof, which is straightforward and technical.

\begin{Proposition}
    \label{prop_resampling}
    Let $\mathcal N'$ denote an independent copy of $\mathcal N$. Then for any $n\geq 0$, conditionally on $\mathcal F_n$, the random variable
        \[\mathcal N\setminus\big(\mathbb H^-(m_n)\cup \overline{H}_n\big)\]
    is distributed as $\mathcal N'\setminus(\mathbb H^-(m_n)\cup H_n)$.
\end{Proposition}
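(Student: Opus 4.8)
The plan is to proceed by induction on $n$, showing at each step that the only information about $\mathcal N$ revealed by $\mathcal F_n$ lies inside $\mathbb H^-(m_n)\cup\overline H_n$, so that the restriction of $\mathcal N$ to the complementary region is, conditionally on $\mathcal F_n$, a Poisson point process of unit intensity on that region, independent of $\mathcal F_n$ — which is exactly the distribution of $\mathcal N'$ restricted there. The base case $n=0$ is immediate: $H_0=\emptyset$, $m_0$ equals the common $e_d$-coordinate of the deterministic starting points $\mathbf u_i$, and $\mathcal F_0=\sigma(\mathcal N\cap\mathbb H^-(m_0))$; the restriction property of Poisson processes (independence over disjoint regions) gives the claim directly.

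For the inductive step, I would carefully track what happens between step $n$ and step $n+1$. The exploration moves the vertex $x_n$ of minimal $e_d$-coordinate to $\Psi(x_n)$. By definition of $\Psi$, the edge $(x_n,\Psi(x_n))$ is determined by: (i) which point of $\mathcal N$ in $\mathbb H^+(x_n\cdot e_d)=\mathbb H^+(m_n)$ is $\ell^p$-closest to $x_n$; equivalently, (ii) the value $r_n\coloneqq\|\Psi(x_n)-x_n\|$ and the location of the minimizer, subject to $B^+(x_n,r_n)$ being empty of $\mathcal N$. The key observation is that $x_n\in\partial H_n$ and $x_n\cdot e_d=m_n$, so $\mathbb H^+(x_n\cdot e_d)=\mathbb H^+(m_n)$, and the region $\mathbb H^+(m_n)$ decomposes as $H_n$ (known to be empty of $\mathcal N$, hence carrying no new randomness) together with $\mathbb H^+(m_n)\setminus \overline H_n$, on which, by the inductive hypothesis, $\mathcal N$ is a fresh unit-intensity Poisson process independent of $\mathcal F_n$. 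Hence determining $\Psi(x_n)$ only consumes information about $\mathcal N$ in $\mathbb H^+(m_n)\setminus\overline H_n$, and specifically only within the half-ball $B^+(x_n,r_n)$ and on its boundary. One then checks that the new known region is $\mathbb H^-(m_{n+1})\cup\overline H_{n+1}$ where $H_{n+1}=\mathbb H^+(m_{n+1})\cap\bigcup_{i=0}^{n}B^+(x_i,r_i)$ and $m_{n+1}=\min_i g_{n+1}(\mathbf u_i)\cdot e_d\geq m_n$: raising the floor from $m_n$ to $m_{n+1}$ only adds to the "known" side the slab $\mathbb H^+(m_n)\cap\mathbb H^-(m_{n+1})$, whose intersection with $\bigcup_{i\le n} B^+(x_i,r_i)$ is precisely what is subtracted off when passing from $H_n$ to $H_{n+1}$, and the newly revealed half-ball $B^+(x_n,r_n)$ is added. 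A short measure-theoretic argument (using the strong Markov / restriction property of Poisson processes applied region by region, and the fact that $\partial H_n$ has Lebesgue measure $0$ almost surely, so closures may be freely inserted or removed) then upgrades this bookkeeping to the stated conditional distributional identity.

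I expect the main obstacle to be purely notational rather than conceptual: making rigorous the claim that "exploring the edge $(x_n,\Psi(x_n))$ reveals information only about $\mathcal N\cap \overline{B^+(x_n,r_n)}$". This requires decomposing the conditional law according to the (a.s.\ finite, discrete) value of $r_n$ and the location of the minimizer, handling the fact that $r_n$ is itself a function of $\mathcal N$ in the unexplored region, and invoking the standard fact that for a Poisson process, conditioning on emptiness of a set $A$ and on the position of the nearest point outside a known region is compatible with a Poisson law of the unconditioned intensity on the still-unexplored complement. Since the authors explicitly describe this proof as "straightforward and technical" and omit it, I would present the induction and the region bookkeeping in detail and treat the Poisson restriction facts as standard, citing a reference for the strong Markov property of Poisson processes.
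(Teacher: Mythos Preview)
The paper does not actually prove Proposition~\ref{prop_resampling}; the authors write ``We omit its proof, which is straightforward and technical.'' Your inductive plan---base case via the restriction property of Poisson processes, inductive step by tracking that the only new information consumed when determining $\Psi(x_n)$ lies in $\overline{B^+(x_n,r_n)}\setminus H_n$, then updating the known region from $\mathbb H^-(m_n)\cup\overline H_n$ to $\mathbb H^-(m_{n+1})\cup\overline H_{n+1}$---is exactly the standard argument and is what the authors evidently have in mind. One small inaccuracy: you assert $x_n\in\partial H_n$, which need not hold (e.g.\ at step $0$, or more generally when $x_n$ is an original starting point that has not yet moved and lies outside the half-balls generated so far); what you actually use, and what is always true, is $x_n\cdot e_d=m_n$, so this does not affect the argument.
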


As a consequence of Proposition \ref{prop_resampling}, the exploration process with history $((g_n(\mathbf u_i))_{i\in\range{1}{k}}, H_n)_{n\geq 0}$ is a Markov chain. Its distribution can be characterized as follows. Fix $n\geq 0$, and let $y_{n+1}$ denote the closest point to $x_n$ in $\mathcal N\setminus\overline{H}_n$ having strictly larger $e_d$-coordinate. Conditional on $\mathcal F_n$, $y_{n+1}$ is distributed as the closest point to $x_n$ in $\mathcal N'\setminus H_n$ having strictly large $e_d$-coordinate. Then, $\Psi(x_n)$ is obtained as the closest point to $x_n$ in $\{y_{n+1}\}\cup\{g_n(\mathbf u_i):i\in\range{1}{k}\}\setminus\{x_n\}$.

This convenient characterization of the conditional law of the process, with \emph{resampling} outside the explored region, will play a central role in the analysis of the exploration with history.

\section{Good steps}

\label{section_good_steps}

As the history sets encode the geometrical constraints imposed during the exploration process, it is crucial to control their size to understand how far the interactions can extend into the future. To this end, for all $n\geq0$, we define
    \[M_n\coloneqq\max(\{m_n\}\cup\{x\cdot e_d~:~x\in H_n\}),\]
and we set
    \[L_n\coloneqq M_n-m_n.\]
By construction, $L_n$ represents the height of the history set $H_n$ with respect to the $e_d$-coordinate while $M_n$ and $m_n$ represent the $e_d$ level of its top and bottom, respectively. It is worth noticing that both processes $(m_n)_{n\geq0}$ and $(M_n)_{n\geq 0}$ are non-decreasing, as during exploration, both the top and bottom of the history sets move upward at each step. This section aims to prove the following theorem, which provides control over $(L_n)_{n\geq0}$. It gives in particular that there exists a threshold $\kappa>0$ such that $L_n\leq\kappa$ for infinitely many $n\geq 0$. In this work, we use $\alpha$ and $\beta$ to denote constants whose values may vary from one statement to another.

\begin{Proposition}
    \label{thm_good_steps}
    Assume $d=2$ or $p\in\{1, 2, \infty\}$. For all $\kappa>0$ and $R>0$, we define a sequence $(\tau_n)_{n\geq 0}=(\tau_n(\kappa, R))_{n\geq 0}$ of $(\mathcal F_n)_{n\geq 0}$ stopping times by induction with $\tau_0\coloneqq0$ and
        \[\tau_{n+1}\coloneqq\inf\{j>\tau_n:L_j\leq\kappa,~ m_j-m_{\tau_n}\geq\kappa+R\}\]
    for each $n\geq0$. Then there exist $\kappa=\kappa(k, d, p)>0$, $\alpha=\alpha(k, d, p)>0$ and $\beta=\beta(k, d, p)>0$ such that for all $R>0$, the variables $(\tau_n)_{n\geq 0}$ are almost surely finite, and all $n,\ell\geq0$,
        \[\P(\tau_{n+1}-\tau_n>\ell~\vert~\mathcal F_{\tau_n})\leq \alpha e^{R-\beta \ell}.\]
    The steps associated to the indices $(\tau_n)_{n\geq 0}$ are then called \emph{good steps}.
\end{Proposition}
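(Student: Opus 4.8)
The plan is to show that the height $L_n$ of the history set returns below a threshold $\kappa$ infinitely often with good quantitative control, and that between two such returns the bottom $m_n$ has moved up by at least $\kappa+R$. The first observation is that it suffices to prove a one-step version of the statement: there exist $\kappa, \alpha, \beta > 0$ such that for any $(\mathcal F_n)$-stopping time $\sigma$ with $L_\sigma \le \kappa$, writing $\tau = \inf\{j > \sigma : L_j \le \kappa,\ m_j - m_\sigma \ge \kappa + R\}$, we have $\P(\tau - \sigma > \ell \mid \mathcal F_\sigma) \le \alpha e^{R - \beta \ell}$. Indeed, by the strong Markov property (Proposition \ref{prop_resampling}) and induction this immediately yields the claimed bound for all $(\tau_n)$, and a.s.\ finiteness follows by Borel--Cantelli once $\beta > 0$. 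So from now on I condition on $\mathcal F_\sigma$ and may assume, by the resampling characterization, that the Poisson process is fresh outside $\mathbb H^-(m_\sigma) \cup H_\sigma$, with $L_\sigma \le \kappa$, i.e.\ the explored region reaches at most height $m_\sigma + \kappa$.

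The heart of the matter is a single step of \emph{regeneration}: I want to produce, with probability bounded below by a constant depending only on $(k,d,p)$, a configuration in a window of bounded height above $m_\sigma$ in which (i) all $k$ current trajectory tips get "pushed up together" into a small region, and (ii) simultaneously the old history set $H_\sigma$ gets "cut off" — meaning its contribution disappears once $m_n$ rises past $m_\sigma + \kappa$, because $H_n$ only retains the portion of the constraint half-balls lying in $\mathbb H^+(m_n)$. Concretely, the favorable event is: within the slab $\{m_\sigma < x\cdot e_d \le m_\sigma + c\}$ for a suitable constant $c = c(k,d,p)$, there are no Poisson points except for exactly one point $z$ placed in a small ball $B(\cdot, \varepsilon)$ near the common horizontal barycenter of the tips, positioned so that $z = \Psi(x_n)$ for each trajectory tip $x_n$ explored in that slab, and $z$ lies low enough that the half-balls $B^+(x_n, \|z - x_n\|)$ all have height $\le \kappa$. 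Then after the finitely many steps needed to exhaust the slab, all trajectories have coalesced (or at least concentrated) at $z$, the new history set has height $\le \kappa$, and since $m_n$ has climbed above $m_\sigma + \kappa$, the old constraints are gone. This is where $p \in \{1, 2, \infty\}$ enters: one must check that such a placement of $z$ is geometrically consistent — that the relevant $\ell^p$ balls really do fit in a bounded slab and exclude the needed region — and the explicit geometry of $\ell^p$ spheres (available cleanly only for $p \in \{1,2,\infty\}$, where the unit ball is a cross-polytope, a Euclidean ball, or a cube) is what makes this verifiable; for general $p$ the author has presumably only managed this when $d=2$. The probability of this favorable event is bounded below by the Poisson probability of a void in a bounded slab times the probability of one point in a small ball — both strictly positive constants.

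With the one-step regeneration lemma in hand, the proof of the proposition is a geometric-trials argument with a penalty for $R$. Each "attempt" uses a slab of bounded height; if the favorable event fails I wait until $m_n$ has risen by another bounded amount and try again, independently by the Markov/resampling property. The number of attempts needed is geometric, hence has exponential tails, giving $\P(\tau - \sigma > \ell \mid \mathcal F_\sigma) \le \alpha' e^{-\beta \ell}$ for the modified stopping time that only asks $L_j \le \kappa$; the extra requirement $m_j - m_\sigma \ge \kappa + R$ costs at most $O(R)$ additional attempts on average (one needs enough successful upward progress), which inflates the bound by a factor $e^{\beta' R}$, absorbed into the $\alpha e^{R - \beta\ell}$ form after relabeling constants. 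The main obstacle, and the genuinely new content, is the regeneration lemma itself: controlling that a bounded-height favorable configuration can (a) force coalescence/concentration of all $k$ tips and (b) guarantee the resulting history set is short, \emph{uniformly} in the (arbitrary) shape of the incoming history set $H_\sigma$ of height $\le \kappa$ and in the relative positions of the tips — which could be far apart horizontally. Handling tips that are horizontally distant may require either first running enough steps to bring them within bounded horizontal distance (using a separate recurrence-type estimate on the horizontal spread), or designing the favorable event at a scale large enough to capture all of them; either way the $\ell^p$ geometry for $p \in \{1,2,\infty\}$ is the key technical enabler.
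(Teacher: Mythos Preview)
Your approach has a genuine gap that cannot be patched along the lines you suggest. The regeneration event you describe --- forcing all $k$ tips to jump to a common point $z$ in a bounded slab --- requires the half-balls $B^+(x_n,\|z-x_n\|)$ to be empty of Poisson points for every tip $x_n$. But the tips can be at arbitrary horizontal distance $D$ from one another (nothing in the hypothesis $L_\sigma\le\kappa$ constrains their horizontal spread), and then at least one of these half-balls has radius of order $D$, hence volume of order $D^d$. The probability of your favorable event is therefore at most $e^{-cD^d}$, which is not bounded below by a positive constant depending only on $(k,d,p)$. You acknowledge this at the end, but neither fix you propose works: scaling the favorable event to size $D$ destroys the uniform lower bound, and ``waiting for the tips to come within bounded horizontal distance'' is hopeless in general --- for $d\ge 4$ the whole point of Theorem~\ref{thm_main} is that the trajectories do \emph{not} coalesce, yet Proposition~\ref{thm_good_steps} still holds there. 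So any argument that relies on eventual horizontal proximity of the tips is circular or simply false.

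The paper's route is entirely different and does not use any coalescence-type regeneration. It shows directly that $L_n=M_n-m_n$ has a negative exponential drift once it exceeds a threshold $\kappa$: separately, (i) $M_{n+1}-M_n$ has tails controlled by a function $f(\ell,L_n)$ that vanishes as $L_n\to\infty$ (Proposition~\ref{prop_upper_control}, via a volume argument on $B^+(x_n,L_n+\ell)\setminus H_n$), and (ii) $m_{n+k}-m_n$ stochastically dominates a fixed positive variable $\Gamma_k$ regardless of the history (Lemma~\ref{lemma_progress}). Step~(ii) is where the restriction $d=2$ or $p\in\{1,2,\infty\}$ actually enters: it comes from the stochastic domination $X^H\cdot e_d\succeq_{\mathrm{sto}} X^\emptyset\cdot e_d$ of Theorem~\ref{thm_sto_dom_poisson}, whose proof reduces to a delicate monotonicity statement about sections of the $\ell^p$ ball (Lemma~\ref{lemma_section}) that fails for general $p$ when $d\ge 3$. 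Combining (i) and (ii) gives $\mathds 1_{L_n>\kappa}\,\E[e^{L_{n+K_n}-L_n}\mid\mathcal F_n]\le q<1$ (Proposition~\ref{prop_ln_exp_control}), from which exponential tails for the return times to $\{L\le\kappa\}$ and then for $\tau_{n+1}-\tau_n$ follow by a supermartingale argument. Your identification of the role of the $p$-restriction --- as a geometric compatibility check for placing a single point $z$ --- is therefore also incorrect.
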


Note that the condition $d=2$ or $p\in\{1, 2, \infty\}$ serves as a concise shorthand we will frequently use to refer to the two case $d=2$ with $p\in[1, \infty]$, and $d\geq3$ with $p\in\{1, 2,\infty\}$.

The strategy of the proof of Theorem \ref{thm_good_steps} is inspired by \cite{Dsf2d}, but we develop an original approach that overcomes the key mathematical obstacles that previously prevented generalization both to the singular $\ell^\infty$ norm and, more importantly, beyond the planar setting. Our framework handles both challenges in a unified way. A central contribution is a novel stochastic domination that allows us to compare any arbitrary step of the exploration process with the idealized case where the history set is empty, that is, where no constraints from the past are present. We begin by establishing an upper bound on the growth of $(M_n)_{n\geq0}$ in Subsection \ref{subsection_upper_control}. In Subsection \ref{subsection_stochastic_domination}, we introduce the novel stochastic domination result, which will allow us to derive a lower bound on the growth of $(m_n)_{n\geq0}$ in Subsection \ref{subsection_lower_control}. Finally, in Subsection \ref{subsection_proof_good_steps}, we combine these ingredients to prove Theorem \ref{thm_good_steps}.

\subsection{Upper control for the increase of $(M_n)_{n\geq0}$}

\label{subsection_upper_control}

In this subsection, we focus on establishing an upper bound on the growth of the sequence $(M_n)_{n\geq0}$. We begin by proving the following proposition, which forms the core of the argument. In the second part, we use this result to derive some exponential moment bounds.

\begin{Proposition}
    \label{prop_upper_control}
    For arbitrary dimension $d\geq 2$ and parameter $p\in [1, \infty]$, there exist a function $f=f_{d, p}:(\R_+)^2\to[0, 1]$ such that 
        \begin{itemize}
            \item[(a)] For each $n\geq 0$ and $\ell\in\R_+$,
                \[\P\big(M_{n+1}-M_n\geq \ell~\vert~\mathcal F_n\big)\leq f(\ell, L_n).\]
            \item[(b)] There exist $c=c(d, p)>0$ such that for all $(\ell,L)\in(\R_+)^2$,
                \[f(\ell, L)\leq \exp(-c\ell^d).\]
            \item[(c)] For each $\ell\in(0,\infty)$, $L\mapsto f(\ell, L)$ is non-increasing and  $\lim_{L\to\infty}f(\ell, L)=0$.
        \end{itemize}
\end{Proposition}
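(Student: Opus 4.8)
The goal is to construct $f_{d,p}$ controlling the upward jump of $M_n$ at a single exploration step, conditionally on $\mathcal F_n$. Recall that at step $n$ we move the vertex $x_n$ with $x_n\cdot e_d=m_n$, and the new half-ball added to the history is $B^+(x_n,\|\Psi(x_n)-x_n\|)$. Since $m_{n+1}\geq m_n$ and the top $M_n$ can only increase because of this new half-ball, we have
\[
M_{n+1}-M_n\leq \big(m_n+\|\Psi(x_n)-x_n\|\big)-M_n
\]
whenever the new half-ball's top exceeds $M_n$, and $M_{n+1}=M_n$ otherwise. So controlling $M_{n+1}-M_n$ reduces to controlling the radius $\|\Psi(x_n)-x_n\|$, i.e. the length of the edge emanating from $x_n$. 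Thus the plan is: (i) express $M_{n+1}-M_n\geq \ell$ in terms of the event that the edge from $x_n$ is long enough, roughly $\|\Psi(x_n)-x_n\|\geq \ell+L_n$ (using that $M_n-m_n=L_n$); (ii) use Proposition~\ref{prop_resampling} to replace $\mathcal N$ outside $\mathbb H^-(m_n)\cup H_n$ by a fresh independent Poisson process; (iii) bound the probability that this resampled process has no point in a suitable region above $x_n$.

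**The core estimate.** Conditionally on $\mathcal F_n$, by the resampling characterization the point $\Psi(x_n)$ is (at worst) as far as the nearest point of $\mathcal N'\setminus H_n$ above $x_n$. The event $\|\Psi(x_n)-x_n\|\geq r$ forces $\mathcal N'$ to avoid $B^+(x_n,r)\setminus H_n$. Since $H_n\subseteq \mathbb H^+(m_n)$ has top at height $M_n=m_n+L_n$, the part of $B^+(x_n,r)$ lying strictly above height $m_n+L_n$ is entirely outside $\overline{H}_n$, hence must be genuinely empty of $\mathcal N'$. For $r=\ell+L_n$ this "free" region contains a translate of $B^+(0,\ell)$ shifted up by $L_n$ — more precisely it contains the slab-truncated half-ball $\{y:\, x_n\cdot e_d+L_n< y\cdot e_d<x_n\cdot e_d+\ell+L_n,\ \|y-x_n\|<\ell+L_n\}$, whose volume I will bound below by a dimensional constant times $\ell^d$ (the $\ell^p$ ball of radius $\ell$ has volume $v_{d,p}\,\ell^d$, and a fixed fraction of it sits in any given thin-to-thick upper slab of the half-ball of radius $\ell+L_n$; one can simply use the region between heights $L_n$ and $L_n+\ell$ which contains a full copy of the "cap" of an $\ell^p$-ball of radius $\ell$). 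Poisson voidness then gives
\[
\P\big(M_{n+1}-M_n\geq \ell\,\big|\,\mathcal F_n\big)\leq \exp(-c\,\ell^d)
\]
for a constant $c=c(d,p)>0$, which is part (b). For part (a) one keeps the dependence on $L_n$: define $f(\ell,L)$ as the exact probability that a homogeneous unit-intensity Poisson process on $\R^d$ has no point in $B^+(0,\ell+L)\setminus K$, maximized over all "admissible" history-shaped sets $K$ with height $\leq L$ — or, more cleanly, set $f(\ell,L):=\exp(-\mathrm{Vol}(R_{\ell,L}))$ where $R_{\ell,L}$ is the explicit lower-bound region described above (the portion of $B^+(x_n,\ell+L)$ above height $L$). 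This is manifestly $[0,1]$-valued, and the conditional bound (a) holds by the voidness argument just sketched.

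**Monotonicity and the limit (part (c)).** With $f(\ell,L)=\exp(-\mathrm{Vol}(R_{\ell,L}))$, I must check $L\mapsto \mathrm{Vol}(R_{\ell,L})$ is non-decreasing and tends to $\infty$ as $L\to\infty$. The region $R_{\ell,L}$ is the set of $y$ with $L<y\cdot e_d-x_n\cdot e_d<L+\ell$ and $\|y-x_n\|<\ell+L$; as $L$ grows, the radius constraint $\ell+L$ relaxes while the slab $\{L<\cdot<L+\ell\}$ merely translates upward, so the cross-sections only get larger — hence monotonicity. For the divergence: the slab of thickness $\ell$ at height between $L$ and $L+\ell$, intersected with the ball of radius $\ell+L$ centered at $x_n$, has $(d-1)$-dimensional cross-sections that are $\ell^p$-balls in $\R^{d-1}$ whose radius grows like a positive power of $L$ (for fixed small $\ell$ the radius is $\gtrsim \sqrt{(\ell+L)\ell}$ in the $\ell^2$ case, and comparably in general $\ell^p$ since norms are equivalent), so $\mathrm{Vol}(R_{\ell,L})\to\infty$. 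A cleaner alternative that avoids slab geometry: redefine $R_{\ell,L}$ as the full region $B^+(x_n,\ell+L)\cap\mathbb H^+(x_n\cdot e_d+L)$ (everything in the half-ball above the top of the history), which only makes $f$ smaller hence still validates (a); its volume is that of a half-ball of $\ell^p$-radius $\ell+L$ minus a half-ball of radius $L$, which is $v^+_{d,p}((\ell+L)^d-L^d)\geq v^+_{d,p}\,\ell^d$ (giving (b) with $c=v^+_{d,p}$) and is clearly increasing in $L$ with infinite limit, giving (c) for free.

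**Main obstacle.** The only genuinely delicate point is step (i): rigorously justifying that $M_{n+1}-M_n\geq\ell$ really does force the long-edge event, and that the "free" region (above the top of $H_n$, inside the new half-ball, outside $\overline H_n$) is correctly identified — one must be careful that $H_n$ could in principle poke above where one expects, but by definition $H_n\subseteq \mathbb H^+(m_n)$ with $\sup\{y\cdot e_d:y\in H_n\}=M_n$, so nothing in $H_n$ lies above height $M_n=m_n+L_n=x_n\cdot e_d+L_n$, which is exactly what the argument needs. The rest is routine: the resampling from Proposition~\ref{prop_resampling} converts the conditional statement into an unconditional Poisson voidness probability, and elementary volume comparisons for $\ell^p$-balls (using equivalence of norms to get dimensional constants) deliver (b) and (c). I expect no serious difficulty and would keep the computation of the explicit constant $c$ to a one-line volume estimate.
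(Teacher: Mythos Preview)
Your reduction to Poisson voidness (steps (i)--(iii)) is correct and matches the paper exactly. The issue is in your volume estimate for the ``cleaner alternative'' region $R_{\ell,L}=B^+(x_n,\ell+L)\cap\mathbb H^+(x_n\cdot e_d+L)$.

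First, a computational slip: the cap $R_{\ell,L}$ is \emph{not} the difference of two half-balls, so its volume is not $v^+_{d,p}((\ell+L)^d-L^d)$. The slab $\{m_n<y\cdot e_d\leq m_n+L\}\cap B^+(x_n,\ell+L)$ is a slab of the \emph{large} ball, not the half-ball $B^+(x_n,L)$.

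Second, and more seriously, your claim that $\mathrm{Vol}(R_{\ell,L})\to\infty$ as $L\to\infty$ is \emph{false for $p=1$}. For the $\ell^1$ norm, the region $\{y:\|y-x_n\|_1<\ell+L,\ y\cdot e_d>x_n\cdot e_d+L\}$ is, after translating $y\cdot e_d$ down by $L$, exactly $\{z:\|z\|_1<\ell,\ z\cdot e_d>0\}=B^+(0,\ell)$, with volume independent of $L$. So your $f(\ell,L)$ is constant in $L$ when $p=1$, and (c) fails. (Your appeal to ``norms are equivalent'' is misleading here: the issue is the conical shape of the $\ell^1$-ball near its top vertex, not a norm comparison.) For $1<p\leq\infty$ the cap volume does diverge, so your argument would go through in that range after fixing the volume formula.

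The paper addresses $p<\infty$ (including $p=1$) by a genuinely different geometric idea: rather than only using the region \emph{above} $M_n$, it shows that any union of balls with centers in $\mathbb H^-(0)$ avoiding the origin and contained in $\mathbb H^-(1)$ must also avoid a fixed ball $B(e_d,\alpha_p)$ with $\alpha_p=2^{1/p}-1>0$. After translating and rescaling, this yields a ball of radius $\sim L_n$ centered at $x_n+L_ne_d$ that lies inside $B^+(x_n,L_n+\ell)$ yet is disjoint from $H_n$, contributing volume $\sim L_n^d$. Combined with the cap (which still gives $\gtrsim\ell^d$), this yields $|B^+(x_n,L_n+\ell)\setminus H_n|\geq c\max\{\ell,L_n\}^d$, from which (b) and (c) follow with $f(\ell,L)=e^{-c\max\{\ell,L\}^d}$.
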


In words, Proposition \ref{prop_upper_control} states that for any $n\geq0$, conditionally on $\mathcal F_n$, the increment $M_{n+1}-M_n$ has a distribution with a rapidly decaying tail. Moreover, each tail probability is arbitrarily small when $L_n$ is sufficiently large. The key idea to prove Proposition \ref{prop_upper_control} comes from the following lemma, which provides an upper bound on the conditional probability of interest in terms of the measure of some region.

\begin{Lemma}
    \label{lemma_proba_area}
    For all $n\geq0$ and $\ell\in\R_+$, we have
        \[\P\big(M_{n+1}-M_n\geq\ell~\vert~\mathcal F_n\big)\leq\exp{(-|B^+(x_n,L_n+\ell)\setminus H_n|)},\]
    where we recall that $x_n$ is the vertex that will move at the next step, and $|\cdot|$ denotes the Lebesgue measure.
\end{Lemma}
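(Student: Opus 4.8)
The plan is to translate the event $\{M_{n+1}-M_n\ge\ell\}$ into a statement about the length of the edge revealed at step $n$, and then bound that event's probability using the resampling characterization following Proposition~\ref{prop_resampling} together with the Poisson void probability.

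For the first step, write $r_n\coloneqq\|\Psi(x_n)-x_n\|$ for the length of the edge revealed at step $n$. By \eqref{eq_history_set}, $H_{n+1}$ is the intersection of $\mathbb H^+(m_{n+1})$ with the union of the $n$ previously revealed half-balls and the new half-ball $B^+(x_n,r_n)$. Discarding the intersection with $\mathbb H^+(m_{n+1})$ can only increase the supremum of the $e_d$-coordinate, so $M_{n+1}\le\max\big(M_n,\ \sup\{x\cdot e_d:x\in B^+(x_n,r_n)\}\big)$; since the topmost point of the $\ell^p$ ball $B(x_n,r_n)$ is $x_n+r_ne_d$, which has $e_d$-coordinate $x_n\cdot e_d+r_n=m_n+r_n$ for every $p\in[1,\infty]$, this yields $M_{n+1}\le\max(M_n,m_n+r_n)$. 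Recalling $L_n=M_n-m_n$, we conclude $\{M_{n+1}-M_n\ge\ell\}\subseteq\{r_n\ge L_n+\ell\}$, so it suffices to bound $\P(r_n\ge L_n+\ell\mid\mathcal F_n)$.

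For the second step, let $y_{n+1}$ be the closest point to $x_n$ in $\mathcal N\setminus\overline H_n$ with strictly larger $e_d$-coordinate, as in the characterization stated after Proposition~\ref{prop_resampling}. Since $\Psi(x_n)$ is, by definition, the closest point of $\mathcal N$ to $x_n$ among those with strictly larger $e_d$-coordinate, we have $r_n\le\|y_{n+1}-x_n\|$, hence $\P(r_n\ge L_n+\ell\mid\mathcal F_n)\le\P(\|y_{n+1}-x_n\|\ge L_n+\ell\mid\mathcal F_n)$. Conditionally on $\mathcal F_n$, Proposition~\ref{prop_resampling} gives that $y_{n+1}$ is distributed as the closest point to $x_n$ in $\mathcal N'\setminus H_n$ with $e_d$-coordinate larger than $x_n\cdot e_d=m_n$, where $\mathcal N'$ is an independent unit-intensity Poisson process. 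The event $\{\|y_{n+1}-x_n\|\ge L_n+\ell\}$ is then exactly the event that $\mathcal N'$ charges no point of $B(x_n,L_n+\ell)\cap\mathbb H^+(m_n)\setminus H_n$, and since $m_n=x_n\cdot e_d$ this region is precisely $B^+(x_n,L_n+\ell)\setminus H_n$. As $H_n$ is $\mathcal F_n$-measurable, the Poisson void probability gives $\P(\|y_{n+1}-x_n\|\ge L_n+\ell\mid\mathcal F_n)=\exp\big(-|B^+(x_n,L_n+\ell)\setminus H_n|\big)$, which is the announced bound.

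The only genuinely delicate point is the bookkeeping in the second step: one must verify that the region governing the void event is exactly $B^+(x_n,L_n+\ell)\setminus H_n$, accounting for the points $g_n(\mathbf u_i)$ sitting on $\partial H_n$, for the (measure-zero) boundary $\partial H_n$, and for the fact that ``strictly larger $e_d$-coordinate'' coincides with ``belonging to $\mathbb H^+(m_n)$'' because $x_n\cdot e_d=m_n$. The reduction from $\Psi(x_n)$ to $y_{n+1}$ is precisely what makes these other trajectory vertices irrelevant and lets the clean void-probability formula apply; everything else reduces to the elementary geometric fact about the topmost point of an $\ell^p$ ball.
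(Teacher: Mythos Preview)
Your proof is correct and follows essentially the same approach as the paper. The only presentational difference is that you route the second step through the auxiliary point $y_{n+1}$ and the inequality $r_n\le\|y_{n+1}-x_n\|$, whereas the paper argues more directly that $r_n\ge L_n+\ell$ forces $\mathcal N\cap B^+(x_n,L_n+\ell)\setminus\overline H_n=\emptyset$ and then applies Proposition~\ref{prop_resampling} to this void event; the two routes are equivalent.
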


\begin{proof}
    First, observe that by construction of the exploration process, 
        \[M_{n+1}=\min\{M_n,~m_n+\|\psi(x_n)-x_n\|\}.\]
    Therefore, if $M_{n+1}-M_n\geq \ell$, it must be that $\|\Psi(x_n)-x_n\|\geq M_n+\ell-m_n=L_n+\ell$, i.e.\
        \[\mathcal N\cap B^+(x_n,L_n+\ell)=\emptyset,\]
    and in particular, $\mathcal N\cap B^+(x_n,~L_n+\ell)\setminus\overline{H}_n=\emptyset$. This shows that
        \[\P\big(M_{n+1}-M_n\geq \ell~\vert~\mathcal F_n\big)\leq\P(\mathcal N\cap B^+(x_n,~L_n+\ell)\setminus \overline{H}_n=\emptyset~|~\mathcal F_n).\]
    Finally, since 
        \[\mathcal N\cap B^+(x_n,~L_n+\ell)\setminus\overline{H}_n\subset\mathcal N\setminus\big(\mathbb H^-(m_n)\cup\overline{H}_n\big),\]
    applying Proposition \ref{prop_resampling}, we have that, conditionally on $\mathcal F_n$, the number of points of $\mathcal N$ in $B^+(x_n,L_n+\ell)\setminus \overline{H}_n$ is distributed as a Poisson random variable of parameter $|B^+(x_n,~L_n+\ell)\setminus H_n|$, implying the result.
\end{proof}

Now it remains to derive a suitable lower bound on the measure of the set $B^+(x_n, L_n+\ell)\setminus H_n$ for all $n\geq0$ and $\ell\in\R_+$ to prove Proposition \ref{prop_upper_control}. This reduces the problem to a deterministic geometric one, where we must leverage the inherent properties of the exploration process. We proceed by treating the cases $p=\infty$ and $p<\infty$ separately. We begin with the case $p=\infty$, which is addressed by the following lemma.

\begin{Lemma}
    \label{lemma_area_linfty}
    If $p=\infty$, then for all $n\geq 0$ and $\ell\in\R_+$, we have 
        \[|B^+(x_n,L_n+\ell)\setminus H_n|\geq \ell(L_n+\ell)^{d-1}.\]
\end{Lemma}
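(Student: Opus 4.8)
The plan is to exploit the very rigid geometry of $\ell^\infty$-balls together with one structural fact about the exploration process: by the definition of $M_n$, the history set $H_n$ is entirely contained in the slab of $e_d$-heights lying between $m_n$ and $M_n = m_n + L_n$. Since $x_n$ has $e_d$-coordinate exactly $m_n$, the $\ell^\infty$ half-ball $B^+(x_n, L_n+\ell)$ reaches up to $e_d$-height $m_n + L_n + \ell$, i.e.\ exactly $\ell$ above the top of $H_n$. The portion of this half-ball lying strictly above height $M_n$ is therefore automatically disjoint from $H_n$, and it will suffice to bound its volume from below.

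Concretely, I would first record that, for $p = \infty$, for any $x \in \R^d$ and $r > 0$,
\[
    B^+(x, r) = \Big\{x + z \ :\ z \in \R^d,\ |z\cdot e_i| < r \text{ for } i \in \range{1}{d-1},\ 0 < z\cdot e_d < r\Big\},
\]
so that $B^+(x, r)$ is a box with $(d-1)$ horizontal sides of length $2r$ and one vertical (i.e.\ $e_d$-directed) side of length $r$. Next I would note that $H_n$, being a finite union of bounded half-balls intersected with a half-space, is bounded, so $M_n < \infty$, and that by the very definition of $M_n$ every $y \in H_n$ satisfies $y \cdot e_d \le M_n = m_n + L_n$.

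With these preliminaries, I introduce the slab
\[
    S_n \coloneqq \big\{\, y \in B^+(x_n, L_n + \ell) \ :\ y\cdot e_d > m_n + L_n \,\big\}.
\]
By the previous observation $S_n \cap H_n = \emptyset$, hence $S_n \subseteq B^+(x_n, L_n+\ell)\setminus H_n$. On the other hand, using $x_n \cdot e_d = m_n$ and the explicit box description, $S_n$ is precisely the box of points $x_n + z$ with $|z\cdot e_i| < L_n + \ell$ for $i \in \range{1}{d-1}$ and $L_n < z\cdot e_d < L_n + \ell$, whose Lebesgue measure equals $\big(2(L_n+\ell)\big)^{d-1}\,\ell \ge \ell\,(L_n+\ell)^{d-1}$. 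Combining the two displayed facts yields the claim; the cases $\ell = 0$ or $L_n + \ell = 0$ are trivial since the right-hand side then vanishes.

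There is essentially no serious obstacle here. The only point requiring a moment's care is the structural input that $H_n$ is ``capped'' at $e_d$-height $M_n$ --- immediate from the definition of $M_n$ --- together with the observation that the radius $L_n + \ell$ of the half-ball was chosen precisely so that, with its centre $x_n$ sitting at the bottom height $m_n$, it protrudes a full height $\ell$ above that cap. Everything else is an elementary volume estimate; note the bound is in fact wasteful by a factor $2^{d-1}$, since one discards the horizontal extent of the slab.
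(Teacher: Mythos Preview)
Your proof is correct and essentially identical to the paper's: both isolate the slab $B^+(x_n,L_n+\ell)\cap\mathbb H^+(M_n)$, use that $H_n\subset\mathbb H^-(M_n)$ to conclude it avoids $H_n$, and compute its volume as $\big(2(L_n+\ell)\big)^{d-1}\ell\ge \ell(L_n+\ell)^{d-1}$. Your remark about the wasted factor $2^{d-1}$ is also accurate.
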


\begin{proof}
    We start by setting
        \[R_n(\ell)\coloneqq B^+(x_n, L_n+\ell)\cap\mathbb H^+(M_n).\]
    By construction, since $H_n\subset\mathbb H^-(M_n)$, we have
        \[R_n(\ell)\subset B^+(x_n,L_n+\ell)\setminus H_n.\]
    We refer to Figure \ref{fig_cone} for a graphic illustration. Finally, observing that we can write
        \[R_n(\ell)=x_n+(-L_n+\ell, L_n+\ell)^{d-1}\times(L_n, L_n+\ell),\]
    we deduce $|R_n(\ell)|=\ell(L_n+\ell)^{d-1}$, implying the result.
\end{proof}

\begin{Remark}
    It could be possible to adapt the argument used in the proof of Lemma \ref{lemma_area_linfty} to derive a suitable similar result for all $p>1$, using that the top of the $\ell^p$ balls are locally flat. However, this argument breaks down for $p=1$ due to the singularity of the $\ell^1$ ball.
\end{Remark}

We now turn to the case $p<\infty$. The following lemma is the key to our argument. Informally, it says that any ball contained in $\mathbb H^-(1)$, whose center lies in $\mathbb H^-(0)$ and which avoids the origin, must be disjoint from a ball centered at $e_d$ with universal radius. This geometric property, which relies on the rounded shape of the $\ell^p$ balls when $p<\infty$, will allow us, after a suitable translation and scaling, to construct a nontrivial region that systematically avoids the history set. This argument breaks down for $p=\infty$, due to the flatness of the $\ell^\infty$ ball.

\begin{Lemma}
    \label{lemma_empty_ball}
    Assume $p<\infty$. Let $c\in\mathbb H^-(0)$ and $r>0$ such that $0\notin B(c,r)$ and $B(c, r)\subset\mathbb H^-(1)$, then
        \[B(e_d, \alpha_p)\cap B(c, r)=\emptyset,\quad\text{where}\quad \alpha_p\coloneqq2^{\frac{1}{p}}-1\in(0,1).\]
    In particular, 
        \[B\Big(z_p, \frac{\alpha_p}{2}\Big)\subset B^+(0,1)\setminus B(c, r),\quad\text{where}\quad z_p\coloneqq\left(1-\frac{\alpha_p}{2}\right)e_d.\]
\end{Lemma}

\begin{proof}
    First, note that since $c\in\mathbb H^-(0)$, we can write $c=c_0-\beta e_d$ with $c_0\cdot e_d=0$ and $\beta\geq 0$. Since $B(c, r)\subset\mathbb H^-(1)$ and $c_0+e_d\in\mathbb \partial \mathbb H^-(1)$, we must have
        \begin{equation}
            \label{eq_rbeta}
            r\leq \|e_d+c_0-c\|=\|e_d+\beta e_d\|=\beta+1
        \end{equation}
    Moreover, since $0\notin B(c,r)$, we must have $r\leq\|c\|$ and thus
        \begin{equation}
            \label{eq_deltarp}
            \begin{split}
                \|e_d-c\|^p-r^p&\geq\|e_d-c\|^p-\|c\|^p\\
                &=\|c_0\|^p+(\beta+1)^p-\|c_0\|^p-\beta^p\\
                &=(\beta+1)^p-\beta^p.
            \end{split}
        \end{equation}
    Now, we can write
        \begin{equation}
            \label{eq_deltar}
            \begin{split}
                \|e_d-c\|-r&=(r^p+\|e_d-c\|^p-r^p)^{\frac{1}{p}}-(r^p)^{\frac{1}{p}}\\
                &\geq \big((\beta+1)^p+\|e_d-c\|^p-r^p\big)^{\frac{1}{p}}-(\beta+1)\\
                &\geq\big(2(\beta+1)^p-\beta^p\big)^{\frac{1}{p}}-(\beta^p)^{\frac{1}{p}}-1\\
                &\geq\big(2(\beta+1)^p\big)^{\frac{1}{p}}-(2\beta^p)^{\frac{1}{p}}-1\\
                &=2^{\frac{1}{p}}-1\\
                &=\alpha_p,
            \end{split}
        \end{equation}
    where the third line is obtained from \eqref{eq_deltarp} and the first and last inequality comes from concavity of $z\mapsto z^{1/p}$ with \eqref{eq_rbeta} and $\beta^p\geq 0$ as for all $0\leq a\leq b$ and $c\geq 0$ we have $b^{1/p}-a^{1/p}\geq (b+c)^{1/p}-(a+c)^{1/p}$. Finally let $x\in B(e_d,\alpha_p)$. From triangle inequality and \eqref{eq_deltar}, we can write
        \begin{equation*}
            \|x-c\|\geq\|e_d-c\|-\|x-e_d\|\geq r+\alpha_p-\alpha_p=r,
        \end{equation*}
    implying $x\notin B(c,r)$. This shows that $B(e_d,\alpha_p)\cap B(c, r)=\emptyset$. Let $x\in B(z_p,\frac{\alpha_p}{2})$. We have
        \[(z_p-x)\cdot e_d\leq\|x-z_p\|<\frac{\alpha_p}{2},\]
    hence $x\cdot e_d\geq z_p\cdot e_d-\frac{\alpha_p}{2}=1-\alpha_p\geq 0$, which shows $x\in\mathbb H^+(0)$. Moreover,
        \[\|x\|\leq\|x-z_p\|+\|z_p\|<\frac{\alpha_p}{2}+\Big(1-\frac{\alpha_p}{2}\Big)=1,\]
    hence $x\in B^+(0, 1)$. The proof is complete.
\end{proof}

\begin{Remark}
    The radius $\alpha_p$ given by the Lemma \ref{lemma_empty_ball} is optimal. One can check that in the case $c=e_1$ and $r=1$, we have
        \[e_1+\frac{e_d-e_1}{\|e_d-e_1\|}\in \overline B\left(e_d, \alpha_p\right)\cap\overline B(c, r).\]
\end{Remark}

We can now derive a suitable lower bound on the measure of $B^+(x_n,L_n+\ell)\setminus H_n$ for all $n\geq 0$ and $l\in\R_+$. The argument is essentially an adaptation of \cite[Lemma 3.2]{Dsf2d}, carefully generalized to all $d\geq 2$ and $p\in[1,\infty)$.

\begin{Lemma}
    \label{lemma_area_lp}
    For all $p<\infty$, there exists $c=c(d, p)>0$ such that for all $n\geq0$ and $\ell\in\R_+$, we have
        \[|B^+(x_n,L_n+\ell)\setminus H_n|\geq c\max\{\ell, L_n\}^d.\]
\end{Lemma}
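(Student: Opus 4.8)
The plan is to reduce to Lemma \ref{lemma_empty_ball} by an affine rescaling, and then to split into the two regimes $\ell \geq L_n$ and $\ell < L_n$. Recall that $H_n$ is a union of half-balls $B^+(x_i,\|\Psi(x_i)-x_i\|)$ intersected with $\mathbb H^+(m_n)$, and crucially each such half-ball, being of the form $B(x_i, r_i)\cap \mathbb H^+(x_i\cdot e_d)$, does not contain $x_n$ (since $x_n\in\mathcal N$ while $\mathcal N\cap H_n=\emptyset$, so $x_n\notin H_n$; in fact $x_n$ lies in $\partial H_n$ or strictly outside). Moreover, $H_n\subset \mathbb H^-(M_n) = \mathbb H^-(m_n+L_n)$, while $x_n\cdot e_d = m_n$. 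So after translating by $-x_n$ and dividing all coordinates by $L_n$ (when $L_n>0$), we land exactly in the setting of Lemma \ref{lemma_empty_ball}: each rescaled constraint ball has center in $\mathbb H^-(0)$, avoids the origin, and is contained in $\mathbb H^-(1)$. Hence that ball is disjoint from $B(e_d,\alpha_p)$, and undoing the scaling, the ball $B(x_n + L_n e_d, \alpha_p L_n)$ is disjoint from every constraint half-ball, hence from $H_n$.

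First I would handle the regime $\ell \geq L_n$. Here the target bound is $c\,\ell^d$. Since $M_n = m_n + L_n \leq m_n+\ell$, the slab $R_n(\ell):= B^+(x_n, L_n+\ell)\cap \mathbb H^+(M_n)$ is contained in $B^+(x_n,L_n+\ell)\setminus H_n$ exactly as in Lemma \ref{lemma_area_linfty}, and since $\ell\geq L_n$ this region contains a fixed fraction (depending only on $d,p$) of an $\ell^p$-ball of radius comparable to $\ell$; more concretely it contains a Euclidean box of side order $\ell$ sitting strictly above height $M_n$, giving $|R_n(\ell)|\geq c\ell^d$ for a dimensional constant $c=c(d,p)>0$. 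This uses only that $B^+(x_n,\cdot)$ is a genuine $d$-dimensional ball of radius $\geq \ell$ whose top cap above its center has volume of order (radius)$^d$.

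Next, the regime $\ell < L_n$, where the target is $c\,L_n^d$ and is the substantive case. If $L_n=0$ then $H_n$ meets $\mathbb H^+(m_n)$ in a set of measure zero and $B^+(x_n,\ell)$ has positive measure, so the bound is trivial; assume $L_n>0$. Apply the rescaling argument above: the ball $B(x_n+L_n e_d, \alpha_p L_n)$ is disjoint from $H_n$. I would then intersect this ball with $B^+(x_n, L_n+\ell)$; since its center $x_n+L_n e_d$ has $e_d$-coordinate $m_n+L_n \geq m_n$, the ball meets $\mathbb H^+(m_n)$ in at least half its volume, and since $\|x_n + L_n e_d - x_n\| + \alpha_p L_n = L_n + \alpha_p L_n \leq L_n + L_n$, provided we also ensure $\alpha_p L_n \leq L_n + \ell$ (which holds as $\alpha_p<1\leq 1+\ell/L_n$)... more carefully, any point of $B(x_n+L_n e_d,\alpha_p L_n)$ is within $L_n+\alpha_p L_n = (1+\alpha_p)L_n$ of $x_n$, which may exceed $L_n+\ell$; so I would instead shrink, taking the ball $B(x_n + L_n e_d, \min(\alpha_p, \ell/L_n)L_n)$ — no, cleaner: replace $\ell$ by $0$ throughout since $B^+(x_n, L_n) \subset B^+(x_n, L_n+\ell)$, and work with $B(x_n+t e_d, \alpha_p t)$ for a well-chosen $t\leq L_n$ ensuring containment in $B(x_n, L_n)$ and in $\mathbb H^+(m_n)$. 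Taking $t = L_n/(1+\alpha_p)$ works: such a ball lies in $B(x_n, L_n)$, its center is above $m_n$, so at least half of it lies in $B^+(x_n,L_n)\setminus H_n$, yielding $|B^+(x_n,L_n+\ell)\setminus H_n| \geq \tfrac12\,\omega_{d,p}\,(\alpha_p t)^d = c(d,p)\,L_n^d$, where $\omega_{d,p}$ is the volume of the unit $\ell^p$-ball. Combining the two regimes gives $|B^+(x_n,L_n+\ell)\setminus H_n|\geq c\max\{\ell,L_n\}^d$.

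The main obstacle is the bookkeeping in the second regime: verifying that the rescaled constraint balls genuinely satisfy all three hypotheses of Lemma \ref{lemma_empty_ball} — center in $\mathbb H^-(0)$, avoiding the origin, contained in $\mathbb H^-(1)$ — and that the unrescaled clear-ball is then positioned so that a fixed fraction of it simultaneously lies inside $B^+(x_n,L_n+\ell)$ and outside $\mathbb H^-(m_n)$. The hypothesis that the rescaled ball avoids the origin is exactly the statement $x_n\notin H_n$, i.e. $x_n\notin B^+(x_i,\|\Psi(x_i)-x_i\|)$ for all $i<n$, which follows because $x_n\in\mathcal N$ and each such half-ball is empty of $\mathcal N$-points; the containment in $\mathbb H^-(1)$ after rescaling is the assertion $H_n\subset\mathbb H^-(M_n)$, which is immediate from the definition of $M_n$. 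Once these are in place, the volume estimate is routine, using only the fixed lower bound $\alpha_p$ on the radius of the clear ball.
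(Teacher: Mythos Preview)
Your overall strategy matches the paper's: establish separately an $\ell^d$ lower bound (via a region above height $M_n$) and an $L_n^d$ lower bound (via Lemma~\ref{lemma_empty_ball}), then combine. The $\ell^d$ part is correct and coincides with the paper's argument, which simply observes that $B^+(x_n+L_ne_d,\ell)\subset B^+(x_n,L_n+\ell)\setminus H_n$.

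The gap is in your $L_n^d$ argument. You correctly rescale to obtain $B(x_n+L_ne_d,\alpha_pL_n)\cap H_n=\emptyset$, but your chosen sub-ball $B(x_n+te_d,\alpha_pt)$ with $t=L_n/(1+\alpha_p)$ is \emph{not} guaranteed to be disjoint from $H_n$. It is not a sub-ball of $B(x_n+L_ne_d,\alpha_pL_n)$: that containment would force $(L_n-t)+\alpha_pt\leq\alpha_pL_n$, i.e.\ $t\geq L_n$. Nor can you re-apply Lemma~\ref{lemma_empty_ball} at scale $t$, since after rescaling by $t^{-1}$ the constraint balls lie only in $\mathbb H^-(L_n/t)\supsetneq\mathbb H^-(1)$. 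Concretely, for $p=2$ the valid constraint ball $B(x_n+L_ne_1,L_n)$ (center in $\mathbb H^-(m_n)$, avoiding $x_n$, contained in $\mathbb H^-(M_n)$) intersects your ball whenever $t<L_n$, since $\|L_ne_1-te_d\|_2=\sqrt{L_n^2+t^2}<L_n+(\sqrt2-1)t$.

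The fix is to decouple the center height from the radius. The paper takes the ball $B\big(c_n,\tfrac{\alpha_p}{2}L_n\big)$ with $c_n=x_n+(1-\tfrac{\alpha_p}{2})L_ne_d$: one checks by the triangle inequality that this is simultaneously a sub-ball of $B(x_n+L_ne_d,\alpha_pL_n)$ (hence disjoint from $H_n$) and of $B(x_n,L_n)$, and that it lies in $\mathbb H^+(m_n)$ since $\alpha_p<1$. This yields $|B^+(x_n,L_n+\ell)\setminus H_n|\geq(\tfrac{\alpha_p}{2}L_n)^d|B(0,1)|$, completing the second regime.
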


\begin{proof}
    First, observing that since $H_n$ consists of the intersection of $\mathbb H^+(m_n)$ with an union of balls avoiding $x_n$, included in $\mathbb H^-(M_n)$ with centers in $\mathbb H^-(m_n)$, performing a translation by $-x_n$ and a scaling by ${L_n}^{-1}$, we can apply Lemma \ref{lemma_empty_ball} to get that
        \begin{equation}
            \label{eq_bottom_ball}
            B\left(c_n, \frac{\alpha_p}{2}L_n\right)\subset B^+(x_n, L_n+\ell)\setminus H_n,\quad\text{where}\quad c_n\coloneqq x_n+L_n z_p.
        \end{equation}
    We refer to Figure \ref{fig_cone} for a graphic illustration. Taking the Lebesgue measure in \eqref{eq_bottom_ball}, we get
        \begin{equation}
            \label{eq_area1}
            |B^+(x_n, L_n+\ell)\setminus H_n|\geq\left(\frac{\alpha_p}{2}L_n\right)^d|B(0,1)|.
        \end{equation}
    Additionally, we have that
        \[B^+(x_n+L_n e_d,\ell)\subset B^+(x_n, L_n+\ell)\setminus H_n.\]
    Indeed $B^+(x_n+L_n e_d,\ell)\subset\mathbb H^+(M_n)$ whereas $H_n\subset\mathbb H^-(M_n)$, and $B^+(x_n+L_n e_d,\ell)\subset B^+(x_n, L_n+\ell)$ using triangle inequality. Taking the Lebesgue measure, we get that
        \[|B^+(x_n, L_n+\ell)\setminus H_n|\geq \ell^d|B^+(0, 1)|.\]
    Together with \eqref{eq_area1}, it gives the result.
\end{proof}

Putting everything together, we prove Proposition \ref{prop_upper_control}.

\begin{figure}[h]
    \centering
    \includegraphics[width=\linewidth]{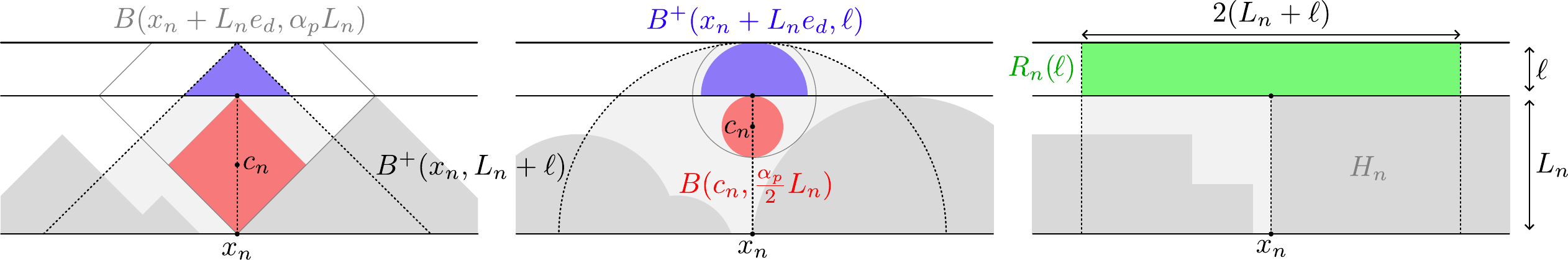}
    \caption{Illustration of the reasoning behind the proof of Lemma \ref{lemma_area_linfty} and \ref{lemma_area_lp}. From left to right, the cases $p=1$, $p=2$, and $p=\infty$.}
    \label{fig_cone}
\end{figure}

\begin{proof}[Proof of Proposition \ref{prop_upper_control}]
    We set
        \[f=f_{d, p}:(\ell, L)\mapsto\begin{cases}
            e^{-\ell(L+\ell)^{d-1}}&\text{if}\quad p=\infty\\
            e^{-c\max\{\ell, L\}^d}&\text{otherwise}
        \end{cases}\]
    where $c$ is given by Lemma \ref{lemma_area_lp}. Then, combining Lemma \ref{lemma_proba_area} with Lemmas \ref{lemma_area_linfty} and \ref{lemma_area_lp}, we get that
        \[\P(M_{n+1}-M_n\geq\ell~|~\mathcal F_n)\leq f(\ell, L_n)\]
    for each $\ell\in\R_+$ and $n\geq 0$. Additionally, for all $(\ell, L)\in(\R_+)^2$ we have
        \[f(\ell, L)\leq e^{-\min\{1,c\}\ell^d}.\]
    Finally, for each $\ell\in(0,\infty)$, the function $L\mapsto f(\ell, L)$ is non-increasing and $\lim_{L\to\infty}f(\ell, L)=0$. The proof is complete.
\end{proof}

To conclude this subsection, we derive two lemmas from Proposition \ref{prop_upper_control} that establish exponential moment bounds, which will be useful in later arguments.

\begin{Lemma}
    \label{lemma_GtL}
    For every $n\geq 0$ and $i\geq n$ we denote
        \[Y(n, i)\coloneqq \big(M_{i+1}-M_i-(L_i-L_n)_-\big)_+.\]
    Fix $t>0$. There exist a non-increasing function $G_t:\R_+\to\R$ with $\lim_{L\to\infty}G_t(L)=1$ such that for all $n\geq 0$ and $i\geq n$, we have
        \[\E\left[e^{tY(n, i)}~\middle\vert~\mathcal F_i\right]\leq G_t(L_n).\]
\end{Lemma}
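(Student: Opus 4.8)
The goal is to bound the exponential moment of $Y(n,i) = \big(M_{i+1}-M_i-(L_i-L_n)_-\big)_+$. The key observation is that $Y(n,i)$ is a nonnegative random variable dominated by the pair $(M_{i+1}-M_i, L_i)$ in a way that lets us apply Proposition~\ref{prop_upper_control} conditionally on $\mathcal F_i$. Since $(L_i - L_n)_- = \max\{0, L_n - L_i\}$, we have $Y(n,i) \le (M_{i+1}-M_i - (L_n - L_i)_+)_+$; when $L_i \ge L_n$ the correction term vanishes and $Y(n,i) = (M_{i+1}-M_i)_+$, whereas when $L_i < L_n$ we subtract the extra slack $L_n - L_i$. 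The first step is to note that $\mathcal F_i$-conditionally, the tail of $M_{i+1}-M_i$ is controlled by $f(\cdot, L_i)$, and $L_i$ is $\mathcal F_i$-measurable, so everything reduces to a deterministic estimate: for a nonnegative random variable $Z$ with $\P(Z \ge \ell) \le f(\ell, L)$ and a constant $a \ge 0$, bound $\E[e^{t(Z-a)_+}]$ in terms of $L$ and $a$.

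Next I would carry out this deterministic computation. Write $W = (Z-a)_+$; then for $s > 0$, $\P(W \ge s) = \P(Z \ge s+a) \le f(s+a, L)$, and $\P(W = 0)$ could be close to $1$. Using $\E[e^{tW}] = 1 + t\int_0^\infty e^{ts}\P(W \ge s)\,ds$, substitute the bound $f(s+a,L) \le \min\{f(s+a, L),\, e^{-c(s+a)^d}\}$ from part~(b) of Proposition~\ref{prop_upper_control} (with $c = c(d,p)$, or the analogous $\ell^\infty$ bound). The relevant value of $a$ here is $a = (L_n - L_i)_+ \le L_n$, so to get a bound uniform over $i \ge n$ depending only on $L_n$, I would split into the two cases. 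When $L_i \ge L_n$: $a = 0$, and by part~(c), $f(s, L_i)$ is small for large $L_i$; in general $f(s, L_i) \le f(s, 0)$, which is integrable against $e^{ts}$ by part~(b), giving a crude uniform bound. When $L_i < L_n$: now $a = L_n - L_i$, and although $f(s+a, L_i)$ might not be small (since $L_i$ could be tiny), the shift by $a$ helps — but only if $a$ is large, i.e. $L_i$ small, which is exactly the regime where $L_i$ not being large hurts. So the two effects partially compensate. The cleanest route is: for fixed $t$, set $G_t(L) := 1 + t\int_0^\infty e^{ts}\sup_{0 \le \ell \le L}\big[\sup_{L'\ge 0} f\big(s + (L-L')_+,\, L'\big)\big]\,ds$, or more simply bound the inner supremum by $\min\{1, \text{(something decaying in }L\text{)}\}$ and verify the three required properties of $G_t$.

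The properties to check are then: $G_t$ is non-increasing (clear if defined as a sup over a structure monotone in $L$, using part~(c) and the fact that larger $L_n$ only enlarges the feasible range of $a$ but the integrand is controlled), and $\lim_{L\to\infty} G_t(L) = 1$. For the limit, the point is that as $L_n \to \infty$, either $L_i \ge L_n$ (so $L_i$ is also large and $f(s, L_i) \to 0$ by part~(c), with dominated convergence against $e^{ts}e^{-cs^d}$), or $L_i < L_n$ with $a = L_n - L_i$ large, in which case $f(s+a, L_i) \le e^{-c(s+a)^d} \le e^{-ca^d} \to 0$ uniformly — so in both regimes the integrand vanishes pointwise and is dominated, giving $G_t(L) \to 1$.

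\textbf{Main obstacle.} The delicate point is the uniformity over $i \ge n$ combined with the two competing regimes for $L_i$ versus $L_n$: the estimate must be controlled when $L_i$ is small (bad for part~(c)) precisely because then the compensating shift $a = L_n - L_i$ is large (good, via part~(b)'s Gaussian-type tail), and this trade-off has to be packaged into a single clean function $G_t$ of $L_n$ alone. Getting the dominated-convergence argument for $\lim_{L\to\infty}G_t(L)=1$ to handle both regimes simultaneously — rather than, say, obtaining a bound that blows up in $t$ or fails to converge to exactly $1$ — is where the care is needed; the integrability against $e^{ts}$ is guaranteed for every fixed $t>0$ by the super-exponential tail $e^{-c\ell^d}$ with $d \ge 2$, so $t$ being large is not itself an issue.
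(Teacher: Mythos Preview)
Your plan is correct and takes the same route as the paper: write $\E[e^{tY(n,i)}\mid\mathcal F_i]=1+t\int_0^\infty e^{ts}\,\P(Y(n,i)\ge s\mid\mathcal F_i)\,ds$, bound the tail via Proposition~\ref{prop_upper_control}, and deduce the properties of $G_t$ from parts~(b)--(c) by dominated convergence. The paper packages this more compactly by setting $G_t(L):=1+\int_0^\infty t\,e^{t\ell}f(\ell,L)\,d\ell$ and asserting the single pointwise bound $\P(Y(n,i)\ge\ell\mid\mathcal F_i)\le f(\ell,L_n)$, which absorbs your two-regime trade-off into one inequality; your sup-based $G_t$ would also work but is not needed (and note your displayed formula has a stray $\sup_{0\le\ell\le L}$ over a variable that does not appear inside the bracket).
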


\begin{proof}
    We set
        \[G_t:L\mapsto 1+\int_0^\infty te^{t\ell}f(\ell, L)\mathrm d\ell,\]
    where $f$ is given by Proposition \ref{prop_upper_control}. Using point (b) of Proposition \ref{prop_upper_control}, we get $G_t(0)<\infty$. The integrand in the definition of $G_t(L)$ decreases to $0$ as $L\rightarrow\infty$ for all $\ell>0$ from point (c), therefore by dominated convergence, $G_t(L)$ decreases to $1$ as $L\rightarrow\infty$. Now, observe that applying Fubini, we have
    \begin{equation}
        \label{eq_fubini}
        \begin{split}
            \E\left[e^{tY(n, i)}~\middle\vert~\mathcal F_i\right]&=\int_\R te^{t\ell}\P[Y(n, i)\geq \ell~\vert~\mathcal F_i]\mathrm d\ell\\
            &=1+\int_0^\infty te^{t\ell}\P[Y(n, i)\geq \ell~\vert~\mathcal F_i]\mathrm d\ell,
        \end{split}
    \end{equation}
    where the last equality comes from the fact that $Y(n, i)\geq 0$ almost surely and $\int_{\R_-}te^{t\ell}\mathrm d\ell=1$. Now, using Proposition \ref{prop_upper_control} again, for all $\ell>0$ we write
    \begin{equation*}
        \begin{split}
            \P[Y(n, i)\geq \ell~\vert~\mathcal F_i]&=\P(M_{i+1}-M_i\geq \ell+[L_i-L_n]_-~\vert~\mathcal F_i)\\
            &\leq\P(M_{i+1}-M_i\geq \ell~\vert~\mathcal F_i)\\
            &\leq f(\ell, L_n).
        \end{split}
    \end{equation*}
    Injecting in \eqref{eq_fubini}, we get
        \[\E\left[e^{tY(n, i)}~\middle\vert~\mathcal F_j\right]\leq 1+\int_0^\infty te^{t\ell}f(\ell, L_n)\mathrm d\ell=G_t(L_n).\]
    The proof is complete.
\end{proof}

%The following proposition conclude the subsection. It is this result together with a result on the lower control of the increase of $(m_n)_{n\geq 0}$ that will allow us to get sufficient control on $(L_n)_{n\geq 0}$ to prove the good steps decomposition of Proposition \ref{prop_good_steps}.

\begin{Lemma}
    \label{lemma_upper_moment}
    For all $n\geq0$ and $j>n$ we have
        \[\E\left[e^{t(L_j-L_n)_+}~\middle\vert~\mathcal F_n\right]\leq\E\left[e^{t(L_{j-1}-L_n+M_j-M_{j-1})_+}~\middle\vert~\mathcal F_n\right]\leq \big(G_t(L_n)\big)^{j-n}.\]
\end{Lemma}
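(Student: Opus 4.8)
The plan is to establish the two inequalities separately, with the second one proved by induction on $j-n$ using the tower property of conditional expectation together with Lemma~\ref{lemma_GtL}. For the \emph{first} inequality, I would observe that since $(M_i)_{i\geq 0}$ is non-decreasing, $M_j \geq M_{j-1}$, and hence $L_j = M_j - m_j \leq M_{j-1} + (M_j - M_{j-1}) - m_j \leq M_{j-1} + (M_j - M_{j-1}) - m_{j-1}$ because $(m_i)_{i\geq0}$ is also non-decreasing. Therefore $L_j - L_n \leq L_{j-1} - L_n + (M_j - M_{j-1})$, and applying $(\cdot)_+$ (monotone) and then $z\mapsto e^{tz}$ (increasing) and taking conditional expectation gives the claim; this is essentially just a consequence of the monotonicity of $(m_i)$ and $(M_i)$ noted right after the definition of $L_n$.

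For the \emph{second} inequality, I would argue by induction on the number of steps $j - n \geq 1$. The key point is to relate $(L_{j-1} - L_n + M_j - M_{j-1})_+$ to the random variable $Y(n, j-1) = (M_j - M_{j-1} - (L_{j-1} - L_n)_-)_+$ appearing in Lemma~\ref{lemma_GtL}. One checks the deterministic inequality $(L_{j-1} - L_n + M_j - M_{j-1})_+ \leq (L_{j-1} - L_n)_+ + Y(n, j-1)$: indeed, writing $a = L_{j-1} - L_n$ and $b = M_j - M_{j-1} \geq 0$, when $a \geq 0$ the left side is $(a+b)_+ = a+b = a_+ + b = a_+ + (b - a_-)_+$ since $a_- = 0$; when $a < 0$ the left side is $(a+b)_+ \leq b_+ = b$, while the right side is $0 + (b + a)_+ = (b - a_-)_+$, and since $b \geq 0 > a$ we have $(b-a_-)_+ = \max(b-a_-,0)$; if $b \leq a_-$ then $(a+b)_+ = (b - a_-)_+ = 0$ and equality holds, whereas if $b > a_-$ then $(a+b)_+ = b - a_- = (b-a_-)_+ \leq b$. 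In all cases the inequality holds. Consequently
\[
e^{t(L_{j-1}-L_n+M_j-M_{j-1})_+} \leq e^{t(L_{j-1}-L_n)_+}\, e^{tY(n, j-1)}.
\]
Now condition on $\mathcal F_{j-1}$: the factor $e^{t(L_{j-1}-L_n)_+}$ is $\mathcal F_{j-1}$-measurable (as $L_{j-1}, L_n$ are), and Lemma~\ref{lemma_GtL} applied with $i = j-1$ gives $\E[e^{tY(n, j-1)}\mid \mathcal F_{j-1}] \leq G_t(L_n)$, where $L_n$ is $\mathcal F_n \subseteq \mathcal F_{j-1}$-measurable. Hence
\[
\E\left[e^{t(L_{j-1}-L_n+M_j-M_{j-1})_+}\,\middle\vert\,\mathcal F_{j-1}\right] \leq G_t(L_n)\, e^{t(L_{j-1}-L_n)_+}.
\]
Taking $\E[\,\cdot\mid\mathcal F_n]$ on both sides, using the tower property and the induction hypothesis $\E[e^{t(L_{j-1}-L_n)_+}\mid\mathcal F_n] \leq (G_t(L_n))^{j-1-n}$ (valid since $G_t(L_n)\geq 1$ is $\mathcal F_n$-measurable and can be pulled out), yields $(G_t(L_n))^{j-n}$. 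The base case $j = n+1$ follows directly from Lemma~\ref{lemma_GtL} with $i = n$, noting $(L_n - L_n)_- = 0$ so $Y(n,n) = (M_{n+1}-M_n)_+ = M_{n+1}-M_n$ and the left-hand quantity is $e^{t(M_{n+1}-M_n)_+}$.

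I do not expect a serious obstacle here; the only mildly delicate point is the bookkeeping of the case analysis in the deterministic inequality $(L_{j-1}-L_n+M_j-M_{j-1})_+ \leq (L_{j-1}-L_n)_+ + Y(n,j-1)$ and making sure the correct $\sigma$-algebra is used at each conditioning step so that $G_t(L_n)$ (rather than $G_t(L_{j-1})$) comes out — this is exactly why Lemma~\ref{lemma_GtL} is stated with the $(L_i - L_n)_-$ correction term and with conditioning on $\mathcal F_i$. Everything else is a routine induction.
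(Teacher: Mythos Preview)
Your proof is correct and follows essentially the same route as the paper: the first inequality via $L_j-L_{j-1}\leq M_j-M_{j-1}$, and the second by induction using the tower property together with Lemma~\ref{lemma_GtL}. The only cosmetic difference is that your case analysis in fact establishes the \emph{equality} $(a+b)_+ = a_+ + (b-a_-)_+$ for $b\geq 0$ (which the paper states directly), so the ``inequality'' you prove is not actually lossy.
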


\begin{proof}
    The first inequality comes from the inequality of the corresponding variables since we have $L_j-L_{j-1}\leq M_j-M_{j-1}$. To prove the second inequality, we proceed by induction on $j$. If $j=n+1$, we have
        \[L_{j-1}-L_n+M_j-M_{j-1}=M_{n+1}-M_n=Y(n, n),\]
    hence the initialization follows from Lemma \ref{lemma_GtL}. Now, suppose the result holds for some $j>n$. We can write
    \begin{equation*}
        \begin{split}
            (L_j-L_n+M_{j+1}-M_j)_+&=(L_j-L_n)_++(M_{j+1}-M_j-[L_{j}-L_n]_-)_+\\
            &=(L_j-L_n)_++Y(n, j),
        \end{split}
    \end{equation*}
    where the first equality comes from the fact that $(a+b)_+=b_++(a-b_-)_+$ for all $(a, b)\in\R_+\times\R$. We deduce then that
    \begin{equation*}
        \begin{split}
            \E\left[e^{t(L_j-L_n+M_{j+1}-M_j)_+}~\middle\vert~\mathcal F_n\right]
            &=\E\left[e^{t(L_j-L_n)_+} e^{tY(n, j)}~\middle\vert~\mathcal F_n\right]\\
            &\leq\E\left[e^{t(L_j-L_n)_+}~\middle\vert~\mathcal F_n\right]G_t(L_n)\\
            &\leq \big(G_t(L_n)\big)^{j-n+1},
        \end{split}
    \end{equation*}
    where the second inequality comes from Proposition \ref{lemma_GtL} since $e^{t(L_j-L_n)_+}$ is $\mathcal F_j$ measurable, and the last inequality comes from the induction hypothesis. The heredity is thus verified, and the proof is complete.
\end{proof}

\subsection{A useful stochastic domination}

\label{subsection_stochastic_domination}

In this subsection, we establish a novel and useful stochastic domination result, which will later be used to obtain a lower bound on the growth of the sequence $(m_n)_{n\geq 0}$. The idea is as follows. For all $n\geq 0$, recall that $y_{n+1}$ denotes the closest point to the moving vertex $x_n$ in $\mathbb H^+(m_n)\cap\mathcal N\setminus\overline H_n$. The key difference between $\Psi(x_n)$ and $y_{n+1}$ is that $y_{n+1}$ ignores the other trajectories, whose current positions may lie on $\partial H_n$. To obtain a lower bound the increments of $(m_n)_{n\geq 0}$, we aim to bound from below, for each $n\geq0$, the quantity
    \[(y_{n+1}-x_n)\cdot e_d.\]
To study this quantity, for any Borel set $A\subset\R^d$ of finite measure, we introduce the random variable $X^A$, which is the closest point to the origin in $\mathbb H^+(0)\cap\mathcal N'\setminus A$, for the $\ell^p$ distance and where $\mathcal N'$ denotes an independent copy of $\mathcal N$. By construction, using a spatial translation along with Proposition \ref{prop_resampling}, we have that for all $n\geq 0$, conditionally on $\mathcal F_n$, the displacement $y_{n+1}-x_n$ is distributed as $X^{H_n-x_n}$. In particular, conditionally on $\mathcal F_n$, $(y_{n+1}-x_n)\cdot e_d$ is distributed as 
    \[X^{H_n-x_n}\cdot e_d.\]
Let us denote by $\mathcal H_0$ the collection of all subsets of $\R^d$ that can be written as the intersection of $\mathbb H^+(0)$ with a finite union of balls that avoid the origin and whose centers lie in $\mathbb H^-(0)$. In words, $\mathcal H_0$ represents the collection of all \emph{re-centered valid history sets}. By construction, we have $H_n-x_n\in\mathcal H_0$ almost surely. It is therefore sufficient to study the random variable $X^H\cdot e_d$ for any fixed $H\in\mathcal H_0$. In the remainder of this subsection, we fix such a deterministic set $H\in\mathcal H_0$. The main theorem of this section asserts that, for all $n\geq 0$, conditionally on the first $n$ steps of the exploration process, $(y_{n+1}-x_n)\cdot e_d$ is stochastically larger than it would be if the history set $H_n$ were empty.

\begin{Theorem}
    \label{thm_sto_dom_poisson}
    If $d=2$ or $p\in\{1, 2, \infty\}$, we have the following stochastic domination:
        \[X^H\cdot e_d\succeq_{\mathrm{sto}}X^\emptyset\cdot e_d,\]
    that is, $\P(X^H\cdot e_d\geq h)\geq\P(X^\emptyset\cdot e_d\geq h)$ for for all $h\in\R_+$. In particular, for all $n\geq 0$, conditional on $\mathcal F_n$, we have $(y_{n+1}-x_n)\cdot e_d\succeq_{\mathrm{sto}}X^\emptyset\cdot e_d$, meaning that
        \[\P\big((y_{n+1}-x_n)\cdot e_d\geq h~\big|~\mathcal F_n\big)\geq\P(X^\emptyset\cdot e_d\geq h)\]
    for each $h\geq 0$.
\end{Theorem}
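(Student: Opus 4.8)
The plan is to prove the stronger statement $X^H \cdot e_d \succeq_{\mathrm{sto}} X^\emptyset \cdot e_d$ by a direct coupling/geometric argument on the Poisson process $\mathcal N'$. Fix $h \geq 0$. The event $\{X^\emptyset \cdot e_d \geq h\}$ says precisely that $\mathcal N'$ has no point in the open half-ball $B^+(0, \rho)$ where $\rho$ is the $\ell^p$-distance from $0$ to the closest point of $\mathcal N' \cap \mathbb H^+(0)$; equivalently, writing $C(h)$ for the region of points in $\mathbb H^+(0)$ at $e_d$-coordinate strictly less than $h$, one has $X^\emptyset \cdot e_d \geq h$ iff $X^\emptyset \notin C(h)$, and similarly $X^H \cdot e_d \geq h$ iff $X^H \notin C(h)$. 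So the task reduces to showing $\P(X^H \in C(h)) \leq \P(X^\emptyset \in C(h))$: removing the history region $H$ from the ambient space only makes it \emph{less} likely that the closest surviving point lands low. Intuitively this is obvious — deleting candidate points can only push the nearest-neighbour farther away — but the subtlety is that $X^H$ is computed among points of $\mathcal N' \setminus H$, so a point of $C(h) \cap H$ that would have been $X^\emptyset$ is no longer eligible, and the new minimizer could in principle still lie in $C(h)$; we need the geometry of $H \in \mathcal H_0$ (a union of balls through $0$ with centers below the hyperplane $\{x \cdot e_d = 0\}$) to rule out bad configurations.

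The key geometric input is the following monotonicity property of the regions $C(h) \setminus H$: I claim that for $H \in \mathcal H_0$, the portion of the low strip $C(h)$ that survives deletion of $H$ is "shadowed" by the surviving portion of the ball that $X^\emptyset$ would have generated, in the sense that any $\ell^p$-ball centered at a point of $C(h) \setminus H$ and passing through $0$ already contains a definite chunk of probability mass outside $H$ below level $h$ — this is exactly the type of statement furnished by Lemma~\ref{lemma_empty_ball} (for $p < \infty$) after rescaling, and by the flat-top argument behind Lemma~\ref{lemma_area_linfty} (for $p = \infty$). Concretely, I would condition on $\mathcal N' \setminus C(h)$: given that data, $X^\emptyset \cdot e_d \geq h$ fails iff $\mathcal N' \cap (C(h) \setminus B)$ is empty where $B$ is the ball hitting the relevant boundary point, a monotone (decreasing) event in the Poisson configuration on $C(h)$; and $X^H \cdot e_d \geq h$ fails iff $\mathcal N' \cap (C(h) \setminus H \setminus B')$ is empty, again decreasing, but now on the \emph{smaller} region $C(h) \setminus H$ and with a region $C(h)\setminus H\setminus B'$ that I must show has measure at least that of $C(h) \setminus B$ — this is where the $\ell^p$ geometry and the hypothesis $p \in \{1,2,\infty\}$ (or $d=2$) enter, mirroring precisely the case split already done in Lemmas~\ref{lemma_area_linfty}, \ref{lemma_empty_ball}, \ref{lemma_area_lp}.

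The main obstacle, and the reason for the restriction $p \in \{1,2,\infty\}$ when $d \geq 3$, is controlling the geometry of the \emph{union} of many balls in $H$: for a single ball the shadowing argument is clean, but $H$ can be an intricate union, and one must argue that no matter how the balls of $H$ are arranged (subject to passing through $0$ with centers in $\mathbb H^-(0)$), the surviving low region $C(h) \setminus H$ still dominates — after the scaling reduction — the reference region for $X^\emptyset$. For $p = 2$ this presumably exploits the fact that balls through a common point and hyperplane constraint have a convexity/half-space structure (the exterior of a ball through $0$ contains a half-space tangent at $0$); for $p = \infty$ the balls are boxes and the combinatorics is finite and explicit; for $p = 1$ one uses the dual explicit structure of cross-polytopes. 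I expect the cleanest route is to first establish, for a single recentered valid history ball, a deterministic inclusion of the form "$C(h) \setminus (\text{ball})$ contains a translate of the reference region", then bootstrap to finite unions by noting that each ball only removes mass that is already compensated — but making this uniform over all of $\mathcal H_0$ in dimension $d \geq 3$ is the delicate point, and is exactly where I would expect the author to invoke something special about $p \in \{1,2,\infty\}$ rather than a soft argument. Finally, once the inequality $\P(X^H \in C(h)) \leq \P(X^\emptyset \in C(h))$ is in hand for all $h$, the stochastic domination $X^H \cdot e_d \succeq_{\mathrm{sto}} X^\emptyset \cdot e_d$ follows immediately, and the conditional statement for $(y_{n+1} - x_n) \cdot e_d$ is then just the translation/resampling identity $(y_{n+1} - x_n) \mid \mathcal F_n \overset{d}{=} X^{H_n - x_n}$ recorded before the theorem, together with $H_n - x_n \in \mathcal H_0$ a.s.
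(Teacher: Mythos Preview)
Your approach has a genuine gap. You condition on $\mathcal N'\setminus C(h)$ and try to compare the probabilities that the remaining Poisson points avoid two ``dangerous'' regions, but the resulting measure inequality fails pointwise. Concretely, writing $r^*$ (resp.\ $r_H^*$) for the distance from $0$ to the nearest point of $\mathcal N'\cap\mathbb H^+(h)$ (resp.\ of $(\mathcal N'\cap\mathbb H^+(h))\setminus H$), one would need
\[
\big|(C(h)\setminus H)\cap B(0,r_H^*)\big|\;\leq\;\big|C(h)\cap B(0,r^*)\big|,
\]
and since $r_H^*$ can be arbitrarily larger than $r^*$ (whenever the close points above level $h$ happen to lie inside $H$), the left side can easily exceed the right on typical realizations. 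The paper explicitly warns that ``the most natural coupling one might consider, namely, using the same Poisson point process, fails to achieve it''; your proposal is a variant of exactly that coupling. Moreover, the lemmas you invoke (Lemmas~\ref{lemma_area_linfty}, \ref{lemma_empty_ball}, \ref{lemma_area_lp}) bound $|B^+(x_n,L_n+\ell)\setminus H_n|$ from below for all $p\in[1,\infty]$; they are not where the restriction $p\in\{1,2,\infty\}$ arises, and they do not give the comparison you need.

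The paper's route is entirely different and computational. It conditions not on the complement of a strip but on $\|X_2^A\|$, the distance to the \emph{second}-closest point: given this, $X^A/\|X_2^A\|$ is uniform in $B^+(0,1)\setminus(A/\|X_2^A\|)$ (Lemma~\ref{lemma_uniformisation}). This reduces the problem to Proposition~\ref{prop_sto_dom}, a stochastic domination between \emph{uniform} points $U^H\cdot e_d\succeq_{\mathrm{sto}}U^\emptyset\cdot e_d$. That is proved by showing the Radon--Nikodym derivative $h\mapsto\alpha_h$ (the proportion of the slice $S_h=\{x\in B(0,1):x\cdot e_d=h\}$ not covered by $H$) is non-decreasing, which in turn follows from a deterministic inclusion $\Phi(S_h\cap H)\subset\Phi(S_{h'}\cap H)$ for $h\geq h'$ under the rescaling map $\Phi$ onto $S_0$ (Lemma~\ref{lemma_section}). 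It is precisely in this last inclusion that the hypothesis $d=2$ or $p\in\{1,2,\infty\}$ enters, and the paper gives a counterexample for $p=d=3$. The final assembly combines Proposition~\ref{prop_sto_dom} (applied to $H/\|X_2^H\|\in\mathcal H_0$) with the trivial coupling inequality $\|X_2^H\|\geq\|X_2^\emptyset\|$ and monotonicity of $h\mapsto\P(U^\emptyset\cdot e_d\geq h)$.
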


To our knowledge, such stochastic domination was never stated before. It is worth noting that we will not provide an explicit coupling that realizes the stochastic domination. Indeed, the most natural coupling one might consider, namely, using the same Poisson point process, fails to achieve it. We therefore adopt a computational strategy, comparing the distributions directly. We begin by establishing an intermediate stochastic domination in Section~\ref{section_uniform_sto_dom}, which forms the core of the proof of Theorem~\ref{thm_sto_dom_poisson}, presented in Section~\ref{section_proof_sto_dom}.

The fact that Theorem \ref{thm_sto_dom_poisson} holds for any $p\in[1, \infty]$ when $d=2$ naturally raises the question of whether the result remains valid beyond these specific assumptions. As we will see in the sequel, however, one of the intermediate results crucial to our proof fails to hold in general once these conditions are removed. This failure is far from obvious and suggests that the general case is genuinely delicate. Whether the theorem itself extends remains an open and subtle question. There is, at this stage, no clear reason why it should or should not.

\subsubsection{An intermediate stochastic domination}

\label{section_uniform_sto_dom}

In this subsection, we establish an intermediate stochastic domination that will be crucial in the proof of Theorem \ref{thm_sto_dom_poisson}. For any Borel set $A\subset\R^d$ such that $|B^+(0,1)\setminus A|>0$, we denote by $U^A$ a uniform point of $B^+(0,1)\setminus A$. The aim is to show the following proposition. Recall that $H\in\mathcal H_0$ is a fixed re-centered valid history set. Assume that $|B^+(0,1)\setminus H|>0$. The aim is to show the following proposition.

\begin{Proposition}
    \label{prop_sto_dom}
    We have the stochastic domination $U^H\cdot e_d\succeq_{\mathrm{sto}}U^\emptyset\cdot e_d$, that is for all $h\in[0, 1)$
    \begin{equation}
        \label{eq_dom}
        \P(U^H\cdot e_d\geq h)\geq\P(U^\emptyset\cdot e_d\geq h).
    \end{equation}
\end{Proposition}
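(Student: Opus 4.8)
The plan is to prove Proposition~\ref{prop_sto_dom} by reducing the statement about a uniform point in $B^+(0,1)\setminus H$ to a statement about \emph{slices}. For $h\in[0,1)$, let $S_h\coloneqq\{x\in B^+(0,1):x\cdot e_d=h\}$ denote the horizontal slice at height $h$, and write $a(h)\coloneqq|S_h|_{d-1}$ for its $(d-1)$-dimensional measure and $a_H(h)\coloneqq|S_h\setminus H|_{d-1}$ for the measure of the part of the slice avoiding $H$. Then $\P(U^\emptyset\cdot e_d\geq h)=\int_h^1 a(s)\,\mathrm ds\big/\int_0^1 a(s)\,\mathrm ds$ and $\P(U^H\cdot e_d\geq h)=\int_h^1 a_H(s)\,\mathrm ds\big/\int_0^1 a_H(s)\,\mathrm ds$. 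By a standard rearrangement/quotient argument, inequality~\eqref{eq_dom} for all $h$ is equivalent to the statement that the ratio $a_H(s)/a(s)$ is (essentially) non-decreasing in $s$ on $[0,1)$ — equivalently, that the \emph{fraction of the slice that is blocked by $H$}, namely $1-a_H(s)/a(s)$, is non-increasing in $s$. So the whole proposition reduces to the following monotonicity claim: the proportion of each horizontal slice of the half-ball that is covered by $H$ decreases as we move up toward the top of the ball.

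The key structural input is that $H\in\mathcal H_0$ is a \emph{finite union} of $\ell^p$-balls, each avoiding the origin, each with center in $\mathbb H^-(0)$, intersected with $\mathbb H^+(0)$. Since the map $s\mapsto a_H(s)/a(s)$ is a ratio of slice-coverage and since coverage fractions do not in general add, I would not try to reduce directly to a single ball; instead I would work with the coverage set $H\cap S_h$ directly. The natural approach is a \emph{radial scaling} comparison: a point $x\in B^+(0,1)$ at height $h$ lies in $H$ only if it lies in one of the defining balls $B(c,r)$ with $c\cdot e_d\leq 0$ and $0\notin B(c,r)$. The geometric heart is to show that if $x\in B(c,r)$ with $x\cdot e_d=h$, then the ``scaled-up'' point obtained by moving $x$ to a lower height along an appropriate ray (roughly, the ray from a point below the ball, so that we stay inside $B(c,r)$ while the enclosing half-ball $B^+(0,1)$ shrinks the slice more slowly) also lies in $B(c,r)$; combined with the fact that the half-ball slices $S_h$ themselves shrink as $h\uparrow 1$, this yields a measure-theoretic domination $a_H(h')/a(h')\geq a_H(h)/a(h)$ for $h'\leq h$. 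This is exactly the place where the hypothesis $d=2$ or $p\in\{1,2,\infty\}$ should enter: for $p\in\{1,2,\infty\}$ the balls are, respectively, polytopes, Euclidean balls, and cubes, for which the slice geometry is explicitly computable and the scaling argument can be checked directly; for general $p$ the top of the $\ell^p$ ball has a curvature mismatch with the dilation of $B^+(0,1)$ that breaks the pointwise containment.

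Concretely, I would organize the proof as: (i) set up the slicing formula and reduce~\eqref{eq_dom} to monotonicity of $h\mapsto a_H(h)/a(h)$ via the elementary lemma that for nonnegative integrable $g,\tilde g$ on $[0,1)$ with $\tilde g/g$ non-decreasing, $\int_h^1\tilde g\big/\int_0^1\tilde g\geq\int_h^1 g\big/\int_0^1 g$ for all $h$ (proved by writing the difference of numerators over a common denominator and using the FKG-type inequality $\int\tilde g\int_h^1 g\leq\int g\int_h^1\tilde g$, i.e.\ Chebyshev's sum inequality for the monotone functions $\tilde g/g$ and $\mathbf 1_{[h,1)}$); (ii) prove the slice-coverage monotonicity by exhibiting, for each $h'<h$, an injective measure-non-decreasing map $S_h\setminus H\to S_{h'}\setminus H$, built from radial dilation centered at an apex point on the $e_d$-axis below the origin, using that $H$ is a union of balls satisfying the Lemma~\ref{lemma_empty_ball}-type constraints; (iii) treat the three cases $p\in\{1,2,\infty\}$ (and all $p$ when $d=2$, where slices are intervals and everything is one-dimensional and explicit) to verify that the dilation indeed preserves membership in each defining ball. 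The main obstacle is step (ii)–(iii): making the radial-map argument rigorous while controlling how $H$ — an arbitrary finite union of admissible balls, possibly with complicated overlaps — interacts with the dilation, since the containment $H\cap S_h$ dilates into $H\cap S_{h'}$ must hold ball-by-ball but the \emph{complement} $S_h\setminus H$ is what we must push forward injectively into $S_{h'}\setminus H$. I expect the $d=2$ case to be essentially trivial (slices are intervals, $a(h)$ and $a_H(h)$ are lengths, monotonicity is direct), the $p=\infty$ case to follow from the explicit box geometry as in Lemma~\ref{lemma_area_linfty}, and the $p=1,2$ cases to require the genuine geometric input, with $p=2$ leaning on the rotational symmetry and $p=1$ on the piecewise-linear (polytopal) structure of the balls.
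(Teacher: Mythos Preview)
Your high-level plan coincides with the paper's: slice at height $h$, reduce \eqref{eq_dom} to monotonicity of $h\mapsto a_H(h)/a(h)$, and deduce stochastic domination from monotonicity of the likelihood ratio (your step (i) is exactly Lemma~\ref{lemma_radon_sto_dom}). The gap is in step (ii)--(iii), and it is a real one, not just a matter of detail.

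First, the direction of your inclusion is wrong. An injection $S_h\setminus H\to S_{h'}\setminus H$ built from the natural rescaling $S_h\to S_{h'}$ (Jacobian $|S_{h'}|/|S_h|$) would yield $a_H(h)/a(h)\le a_H(h')/a(h')$, the opposite of what you want; and in fact no such injection exists in general. What one can prove (and what the paper proves as Lemma~\ref{lemma_section}) is the inclusion on the \emph{$H$-part}: the rescaling $S_h\to S_{h'}$ sends $S_h\cap H$ into $S_{h'}\cap H$ for $h'\le h$. This is the statement that yields $\alpha_{h'}\le\alpha_h$.

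Second, the map is not a radial dilation from an apex on the $e_d$-axis; no single apex works (for $p=\infty$ all slices have the same radius, so the map is a pure translation, and for $p<\infty$ the would-be center depends on $h,h'$ and lies above the origin). The correct map is the horizontal rescaling $\Phi:x\mapsto (x-(x\cdot e_d)e_d)/\rho(x\cdot e_d)$ that sends every slice onto $S_0$. The actual geometric content is then: if $\rho(h)x_0+he_d\in B(c,r)$ with $c\in\mathbb H^-(0)$ and $0\notin B(c,r)$, then $\rho(h')x_0+h'e_d\in B(c,r)$ for $h'\le h$. This is a direct computation in $\|\cdot\|_p$, decomposing along $e_d$ and orthogonally to it, and it is precisely here that $p\in\{1,2,\infty\}$ or $d=2$ is used (Pythagoras for $p=2$, subadditivity of $\|\cdot\|^p$ for $p=1$, constant slice radius for $p=\infty$, one-dimensional slices for $d=2$). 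Lemma~\ref{lemma_empty_ball} is unrelated to this step and should not enter.
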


Let us start with some notations. For all $h\in[0, 1)$, we define the section at level $h$ by
    \[S_h\coloneqq \{x\in B(0,1):x\cdot e_d=h\}.\]
Let $\mathbf p:\R^d\rightarrow\R^{d-1},~(x_k)_{1\leq k\leq d}\mapsto(x_k)_{1\leq k\leq d-1}$ be the application that forgets the $e_d$ coordinate. For all $h\in[0,1)$, we define the proportion of $S_h$ that is not covered by $H$ by
    \[\alpha_h\coloneqq\frac{|\mathbf p(S_h\setminus H)|}{|\mathbf p(S_h)|}.\]
Note that here $|\cdot|$ denotes the Lebesgue measure on $\R^{d-1}$. The main idea is the following. The random variable $U^\emptyset\cdot e_d$ has density
    \[g_\emptyset:[0,1)\to\R_+,~h\mapsto \frac{1}{|B^+(0,1)|}|\mathbf p(S_h)|,\]
with respect to the Lebesgue measure on $[0,1)$, whereas $U^H\cdot e_d$ has density
    \[g_H:[0,1)\to\R_+,~h\mapsto \frac{1}{|B^+(0,1)\setminus H|}\alpha_h|\mathbf p(S_h)|.\]
Therefore, denoting the normalizing constant $Z\coloneqq|B^+(0,1)\setminus H|/|B^+(0, 1)|$, for all $h\in[0,1)$, the Radon-Nikodym derivative of the law of $U^H\cdot e_d$ with respect to the law of $U^\emptyset\cdot e_d$ satisfies
    \begin{equation}
        \label{eq_radon_nikodym}
        \forall h\in[0,1),\quad\frac{\mathrm{d}\P_{U^H\cdot e_d}}{\mathrm{d}\P_{U^\emptyset\cdot e_d}}(h)=\frac{\alpha_h}{Z}.
    \end{equation}
Intuitively, we can say that if $h\mapsto\alpha_h$ is non-decreasing, then the random variable $U^H\cdot e_d$ tends to place more mass on larger values than $U_\emptyset\cdot e_d$, suggesting the stochastic domination $U^H\cdot e_d\succeq_{\mathrm{sto}}U^\emptyset\cdot e_d$. The following lemma confirms that this intuition is indeed correct.

\begin{Lemma}
    \label{lemma_radon_sto_dom}
    Let $\mu$ and $\nu$ be two Borel probability measure on $\R$ with $\nu$ absolutely continuous with respect to $\mu$ and such that the Radon-Nikodym derivative
        \[D:\R\to\R_+,~h\mapsto\frac{\mathrm{d}\nu}{\mathrm{d}\mu}(h)\]
    is non-decreasing. Then $\nu\succeq_{\mathrm{sto}}\mu$ in the sense that for all $h\in\R$,
        \[\nu([h, \infty])\geq\mu([h,\infty]).\]
\end{Lemma}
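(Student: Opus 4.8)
The plan is to prove the standard coupling/rearrangement fact that an absolutely continuous measure with non-decreasing Radon--Nikodym derivative stochastically dominates its base measure. I would work directly with the survival functions rather than attempting to build an explicit monotone coupling, since the one-dimensional computation is clean. Fix $h\in\R$; we must show $\nu([h,\infty))\geq\mu([h,\infty))$.

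The key step is the following averaging observation. Write $\bar F_\nu(h)=\nu([h,\infty))=\int_{[h,\infty)}D\,\mathrm d\mu$ and $\bar F_\mu(h)=\mu([h,\infty))$. Because $D$ is non-decreasing, on the set $[h,\infty)$ we have $D(t)\geq D(h^-):=\inf_{t\geq h}D(t)$ (the left limit, or $\inf$ over the tail), while on $(-\infty,h)$ we have $D(t)\leq D(h^-)$. Consequently
    \begin{equation*}
        \nu([h,\infty))=\int_{[h,\infty)}D\,\mathrm d\mu\geq D(h^-)\,\mu([h,\infty)),
    \end{equation*}
and similarly, using that $\int_\R D\,\mathrm d\mu=1$,
    \begin{equation*}
        1-\nu([h,\infty))=\int_{(-\infty,h)}D\,\mathrm d\mu\leq D(h^-)\,\mu((-\infty,h))=D(h^-)\big(1-\mu([h,\infty))\big).
    \end{equation*}
If $D(h^-)\geq 1$, the first inequality already gives $\nu([h,\infty))\geq\mu([h,\infty))$. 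If $D(h^-)\leq 1$, the second inequality gives $1-\nu([h,\infty))\leq 1-\mu([h,\infty))$, i.e.\ the same conclusion. One of the two cases always applies, so we are done. (A small care point: the monotone function $D$ is defined $\mu$-a.e.; replacing it by its right-continuous version and letting $D(h^-)$ denote $\lim_{t\uparrow h}D(t)$ makes all the inequalities above hold $\mu$-a.e., which suffices for the integral estimates.)

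I do not expect a genuine obstacle here — this is a soft, self-contained lemma. The only mild subtlety is bookkeeping around the threshold value: making sure the splitting of $\R$ into $(-\infty,h)$ and $[h,\infty)$ is consistent, that $D$ is pinned down up to $\mu$-null sets, and that the dichotomy on whether $D(h^-)\lessgtr 1$ is exhaustive (it is, trivially). An alternative, equally short route is to pick any threshold $c$ with $\mu(\{D<c\})\leq$ and $\mu(\{D>c\})\geq$ arranged so that $D\geq 1$ above a point and $D\leq 1$ below it — but the two-inequality dichotomy above avoids even that and is the cleanest write-up.
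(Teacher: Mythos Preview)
Your proof is correct and follows essentially the same approach as the paper: split $\R$ at $h$, bound $D$ from below on the upper piece and from above on the lower piece by a single threshold value, and conclude via the dichotomy on whether that threshold is $\geq 1$ or $\leq 1$. The only cosmetic difference is that the paper uses $D(h)$ as the threshold while you use $D(h^-):=\inf_{t\geq h}D(t)$ (a slightly odd name for this quantity, but the inequalities you write with it are valid), and you add a remark about the $\mu$-a.e.\ definition of $D$ that the paper omits.
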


\begin{proof}
    Let $h\in\R$. Since $D$ is non-decreasing, we have
        \begin{equation}
            \label{eq_Dh1}
            \nu([h, \infty])=\int_{[h,\infty]} D(t)\mathrm{d}\mu(t)\geq D(h)\int_{[h,\infty]}\mathrm{d}\mu(t)=D(h)\mu([h,\infty]).
        \end{equation}
    Similarly, we have
        \begin{equation}
            \label{eq_Dh2}
            \nu(\R\setminus[h, \infty])\leq D(h)\mu(\R\setminus[h, \infty]).
        \end{equation}
    If $D(h)\geq1$, the sought inequality follows from \eqref{eq_Dh1}, otherwise \eqref{eq_Dh2} gives it. The proof is complete.
\end{proof}

We now dedicate the remainder of this section to proving that the proportion function $h\mapsto\alpha_h$ is indeed non-decreasing. To do this, we compare different sections by mapping them onto a common reference, namely $S_0$, through a combination of translation and scaling. For that purpose, for every $h\in[0,1)$, we define the radius of the section $S_h$ by
    \[\rho(h)\coloneqq\begin{cases}
        1&\text{if}\quad p=\infty\\
        (1-h^p)^{\frac{1}{p}}&\text{otherwise.}
    \end{cases}\]
We then define the map
    \[\Phi:B(0,1)\cap\mathbb H^+(0)\mapsto S_0,~x\mapsto\frac{x-(x\cdot e_d)e_d}{\rho(x\cdot e_d)}.\]
For each $h \in [0,1)$, the restriction of $\Phi$ to $S_h$ is a bijection onto $S_0$, and its inverse is given by 
    \[{\Phi_{\vert S_h}}^{-1}:S_0\ni x_0\mapsto \rho(h)x_0+he_d.\]
An illustration of the function $\Phi$ is presented in Figure \ref{fig_phi}. The following lemma uses the map $\Phi$ to compare the sections via set inclusion. The proof is technical and relies on convexity, triangle inequalities, and orthogonal decomposition. The condition that $d=2$ or $p\in\{1,2,\infty\}$ emerges naturally in the proof. Roughly speaking, this arises from the following specific singularities: when $p=\infty$, all sections have the same radius; when $p=2$, the Pythagorean Theorem applies; when $p=1$, the function $\|\cdot\|^p$ satisfies the triangular inequality; and finally, when $d=2$, the $\mathbb{R}^d$ is only one dimension higher than $e_d\R$.

\begin{Lemma}
    \label{lemma_section}
    Assume $d=2$ or $p\in\{1, 2, \infty\}$. Let $h,h'\in[0,1)$ be such that $h\geq h'$. Then,
        \[\Phi(S_h\cap H)\subset\Phi(S_{h'}\cap H).\]
\end{Lemma}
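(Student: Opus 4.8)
The goal is to show that if a point $x$ of the history set $H$ lies in the section $S_h$ at the higher level $h$, then the point $\Phi_{|S_{h'}}^{-1}(\Phi(x))$ — i.e. the point of $S_{h'}$ with the same rescaled horizontal coordinate — also lies in $H$. Since $H$ is a finite union of balls that avoid the origin and have centers in $\mathbb H^-(0)$, intersected with $\mathbb H^+(0)$, it suffices to work with a single such ball $B(c,r)$ with $c \cdot e_d \le 0$ and $0 \notin B(c,r)$: I will show that if $x \in S_h \cap B(c,r)$ then $y := \Phi_{|S_{h'}}^{-1}(\Phi(x)) \in B(c,r)$. Write $x = \rho(h)x_0 + h e_d$ and $y = \rho(h')x_0 + h' e_d$ with $x_0 \in S_0 \subset e_d^\perp$, and decompose $c = c_0 + (c\cdot e_d)e_d = c_0 - \gamma e_d$ with $\gamma \ge 0$. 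The plan is to compare $\|y - c\|^p$ (or $\|y-c\|$ when $p = \infty$) with $\|x - c\|^p \le r^p$, using the orthogonal splitting of the $\ell^p$ norm along $e_d\R \oplus e_d^\perp$, namely $\|v\|^p = \|\mathbf p(v)\|^p + |v\cdot e_d|^p$ for $p < \infty$.

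**Case analysis.** For $p = \infty$, all radii $\rho(h) = 1$, so $\mathbf p(y) = \mathbf p(x)$ and $y$ differs from $x$ only in the $e_d$-coordinate; since $h' < h$ and $c \cdot e_d \le 0 < h' < h$, the quantity $|y\cdot e_d - c\cdot e_d| = h' + \gamma$ is no larger than $|x \cdot e_d - c \cdot e_d| = h + \gamma$, so $\|y - c\|_\infty \le \max(\|\mathbf p(x) - c_0\|_\infty, h' + \gamma) \le \max(\|\mathbf p(x)-c_0\|_\infty, h+\gamma) = \|x-c\|_\infty \le r$, and we are done. For $p = 2$, use Pythagoras: $\|y-c\|_2^2 = \|\mathbf p(y) - c_0\|_2^2 + (h'+\gamma)^2$; here $\|\mathbf p(y)\|_2 = \rho(h')\|x_0\|_2 \le \rho(h)\|x_0\|_2 = \|\mathbf p(x)\|_2$ since $\rho$ is non-increasing, but I must be careful because shrinking $\mathbf p(x)$ toward the origin does not automatically decrease the distance to $c_0$ — this is where I expect to need that the center $c_0$ is not "too far", exploiting $0 \notin B(c,r)$ which forces $r \le \|c\|$; combining $\|\mathbf p(y)-c_0\|_2^2 \le \|\mathbf p(x) - c_0\|_2^2 + (\rho(h')^2 - \rho(h)^2)\|x_0\|_2^2 + 2(\rho(h)-\rho(h'))\langle x_0, c_0\rangle$ with the trade-off in the $e_d$-coordinate, $(h'+\gamma)^2 - (h+\gamma)^2 = (h'-h)(h'+h+2\gamma)$, and the relation $\rho(h)^2 = 1-h^2$ should make the negative terms dominate. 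For $p = 1$, the key is that $t \mapsto t^1$ satisfies the triangle inequality trivially, so $\|\cdot\|_1^1 = \|\cdot\|_1$ is itself a norm and one can argue more directly with $\|y - c\|_1 = \|\mathbf p(y) - c_0\|_1 + h' + \gamma$ and monotonicity of $\rho$ and of the affine interpolation. For general $d = 2$: here $e_d^\perp$ is one-dimensional, $x_0$ is a scalar, $S_0 = [-1,1]$, and the extra rigidity (only one transverse coordinate, so $\mathbf p(y)$ lies on the segment between $0$ and $\mathbf p(x)$) lets the same $\ell^p$ computation go through for all $p \in [1,\infty]$ without the higher-dimensional obstruction.

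**The main obstacle.** The delicate point, and the reason the hypothesis $d=2$ or $p\in\{1,2,\infty\}$ appears, is controlling $\|\mathbf p(y) - c_0\|_p$ when $\mathbf p(y) = \frac{\rho(h')}{\rho(h)}\mathbf p(x)$ is a contraction of $\mathbf p(x)$ toward the origin: in general dimension this need not decrease the distance to $c_0$, and the loss must be absorbed by the simultaneous gain in the $e_d$-coordinate (where $h'+\gamma \le h+\gamma$) together with the constraints $0\notin B(c,r)$ and $c\cdot e_d \le 0$. I will organize the estimate so that the two effects are compared after raising to the $p$-th power and splitting orthogonally; concavity/convexity of $t\mapsto t^p$ and the explicit form $\rho(h)^p = 1 - h^p$ are what make the inequality tip the right way in the three special values of $p$, while in dimension two the one-dimensionality of the transverse space removes the difficulty for every $p$. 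I would present the unified computation first and then indicate, in each of the four cases, which elementary inequality (Pythagoras for $p=2$, subadditivity of $t\mapsto t$ for $p=1$, equality of radii for $p=\infty$, one-dimensionality for $d=2$) closes the gap, flagging explicitly — as the paper's surrounding discussion promises — the point at which the argument would break for, say, $d = 3$ and $p = 3$.
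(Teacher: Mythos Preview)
Your outline follows exactly the paper's approach: reduce to a single ball $B(c,r)$ of the history set, split the $\ell^p$ distance along $e_d\R \oplus e_d^\perp$, and treat $p=\infty$, $p=1$, and $\{p=2$ or $d=2\}$ separately using the relation $\rho(h)^p = 1-h^p$. The paper also handles $p=2$ and $d=2$ in one stroke by writing $x_0 = \alpha c_0 + x_0^\perp$ and reducing the transverse comparison to a one-variable inequality along $c_0$.

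However, you have the direction of the horizontal scaling backwards, and this error runs through the whole sketch. You write ``$\rho(h')\|x_0\|_2 \le \rho(h)\|x_0\|_2$ since $\rho$ is non-increasing'' and call $\mathbf p(y)=\tfrac{\rho(h')}{\rho(h)}\mathbf p(x)$ a ``contraction toward the origin''; but $h'\le h$ together with $\rho$ non-increasing gives $\rho(h')\ge\rho(h)$, so $\mathbf p(y)$ is an \emph{expansion} of $\mathbf p(x)$ away from the origin, not a contraction. Likewise your $d=2$ remark that ``$\mathbf p(y)$ lies on the segment between $0$ and $\mathbf p(x)$'' is false: $\mathbf p(y)$ lies \emph{beyond} $\mathbf p(x)$ on that ray. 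The correct geometric picture is that, in passing from $x$ to $y$, the $e_d$-coordinate decreases (helpful, since $c\cdot e_d\le 0$) while the transverse coordinate moves \emph{outward} (potentially harmful); the content of the lemma is that the first effect dominates. This is a genuine trade-off inequality, not the ``monotone in both directions'' situation your sketch describes. Your displayed identity for $\|\mathbf p(y)-c_0\|_2^2-\|\mathbf p(x)-c_0\|_2^2$ is in fact correct (it is even an equality), so once the sign is fixed your $p=2$ plan can be salvaged; but the intuition as written would, if followed, lead you to argue for the opposite inclusion.
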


\begin{figure}[h]
    \centering
    \includegraphics[width=0.75\linewidth]{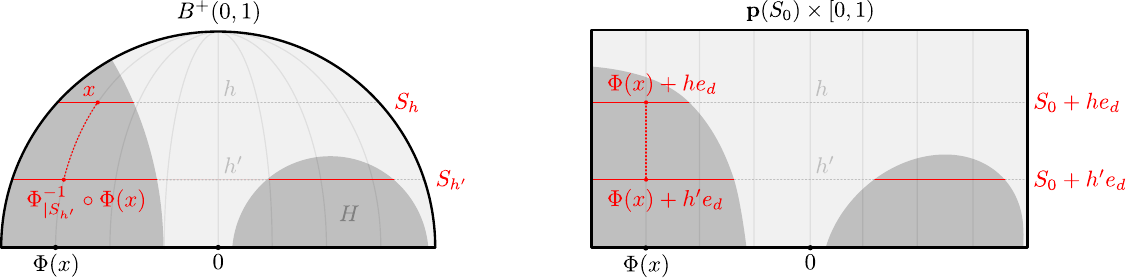}
    \caption{Illustration of how the function $\Phi$ allows to transform $B^+(0,1)$ into $\mathbf p(S_0)\times[0,1)$.}
    \label{fig_phi}
\end{figure}

\begin{proof}
    Let us start with the case $p=\infty$. Let $x_0\in\Phi(S_h\cap H)$. Since $\rho(h)=\rho(h')=1$, we have that $x_0+he_d\in H$ and we need to show that $x_0+h'e_d\in H$. Since $x_0+he_d\in H$, there exist $c\in\mathbb H^-(0)$ and $r>0$ with $\mathbb H^+(0)\cap B(c,r)\subset H$ such that $x_0+he_d\in B(c, r)$. Since $h\geq h'$ and $c\cdot e_d\leq 0$ from $c\in\mathbb H^-(0)$, we have
        \begin{equation*}
            \begin{split}
                \|x_0+h'e_d-c\|&=\max\{\|\mathbf p(x_0-c)\|, h'-c\cdot e_d\}\\
                &\leq\max\{\|\mathbf p(x_0-c)\|, h-c\cdot e_d\}\\
                &=\|x_0+he_d-c\|<r,
            \end{split}
        \end{equation*}
    implying that $x_0+h'e_d\in \mathbb H^+(0)\cap B(c,r)\subset H$. This concludes the proof when $p=\infty$.

    Now, assume $p\neq\infty$. Let $x_0\in\Phi(S_h\cap H)$. Again, we have $x\coloneqq\rho(h) x_0+he_d\in H$ and we need to show that $x'\coloneqq\rho(h')r+h'e_d\in H$. Also, since $x\in H$, there exist $c\in\mathbb H^-(0)$ and $r>0$ such that $x\in \mathbb H^+(0)\cap B(c, r)\subset H$ and it is enough to check that $\|x-c\|^p-\|x'-c\|^p\geq 0$. First, since $c\in\mathbb H^-(0)$, we can write $c=c_0-\beta e_d$ with $c_0\cdot e_d=0$ and $\beta\in\R_+$. Then we have
    \begin{equation*}
        \begin{split}
            &\|x-c\|^p=(h+\beta)^p+\|\rho(h)x_0-c_0\|^p\\
           & \text{and}\quad\|x'-c\|^p=(h'+\beta)^p+\|\rho(h')x_0-c_0\|^p
        \end{split}
    \end{equation*}
    Subtracting, we get
    \begin{equation}
        \label{eq_diff_distances_p}
        \begin{split}
            \|x-c\|^p-\|x'-c\|^p&=(h+\beta)^p-(h'+\beta)^p+\|\rho(h)x_0-c_0\|^p-\|\rho(h')x_0-c_0\|^p\\
            &\geq h^p-{h'}^p+\|\rho(h)x_0-c_0\|^p-\|\rho(h')x_0-c_0\|^p,
        \end{split}
    \end{equation}
    where the inequality comes from convexity of $y\mapsto y^p$ since $h\geq h'$ and $\beta \geq0$. Let us show that we have indeed $\|x-c\|^p-\|x'-c\|^p\geq 0$ when $d=2$ or $p\in\{1, 2\}$ using \eqref{eq_diff_distances_p}. If $p=1$, we can write
    \begin{equation*}
        \begin{split}
            \|\rho(h)x_0-c_0\|^p-\|\rho(h')x_0-c_0\|^p&=\|\rho(h)x_0-c_0\|-\|\rho(h')x_0-c_0\|\\
            &\geq-\|\rho(h)x_0-\rho(h')x_0\|\\
            &=-|\rho(h)-\rho(h')|\|x_0\|\\
            &=-(h-h')\|x_0\|\\
            &\geq -(h-h'),
        \end{split}
    \end{equation*}
    where we used the triangle inequality in the second line, and the last inequality uses $\|x_0\|\leq 1$ from $x_0\in S_0$. Injecting in \eqref{eq_diff_distances_p} we get $\|x-c\|^p-\|x'-c\|^p\geq0$ and the case is close. It remains to check the case $p=2$ and the case $d=2$. We treat them simultaneously. Assume either $p=2$ or $d=2$. We can write
        \[x_0=\alpha c_0+x_0^{\perp}\]
    where $\alpha\in\R$ and $x_0^\perp\cdot c_0=0$. Note that since $x\in B(c, r)$ and $0\notin B(c, r)\subset H$, we have
        \begin{equation*}
            \begin{split}
                \beta^p+\|c_0\|^p=\|c\|^p&\geq r^p\\
                &>\|x-c\|^p\\
                &=(\beta+h)^p+|\alpha\rho(h)-1|^p\|c_0\|^p\\
                &\geq\beta^p+|\alpha\rho(h)-1|^p\|c_0\|^p,
            \end{split}
        \end{equation*}
    leading to $\|c_0\|>|\alpha\rho(h)-1|\|c_0\|$. This implies that $c_0\neq 0$, and $\alpha\geq 0$ as $\rho(h)\geq 0$. Now, observe that we always have
        \begin{equation}
            \label{eq_norm_perp1}
            \|\rho(h)x_0-c_0\|^p=\|\rho(h)\alpha c_0-c_0\|^p+\|\rho(h)x_0^\perp\|^p.
        \end{equation}
    Indeed, if $p=2$ the equality follow from the Pythagorean Theorem, and if $d=2$ then $x_0\in\{z\in\R^d:z\cdot e_d=0\}=c_0\R$ as the two spaces have dimension $d-1=1$, thus $x_0^\perp=0$ and the inequality is verified. Similarly,
        \begin{equation}
            \label{eq_norm_perp2}
            \|\rho(h')x_0-c_0\|^p=\|\rho(h')\alpha c_0-c_0\|^p+\|\rho(h')x_0^\perp\|^p.
        \end{equation}
    Subtracting \eqref{eq_norm_perp1} and \eqref{eq_norm_perp2}, we get
        \begin{equation}
            \label{eq_alpha_diff}
            \|\rho(h)x_0-c_0\|^p-\|\rho(h')x_0-c_0\|^p=(|\alpha\rho(h)-1|^p-|\alpha\rho(h')-1|^p)\|c_0\|^p-(h^p-h'^p)\|x_0^\perp\|^p.
        \end{equation}
    As $\rho(h)\leq\rho(h')$ and $\alpha\geq 0$, the convexity of $z\mapsto|z|^p$ implies that
        \[|\alpha\rho(h')-1|^p-|\alpha\rho(h)-1|^p\leq|\alpha\rho(h')|^p-|\alpha\rho(h)|^p=\alpha^p(h^p-{h'^p}).\]
    Injecting in \eqref{eq_alpha_diff}, we deduce
        \begin{equation*}
            \begin{split}
                \|\rho(h)x_0-c_0\|^p-\|\rho(h')x_0-c_0\|^p&\geq-(h^p-h'^p)(\alpha^p\|c_0\|^p+\|x_0^\perp\|^p)\\
                &=-(h^p-h'^p)\|x_0\|^p\\
                &\geq-(h^p-h'^p),
            \end{split}
        \end{equation*}
    where the second equality is obtained from $x_0=\alpha c_0+x_0^\perp$ using the Pythagorean Theorem when $p=2$ and from the fact that $x_0^\perp=0$ when $d=2$. Injecting in \eqref{eq_diff_distances_p} we get 
        \[\|x-c\|^p-\|x'-c\|^p\geq 0,\]
    which concludes the proof.
\end{proof}

\begin{Remark}
    Lemma \ref{lemma_section} does not hold in complete generality if we remove the assumption that $d=2$ or $p\in\{1,2,\infty\}$. We provide a counterexample in the Appendix, Section \ref{section_counter_example}, with a specific $H\in\mathcal H_0$ and $p=d=3$. However, it remains unclear whether this condition is tight. In particular, we have not found any counterexamples when $d\geq3$ and $p\in(1, 2)$.
\end{Remark}

We now prove the monotonicity of $h\mapsto\alpha_h$.

\begin{Lemma}
    \label{lemma_mono_alpha}
    The application $[0,1)\ni h\mapsto\alpha_h$ is non-decreasing.
\end{Lemma}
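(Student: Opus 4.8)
The plan is to deduce the monotonicity of $h\mapsto\alpha_h$ directly from Lemma~\ref{lemma_section} by translating the set inclusion of covered sections into a comparison of uncovered proportions. Recall that $\alpha_h=|\mathbf p(S_h\setminus H)|/|\mathbf p(S_h)|$, so the quantity to control is the fraction of the section $S_h$ that escapes the history set $H$. The first step is to rewrite $\alpha_h$ in terms of the reference section $S_0$ via the map $\Phi$. Since $\Phi_{|S_h}$ is a bijection onto $S_0$ obtained by a translation (removing the $e_d$-component) followed by a homothety of ratio $\rho(h)^{-1}$, the projected set $\mathbf p(S_h\setminus H)$ is the image of $\Phi(S_h\setminus H)$ under the dilation $x_0\mapsto\rho(h)x_0$, up to the obvious identification of $S_0$ with a subset of $\R^{d-1}$ via $\mathbf p$. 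Hence $|\mathbf p(S_h\setminus H)|=\rho(h)^{d-1}|\Phi(S_h\setminus H)|$ and likewise $|\mathbf p(S_h)|=\rho(h)^{d-1}|\Phi(S_h)|=\rho(h)^{d-1}|\mathbf p(S_0)|$, where the last equality holds because $\Phi$ maps $S_h$ onto all of $S_0$. Therefore the scaling factors cancel and
    \[\alpha_h=\frac{|\Phi(S_h\setminus H)|}{|\mathbf p(S_0)|}=\frac{|\mathbf p(S_0)|-|\Phi(S_h\cap H)|}{|\mathbf p(S_0)|}=1-\frac{|\Phi(S_h\cap H)|}{|\mathbf p(S_0)|},\]
using that $\Phi(S_h)=S_0$ splits as the disjoint union $\Phi(S_h\cap H)\sqcup\Phi(S_h\setminus H)$ (disjointness is preserved because $\Phi_{|S_h}$ is injective).

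The second and final step is then immediate: for $h\geq h'$, Lemma~\ref{lemma_section} gives $\Phi(S_h\cap H)\subset\Phi(S_{h'}\cap H)$, so by monotonicity of Lebesgue measure $|\Phi(S_h\cap H)|\leq|\Phi(S_{h'}\cap H)|$, and plugging this into the displayed formula yields $\alpha_h\geq\alpha_{h'}$. This establishes that $h\mapsto\alpha_h$ is non-decreasing, as claimed.

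There is essentially no serious obstacle here, since all the geometric difficulty has already been absorbed into Lemma~\ref{lemma_section} (which is where the hypothesis $d=2$ or $p\in\{1,2,\infty\}$ is genuinely used). The only points requiring a modicum of care are bookkeeping ones: first, one must correctly track how the change of variables $\Phi$ scales $(d-1)$-dimensional Lebesgue measure — the Jacobian of the linear map $x_0\mapsto\rho(h)x_0+he_d$ restricted to the hyperplane $\{x\cdot e_d=h\}$ is $\rho(h)^{d-1}$ — so that the factors indeed cancel in the ratio; and second, one must observe that $\Phi$ maps $S_h$ bijectively onto $S_0$, which is exactly the content of the remark preceding the lemma, so that $|\Phi(S_h)|=|\mathbf p(S_0)|$ independently of $h$. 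With these two observations the proof is a two-line consequence of Lemma~\ref{lemma_section}, and this monotonicity, fed into \eqref{eq_radon_nikodym} and Lemma~\ref{lemma_radon_sto_dom}, will in turn give Proposition~\ref{prop_sto_dom}.
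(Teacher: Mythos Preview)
Your proof is correct and follows essentially the same approach as the paper: both rewrite $\alpha_h$ via the scaling identity $\mathbf p_{|S_h}=\rho(h)\,\mathbf p\circ\Phi_{|S_h}$ so that the $\rho(h)^{d-1}$ factors cancel and $\alpha_h$ becomes a ratio of measures on the fixed reference section $S_0$, then apply Lemma~\ref{lemma_section} to conclude. The only cosmetic difference is that you pass to the complement formula $\alpha_h=1-|\Phi(S_h\cap H)|/|\mathbf p(S_0)|$ and compare covered measures, whereas the paper works directly with the uncovered sets $\Phi(S_h\setminus H)\supset\Phi(S_{h'}\setminus H)$; these are equivalent.
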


\begin{proof}
    First, observe that for all $h\in[0,1)$ we have
        \[\rho(h)\mathbf p\circ\Phi_{\vert S_h}=\mathbf p_{\vert S_h}.\]
    Indeed, if $x\in S_h$ then $\mathbf p(x)=\mathbf p(x-he_d)=\mathbf p(\rho(h)\Phi(x))=\rho(h)\mathbf p(\Phi(x))$. Therefore, we can write
        \[\alpha_h=\frac{|\rho(h)\mathbf p\circ\Phi(S_h\setminus H)|}{|\rho(h) \mathbf p\circ\Phi(S_h)|}=\frac{|\mathbf p\circ\Phi(S_h\setminus H)|}{|\mathbf p(S_0)|},\]
    where we used that $|rA|=r^{d-1}|A|$ for any $r>0$ and Borel set $A\subset\R^{d-1}$ as well as $\Phi(S_h)=S_0$ to get the last equality. Now, since on the right-hand side of the last display the denominator does not depend on $h$, it is enough to check that $|\mathbf p\circ\Phi(S_h\setminus H)|$ is non-decreasing in $h$. Let $h, h'\in[0,1)$ be such that $h\geq h'$. From Lemma \ref{lemma_section} we have $\Phi(S_h\cap H)\subset\Phi(S_{h'}\cap H)$, and thus $\Phi(S_h\setminus H)\supset\Phi(S_{h'}\setminus H)$. Applying $\mathbf p$ we get $\mathbf p\circ\Phi(S_{h'}\cap H)\subset \mathbf p\circ\Phi(S_h\setminus H)$ and finally taking the Lebesgue measure, $|\mathbf p\circ\Phi(S_{h'}\setminus H)|\leq|\mathbf p\circ\Phi(S_h\setminus H)|$, giving $\alpha_{h'}\leq\alpha_h$. The proof is complete.
\end{proof}

Finally, as announced, summing up everything, we get the intermediate stochastic domination.

\begin{proof}[Proof of Proposition \ref{prop_sto_dom}]
    The result is obtained by combining Lemma \ref{lemma_radon_sto_dom} with \eqref{eq_radon_nikodym} and Lemma \ref{lemma_mono_alpha}.
\end{proof}

\begin{Remark}
    \label{remark_fail}
    In Appendix, Section \ref{section_counter_example}, we present a specific example of $H \in \mathcal H_0$ such that, for $d=3$ and $p=4$, numerical computations suggest that the function $h\mapsto\alpha_h$ is non-increasing and non-constant. This may imply that $U^H \cdot e_d$ and $U^\emptyset \cdot e_d$ are not identically distributed, yet the reverse stochastic domination to that of Proposition \ref{prop_sto_dom}, namely $U^H \cdot e_d \preceq_{\mathrm{sto}}U^\emptyset \cdot e_d$, holds. This provides strong evidence that, perhaps surprisingly, Proposition \ref{prop_sto_dom} may fail when the assumptions on $p$ and $d$ are completely removed.
\end{Remark}

\subsubsection{Proof of Theorem \ref{thm_sto_dom_poisson}}

\label{section_proof_sto_dom}

In this subsection, we derive Theorem \ref{thm_sto_dom_poisson} from Proposition \ref{prop_sto_dom}. For any Borel set $A\subset\R^d$ with $|A|<\infty$, denote by $X_2^A$ second-closest point to $0$ in $\mathbb H^+(0)\cap \mathcal N\setminus A$. Note that since $X^A_2$ and $X^\emptyset_2$ are defined using the same Poisson point process, we always have
\begin{equation}
    \label{eq_ineq_r2}
    \|X_2^A\|\geq\|X_2^\emptyset\|.
\end{equation}
The following lemma establishes the key connection between the closest Poisson points and uniform points by conditioning on $\|X_2^A\|$. As the result can be obtained using straightforward density estimations, we omit its proof.

\begin{Lemma}
    \label{lemma_uniformisation}
    Let $A\subset\R^d$ be a Borel set with $|A|<\infty$. Then, conditionally on $\|X_2^A\|$,
        \[\frac{X^A}{\|X_2^A\|}\quad\text{is uniform in}\quad B^+(0,1)\setminus \frac{A}{\|X_2^A\|},\quad \text{i.e.\ distributed as}\quad U^{A/\|X_2^A\|}.\]
\end{Lemma}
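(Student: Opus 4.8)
The plan is to compute the joint law of the nearest and second-nearest points of the underlying Poisson process in $\mathbb H^+(0)\setminus A$, condition on the second one, and then rescale. Throughout, write $\mathcal N$ for the Poisson point process used to define $X^A$ and $X_2^A$, and for $r>0$ set
\[
    V(r)\coloneqq\bigl|B^+(0,r)\setminus A\bigr|.
\]
Since $|B^+(0,r)|=r^d\,|B^+(0,1)|$ and $|A|<\infty$, the function $V$ is continuous and non-decreasing, with $V(0)=0$ and $V(r)\to\infty$ as $r\to\infty$. In particular $\|X^A\|<\|X_2^A\|<\infty$ almost surely and $\|X_2^A\|$ is non-atomic, so the conditioning below makes sense.

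First I would establish the joint density of $(X^A,X_2^A)$. Informally, the event that $\mathcal N$ has a point in an infinitesimal neighbourhood $\mathrm dx$ of $x$, a point in $\mathrm dy$ of $y$, and no other point inside $B^+(0,\|y\|)\setminus A$ has probability $e^{-V(\|y\|)}\,\mathrm dx\,\mathrm dy$, and on this event one has $x=X^A$ and $y=X_2^A$ precisely when $x,y\in\mathbb H^+(0)\setminus A$ and $\|x\|<\|y\|$. This yields that $(X^A,X_2^A)$ has density
\[
    q(x,y)=e^{-V(\|y\|)}\,\mathbf 1\bigl[\,x,y\in\mathbb H^+(0)\setminus A,\ \|x\|<\|y\|\,\bigr]
\]
with respect to Lebesgue measure on $\R^d\times\R^d$. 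The heuristic is made rigorous by the Slivnyak--Mecke formula, or equivalently by a direct limiting argument over shrinking balls around $x$ and $y$ together with the independence of $\mathcal N$ on disjoint regions and the fact that, conditionally on two prescribed points, the remaining configuration is again a unit Poisson process. As a consistency check, integrating $q$ in $x$ gives $V(\|y\|)\,e^{-V(\|y\|)}$, and integrating the result in $y$, via the substitution $u=V(\|y\|)$ along radii, yields $\int_0^\infty u\,e^{-u}\,\mathrm du=1$.

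Then I would condition and rescale. Dividing $q(x,y)$ by its $x$-marginal $V(\|y\|)\,e^{-V(\|y\|)}$, the conditional density of $X^A$ given $X_2^A=y$ equals $V(\|y\|)^{-1}\,\mathbf 1[x\in B^+(0,\|y\|)\setminus A]$; that is, $X^A$ is uniformly distributed on $B^+(0,\|y\|)\setminus A$. Crucially this depends on $y$ only through $\|y\|$, so that conditionally on $\|X_2^A\|=r_2$ the point $X^A$ is uniform on $B^+(0,r_2)\setminus A$. Pushing this law forward under the dilation $z\mapsto z/r_2$, and using $B^+(0,r_2)/r_2=B^+(0,1)$ together with $(B^+(0,r_2)\setminus A)/r_2=B^+(0,1)\setminus(A/r_2)$, we conclude that, conditionally on $\|X_2^A\|=r_2$, the normalised point $X^A/\|X_2^A\|$ is uniform on $B^+(0,1)\setminus(A/r_2)$, i.e.\ distributed as $U^{A/\|X_2^A\|}$.

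The only genuinely delicate points are bookkeeping: justifying the joint density formula via Palm calculus (or the shrinking-ball limit), checking that $V$ is continuous with $V(r)\to\infty$ so that the two nearest points exist, are distinct, and $\|X_2^A\|$ is non-atomic, and discarding the measure-zero events caused by ties at equal $\ell^p$ distance or by the distinction between open and closed $\ell^p$ balls. None of these presents a real obstacle, consistent with the paper's remark that the lemma follows from straightforward density estimations.
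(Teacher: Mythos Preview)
Your argument is correct and is precisely the ``straightforward density estimation'' that the paper alludes to (the paper omits the proof entirely). The joint density of $(X^A,X_2^A)$, the conditioning step, and the rescaling are all handled properly; the only cosmetic slip is calling $V(\|y\|)e^{-V(\|y\|)}$ the ``$x$-marginal'' when you mean the marginal obtained by integrating out $x$ (i.e.\ the density of $X_2^A$), and omitting the indicator $\mathbf 1[y\in\mathbb H^+(0)\setminus A]$ there, neither of which affects the argument.
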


Using this connection, we derive Theorem \ref{thm_sto_dom_poisson}.

\begin{proof}[Proof of Theorem \ref{thm_sto_dom_poisson}]
    For any Borel set $A\subset\R^d$ such that $|A|<\infty$ we denote $f^A:h\mapsto\P(U^A\cdot e_d\geq h)$. Recall that from Proposition \ref{prop_sto_dom} we have $f^H\geq f^\emptyset$ for any re-centered valid history set $H\in\mathcal H_0$. Let $h\in[0,1)$, the result is obtained writing
    \begin{equation*}
        \begin{split}
            \P(X^H\geq h)&=\E\left[\P\left(\frac{X^H}{\|X_2^H\|}\geq \frac{h}{\|X_2^H\|}~\middle\vert~ \|X_2^H\|\right)\right]\\
            &=\E\left[f^{H/\|X_2^H\|}\left(\frac{h}{\|X_2^H\|}\right)\right]\\
            &\geq\E\left[f^{\emptyset}\left(\frac{h}{\|X_2^H\|}\right)\right]\\
            &\geq\E\left[f^{\emptyset}\left(\frac{h}{\|X_2^\emptyset\|}\right)\right]\\
            &=\E\left[\P\left(\frac{X^\emptyset}{\|X_2^\emptyset\|}\geq\frac{h}{\|X_2^\emptyset\|}~\middle\vert~\|X_2^\emptyset\|\right)\right]=\P(X^\emptyset\geq h).
        \end{split}
    \end{equation*}
    The second and second-to-last equalities follow from Lemma \ref{lemma_uniformisation} and the definition of $f^\cdot$. The first inequality comes from Proposition \ref{prop_sto_dom} as $H/\|X_2^H\|\in\mathcal H_0$ by scaling of $H\in\mathcal H_0$. The second inequality follows from the fact that $f^\emptyset$ is non-increasing and \eqref{eq_ineq_r2}. The proof is complete.
\end{proof}

\subsection{Lower control for the growth of $(m_n)_{n\geq 0}$}

\label{subsection_lower_control}

From now on, we will work with the assumption $p=2$ or $d\in\{1, 2,\infty\}$.\\

In this subsection, we use the stochastic domination of Theorem \ref{thm_sto_dom_poisson} to provide an exponential moment lower bound on the growth of the bottom of the history sets, $(m_n)_{n\geq0}$. Recall that as a consequence of Proposition \ref{prop_resampling}, conditionally on $\mathcal F_n$, the displacement $y_{n+1}-x_n$ is distributed as $X^{H_n-x_n}$. Furthermore, $\Psi(x_n)$ is the closest point to $x_n$ in
    \[\{y_{n+1}\}\cup\{g_n(\mathbf u_i):i\in\range{1}{k}\}\setminus\{x_n\}.\]
Hence, when $k\geq 2$, due to the presence of $k-1$ other trajectories, $\Psi(x_n)$ is not necessarily equal to $y_{n+1}$. This complicates the analysis conditionally on $\mathcal F_n$: for instance, if the other trajectories are very close to $x_n$, coalescence will very likely occur and the increment $m_{n+1}-m_n$ can be very small. To overcome this issue, we consider the progress over $k$ consecutive steps of the exploration process, rather than just one. The following lemma ensures that, conditionally on $\mathcal F_n$, the increment $m_{n+k}-m_n$ stochastically dominates a fixed law.

\begin{Lemma}
    \label{lemma_progress}
    Let $(X^\emptyset_i)_{1\leq i\leq k}$ denote i.i.d.\ copies of $X^\emptyset$. We set
        \[\Gamma_k\coloneqq\min_{1\leq i\leq k}(X_i^\emptyset\cdot e_d).\]
    Then, for all $n\geq 0$, the law of $m_{n+k}-m_n$ conditional on $\mathcal F_n$ stochastically dominates the law of $\Gamma_k$, that is, for all $h\in\R_+$,
        \[\P\left(m_{n+k}-m_n\geq h~\vert~\mathcal F_n\right)\geq\P(\Gamma_k\geq h).\]
\end{Lemma}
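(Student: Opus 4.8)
The plan is to track the $k$ exploration steps that follow level $m_n$ and show that each contributes an independent-enough vertical progress dominated by $X^\emptyset \cdot e_d$. Fix $n\geq 0$. Between step $n$ and step $n+k$, exactly $k$ edges of the joint exploration are revealed, one per step. At each step $j\in\range{n}{n+k-1}$ the moving vertex $x_j$ has $x_j\cdot e_d = m_j$, and the new position of that trajectory is $\Psi(x_j)$, so its $e_d$-coordinate rises by $(\Psi(x_j)-x_j)\cdot e_d \geq (y_{j+1}-x_j)\cdot e_d$, using that $\Psi(x_j)$ is the closest point to $x_j$ among $\{y_{j+1}\}\cup\{g_j(\mathbf u_i):i\}\setminus\{x_j\}$ and every candidate other than $y_{j+1}$ (the other current trajectory positions) already lies in $\mathbb H^+(m_j)$, hence also has strictly larger $e_d$-coordinate than $x_j$. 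Actually the cleaner bound is simply $(\Psi(x_j)-x_j)\cdot e_d>0$ always, but we want the stronger stochastic statement, so I would instead argue directly on the minimum.

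The key observation is that $m_{n+k}-m_n$ is controlled from below because, after $k$ steps, \emph{every} one of the $k$ trajectories that was sitting at level $m_n$ (or, more carefully, the trajectory realizing $m_n$ plus the subsequent moving vertices) has been advanced at least once, or has coalesced into a trajectory that has been advanced. More precisely, I would show $m_{n+k}\geq \min_{j\in\range{n}{n+k-1}} (\Psi(x_j)\cdot e_d)$: indeed $m_{n+k}$ is the minimum $e_d$-coordinate among the $k$ current positions at step $n+k$, and each such position is either some $g_n(\mathbf u_i)$ that was never the moving vertex (hence has $e_d$-coordinate $\geq m_n + (\text{something})$... no), so the honest route is: each current position at step $n+k$ is either a point $\Psi(x_j)$ for some $j\in\range{n}{n+k-1}$, or equals $g_n(\mathbf u_i)$ for an $i$ whose trajectory never moved in steps $n,\dots,n+k-1$. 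In the latter case, since that trajectory never moved, at every step $j\in\range{n}{n+k-1}$ we had $g_j(\mathbf u_i)\cdot e_d > m_j$ strictly (it was never the argmin), and in particular $g_n(\mathbf u_i)\cdot e_d = g_{n+k}(\mathbf u_i)\cdot e_d$; but since only $k-1$ other trajectories exist, a pigeonhole argument over $k$ steps forces at least one step where $\mathbf u_i$'s trajectory is the one that moves — contradiction — unless it coalesced. I would instead phrase it via coalescence: run the exploration; either trajectory $i$ moves at some step in $\range{n}{n+k-1}$, or by step $n+k$ all trajectories have the same minimal-height structure; in all cases one checks $m_{n+k}\geq m_n + \min_{j}(y_{j+1}-x_j)\cdot e_d$ where the min is over the (at most $k$) steps that actually occurred with moving vertex at the running minimum.

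Granting the deterministic inequality $m_{n+k}-m_n\geq \min_{j\in\range{n}{n+k-1}}\big((y_{j+1}-x_j)\cdot e_d\big)$, the probabilistic part is the telescoping application of Theorem~\ref{thm_sto_dom_poisson}. I would condition successively: for the last step $j=n+k-1$, Theorem~\ref{thm_sto_dom_poisson} gives that conditionally on $\mathcal F_{n+k-1}$, the quantity $(y_{n+k}-x_{n+k-1})\cdot e_d$ stochastically dominates $X^\emptyset\cdot e_d$; peeling off one conditioning at a time and using that a minimum of (conditionally) independent-dominated terms dominates the minimum of $k$ i.i.d.\ copies, one concludes $\min_j (y_{j+1}-x_j)\cdot e_d \succeq_{\mathrm{sto}} \Gamma_k$ conditionally on $\mathcal F_n$. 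Combined with the deterministic bound, this yields $\P(m_{n+k}-m_n\geq h\mid \mathcal F_n)\geq \P(\Gamma_k\geq h)$ for all $h\in\R_+$.

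\textbf{Main obstacle.} The delicate point is the deterministic combinatorial inequality relating $m_{n+k}-m_n$ to the $k$ increments $(y_{j+1}-x_j)\cdot e_d$: one must carefully handle trajectories that coalesce or that repeatedly fail to be the moving vertex, and confirm that over exactly $k$ steps every trajectory sitting near the bottom has been pushed up by at least one genuine exploration increment. Bookkeeping the identity of the moving vertex and the effect of coalescences (where two of the $k$ trajectories merge, reducing the count of distinct positions) is the part that needs care; everything after that is a routine stochastic-domination peeling argument built on Theorem~\ref{thm_sto_dom_poisson}.
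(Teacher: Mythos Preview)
Your plan matches the paper's: first establish the deterministic inequality $m_{n+k}-m_n\geq \min_{n\leq j<n+k}\big((y_{j+1}-x_j)\cdot e_d\big)$, then peel off the conditioning step by step via Theorem~\ref{thm_sto_dom_poisson} to get $\P(\,\cdot\,\geq h\mid\mathcal F_n)\geq \P(X^\emptyset\cdot e_d\geq h)^k=\P(\Gamma_k\geq h)$. One warning: your early claim $(\Psi(x_j)-x_j)\cdot e_d\geq(y_{j+1}-x_j)\cdot e_d$ is false in general---when $\Psi(x_j)$ is another trajectory's current position it may well sit below $y_{j+1}$---but you correctly abandon it and aim for the right inequality on $m_{n+k}$ instead.

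For the deterministic step you flag as the main obstacle, the paper avoids pigeonhole/coalescence bookkeeping by tracking sets of positions. Since at each step $\{g_{j+1}(\mathbf u_i)\}_i\subset\{y_{j+1}\}\cup\{g_j(\mathbf u_i)\}_i\setminus\{x_j\}$, induction gives
\[
\{g_{n+k}(\mathbf u_i)\}_i\ \subset\ \{y_{\ell+1}\}_{n\leq\ell<n+k}\ \cup\ \{g_n(\mathbf u_i)\}_i\setminus\{x_\ell\}_{n\leq\ell<n+k}.
\]
Now split into two cases. If some $x_{\ell_1}=y_{\ell_2+1}$ for $\ell_1,\ell_2\in\range{n}{n+k-1}$, then $m_{n+k}\geq m_{\ell_1}=x_{\ell_1}\cdot e_d=y_{\ell_2+1}\cdot e_d$. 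Otherwise $\{x_\ell\}_{n\leq\ell<n+k}\subset\{g_n(\mathbf u_i)\}_i$, and since the $x_\ell$ are distinct, cardinality forces equality, whence the display gives $\{g_{n+k}(\mathbf u_i)\}_i\subset\{y_{\ell+1}\}_{n\leq\ell<n+k}$. Either way $m_{n+k}\geq\min_\ell (y_{\ell+1}\cdot e_d)$, and since $y_{\ell+1}\cdot e_d-m_n\geq (y_{\ell+1}-x_\ell)\cdot e_d$ for each $\ell$, the inequality follows.
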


\begin{proof}
    Let $n\geq 0$. To begin, let us show that we have almost surely
        \begin{equation}
            \label{eq_mnk}
            m_{n+k}\geq\min_{n\leq i< n+k}(y_{i+1}\cdot e_d).
        \end{equation}
    The intuition is that making $k$ steps, coalescence can occur on at most $k-1$ of them, hence at least one step will connect to some $y_{j+1}$ with $n\leq j<n+k$. However, for a fixed $j\geq0$, we don't necessarily have $m_{j+1}=\Psi(x_j)\cdot e_d$, which makes the analysis more delicate. To begin, observe that for all $j\geq 0$, we have
        \[\Psi(x_j)\in\{y_{j+1}\}\cup\{g_j(\mathbf u_i)\}_{1\leq i\leq k}\setminus\{x_j\}.\]
    Hence, since only $x_j$ moves during the step, we deduce that
        \[\{g_{j+1}(\mathbf u_i)\}_{1\leq i\leq k}\subset\{y_{j+1}\}\cup\{g_j(\mathbf u_i)\}_{1\leq i\leq k}\setminus\{x_j\},\]
    with $x_j\in\{g_j(\mathbf u_i)\}_{1\leq i\leq k}$. By induction, it follows that
        \begin{equation}
            \label{eq_gn1}
            \{g_{n+k}(\mathbf u_i)\}_{1\leq i\leq k}\subset\{y_{\ell+1}\}_{n\leq \ell<n+k}\cup\{g_n(\mathbf u_i)\}_{1\leq i\leq k}\setminus\{x_\ell\}_{n\leq \ell<n+k},
        \end{equation}
    with $\{x_\ell\}_{n\leq l<n+k}\subset\{y_{\ell+1}\}_{n\leq l<n+k}\cup\{g_n(\mathbf u_i)\}_{1\leq i\leq k}$. We proceed by treating two cases. If we have
        \[\{x_\ell\}_{n\leq \ell<n+k}\cap\{y_{\ell+1}\}_{n\leq \ell<n+k}\neq\emptyset,\]
    then there exist $\ell_1, \ell_2\in\range{n}{n+k-1}$ such that $x_{\ell_1}=y_{\ell_2+1}$ and thus \eqref{eq_mnk} is verified as
        \[m_{n+k}\geq m_{\ell_1}=x_{\ell_1}\cdot e_d=y_{\ell_2+1}\cdot e_d\geq\min_{n\leq i< n+k}(y_{i+1}\cdot e_d).\]
    In the other case, we have $\{x_\ell\}_{n\leq \ell<n+k}\subset\{g_n(\mathbf u_i)\}_{1\leq i\leq k}$. Hence, since the $(x_\ell)_{n\leq \ell<n+k}$ are disjoints, by checking cardinals we get $\{x_\ell\}_{n\leq \ell<n+k}=\{g_n(\mathbf u_i)\}_{1\leq i\leq k}$. From \eqref{eq_gn1} we get then
        \[\{g_{n+k}(\mathbf u_i)\}_{1\leq i\leq k}\subset\{y_{\ell+1}\}_{n\leq \ell<n+k},\]
    and \eqref{eq_mnk} follows. We have proved \eqref{eq_mnk}. Now, observe that for each $n\leq i<n+k$, we have
        \[y_{i+1}\cdot e_d-m_n\geq y_{i+1}\cdot e_d-m_i=(y_{i+1}-x_i)\cdot e_d,\]
    so that together with \eqref{eq_mnk},
        \[m_{n+k}-m_n\geq\min_{n\leq i< n+k}\big((y_{i+1}-x_i)\cdot e_d\big).\]
    Therefore, for any $h\in\R_+$, we can write
        \begin{equation*}
            \begin{split}
                \P\left(m_{n+k}-m_n\geq h~\vert~\mathcal F_n\right)&\geq\P\left(\min_{n\leq i< n+k}\big((y_{i+1}-x_i)\cdot e_d\big)\geq h~\middle|~\mathcal F_n\right)\\
                &=\E\left[\prod_{i=n}^{n+k-1}\mathds 1_{(y_{i+1}-x_i)\cdot e_d\geq h}~\middle\vert~\mathcal F_n\right]
                \geq\P(X^\emptyset\cdot e_d\geq h)^k
                =\P(\Gamma_k\geq h),
            \end{split}
        \end{equation*}
    where for the second inequality, we used that for all $n\leq i<n+k$, the law of $(y_{i+1}-x_i)\cdot e_d$ conditional on $\mathcal F_i$ stochastically dominates the law of $X^\emptyset\cdot e_d$ from Theorem \ref{thm_sto_dom_poisson}. The proof is complete.
\end{proof}

We conclude this subsection by deriving an exponential moment bound from Lemma \ref{lemma_progress}.

\begin{Lemma}
    \label{lemma_lower_control}
    Let $t>0$. There exist $c_t=c_t(k, d, p)<1$ such that for all $n\geq 0$ we can find a $\mathcal F_n$-measurable random index $K_n\in\range{1}{k}$ such that
        \[\E\left[e^{-t(m_{n+K_n}-m_{n+K_n-1})}~\middle\vert~\mathcal F_n\right]\leq c_t.\]
\end{Lemma}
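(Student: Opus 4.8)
The plan is to extract the one-step bound from the conditional $k$-step stochastic domination of Lemma~\ref{lemma_progress} (which is available since we are under the standing hypothesis $d=2$ or $p\in\{1,2,\infty\}$, via Theorem~\ref{thm_sto_dom_poisson}). That lemma gives, for every $n\geq0$ and every $h\in\R_+$, $\P(m_{n+k}-m_n\geq h\mid\mathcal F_n)\geq\P(\Gamma_k\geq h)$. Since $X^\emptyset\cdot e_d>0$ almost surely, we have $\Gamma_k>0$ almost surely, so we may fix once and for all (depending only on $k,d,p$) a level $h_0>0$ with $q\coloneqq\P(\Gamma_k\geq h_0)>0$; hence $\P(m_{n+k}-m_n\geq h_0\mid\mathcal F_n)\geq q$ for all $n$.

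Next I would introduce, for $j\in\range{1}{k}$, the $\mathcal F_n$-measurable random variables $b_j\coloneqq\E[e^{-t(m_{n+j}-m_{n+j-1})}\mid\mathcal F_n]\in(0,1]$ and define $K_n$ as the smallest index in $\range{1}{k}$ achieving $\min_{1\leq j\leq k}b_j$. Then $K_n$ is $\mathcal F_n$-measurable, and since $\{K_n=j\}\in\mathcal F_n$, pulling out what is known gives $\E[e^{-t(m_{n+K_n}-m_{n+K_n-1})}\mid\mathcal F_n]=\sum_{j=1}^k\mathds 1_{K_n=j}\,b_j=\min_{1\leq j\leq k}b_j$. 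So everything reduces to finding a deterministic $c_t\in(0,1)$ with $\min_{1\leq j\leq k}b_j\leq c_t$.

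The core step is a ``not all $k$ increments can be tiny'' argument. Write $\Delta_j\coloneqq m_{n+j}-m_{n+j-1}\geq0$ (using that $(m_n)_{n\geq0}$ is non-decreasing). For any $a>0$, the bound $e^{-t\Delta_j}\leq 1-(1-e^{-ta})\mathds 1_{\Delta_j\geq a}$ yields $\P(\Delta_j\geq a\mid\mathcal F_n)\leq(1-b_j)/(1-e^{-ta})$. Suppose, towards a contradiction, that $b_j\geq1-\varepsilon$ for every $j\in\range{1}{k}$; taking $a=h_0/k$, a union bound together with the elementary inclusion $\{m_{n+k}-m_n\geq h_0\}\subseteq\{\exists j:\ \Delta_j\geq h_0/k\}$ (valid because $m_{n+k}-m_n=\sum_{j=1}^k\Delta_j$ with all $\Delta_j\geq0$) gives $\P(m_{n+k}-m_n\geq h_0\mid\mathcal F_n)\leq k\varepsilon/(1-e^{-th_0/k})$. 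Comparing with the lower bound $q$ from the previous step forces $\varepsilon\geq\varepsilon_0\coloneqq q(1-e^{-th_0/k})/k>0$. Hence it is impossible that all $b_j\geq1-\varepsilon$ for $\varepsilon<\varepsilon_0$, i.e.\ $\min_{1\leq j\leq k}b_j\leq1-\varepsilon_0$, and one sets $c_t\coloneqq1-\varepsilon_0\in(0,1)$, which depends only on $k,d,p$ (and $t$).

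I do not expect a genuine obstacle here: the argument is a one-line Markov estimate plus a union bound, and the only points needing a little care are the measurability and tie-breaking in the definition of $K_n$ (so that $\E[e^{-t(m_{n+K_n}-m_{n+K_n-1})}\mid\mathcal F_n]$ really equals $\min_j b_j$) and keeping track that the standing assumption $d=2$ or $p\in\{1,2,\infty\}$ is what makes Lemma~\ref{lemma_progress} applicable. The mild subtlety that $(m_n)_{n\geq0}$ need not increase at a single step is precisely what the passage through $k$ steps, and the freedom to let $K_n$ depend on $\mathcal F_n$, is designed to absorb.
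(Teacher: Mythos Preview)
Your proof is correct and follows essentially the same approach as the paper: both fix a level $h_0>0$ with $q=\P(\Gamma_k\geq h_0)>0$ via Lemma~\ref{lemma_progress}, apply the inclusion $\{m_{n+k}-m_n\geq h_0\}\subset\{\exists j:\Delta_j\geq h_0/k\}$ together with a union bound, and arrive at the same constant $c_t=1-\tfrac{q}{k}(1-e^{-th_0/k})$. The only cosmetic difference is the choice of $K_n$: the paper takes $K_n$ to be the argmax of $j\mapsto\P(\Delta_j\geq h_0/k\mid\mathcal F_n)$ (which is $t$-independent), whereas you take $K_n$ to be the argmin of $j\mapsto\E[e^{-t\Delta_j}\mid\mathcal F_n]$; both are $\mathcal F_n$-measurable and yield the stated bound.
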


\begin{proof}
    Since $\Gamma_k>0$ almost surely, there exists $h>0$ such that $\P(\Gamma_k\geq h)>0$. Now write
    \begin{equation*}
        \begin{split}
            \P(\Gamma_k\geq h)\leq\P(m_{n+k}-m_n\geq h~\vert~\mathcal F_n)&\leq\P\left(\exists j\in\range{1}{k}~m_{n+j}-m_{n+j-1}\geq\frac{h}{k}~\middle\vert~\mathcal F_n\right)\\
            &\leq k\max_{1\leq j\leq k}\P\left(m_{n+j}-m_{n+j-1}\geq\frac{h}{k}~\middle\vert~\mathcal F_n\right),
        \end{split}
    \end{equation*}
    where the first inequality comes from Lemma \ref{lemma_progress}. Now set $K_n$ to be the smallest index realizing the maximum in the right-hand side of the last display. Since each term of this maximum is $\mathcal F_n$-measurable, $K_n$ is $\mathcal F_n$-measurable. Moreover, by construction,
        \[\P\left(m_{n+K_n}-m_{n+K_n-1}\geq\frac{h}{k}~\middle\vert~\mathcal F_n\right)\geq\frac{1}{k}\P(\Gamma_k\geq h)>0.\]
    This inequality allows us to upper bound $\E[e^{-t(m_{n+K_n}-m_{n+K_n-1})}~\vert~\mathcal F_n]$ by a constant strictly less than $1$ that does not depend on $n$. The proof is complete.
\end{proof}

%\begin{Remark}\todo{for the PhD}
%    \color{lightgray}
%    In fact, to bypass the arguments of stochastic domination trying to get rid of the condition $d=2$ or $p\in\{1, 2, \infty\}$, it would be sufficient to find $h>0$ and $c>0$ such that for all $n\geq 0$,
%        \[\P(X^H\cdot e_d\geq h~\vert~\mathcal F_n)\geq c>0.\]
%\end{Remark}

\subsection{Proof of Theorem \ref{thm_good_steps}}

\label{subsection_proof_good_steps}

In this section, we gather both the upper control on the increase of $(M_n)_{n\geq 0}$ and the lower control on the increase of $(m_n)_{n\geq 0}$ to prove Theorem \ref{thm_good_steps}. We start with the following proposition, which provides a strong exponential bound on the increments of $(L_n)_{n\geq0}$.

\begin{Proposition}
    \label{prop_ln_exp_control}
    There exist $q=q(k, d, p)<1$ and $\kappa=\kappa(k, d, p)>0$ such that, for all $n\geq0$, we have
        \[\mathds 1_{L_n>\kappa}\E[e^{L_{n+K_n}-L_n}~\vert~\mathcal F_n]\leq q<1.\]
\end{Proposition}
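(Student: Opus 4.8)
The idea is to combine the upper control on the growth of $(M_n)_{n\geq0}$ from Subsection~\ref{subsection_upper_control} with the lower control on the growth of $(m_n)_{n\geq0}$ from Subsection~\ref{subsection_lower_control}, using the random index $K_n\in\range{1}{k}$ provided by Lemma~\ref{lemma_lower_control}. Write
\[
L_{n+K_n}-L_n=(M_{n+K_n}-M_n)-(m_{n+K_n}-m_n),
\]
so that $e^{L_{n+K_n}-L_n}=e^{M_{n+K_n}-M_n}e^{-(m_{n+K_n}-m_n)}$. The point is that the $M$-increment over $K_n\leq k$ steps contributes a bounded multiplicative factor that, crucially, can be made \emph{small} when $L_n$ is large (this is where point (c) of Proposition~\ref{prop_upper_control}, i.e.\ $\lim_{L\to\infty}G_t(L)=1$, enters), while the $-m$-increment contributes, via Lemma~\ref{lemma_lower_control}, a factor strictly less than $1$ that is uniform in $n$.

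First I would control the $M$-term. Since $K_n$ is $\mathcal F_n$-measurable and $M_{n+K_n}-M_n\leq\sum_{j=1}^{k}(M_{n+j}-M_{n+j-1})$ whenever $K_n\leq k$ (monotonicity of $(M_n)$), I would bound, using Lemma~\ref{lemma_upper_moment} with $t=1$,
\[
\E\!\left[e^{M_{n+K_n}-M_n}\,\middle|\,\mathcal F_n\right]\leq\E\!\left[e^{M_{n+k}-M_n}\,\middle|\,\mathcal F_n\right]\leq\big(G_1(L_n)\big)^{k},
\]
where $G_1$ is the non-increasing function with $\lim_{L\to\infty}G_1(L)=1$ from Lemma~\ref{lemma_GtL}. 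Next, the $-m$-term: the difficulty is that $M_{n+K_n}-M_n$ and $m_{n+K_n}-m_n$ are \emph{not} conditionally independent given $\mathcal F_n$, so I cannot simply multiply the two bounds. Instead I would use Hölder's inequality with some exponent pair $(r,r')$, $\tfrac1r+\tfrac1{r'}=1$, writing
\[
\E\!\left[e^{L_{n+K_n}-L_n}\,\middle|\,\mathcal F_n\right]\leq\E\!\left[e^{r(M_{n+K_n}-M_n)}\,\middle|\,\mathcal F_n\right]^{1/r}\E\!\left[e^{-r'(m_{n+K_n}-m_n)}\,\middle|\,\mathcal F_n\right]^{1/r'}.
\]
The first factor is $\leq\big(G_r(L_n)^{k}\big)^{1/r}$ by Lemma~\ref{lemma_upper_moment} applied with $t=r$; and for the second, since $m_{n+K_n}-m_n\geq m_{n+K_n}-m_{n+K_n-1}\geq0$ and $K_n$ is $\mathcal F_n$-measurable, a conditioning argument (condition on $\mathcal F_n$, split on the value of $K_n$, then use $\mathcal F_{n+K_n-1}\supset\mathcal F_n$ together with Lemma~\ref{lemma_lower_control} applied at the index $n+K_n-1$—note $K_n$ at level $n$ matches the index $K_{n+K_n-1}$ only up to a routine re-reading, which I would phrase more carefully as: there is an $\mathcal F_n$-measurable index giving a conditional Laplace bound $c_{r'}<1$) yields
\[
\E\!\left[e^{-r'(m_{n+K_n}-m_n)}\,\middle|\,\mathcal F_n\right]\leq c_{r'}<1.
\]
Combining, on the event $\{L_n>\kappa\}$,
\[
\E\!\left[e^{L_{n+K_n}-L_n}\,\middle|\,\mathcal F_n\right]\leq G_r(\kappa)^{k/r}\,c_{r'}^{1/r'}.
\]

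Finally I would choose the parameters. Fix any $r>1$; then $c_{r'}^{1/r'}<1$ is a fixed constant. Since $G_r(L)\downarrow1$ as $L\to\infty$, I can pick $\kappa=\kappa(k,d,p)$ large enough that $G_r(\kappa)^{k/r}<c_{r'}^{-1/r'}$, which makes the product
\[
q:=G_r(\kappa)^{k/r}\,c_{r'}^{1/r'}<1,
\]
a constant depending only on $k,d,p$. This gives the claimed bound $\mathds 1_{L_n>\kappa}\E[e^{L_{n+K_n}-L_n}\mid\mathcal F_n]\leq q<1$ for all $n\geq0$. The main obstacle, and the only genuinely delicate point, is handling the lack of conditional independence between the $M$- and $m$-increments over the $K_n$ steps: the Hölder split is what resolves it, at the mild cost of working with $G_r$ instead of $G_1$, which is harmless since $G_r$ still tends to $1$. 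A secondary technical care is the bookkeeping that makes Lemma~\ref{lemma_lower_control} applicable over the random number $K_n$ of steps while keeping everything adapted; one clean way is to first establish, for each fixed $j\in\range{1}{k}$, the bound $\E[e^{-r'(m_{n+j}-m_{n+j-1})}\mid\mathcal F_{n+j-1}]\leq c_{r'}$ (which is exactly Lemma~\ref{lemma_lower_control} at a shifted index), then sum over $j$ against the $\mathcal F_n$-measurable event $\{K_n=j\}$.
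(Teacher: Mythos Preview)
Your overall strategy---split $L_{n+K_n}-L_n$ into an $M$-part and an $m$-part, then use H\"older to decouple them---matches the paper's (the paper uses Cauchy--Schwarz, i.e.\ $r=r'=2$). The $m$-part is fine: since $m_{n+K_n}-m_n\geq m_{n+K_n}-m_{n+K_n-1}\geq0$, you get $\E[e^{-r'(m_{n+K_n}-m_n)}\mid\mathcal F_n]\leq c_{r'}<1$ directly from Lemma~\ref{lemma_lower_control}.

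The genuine gap is in the $M$-part. You claim that Lemma~\ref{lemma_upper_moment} with $t=r$ gives $\E[e^{r(M_{n+k}-M_n)}\mid\mathcal F_n]\leq G_r(L_n)^k$, but that lemma does \emph{not} bound $\E[e^{t(M_j-M_n)}\mid\mathcal F_n]$. It bounds the exponential moment of
\[
(L_{j-1}-L_n+M_j-M_{j-1})_+=\bigl(M_j-M_n-(m_{j-1}-m_n)\bigr)_+,
\]
which is \emph{smaller} than $M_j-M_n$ (since $m_{j-1}\geq m_n$), so the inequality goes the wrong way for your purpose. Nor can you recover $\E[e^{r(M_{n+k}-M_n)}\mid\mathcal F_n]\leq G_r(L_n)^k$ by iterating the one-step bound from Lemma~\ref{lemma_GtL}: that gives $\E[e^{r(M_{i+1}-M_i)}\mid\mathcal F_i]\leq G_r(L_i)$, and the intermediate $L_i$ can drop well below $L_n$ even when $L_n$ is large (for instance if $m$ jumps up a lot on an early step while $M$ stays put), so the product of $G_r(L_i)$'s need not be close to $1$.

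The paper resolves this precisely by \emph{not} separating the full $M$- and $m$-increments. It writes
\[
L_{n+K_n}-L_n=A_n-(m_{n+K_n}-m_{n+K_n-1}),\qquad A_n\coloneqq L_{n+K_n-1}-L_n+M_{n+K_n}-M_{n+K_n-1},
\]
so that $A_n$ is exactly the quantity whose positive part is controlled by Lemma~\ref{lemma_upper_moment} (yielding $\E[e^{2(A_n)_+}\mid\mathcal F_n]\leq G_2(L_n)^{K_n}\leq G_2(L_n)^k$), while the remaining $m$-term is the \emph{single} increment $m_{n+K_n}-m_{n+K_n-1}$, which is exactly what Lemma~\ref{lemma_lower_control} controls. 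Cauchy--Schwarz then gives $\E[e^{L_{n+K_n}-L_n}\mid\mathcal F_n]^2\leq G_2(L_n)^k c_2\to c_2<1$ as $L_n\to\infty$. The specific pairing---bundling $m_{n+K_n-1}-m_n$ into $A_n$ rather than into the $m$-factor---is what makes the upper-control lemma applicable with $G_t(L_n)$ rather than $G_t(L_i)$; this is the missing idea in your decomposition.
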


\begin{proof}
    Denote $A_n\coloneqq L_{n+K_n-1}-L_n+M_{n+K_n}-M_{n+K_n-1}$, so that
        \[L_{n+K_n}-L_n=A_n-(m_{n+K_n}-m_{n+K_n-1}).\]
    From Lemma \ref{lemma_upper_moment} applied with $t=2$ and $j=n+K_n$ which is $\mathcal F_n$-measurable, we have
        \[\E[e^{2A_n}~\vert~\mathcal F_n]\leq\E[e^{2(A_n)_+}~\vert~\mathcal F_n]\leq G_2(L_n)^{K_n}\leq G_2(L_n)^k,\]
    where the last inequality uses that $K_n\leq k$. Therefore, using the Cauchy-Schwarz inequality
    \begin{equation*}
        \begin{split}
            \E[e^{L_{n+K_n}-L_n}~\vert~\mathcal F_n]^2&=\E[e^{A_n} e^{-m_{n+K_n}-m_n}~\vert~\mathcal F_n]^2\\
            &\leq\E[e^{2A_n}~\vert~\mathcal F_n]\E[e^{-2(m_{n+K_n}-m_{n+K_n-1})}~\vert~\mathcal F_n]\\
            &\leq G_2(L_n)^k c_2,
        \end{split}
    \end{equation*}
    where $c_2<1$ is given by Lemma \ref{lemma_lower_control}. Finally, since $G_2$ is non-increasing and $\lim_{L\to\infty}G_2(L)=1$, we deduce that
        \[\mathds 1_{L_n>\kappa}\E[e^{L_{n+K_n}-L_n}~\vert~\mathcal F_n]\leq\sqrt{G_2(\kappa)^kc_2}\to\sqrt{c_2}<1\quad\text{as}\quad \kappa\to\infty.\]
    Hence, we can choose $q\in(\sqrt{c_2}, 1)$ and then $\kappa>0$ big enough to get the sought result. The proof is complete.
\end{proof}

From now on, we fix $\kappa$ and $q$ given by Proposition \ref{prop_ln_exp_control}, and we consider any arbitrary $R>0$. We define $(\nu_n)_{n\geq 0}$ by induction as an intermediate sequence of $(\mathcal F_n)_{n\geq0}$ stopping times with $\nu_0=0$ and for all $n\geq 0$,
    \[\nu_{n+1}\coloneqq\inf\{i>\nu_n~:~L_i\leq\kappa\}.\]
By construction, $(\tau_n(\kappa, R))_{n\geq0}$ is a subsequence of $(\nu_n)_{n\geq0}$. We start by showing that the stopping times $(\nu_n)_{n\geq0}$ are almost surely finite with spacing distributed with an exponentially decaying tail.

\begin{Lemma}
    \label{lemma_tau_n_tail}
    For all $n\geq 0$, $\nu_n$ is almost surely finite. Moreover there exist $\alpha=\alpha(k, d, p)>0$ and $\beta=\beta(k, d, p)>0$ such that, for all $n,\ell\geq 0$ we have
        \[\P\left(\nu_{n+1}-\nu_n>\ell~\vert~\mathcal F_{\nu_n}\right)\leq \alpha e^{-\beta \ell}.\]
\end{Lemma}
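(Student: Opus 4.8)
The plan is to track the Lyapunov-type functional $e^{L_n}$ along the random times $n, n+K_n, n+K_n+K_{n+K_n},\dots$ and show it contracts geometrically whenever $L_n>\kappa$, so that $(L_n)_{n\geq 0}$ returns below $\kappa$ quickly. First I would define, for a given $n$, the iterated times $\sigma_0\coloneqq 0$ and $\sigma_{j+1}\coloneqq\sigma_j+K_{n+\sigma_j}$, which are $(\mathcal F_{n+\sigma_j})_{j\geq 0}$-stopping times since each $K_m$ is $\mathcal F_m$-measurable, with increments bounded by $k$. Conditioning on $\mathcal F_{n+\sigma_j}$ and applying Proposition~\ref{prop_ln_exp_control} on the event $\{L_{n+\sigma_j}>\kappa\}$, one gets $\mathds 1_{L_{n+\sigma_j}>\kappa}\E[e^{L_{n+\sigma_{j+1}}-L_{n+\sigma_j}}\mid\mathcal F_{n+\sigma_j}]\leq q<1$. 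Hence, writing $T$ for the first $j$ with $L_{n+\sigma_j}\leq\kappa$, a standard optional-stopping / telescoping argument on the supermartingale $q^{-(j\wedge T)}e^{L_{n+\sigma_{j\wedge T}}}$ (using $e^{L_{n+\sigma_{j\wedge T}}}\geq 1$ and that before time $T$ we are on the bad event) yields $\E[q^{-T}\mid\mathcal F_n]\leq e^{L_n}<\infty$ when $L_n\leq\kappa$ is false; for the base case $n=0$ one has $L_0=0\leq\kappa$ so $\nu_1=\sigma_T$ with this bound starting from $e^{L_0}=1$ after one step, handled directly. This already gives $T<\infty$ a.s.\ and an exponential tail $\P(T>j)\leq q^j e^{L_n}$.

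Next I would convert a bound on the number of iterated steps $T$ into a bound on $\nu_{n+1}-\nu_n$ in the original time scale. Since $\sigma_{j+1}-\sigma_j=K_{n+\sigma_j}\leq k$, we have $\sigma_T\leq kT$, and by construction $\nu_{n+1}\leq n+\sigma_T$ — more precisely $\nu_{n+1}$ is the \emph{first} index after $\nu_n$ with $L\leq\kappa$, which occurs no later than $n+\sigma_T$ when we start the iteration at $n=\nu_n$ (note $L_{\nu_n}\leq\kappa$ by definition of $\nu_n$ for $n\geq 1$, or $L_0=0$ for $n=0$, so we must start the iteration carefully: begin at the step right after $\nu_n$, or equivalently apply the contraction from the first index where $L>\kappa$ after $\nu_n$, and absorb the bounded waiting time before that index). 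Combining $\P(T>\lceil\ell/k\rceil)\leq q^{\lceil\ell/k\rceil}e^{\kappa}$ gives $\P(\nu_{n+1}-\nu_n>\ell\mid\mathcal F_{\nu_n})\leq e^{\kappa}q^{\ell/k-1}$, which is of the claimed form with $\beta\coloneqq -(\log q)/k>0$ and $\alpha\coloneqq e^{\kappa}q^{-1}$, both depending only on $k,d,p$. Almost-sure finiteness of every $\nu_n$ then follows by induction on $n$ from finiteness of each increment.

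The main obstacle I anticipate is the bookkeeping at the boundary between two consecutive $\nu$'s: right after $\nu_n$ we have $L_{\nu_n}\leq\kappa$, so the contraction estimate of Proposition~\ref{prop_ln_exp_control} does not directly apply at the starting index, and one must either (i) wait the bounded (at most $k$, in fact the increments of $L$ are not bounded, so this needs an exponential-moment argument via Lemma~\ref{lemma_upper_moment}) number of steps until $L$ first exceeds $\kappa$ again, or (ii) reformulate the renewal so that the contraction is applied from the first bad index. A clean way is to note that if $L_{\nu_n+1}\leq\kappa$ then $\nu_{n+1}=\nu_n+1$ and we are done; otherwise $L_{\nu_n+1}>\kappa$ but $L_{\nu_n+1}\leq L_{\nu_n}+ (M_{\nu_n+1}-M_{\nu_n}) \leq \kappa + (M_{\nu_n+1}-M_{\nu_n})$, whose exponential moment is controlled by $G_1(\kappa)$ via Lemma~\ref{lemma_GtL}, so one picks up only a bounded multiplicative constant before entering the contracting regime at index $\nu_n+1$ (or the first subsequent index where $L>\kappa$). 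Carrying this extra factor through the telescoping argument changes $\alpha$ but not $\beta$, so the stated form survives; the geometric-decay mechanism itself is entirely supplied by Proposition~\ref{prop_ln_exp_control}.
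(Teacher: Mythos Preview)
Your proposal is correct and takes essentially the same approach as the paper. The paper also defines the iterated times $N_0\coloneqq\nu_n$, $N_{i+1}\coloneqq N_i+K_{N_i}$, bounds $\P(\nu_{n+1}-\nu_n>k\ell\mid\mathcal F_{\nu_n})\leq\P(N_\ell<\nu_{n+1}\mid\mathcal F_{\nu_n})$, and controls the latter by writing $\mathds 1_{L_{N_\ell}>\kappa}\leq e^{L_{N_\ell}-\kappa}$, telescoping into $e^{L_{N_1}-\kappa}\prod_{i=1}^{\ell-1}\mathds 1_{L_{N_i}>\kappa}e^{L_{N_{i+1}}-L_{N_i}}$, and applying Proposition~\ref{prop_ln_exp_control} iteratively to collect the factor $q^{\ell-1}$; the remaining first factor $\E[e^{L_{N_1}-\kappa}\mid\mathcal F_{\nu_n}]\leq\E[e^{L_{\nu_n+K_{\nu_n}}-L_{\nu_n}}\mid\mathcal F_{\nu_n}]\leq G_1(0)^k$ is exactly the ``boundary'' issue you raised and resolved via Lemma~\ref{lemma_upper_moment}. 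Your supermartingale packaging of the contraction is equivalent to the paper's direct telescoping, and your handling of the first step matches theirs.
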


\begin{proof}
    We proceed by induction. $\nu_0=0$ is almost surely finite. Now let $n\geq0$. Assume that $\nu_n$ is almost surely finite. Denote $N_0\coloneqq\nu_n$ and for all $i\geq0$, 
        \[N_{i+1}\coloneqq N_i+K_{N_i}.\]
    Since $K_n\leq k$ for all $n\in\N$, then for all $\ell\geq 0$ we have $N_\ell-\nu_n\leq k\ell$, hence
        \begin{equation}
            \label{eq_tau_and_N}
            \P\left(\nu_{n+1}-\nu_n>k\ell~\vert~\mathcal F_{\nu_n}\right)\leq\P\left(N_\ell<\nu_{n+1}~\vert~\mathcal F_{\nu_n}\right).
        \end{equation}
    Now observe that for each $\ell\geq1$
        \begin{equation}
            \begin{split}
                \P(N_\ell<\nu_{n+1}~\vert~\mathcal F_{\nu_n})&\leq\E\left[\mathds 1_{L_{N_\ell}>\kappa}\prod_{i=1}^{\ell-1}\mathds 1_{L_{N_i}>\kappa}~\middle\vert~\mathcal F_{\nu_n}\right]\\
                &\leq\E\left[e^{L_{N_\ell}-\kappa}\prod_{i=1}^{\ell-1}\mathds 1_{L_{N_i}>\kappa}~\middle\vert~\mathcal F_{\nu_n}\right]\\
                &=\E\left[e^{L_{N_1}-\kappa}\prod_{i=1}^{\ell-1}\mathds 1_{L_{N_i}>\kappa}e^{L_{N_i+K_{N_i}}-L_{N_i}}~\middle\vert~\mathcal F_{\nu_n}\right]
            \end{split}
        \end{equation}
    where the second inequality comes from $\mathds 1_{a>b}\leq e^{a-b}$ for any $a, b\in\R$ and the last equality comes from a telescopic product. Now, in the last expectation of the display, we have a product of $\ell$ terms, where the first $i$ terms are $\mathcal F_{N_i}$-measurable. Therefore, using Proposition \ref{prop_ln_exp_control}, we get
        \begin{equation*}
            \P(N_\ell<\nu_{n+1}~\vert~\mathcal F_{\nu_n})\leq\E[e^{L_{N_1}-\kappa}~\vert~\mathcal F_{\nu_n}]q^{\ell-1}\leq\E[e^{L_{\nu_n+K_{\nu_n}}-L_{\nu_n}}~\vert~\mathcal F_{\nu_n}]q^{\ell-1}\leq G_1(0)^kq^{\ell-1},
        \end{equation*}
    where the second inequality comes from $L_{\nu_n}\leq\kappa$ and the last one from Lemma \ref{lemma_upper_moment} since $K_{\nu_n}\leq k$ is $\mathcal F_{\nu_n}$-measurable. Injecting in \eqref{eq_tau_and_N} we get the result.
\end{proof}

To transfer the properties we just established for the sequence $(\nu_n)_{n\geq 0}$ to the sequence $(\tau_n(\kappa, R))_{n\geq 0}$, we define, for each $n\geq 0$, the random variable
    \[T_n\coloneqq\inf\{j>0:m_{n+j}-m_n\geq\kappa+R\}.\]
In words, $T_n$ is the time we have to wait from index $n$ to reach a progress of at least $\kappa+R$. The following lemma ensures that the $(T_n)_{n\geq 0}$ are distributed with exponentially decaying tails.

\begin{Lemma}
    \label{lemma_T_n_tail}
    There exist $\alpha=\alpha(k, d, p)>0$ and $\beta=\beta(k, d, p)>0$ such that for all $n,\ell\geq 0$,
        \[\P(T_n>\ell~\vert~\mathcal F_n)\leq\alpha e^{R-\beta \ell}.\]
\end{Lemma}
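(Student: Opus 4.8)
The plan is to bound $T_n$ by comparing it with the increments $m_{n+K_n}-m_{n+K_n-1}$ and a suitable super-level count, using the exponential moment bound of Lemma~\ref{lemma_lower_control}. More precisely, starting from index $n$, define recursively $N_0\coloneqq n$ and $N_{i+1}\coloneqq N_i+K_{N_i}$, exactly as in the proof of Lemma~\ref{lemma_tau_n_tail}. Since each $K_{N_i}\in\range{1}{k}$, we have $N_\ell-n\leq k\ell$, so it suffices to show that the number of ``blocks'' of the form $[N_i, N_{i+1})$ needed to accumulate a total progress of $\kappa+R$ in the $e_d$ direction has an exponentially decaying tail, with the prefactor carrying the $e^{R}$ term.

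The key quantity is $m_{N_\ell}-m_n=\sum_{i=0}^{\ell-1}(m_{N_{i+1}}-m_{N_i})\geq\sum_{i=0}^{\ell-1}(m_{N_i+K_{N_i}}-m_{N_i+K_{N_i}-1})$, since $(m_j)_{j\geq 0}$ is non-decreasing and each term on the right is one increment occurring inside block $i$. Writing $Z_i\coloneqq m_{N_i+K_{N_i}}-m_{N_i+K_{N_i}-1}\geq 0$, we have $T_n\leq k\ell$ whenever $\sum_{i=0}^{\ell-1}Z_i\geq\kappa+R$, hence
\begin{equation*}
\P(T_n>k\ell~\vert~\mathcal F_n)\leq\P\left(\sum_{i=0}^{\ell-1}Z_i<\kappa+R~\middle\vert~\mathcal F_n\right)\leq e^{t(\kappa+R)}\,\E\left[e^{-t\sum_{i=0}^{\ell-1}Z_i}~\middle\vert~\mathcal F_n\right]
\end{equation*}
for any $t>0$, by Markov's inequality applied to $e^{-t\sum Z_i}$. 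Now I peel off the conditional expectations one block at a time: since $N_i$ is an $(\mathcal F_j)$-stopping time and $K_{N_i}$ is $\mathcal F_{N_i}$-measurable, Lemma~\ref{lemma_lower_control} (applied at index $N_i$ with the strong Markov property along the filtration) gives $\E[e^{-tZ_i}~\vert~\mathcal F_{N_i}]\leq c_t<1$, and the product $\prod_{i=0}^{\ell-1}e^{-tZ_i}$ telescopes against the tower property to yield $\E[e^{-t\sum_{i<\ell}Z_i}~\vert~\mathcal F_n]\leq c_t^{\ell}$. Combining, $\P(T_n>k\ell~\vert~\mathcal F_n)\leq e^{t\kappa}e^{tR}c_t^{\ell}$. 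Fixing $t=1$ (say) and absorbing $e^{\kappa}$ and the conversion from $k\ell$ to a general $\ell$ into constants $\alpha=\alpha(k,d,p)$ and $\beta=\beta(k,d,p)>0$—using $c_1^{\ell/k}=e^{-\beta\ell}$ with $\beta=-\frac{1}{k}\log c_1>0$ and handling the at-most-$k$ leftover indices by enlarging $\alpha$—gives $\P(T_n>\ell~\vert~\mathcal F_n)\leq\alpha e^{R-\beta\ell}$, as desired.

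The main technical point to be careful about is the repeated conditioning along the random indices $N_i$: one must check that $Z_i$ depends on the exploration only through the steps $N_i, N_i+1,\dots,N_i+K_{N_i}$, that $K_{N_i}$ is indeed $\mathcal F_{N_i}$-measurable (this is part of the statement of Lemma~\ref{lemma_lower_control}), and that the bound $c_t<1$ is uniform in the index, so that the telescoping of conditional expectations is legitimate. This is essentially the same bookkeeping as in the proof of Lemma~\ref{lemma_tau_n_tail}, so I expect it to go through without surprises; the only genuinely new ingredient is the use of a negative exponential moment ($e^{-tZ_i}$) rather than the positive ones used earlier, together with the $e^{t(\kappa+R)}$ factor from Markov's inequality that produces the $e^{R}$ in the final bound.
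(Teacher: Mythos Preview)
Your proof is correct, but it takes a somewhat more elaborate route than the paper's. The paper does not introduce the random block endpoints $N_i=N_{i-1}+K_{N_{i-1}}$ at all: instead it works with \emph{deterministic} blocks of length exactly $k$, writing
\[
\P(T_n>k\ell\mid\mathcal F_n)\leq e^{\kappa+R}\,\E\!\left[\prod_{i=0}^{\ell-1}e^{-(m_{n+ik+k}-m_{n+ik})}\,\middle|\,\mathcal F_n\right]\leq e^{\kappa+R}\bigl(\E[e^{-\Gamma_k}]\bigr)^{\ell},
\]
and then invokes Lemma~\ref{lemma_progress} directly (the stochastic domination $m_{n+k}-m_n\succeq_{\mathrm{sto}}\Gamma_k$ conditional on $\mathcal F_n$) to bound each factor by $\E[e^{-\Gamma_k}]<1$. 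Your argument instead passes through Lemma~\ref{lemma_lower_control} and the random indices $K_{N_i}$, mirroring the machinery of Lemma~\ref{lemma_tau_n_tail}. Both work; the paper's version is shorter because it bypasses the stopping-time bookkeeping and uses the cruder but sufficient bound on full $k$-step increments, while your version reuses the exact same block structure as in the good-step analysis, which makes the two proofs look pleasantly parallel.
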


\begin{proof}
    Let $n,\ell\geq 0$. Using Markov's inequality, we write
        \begin{equation*}
            \begin{split}
                \P(T_n>k\ell~\vert~\mathcal F_n)=\E[m_{n+k\ell}-m_n<\kappa+R~\vert~\mathcal F_n]
                &\leq \E[e^{\kappa+R-(m_{n+k\ell}-m_n)}~\vert~\mathcal F_n]\\
                &=e^{\kappa+R}\E\left[\prod_{i=0}^{\ell-1}e^{-(m_{n+ik+k}-m_{n+ik})}~\middle|~\mathcal F_n\right]\\
                &\leq e^{\kappa+R}\E[e^{-\Gamma_k}]^\ell,
            \end{split}
        \end{equation*}
    where the last inequality comes from Lemma \ref{lemma_progress}. Since $\E[e^{-\Gamma_k}]<1$ as $\Gamma_k>0$ almost surely, we deduce the sought exponential decay. The proof is complete.
\end{proof}

We now prove the main result of this section by bringing together all the preceding ingredients.

\begin{proof}[Proof of Theorem \ref{thm_good_steps}]
    We proceed by induction to show that $(\tau_n)_{n\geq 0}\coloneqq (\tau_n^{\kappa, R})_{n\geq 0}$ indeed satisfy the sought properties. We have $\tau_0=0$ that is almost surely finite. Now let $n\geq0$ and assume that $\tau_n$ is almost surely finite. First, observe that by definition, $\tau_n=\nu_A$ for some $\mathcal F_{\tau_n}$-measurable $A\in\N$ and $\tau_{n+1}=\nu_B$ where
        \[B=\inf\{j\geq A~:~\nu_j\geq\nu_A+T_{\tau_n}\}.\]
    Since $(\nu_j)_{j\geq 0}$ is increasing, we have $\nu_j-\nu_A\geq j-A$ hence $\nu_{A+T_{\tau_n}}-\nu_A\geq T_{\tau_n}$ and thus we must have
    \begin{equation}
        \label{eq_diffAB}
        1\leq B-A\leq T_{\tau_n}
    \end{equation}
    By definition of $B$, we have $\nu_{B-1}<\tau_n+T_{\tau_n}$, hence
    \begin{equation}
        \label{eq_difftauB1}
        \nu_B-\nu_{B-1}\geq\tau_{n+1}-\tau_n-T_{\tau_n}.
    \end{equation}
    Now let $\ell\geq 0$. Observe that from \eqref{eq_difftauB1}, and using an union bound, we can write
        \begin{equation}
            \label{eq_gamma_proba}
            \begin{split}
                \P\left(\tau_{n+1}-\tau_n>\ell~\vert~\mathcal F_{\tau_n}\right)&\leq\P(\nu_B-\nu_{B-1}> \ell-T_{\tau_n}~\vert~\mathcal F_{\tau_n})\\
                &\leq\P\left(T_{\tau_n}>\frac{\ell}{2}~\middle\vert~\mathcal F_{\tau_n}\right)+\P\left(T_{\tau_n}\leq\frac{\ell}{2},~\nu_B-\nu_{B-1}> \frac{\ell}{2}~\middle\vert~\mathcal F_{\tau_n}\right)
            \end{split}
        \end{equation}
    Next, using \eqref{eq_diffAB} we can write
        \begin{equation*}
            \P\left(T_{\tau_n}\leq\frac{\ell}{2},~\nu_B-\nu_{B-1}\geq \frac{\ell}{2}~\middle\vert~\mathcal F_{\tau_n}\right)=\P\left(\bigcup_{b=0}^{\frac{\ell}{2}-1}\left\lbrace\nu_{A+b+1}-\nu_{A+b}>\frac{\ell}{2}\right\rbrace~\middle\vert~\mathcal F_{\tau_n}\right)
            \leq\frac{\ell}{2}\alpha e^{-\beta\frac{\ell}{2}},
        \end{equation*}
    where the last inequality comes from a union bound and Lemma \ref{lemma_tau_n_tail}, since $A$ is $\mathcal F_{\tau_n}$-measurable. Injecting in \eqref{eq_gamma_proba} we finally get
        \begin{equation*}
            \P\left(\tau_{n+1}-\tau_n>\ell~\vert~\mathcal F_{\tau_n}\right)\leq\P\left(T_{\tau_n}>\frac{\ell}{2}~\middle\vert~\mathcal F_{\tau_n}\right)+\frac{\ell}{2}\alpha e^{-\beta\frac{\ell}{2}}
            \leq\alpha'e^{R-\beta'\frac{\ell}{2}}+\frac{\ell}{2}\alpha e^{-\beta\frac{\ell}{2}},
        \end{equation*}
    where $\alpha', \beta'>0$ are given by Lemma \ref{lemma_T_n_tail}. The result follows.
\end{proof}

\section{Renewal decomposition}

\label{section_renewal}

In this section, we adapt the technique of \cite{Dsf2d} to decompose the exploration process into blocks of random size, thereby making rigorous the intuition that the trajectories behave like random walks in $\R^{d-1}$, that do not interact when they are sufficiently far from each other. To this end, in Subsection \ref{subsection_renewal_event}, we first introduce certain \emph{joint renewal events} for the exploration process, along with associated random times, and show that these times are almost surely finite and that their spacing has an exponentially decaying tail. In Subsection \ref{subsection_block_size}, we then control the fluctuations of the trajectories between consecutive \emph{renewal times}. Finally, in Subsection \ref{subsection_independent_process}, we use the renewal decomposition to describe the behavior of the trajectories via an auxiliary process, namely, the \emph{independent process}.

\subsection{Joint renewal events}

\label{subsection_renewal_event}

Before describing intuitively the idea, let us start with some notations. For all $n\geq 0$ and $i\in\range{1}{k}$, we define
    \[g_n^\uparrow(\mathbf u_i)\coloneqq g_n(\mathbf u_i)+(m_n+\kappa-g_n(\mathbf u_i)\cdot e_d)e_d\quad \text{and}\quad g_n^\downarrow(\mathbf u_i)\coloneqq g_n(\mathbf u_i)+(m_n-g_n(\mathbf u_i)\cdot e_d)e_d.\]
In words, $g_n^\uparrow(\mathbf u_i)$ corresponds to the projection of $g_n(\mathbf u_i)$ on the $e_d$-level $m_n+\kappa$ while $g_n^\downarrow(\mathbf u_i)$ corresponds to the projection of $g_n(\mathbf u_i)$ on the level $m_n$. Let $\kappa>0$ be given by Theorem \ref{thm_good_steps} and $R>0$ be arbitrarily fixed. For any $j\geq 0$, we say that the good step $j$, occurring at time $\tau_j$, is a \emph{joint renewal step}, if the event
    \[A_j\coloneqq\{\forall i\in\range{1}{k},~\#\mathcal N\cap B^+(g_{\tau_j}^\uparrow(\mathbf u_i), R)=\#\mathcal N\cap B(g_{\tau_j}^\downarrow(\mathbf u_i), \kappa+R)\setminus\overline{H}_{\tau_j}=1\}\]
occurs. In words, we say that we have joint renewal of the trajectories at time $\tau_j$ if for each $i\in\range{1}{k}$ we have only one Poisson point in $B(g_{\tau_j}^\downarrow(\mathbf u_i), \kappa+R)\setminus\overline{H}_{\tau_j}$ which is actually in $B^+(g_{\tau_n}^\uparrow(\mathbf u_i), R)$. An illustration of the joint renewal event is provided in Figure \ref{fig_renewal_event}.

\begin{figure}[h]
    \centering
    \includegraphics[width=1\linewidth]{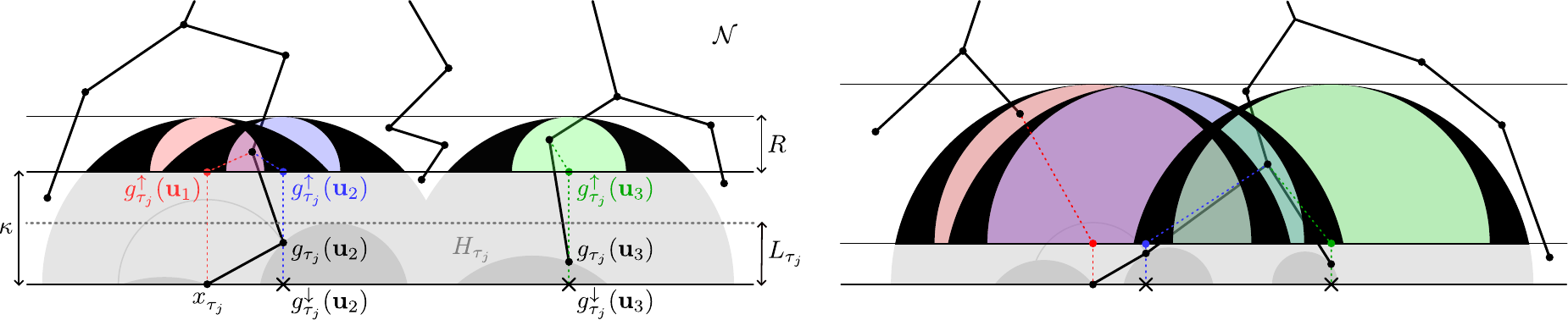}
    \caption{Illustration of the occurrence of the renewal event $A_j$, for $2$ different setups with $k=3$. The light gray and black regions contain no point of $\mathcal N$ while each translucent colorful region (red, blue, and green) contains exactly one point of $\mathcal N$.}
    \label{fig_renewal_event}
\end{figure}

We define $(\gamma_n)_{n\geq 0}$ by induction with $\gamma_0\coloneqq\inf\{i\geq0:A_i~\text{occurs}\}$ and for all $n\geq 0$,
    \[\gamma_{n+1}\coloneqq\inf\{i>\gamma_n:A_i~\text{occurs}\}.\]
In words, for each $n\geq 0$, the renewal step $n$ occurs at the good step index $\gamma_n$, that is at step
    \[\beta_n\coloneqq\tau_{\gamma_n}.\]
We now give some intuition behind the construction of these renewal steps in the case $k=1$, where only a single trajectory is being explored. Fix $n\geq 0$ and suppose that $\beta_n<\infty$. By construction of the renewal event, we have
    \[\Psi(x_{\beta_n})=\Psi\big(g_{\beta_n}(\mathbf u_1)\big)=\Psi\big(g_{\beta_n}^{\uparrow}(\mathbf u_1)\big).\]
In other words, at time $\beta_n$, we can replace the current point $x_{\beta_n}$ by $g_{\beta_n}^{\uparrow}(\mathbf u_1)$ without altering the future evolution of the process. This replacement point lies at level $m_{\beta_n}+\kappa$, which is above the current history set. As a result, we can effectively forget the past in some sense. More precisely, conditional on the information revealed up to time $\beta_n$, the point process
    \[\big(\mathcal N-g_{\beta_n}^{\uparrow}(\mathbf u_1)\big)\cap\mathbb H^+(0)\]
is distributed as $\mathcal N'\cap\mathbb H^+(0)$, where $\mathcal N'$ denotes an independent copy of $\mathcal N$, conditioned on the event
    \[\#\mathcal N'\cap B^+(0,R)=\#\mathcal N'\cap\mathbb H^+(0)\cap B(-\kappa e_d,\kappa+R)=1.\]
Intuitively, this means that the future of the process depends only on the current position and thus behaves like a Markov process. Even more, recalling that $\mathbf p:\R^d\to\R^{d-1}$ denotes the projection parallel to $e_d$, if the renewal times $(\beta_n)_{n\geq 0}$ are almost surely finite, the sequence
    \[\big(\mathbf p(g_{\beta_n}(\mathbf u_1))\big)_{n\geq 0}\]
forms a random walk with i.i.d.\ symmetric increments in $\R^{d-1}$. This formalizes the heuristic that individual trajectories behave as random walks in $\R^{d-1}$. For the cases $k\geq 2$, where we simultaneously explore multiple trajectories, the situation becomes more delicate. Specifically, it is no longer true that for $n\geq 0$ and $\beta_n<\infty$, we have $\Psi(g_{\beta_n}(\mathbf u_i))=\Psi(g_{\beta_n}^{\uparrow}(\mathbf u_i))$ for every $i\in\range{1}{k}$. In fact, as illustrated in Figure \ref{fig_renewal_event}, coalescence can occur after time $\beta_n$ below the level $m_{\beta_n}+\kappa$, when the points $(g_{\beta_n}^{\uparrow}(\mathbf u_i))_{i\in\range{1}{k}}$ are not separated by a distance of at least $\kappa+R$. Due to this possibility of early coalescence, we do not have a Markovian structure a priori. However, we will see that when the trajectories are sufficiently far apart from one another, they evolve approximately as independent random walks. Note that good steps play a crucial role in the construction of the renewal times. The remainder of the section is devoted to making the above ideas into a rigorous argument.\\

As a first step, in the remainder of this subsection, we aim to show that the random times $(\beta_n)_{n\geq0}$ are almost surely finite, and that their spacing is distributed with an exponentially decaying tail. It is worth noting that $(\beta_n)_{n\geq0}$ are not $(\mathcal F_n)_{n\geq0}$ stopping times and that we will need to define enhanced sigma fields. For all $i\geq0$ we define
    \[\mathcal S_i\coloneqq\sigma(\mathcal F_{\tau_i},~A_i).\]
The next lemma ensures that the sequence $(\mathcal S_i)_{i\geq0}$ is indeed a filtration.

\begin{Lemma}
    \label{lemma_sfiltration}
    For all $i\geq 0$, we have $\mathcal S_i\subset\mathcal F_{\tau_{i+1}}$. In particular, $(\mathcal S_i)_{i\geq 0}$ is a filtration.
\end{Lemma}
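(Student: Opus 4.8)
The plan is to decompose the claim into two pieces: \textbf{(i)} $\mathcal F_{\tau_i}\subset\mathcal F_{\tau_{i+1}}$, and \textbf{(ii)} $A_i\in\mathcal F_{\tau_{i+1}}$. Granting both, we get $\mathcal S_i=\sigma(\mathcal F_{\tau_i},A_i)\subset\mathcal F_{\tau_{i+1}}\subset\sigma(\mathcal F_{\tau_{i+1}},A_{i+1})=\mathcal S_{i+1}$, which proves the stated inclusion $\mathcal S_i\subset\mathcal F_{\tau_{i+1}}$ and, along the way, that $(\mathcal S_i)_{i\geq0}$ is a filtration.

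Piece \textbf{(i)} is the standard monotonicity of stopped $\sigma$-fields: $\tau_i$ and $\tau_{i+1}$ are $(\mathcal F_n)_{n\geq0}$ stopping times with $\tau_i\leq\tau_{i+1}$, hence $\mathcal F_{\tau_i}\subset\mathcal F_{\tau_{i+1}}$.

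For piece \textbf{(ii)}, observe that $A_i$ is a deterministic function of the $\mathcal F_{\tau_i}$-measurable data $m_{\tau_i}$, the points $g^\uparrow_{\tau_i}(\mathbf u_j)$ and $g^\downarrow_{\tau_i}(\mathbf u_j)$ for $j\in\range{1}{k}$, the closed history set $\overline H_{\tau_i}$, together with the restriction of $\mathcal N$ to $\bigcup_{j=1}^{k}\big(B^+(g^\uparrow_{\tau_i}(\mathbf u_j),R)\cup B(g^\downarrow_{\tau_i}(\mathbf u_j),\kappa+R)\big)$. The $\mathcal F_{\tau_i}$-measurable data is $\mathcal F_{\tau_{i+1}}$-measurable by \textbf{(i)}. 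The key point — and the only place where the construction of the good steps is used — is that the definition of $(\tau_n)_{n\geq0}$ in Theorem \ref{thm_good_steps} forces $m_{\tau_{i+1}}-m_{\tau_i}\geq\kappa+R$; since $g^\uparrow_{\tau_i}(\mathbf u_j)$ lies at $e_d$-level $m_{\tau_i}+\kappa$ and $g^\downarrow_{\tau_i}(\mathbf u_j)$ at level $m_{\tau_i}$, both $B^+(g^\uparrow_{\tau_i}(\mathbf u_j),R)$ and $B(g^\downarrow_{\tau_i}(\mathbf u_j),\kappa+R)$ — together with their closures — are contained in $\mathbb H^-(m_{\tau_i}+\kappa+R)\subset\mathbb H^-(m_{\tau_{i+1}})$. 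Thus the relevant restriction of $\mathcal N$ is a measurable function of $\mathcal N\cap\mathbb H^-(m_{\tau_{i+1}})$, hence $\mathcal F_{\tau_{i+1}}$-measurable, and \textbf{(ii)} follows.

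To be rigorous at the level of stopped $\sigma$-fields, one checks $A_i\cap\{\tau_{i+1}=n\}\in\mathcal F_n$ for each $n$: on that event $\tau_i\leq n$, so the $\mathcal F_{\tau_i}$-measurable quantities above become $\mathcal F_n$-measurable, and $m_n=m_{\tau_{i+1}}\geq m_{\tau_i}+\kappa+R$, so the relevant region lies in $\mathbb H^-(m_n)$ and the needed restriction of $\mathcal N$ is read off from $\mathcal N\cap\mathbb H^-(m_n)$, which generates a sub-$\sigma$-field of $\mathcal F_n$; the value $n=\infty$ contributes nothing since the $\tau_n$ are a.s.\ finite by Theorem \ref{thm_good_steps}. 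I do not expect any genuine obstacle here: the proof is essentially bookkeeping, the one substantive ingredient being the spacing inequality $m_{\tau_{i+1}}-m_{\tau_i}\geq\kappa+R$, which guarantees that the entire renewal window above level $m_{\tau_i}$ has already been explored by time $\tau_{i+1}$.
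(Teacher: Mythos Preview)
Your proof is correct and follows essentially the same approach as the paper's: the key observation is that the event $A_i$ depends only on $\mathcal N$ restricted to $\mathbb H^-(m_{\tau_i}+\kappa+R)$, and the spacing condition $m_{\tau_{i+1}}-m_{\tau_i}\geq\kappa+R$ built into the definition of the good steps then forces $A_i\in\mathcal F_{\tau_{i+1}}$. Your write-up is more detailed than the paper's (which dispatches the lemma in three lines), in particular spelling out the stopped $\sigma$-field bookkeeping, but there is no genuine difference in strategy.
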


\begin{proof}
    The event $A_i$ is $\sigma(\mathcal N\cap\mathbb H^{-}(m_{\tau_i}+\kappa+R))$ measurable. Since by construction $m_{\tau_{i+1}}-m_{\tau_i}\geq\kappa+R$, then $A_i$ is $\sigma(\mathcal N\cap\mathbb H^{-}(m_{\tau_{i+1}}))\subset\mathcal F_{\tau_{i+1}}$ measurable. To show that we have a filtration, we just write that $\mathcal S_i\subset\mathcal F_{\tau_{i+1}}\subset\sigma(\mathcal F_{\tau_{i+1}}, A_{i+1})=\mathcal S_{i+1}$.
\end{proof}

We start by showing that, at each good step $j\geq0$, the renewal event $A_j$ occurs with probability uniformly bounded away from zero. To establish this, we rely on the following proposition. More than merely nontrivial, its proof requires a substantial and delicate geometric analysis. This argument is entirely deterministic and independent of the probabilistic aspects of the paper, which is why we defer it to the appendix (section \ref{section_balls}). A brief treatment of the special case $d=p=2$ was given in \cite{Dsf2d}, but in the general setting, the proof becomes significantly more intricate, which is why we provided a detailed presentation.

\begin{Proposition}
    \label{prop_combinatorial}
    There exists two constants $a_0=a_0(d, p, k)>0$ and $R_0=R_0(d, p, k)>0$ such that for all configuration $(c_i)_{i\in\range{1}{k}}\in\mathbb\R^k$ with $c_i\cdot e_d=0$ for each $i\in\range{1}{k}$, there exist a partition $\Pi$ of $\range{1}{k}$ such that for all $\pi\in\Pi$,
        \[|P_\pi^{R_0}|\geq a_0\quad\text{where}\quad P_\pi^{R_0}\coloneqq\bigcap_{i\in\pi} B^+(c_i, R_0)\setminus\bigcup_{j\in\range{1}{k}\setminus\pi} B(c_j-\kappa e_d, \kappa+R_0).\]
\end{Proposition}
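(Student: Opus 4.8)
The plan is to build the partition $\Pi$ greedily according to a suitable notion of ``$\ell^p$-proximity at scale $R_0$'', and then verify the area bound for each block. First I would fix the scale: since the $c_i$ all lie on the hyperplane $\{x\cdot e_d=0\}$, the relevant geometry is entirely governed by the $(d-1)$-dimensional configuration $(\mathbf p(c_i))_{i\in\range{1}{k}}$. I would introduce a threshold $D=D(d,p,k)$ (to be chosen large, e.g.\ $D=10k(\kappa+1)$) and declare $i\sim j$ whenever $\|\mathbf p(c_i)-\mathbf p(c_j)\|\le D$; let $\Pi$ be the partition into connected components of the transitive closure of $\sim$. Two features of this construction will be used repeatedly: (i) within a block $\pi$, any two indices are joined by a chain of $\sim$-steps, so $\mathrm{diam}\,\mathbf p(\{c_i:i\in\pi\})\le (k-1)D$; and (ii) across distinct blocks $\pi\ne\pi'$, every $i\in\pi$, $j\in\pi'$ satisfies $\|\mathbf p(c_i)-\mathbf p(c_j)\|>D$.

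The core of the argument is the lower bound $|P_\pi^{R_0}|\ge a_0$ for a single block $\pi$, which I would prove by exhibiting an explicit open box (or $\ell^p$-ball) of controlled size inside $P_\pi^{R_0}$. Let $b_\pi$ be, say, the coordinatewise barycenter of $\{c_i:i\in\pi\}$ in the hyperplane, and consider the point $z_\pi\coloneqq b_\pi+(r+\tfrac12)e_d$ for a small universal $r>0$ and a large $R_0$ to be fixed. For the ``inclusion'' part, I need: for every $i\in\pi$, the ball $B(z_\pi,r)$ lies inside $B^+(c_i,R_0)$; since $z_\pi\cdot e_d>0$ this is the half-space condition, and by (i) together with $\|z_\pi-c_i\|\le \mathrm{diam}+O(1)$ (a bound depending only on $d,p,k$), choosing $R_0$ larger than this bound plus $r$ gives $B(z_\pi,r)\subset B^+(c_i,R_0)$ via the triangle inequality. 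For the ``exclusion'' part, I need $B(z_\pi,r)$ to be disjoint from $B(c_j-\kappa e_d,\kappa+R_0)$ for every $j\notin\pi$. Here I would use that such a $j$ lies in a different block, so $\|\mathbf p(c_j)-\mathbf p(b_\pi)\|$ is large (at least $D$ minus the within-block diameter, hence $\ge D/2$ say if $D$ is chosen large relative to $kD$... so actually I must be careful: $b_\pi$ is within $(k-1)D$ of each $c_i\in\pi$, so to keep $b_\pi$ far from other blocks I instead anchor $z_\pi$ over a single representative $c_{i_0}\in\pi$ and track distances from there). The cleaner route is: anchor at a fixed representative, use (i) to control distances to the rest of the block and (ii) to get $\|\mathbf p(c_j)-\mathbf p(c_{i_0})\|>D-(k-1)D$... which fails sign-wise, so the genuinely correct fix is to choose $D$ after $R_0$, or to define blocks using threshold $D$ but note that the exclusion balls have radius $\kappa+R_0$, so I actually want $D$ much larger than $\kappa+R_0+(k-1)D$, impossible — hence one must instead take $R_0$ first, then $D=D(R_0)$ enormous, and then observe $R_0$ in the \emph{inclusion} bound only needs to dominate the within-block diameter $(k-1)D$, creating circularity. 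The resolution, which I would carry out carefully, is to scale: replace ``$\|\mathbf p(c_i)-\mathbf p(c_j)\|\le D$'' by a \emph{multiplicative} clustering (single-linkage at a ratio), or equivalently rescale the whole configuration so that the minimal inter-block gap is exactly $1$; then within-block diameters are bounded by $(k-1)$ times that gap only if we cluster at a fixed ratio, which single-linkage hierarchical clustering does \emph{not} guarantee — so instead I would use the standard trick of choosing the clustering threshold among $k$ geometrically separated scales $1,\Lambda,\Lambda^2,\dots,\Lambda^{k}$ with $\Lambda$ huge: by pigeonhole there is a scale $\Lambda^m$ at which no pair's distance falls in $[\Lambda^m,\Lambda^{m+1})$, and clustering at that scale gives within-block diameter $\le k\Lambda^m$ while inter-block gap $\ge \Lambda^{m+1}=\Lambda\cdot\Lambda^m \gg k\Lambda^m$. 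Then set $R_0$ proportional to the (configuration-dependent!) quantity $k\Lambda^m$ — but $R_0$ must be universal, so finally one rescales the configuration by $\Lambda^{-m}$: the sets $P_\pi^{R_0}$, the balls, and $\kappa$ all scale, so after rescaling we reduce to configurations with within-block diameter $\le k$ and inter-block gap $\ge \Lambda$, for which a \emph{universal} $R_0=R_0(d,p,k)$ and $a_0=a_0(d,p,k)$ work by the box-insertion argument above, and then one checks the bound $|P_\pi^{R_0}|\ge a_0$ is scale-invariant in the right sense (it is, because $\kappa$ is fixed: here one actually has to redo this — the honest statement is that $\kappa$ does \emph{not} scale, so one keeps $R_0$ large enough that $\kappa$ is negligible, i.e.\ $R_0\gg\kappa$, which is fine since $R_0$ may depend on $d,p,k$ and $\kappa=\kappa(d,p,k)$).

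I would organize the write-up as: (1) the pigeonhole/geometric-separation lemma producing, for any configuration, a partition with within-block diameter at most some $\delta$ and inter-block separation at least $10k\delta$, with $\delta$ depending on the configuration; (2) a rescaling reduction to $\delta\le 1$; (3) the explicit construction of $z_\pi$ over a block representative and the verification, via the triangle inequality for $\|\cdot\|$ (valid for all $p\in[1,\infty]$) and the half-space condition $z_\pi\cdot e_d>0$, that a small ball $B(z_\pi,r)$ is contained in every $B^+(c_i,R_0)$, $i\in\pi$, and disjoint from every $B(c_j-\kappa e_d,\kappa+R_0)$, $j\notin\pi$, for suitable universal $r,R_0$; (4) conclude $|P_\pi^{R_0}|\ge |B(0,r)|=:a_0$. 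The main obstacle, as the above digression makes plain, is step (1)–(2): getting a \emph{universal} pair $(a_0,R_0)$ despite the fact that raw single-linkage clustering controls diameters only up to a factor growing with $k$ and the inter-block gap — the geometric-scales pigeonhole argument is exactly what makes the factors manageable, and the rescaling is what launders the configuration-dependence out of $R_0$. The disjointness verification in step (3) is the only place $\kappa$ enters, and it is handled simply by demanding $R_0$ large compared to $\kappa$; the triangle-inequality estimates are routine and uniform in $p\in[1,\infty]$, so no case analysis on $p$ is needed here (unlike in Lemma \ref{lemma_section}).
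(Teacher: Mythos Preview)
Your proposal correctly flags the circularity between the clustering scale and $R_0$, but the rescaling you propose does not resolve it. After the pigeonhole step you have a configuration-dependent index $m$ and a clustering with within-block diameter $\le(k-1)\Lambda^m$ and inter-block gap $\ge\Lambda^{m+1}$. Your box-insertion places $z_\pi$ at a fixed small height and uses only the triangle inequality in the hyperplane for the exclusion, which forces the inter-block gap to exceed roughly $\kappa+R_0$; combined with the inclusion requirement $R_0\gtrsim$ within-block diameter, this pins $R_0$ to a window around $\Lambda^m$ that moves with the configuration. Rescaling by $\Lambda^{-m}$ does normalise the diameter and gap, but solving the rescaled problem with some universal $R_0'$ and undoing the scaling gives $R_0=\Lambda^m R_0'$ for the original configuration, which is still configuration-dependent --- and $\kappa$ does not rescale, so you cannot simply stay in the rescaled world. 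A concrete obstruction: for three collinear centres at $0,R_0,2R_0$ (with any fixed $R_0$), the one-block intersection $\bigcap_i B^+(c_i,R_0)$ has measure zero, while any proper split leaves an in-plane gap of exactly $R_0$, so with height $h\approx\tfrac12$ your exclusion estimate $\|z_\pi-(c_j-\kappa e_d)\|>\kappa+R_0$ fails once $R_0$ exceeds a bounded threshold.

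The paper avoids this by exploiting the $e_d$ direction rather than demanding large in-plane separation. It first proves a unit-radius version with no $\kappa$ (Proposition~\ref{prop_ball}): for $p<\infty$ the witness is placed at a \emph{tuned} height $h_\pi=(1-(r_\pi+\tfrac{\epsilon}{2})^p)^{1/p}$ above the cluster barycentre, so that inclusion reads $h_\pi^p+r_\pi^p<1$ and exclusion reads $h_\pi^p+(r_\pi+\epsilon)^p>1$, both automatic --- and this works even when the in-plane gap $\epsilon$ is much \emph{smaller} than the within-block radius. The clustering therefore only needs $r_\pi\le\delta<\tfrac12$ and separation $\ge\epsilon$, with $\epsilon$ a fixed small multiple of $\delta$, which a greedy barycentric construction supplies uniformly over all configurations. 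Only after this universal unit-scale statement does a single scaling by $R_0=4\kappa/\eta$ introduce the $\kappa$-shift. This also explains why the paper splits into cases on $p$, contrary to your final remark: the tuned-height argument relies on strict convexity of the $\ell^p$ ball and breaks for $p=\infty$, which is handled by a separate coordinate-wise construction.
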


We now set $R\coloneqq R_0$, as specified in the preceding proposition. The value of $R$ was left arbitrary until now precisely to allow this choice.

\begin{Lemma}
    \label{lemma_renewalproba}
    There exist $p_0=p_0(k, d, p)>0$ such that for all $n\geq0$, we have
        \[\P(A_n~|~\mathcal F_{\tau_n})\geq p_0.\]
    In particular, for all $m>n$, we have $\P(A_m~|~\mathcal S_n)\geq p_0$.
\end{Lemma}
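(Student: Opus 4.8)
The plan is to use Proposition \ref{prop_resampling} to reduce the conditional probability $\P(A_n \mid \mathcal F_{\tau_n})$ to a Poisson probability involving only the geometry of the balls appearing in the event $A_n$, and then to bound this geometry from below uniformly using Proposition \ref{prop_combinatorial}. First I would recall the definition of $A_n$: it asks that for each $i\in\range{1}{k}$, the shifted outer half-ball $B^+(g_{\tau_n}^\uparrow(\mathbf u_i), R)$ contains exactly one Poisson point, and the annular-type region $B(g_{\tau_n}^\downarrow(\mathbf u_i), \kappa+R)\setminus\overline{H}_{\tau_n}$ (which, since $g_{\tau_n}^\downarrow(\mathbf u_i)$ sits at level $m_{\tau_n}$ and $L_{\tau_n}\le\kappa$ on a good step, contains the inner half-ball above) also contains exactly that single point. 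All the regions involved lie in $\mathbb H^+(m_{\tau_n})$ and outside $\overline H_{\tau_n}$, so conditionally on $\mathcal F_{\tau_n}$ Proposition \ref{prop_resampling} tells us that $\mathcal N$ restricted to these regions is a fresh Poisson process on $\R^d\setminus(\mathbb H^-(m_{\tau_n})\cup H_{\tau_n})$. Hence $\P(A_n\mid\mathcal F_{\tau_n})$ equals the probability that this fresh process realizes the prescribed point counts.

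Next I would make the following concrete choice of favourable event, which clearly implies $A_n$: partition $\range{1}{k}$ according to Proposition \ref{prop_combinatorial} applied to the configuration $(c_i)_i$ obtained by projecting the points $(g_{\tau_n}^\downarrow(\mathbf u_i))_i$ (or equivalently $g_{\tau_n}(\mathbf u_i)$) to the level $m_{\tau_n}$ and re-centering; for each class $\pi\in\Pi$, require exactly one Poisson point in the common pocket $P_\pi^{R_0}$ (translated back to the actual position, and lifted so that it sits in $B^+(g_{\tau_n}^\uparrow(\mathbf u_i),R)\cap(B(g_{\tau_n}^\downarrow(\mathbf u_i),\kappa+R)\setminus\overline H_{\tau_n})$ for every $i\in\pi$), and require no Poisson point anywhere else in $\bigcup_i\big(B^+(g_{\tau_n}^\uparrow(\mathbf u_i),R)\cup(B(g_{\tau_n}^\downarrow(\mathbf u_i),\kappa+R)\setminus\overline H_{\tau_n})\big)$. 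Since $|P_\pi^{R_0}|\ge a_0$ and the total measure of the region that must be empty is at most $2k\,|B(0,\kappa+R)|=:V$, the independence of the counts on disjoint regions for a Poisson process gives
\[
\P(A_n\mid\mathcal F_{\tau_n})\ \ge\ e^{-V}\prod_{\pi\in\Pi} a_0 e^{-a_0}\ \ge\ e^{-V}\,(a_0 e^{-a_0})^{k}\ =:\ p_0\ >\ 0,
\]
a bound independent of $n$ and of the realization of $\mathcal F_{\tau_n}$, because $\kappa$, $R=R_0$, $a_0$ and the cap $V$ are all deterministic constants depending only on $k,d,p$. The second assertion, $\P(A_m\mid\mathcal S_n)\ge p_0$ for $m>n$, then follows by the tower property: $\mathcal S_n\subset\mathcal F_{\tau_{n+1}}\subset\mathcal F_{\tau_m}$ by Lemma \ref{lemma_sfiltration}, so $\P(A_m\mid\mathcal S_n)=\E[\P(A_m\mid\mathcal F_{\tau_m})\mid\mathcal S_n]\ge p_0$.

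The only real subtlety — hence the step I expect to be the main obstacle — is checking that the pocket $P_\pi^{R_0}$, after translating and lifting, genuinely lands inside the regions dictated by $A_n$ for \emph{every} index in the class, and in particular does not intersect $\overline H_{\tau_n}$. This is where the definitions of $g^\uparrow$ and $g^\downarrow$, the good-step bound $L_{\tau_n}\le\kappa$, and the precise shape of the sets $B^+(c_i,R_0)$ versus $B(c_j-\kappa e_d,\kappa+R_0)$ in Proposition \ref{prop_combinatorial} must be matched carefully: the half-ball $B^+(g^\uparrow,R)$ sits a height $\kappa$ above level $m_{\tau_n}$, the region $B(g^\downarrow,\kappa+R)\setminus\overline H_{\tau_n}$ reaches from level $m_{\tau_n}$ upward and excludes the (height $\le\kappa$) history set, and one must verify these two overlap in exactly the slab where the combinatorial pockets were constructed. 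Once this bookkeeping is done, the probabilistic content is just the elementary Poisson estimate above.
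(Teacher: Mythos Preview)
Your approach is essentially identical to the paper's: apply Proposition~\ref{prop_combinatorial} to the projected centers, place exactly one Poisson point in each (translated) pocket and none elsewhere in the union of the big balls, and evaluate the conditional probability via Proposition~\ref{prop_resampling}; the second assertion is derived exactly as you do, from Lemma~\ref{lemma_sfiltration} and the tower property. One small correction to your explicit constant: the probability that a Poisson region of measure $|\Lambda_\pi|\ge a_0$ contains exactly one point is $|\Lambda_\pi|e^{-|\Lambda_\pi|}$, which is \emph{not} bounded below by $a_0e^{-a_0}$ in general (the map $t\mapsto te^{-t}$ decreases for $t>1$); since each pocket lies in $B^+(0,R)$, the paper uses instead $\inf_{a_0\le t\le |B^+(0,R)|}te^{-t}>0$, and you should do the same.
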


\begin{proof}
    Let $n\geq0$. For each $i\in\range{1}{k}$, we set
        \[c_i\coloneqq g_{\tau_n}(\mathbf u_i)-(g_{\tau_n}(\mathbf u_i)\cdot e_d)e_d.\]
    Applying Proposition \ref{prop_combinatorial}, there exist $\Pi$ a partition of $\range{1}{k}$ such that for all $\pi\in\Pi$, $|P_\pi^{R_0}|\geq a_0$. By translation, this implies that for all $\pi\in\Pi$,
        \[|\Lambda_{\pi}|\geq a_0\quad\text{where}\quad \Lambda_\pi\coloneqq \bigcap_{i\in\pi} B^+\big(g_{\tau_n}^\uparrow(\mathbf u_i), R\big)\setminus\bigcup_{j\in\range{1}{k}\setminus\pi} B\big(g_{\tau_n}^\downarrow(\mathbf u_i), \kappa+R\big).\]
    Let us denote
        \[E\coloneqq\bigcup_{i\in\range{1}{k}}B\big(g_{\tau_n}^\downarrow(\mathbf u_i), \kappa+R\big)\setminus\left(\overline{H}_{\tau_n}\cup\bigcup_{\pi\in\Pi}\Lambda_\pi\right).\]
    By construction, we have that
        \begin{equation*}
            A_n\supset\{\#\mathcal N\cap E=0\}\cap\{\forall \pi\in\Pi,~\#\mathcal N\cap\Lambda_\pi=1\}.
        \end{equation*}
    Now, observe that thanks to Proposition \ref{prop_resampling}, conditionally on $\mathcal F_{\tau_n}$,
        \[\mathcal N\cap\mathbb H^+(m_{\tau_n})\setminus\overline{H}_{\tau_n}\]
    is distributed as $\mathcal N'\cap\mathbb H^+(m_{\tau_n})\setminus H_{\tau_n}$. Since $E$ and $(\Lambda_{\pi})_{\pi\in\Pi}$ are disjoint subsets of $\mathbb H^+(m_{\tau_n})\setminus\overline{H}_{\tau_n}$, we have then
        \begin{equation*}
            \begin{split}
                \P(A_n~|~\mathcal F_{\tau_n})&\geq\P(\#\mathcal N\cap E\quad\text{and}\quad\forall \pi\in\Pi,~\#\mathcal N\cap\Lambda_\pi=1~|~\mathcal F_{\tau_n})\\
                &=e^{-|E|}\prod_{\pi\in\Pi}\left(|\Lambda_\pi|e^{-|\Lambda_\pi|}\right).
            \end{split}
        \end{equation*}
    Since $|E|\leq k|B^+(0, \kappa+R)|$ by union bound and $a_0\leq |\Lambda_\pi|\leq |B^+(0,R)|$, we deduce that
        \[\P(A_n~|~\mathcal F_{\tau_n})\geq p_0\coloneqq e^{-k|B^+(0, \kappa+R)|}\left(\inf_{a_0\leq t\leq |B^+(0,R)|} te^{-t}\right)^k.\]
    Finally, if $m>n$, from lemma \ref{lemma_sfiltration} we have $\mathcal S_n\subset\mathcal F_{\tau_{n+1}}\subset\mathcal F_{\tau_m}$, and the last part of the statement follows. The proof is complete.
\end{proof}

We now use the previous result to show that the random variables $(\gamma_n)_{n\geq 0}$ are almost surely finite.

\begin{Lemma}
    \label{lemma_eta_stoppingtimes}
    The random variables $(\gamma_n)_{n\geq 0}$ are $(\mathcal S_i)_{i\geq 0}$ stopping times, and for all $n\geq 0$ and $m\geq 0$, we have
        \[\P(\gamma_{n+1}-\gamma_n>\ell~|~\mathcal S_{\gamma_n})\leq(1-p_0)^\ell.\]
\end{Lemma}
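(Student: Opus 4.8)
The plan is to verify the two assertions in turn. First, I would check that each $\gamma_n$ is an $(\mathcal S_i)_{i\geq 0}$ stopping time. This is immediate from the recursive definition: $\gamma_0 = \inf\{i \geq 0 : A_i \text{ occurs}\}$, so $\{\gamma_0 \leq \ell\} = \bigcup_{i=0}^{\ell} A_i$, and each $A_i \in \mathcal S_i \subseteq \mathcal S_\ell$ by Lemma \ref{lemma_sfiltration}, hence $\{\gamma_0 \leq \ell\} \in \mathcal S_\ell$. For the inductive step, $\{\gamma_{n+1} \leq \ell\} = \bigcup_{j=0}^{\ell}\big(\{\gamma_n = j\} \cap \bigcup_{i=j+1}^{\ell} A_i\big)$, where $\{\gamma_n = j\} \in \mathcal S_j \subseteq \mathcal S_\ell$ by the induction hypothesis and $A_i \in \mathcal S_i \subseteq \mathcal S_\ell$; so the event lies in $\mathcal S_\ell$.

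Next I would establish the tail bound. Fix $n \geq 0$ and $\ell \geq 1$. On the event $\{\gamma_{n+1} - \gamma_n > \ell\}$, none of the renewal events $A_{\gamma_n + 1}, \dots, A_{\gamma_n + \ell}$ occur. I would write this out by conditioning step by step: for any $m \geq 1$,
\[
\P(\gamma_{n+1} - \gamma_n > m \mid \mathcal S_{\gamma_n}) = \P\big(A_{\gamma_n+1}^c \cap \dots \cap A_{\gamma_n+m}^c \mid \mathcal S_{\gamma_n}\big).
\]
Peeling off the last factor and using the tower property with respect to $\mathcal S_{\gamma_n + m - 1}$, together with Lemma \ref{lemma_renewalproba} (in its strong form: $\P(A_j \mid \mathcal S_i) \geq p_0$ for $j > i$, applied here on the random index $\gamma_n + m - 1$, which requires the optional-stopping/strong-Markov style argument that the bound holds uniformly over the stopped filtration), one gets
\[
\P(\gamma_{n+1} - \gamma_n > m \mid \mathcal S_{\gamma_n}) \leq (1 - p_0)\,\P(\gamma_{n+1} - \gamma_n > m - 1 \mid \mathcal S_{\gamma_n}),
\]
and iterating down to $m = 0$ yields $(1 - p_0)^\ell$. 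Almost sure finiteness of $\gamma_{n+1}$ (given that of $\gamma_n$, which follows by induction from $\gamma_0$) is then immediate by letting $\ell \to \infty$, and $\gamma_0 < \infty$ a.s.\ follows the same way from $\P(A_i \mid \mathcal F_{\tau_i}) \geq p_0$.

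The main obstacle is the bookkeeping around the fact that $\gamma_n$ is a random (stopping) time, not a deterministic index: to apply Lemma \ref{lemma_renewalproba} at step $\gamma_n + m$ one must justify that the conditional lower bound $p_0$ on $\P(A_{\gamma_n+m} \mid \mathcal S_{\gamma_n + m - 1})$ survives the stopping, i.e.\ one needs a version of the strong Markov / optional sampling argument for the filtration $(\mathcal S_i)_{i\geq 0}$. Concretely, I would decompose over the values $\{\gamma_n = j\}$, which are $\mathcal S_j$-measurable, and on each such event invoke the deterministic-index bound $\P(A_{j+m} \mid \mathcal S_{j+m-1}) \geq p_0$ from Lemma \ref{lemma_renewalproba}; summing back over $j$ recovers the claimed inequality. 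Everything else is routine.
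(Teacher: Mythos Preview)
Your proposal is correct and follows essentially the same approach as the paper. The paper verifies the stopping time property via the closed-form identity $\{\gamma_n=j\}=A_j\cap\{\sum_{i=0}^{j-1}\mathds 1_{A_i}=n\}$ rather than your induction, and then obtains the tail bound exactly as you do: writing $\{\gamma_{n+1}-\gamma_n>\ell\}$ as the intersection $\bigcap_{i=1}^\ell A_{\gamma_n+i}^c$, invoking Lemma~\ref{lemma_renewalproba} in the form $\P(A_{\gamma_n+j+1}\mid\mathcal S_{\gamma_n+j})\geq p_0$ (justified, as you note, via decomposition over $\{\gamma_n=j\}$), and iterating the tower property to produce the factor $(1-p_0)^\ell$.
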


\begin{proof}
    First, observe that if $j\geq 0$, then 
        \[\{\gamma_n=j\}=A_j\cap\left\lbrace\sum_{i=0}^{j-1}\mathds 1_{A_i}=n-1\right\rbrace,\]
    which is $\mathcal S_j$ measurable. Hence $\gamma_n$ is a $(\mathcal S_i)_{i\geq 0}$ stopping time. Together with Lemma \ref{lemma_renewalproba} we then obtain that for all $j\geq0$, 
        \[\P(A_{\gamma_n+j+1}~|~\mathcal S_{\gamma_n+j})\geq p_0.\]
    Finally, recalling that $\gamma_{n+1}-\gamma_n$ represents the number of good step we need to wait to reach the next renewal step, it follows that
        \[\P(\gamma_{n+1}-\gamma_n>\ell~|~\mathcal S_{\gamma_n})=\E\left[\prod_{i=1}^\ell (1-\mathds 1_{A_{\gamma_n+i}})~\middle|~\mathcal S_{\gamma_n}\right]\leq(1-p_0)^\ell.\]
    The proof is complete.
\end{proof}

We now define a new filtration that will make the sequence $(\beta_n)_{n\geq 0}$ adapted to it, by setting
    \[\mathcal G_n\coloneqq\mathcal S_{\gamma_n}.\]
for all $n\geq0$. Before showing that the $(\beta_n)_{n\geq 0}$ are almost surely finite, we prove the following lemma, which will be useful several times.

\begin{Lemma}
    \label{lemma_EGn}
    Let $X$ be any $\sigma(\mathcal N)$-measurable non-negative random variable and $n\geq 0$. We have
        \[\E[X~|~\mathcal G_n]=\sum_{i\geq 0}\mathds 1_{\gamma_n=i}\frac{\E[X\mathds 1_{A_i}~|~\mathcal F_{\tau_i}]}{\P(A_i~|~\mathcal F_{\tau_i})}\leq\frac{1}{p_0}\sum_{i\geq 0}\mathds 1_{\gamma_n=i}\E[X~|~\mathcal F_{\tau_i}]\]
\end{Lemma}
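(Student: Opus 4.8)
The plan is to combine two standard facts about conditioning: the localization property of a stopped $\sigma$-field, and the explicit disintegration formula for a conditional expectation given a $\sigma$-field enriched by a single event.

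First I would recall that, by Lemma~\ref{lemma_eta_stoppingtimes}, $\gamma_n$ is an $(\mathcal S_i)_{i\geq 0}$ stopping time that is almost surely finite, and that by definition $\mathcal G_n=\mathcal S_{\gamma_n}$. Hence, by the usual localization property of stopped $\sigma$-fields, for any non-negative $\sigma(\mathcal N)$-measurable random variable $X$ one has $\mathds 1_{\gamma_n=i}\,\E[X\mid\mathcal G_n]=\mathds 1_{\gamma_n=i}\,\E[X\mid\mathcal S_i]$ for every $i\geq 0$, and therefore, summing over the $i$'s (which partition a full-measure event since $\gamma_n<\infty$ a.s.),
\[\E[X\mid\mathcal G_n]=\sum_{i\geq 0}\mathds 1_{\gamma_n=i}\,\E[X\mid\mathcal S_i].\]
All conditional expectations are interpreted in $[0,\infty]$, which is legitimate as $X\geq 0$.

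Next I would compute $\E[X\mid\mathcal S_i]$. Since $\mathcal S_i=\sigma(\mathcal F_{\tau_i},A_i)$ is the $\sigma$-field $\mathcal F_{\tau_i}$ enriched by the single event $A_i$, the standard disintegration formula gives
\[\E[X\mid\mathcal S_i]=\mathds 1_{A_i}\,\frac{\E[X\mathds 1_{A_i}\mid\mathcal F_{\tau_i}]}{\P(A_i\mid\mathcal F_{\tau_i})}+\mathds 1_{A_i^c}\,\frac{\E[X\mathds 1_{A_i^c}\mid\mathcal F_{\tau_i}]}{\P(A_i^c\mid\mathcal F_{\tau_i})},\]
with the convention $0/0=0$; this identity is verified by testing the right-hand side against the generating sets $G\cap A_i$ and $G\cap A_i^c$ for $G\in\mathcal F_{\tau_i}$ and applying the tower property. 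Crucially, Lemma~\ref{lemma_renewalproba} guarantees $\P(A_i\mid\mathcal F_{\tau_i})\geq p_0>0$ almost surely, so the first quotient is genuinely well defined and no degeneracy occurs.

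Finally I would use that on the event $\{\gamma_n=i\}$ the renewal event $A_i$ occurs, by the very definition of $\gamma_n$; thus $\mathds 1_{\gamma_n=i}\mathds 1_{A_i}=\mathds 1_{\gamma_n=i}$ and $\mathds 1_{\gamma_n=i}\mathds 1_{A_i^c}=0$. Multiplying the displayed expression for $\E[X\mid\mathcal S_i]$ by $\mathds 1_{\gamma_n=i}$ therefore kills the second term, and inserting the result into the sum yields the claimed equality. The inequality is then immediate: since $X\geq 0$ we have $\E[X\mathds 1_{A_i}\mid\mathcal F_{\tau_i}]\leq\E[X\mid\mathcal F_{\tau_i}]$, while $\P(A_i\mid\mathcal F_{\tau_i})\geq p_0$ by Lemma~\ref{lemma_renewalproba}, so each summand is bounded by $\frac{1}{p_0}\mathds 1_{\gamma_n=i}\E[X\mid\mathcal F_{\tau_i}]$. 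I do not expect any serious obstacle: the only points needing care are the convention $0/0=0$ (harmless thanks to Lemma~\ref{lemma_renewalproba}) and the fact that $X$ is merely non-negative rather than integrable, which is handled by working throughout in $[0,\infty]$.
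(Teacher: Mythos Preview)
Your proof is correct and follows essentially the same approach as the paper: localize $\E[X\mid\mathcal S_{\gamma_n}]$ over the values of the stopping time $\gamma_n$, use the disintegration of $\mathcal S_i=\sigma(\mathcal F_{\tau_i},A_i)$, observe that $A_i$ occurs on $\{\gamma_n=i\}$, and conclude the inequality from $X\geq 0$ and $\P(A_i\mid\mathcal F_{\tau_i})\geq p_0$. Your version is a bit more explicit about the disintegration formula and the handling of non-integrable $X$, but the structure is identical.
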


\begin{proof}
    Since $\gamma_n$ is an almost surely finite $(\mathcal S_i)_{i\geq 0}$ stopping time by Lemma \ref{lemma_eta_stoppingtimes}, and that $A_{\gamma_n}$ occurs, we can write
        \begin{equation*}
            \begin{split}
                \E[X~|~\mathcal G_n]=\E[X~|~\mathcal S_{\gamma_n}]
                &=\sum_{i\geq 0}\mathds 1_{\gamma_n=i}\mathds 1_{A_i}\E[X~|~\sigma(\mathcal F_{\tau_i},~A_i)]\\
                &=\sum_{i\geq 0}\mathds 1_{\gamma_n=i}\frac{\E[X\mathds 1_{A_i}~|~\mathcal F_{\tau_i}]}{\P(A_i~|~\mathcal F_{\tau_i})}.
            \end{split}
        \end{equation*}
    Then, since for each $i\geq 0$, we have $X\mathds 1_{A_i}\leq X$ since $X\geq 0$ and $\P(A_i~|~\mathcal F_{\tau_i})\geq p_0$ by Lemma \ref{lemma_renewalproba}, we get the result.
\end{proof}

We can now prove that the random variables $(\beta_n)_{n\geq0}$ are almost surely finite and have spacing distributed with an exponentially decaying tail.

\begin{Proposition}
    \label{prop_nu_stoppingtimes}
    The random variables $(\beta_n)_{n\geq 0}$ are $(\mathcal G_n)_{n\geq 0}$ adapted, almost surely finite, and there exist $\alpha=\alpha(k, d, p)>0$ and $\beta=\beta(k, d,p)>0$ such that for all $n,\ell\geq 0$,
        \[\P(\beta_{n+1}-\beta_n>\ell~|~\mathcal G_n)\leq\alpha e^{-\beta \ell}.\]
\end{Proposition}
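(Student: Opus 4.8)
The plan is to express the spacing $\beta_{n+1}-\beta_n$ in terms of quantities we already control, namely the good-step spacings $\tau_{j+1}-\tau_j$ from Theorem~\ref{thm_good_steps} and the renewal-index spacings $\gamma_{n+1}-\gamma_n$ from Lemma~\ref{lemma_eta_stoppingtimes}. Recall $\beta_n=\tau_{\gamma_n}$, so
\[
    \beta_{n+1}-\beta_n=\tau_{\gamma_{n+1}}-\tau_{\gamma_n}=\sum_{j=\gamma_n}^{\gamma_{n+1}-1}(\tau_{j+1}-\tau_j).
\]
Adaptedness is immediate: $\beta_n=\tau_{\gamma_n}$ is $\mathcal S_{\gamma_n}=\mathcal G_n$-measurable since $\gamma_n$ is an $(\mathcal S_i)$-stopping time and $\tau_i\in\mathcal F_{\tau_i}\subset\mathcal S_i$. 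Almost-sure finiteness will follow from the tail bound.

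For the tail bound, fix $n,\ell\geq 0$ and condition on $\mathcal G_n=\mathcal S_{\gamma_n}$. First I would use a union-type splitting: if $\beta_{n+1}-\beta_n>\ell$, then either the number of good steps to wait, $\gamma_{n+1}-\gamma_n$, exceeds some threshold $L$, or it is at most $L$ but one of the at most $L$ summands $\tau_{j+1}-\tau_j$ exceeds $\ell/L$. Taking $L=\lceil \sqrt{\ell}\,\rceil$ (or $L$ proportional to $\ell$, either works) and applying a union bound,
\[
    \P(\beta_{n+1}-\beta_n>\ell\mid\mathcal G_n)
    \leq\P(\gamma_{n+1}-\gamma_n>L\mid\mathcal G_n)
    +\sum_{b=0}^{L-1}\P\Big(\gamma_{n+1}-\gamma_n> b,\ \tau_{\gamma_n+b+1}-\tau_{\gamma_n+b}>\tfrac{\ell}{L}\ \Big|\ \mathcal G_n\Big).
\]
The first term is at most $(1-p_0)^{L}$ by Lemma~\ref{lemma_eta_stoppingtimes}, which is super-exponentially small in $\ell$ for $L\sim\sqrt\ell$, or plainly exponential for $L\sim\ell$. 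For each term in the sum, on the event $\{\gamma_{n+1}-\gamma_n>b\}\in\mathcal S_{\gamma_n+b}$ I would condition further on $\mathcal F_{\tau_{\gamma_n+b}}$ (using Lemma~\ref{lemma_EGn} to pass from $\mathcal G_n$-conditioning to $\mathcal F_{\tau_i}$-conditioning at the cost of a $1/p_0$ factor, after summing over the value $i$ of $\gamma_n$ and then over $b$) and apply the good-step tail estimate $\P(\tau_{j+1}-\tau_j>m\mid\mathcal F_{\tau_j})\leq\alpha e^{R-\beta m}$ of Theorem~\ref{thm_good_steps} with $m=\ell/L$. This gives each summand a bound of order $\alpha e^{R-\beta\ell/L}/p_0$, and with $L\sim\ell$ this is $\alpha' e^{-\beta'}$ — so I would in fact choose $L=\lceil c\ell\rceil$ for a suitable constant $c$ to balance the two error sources, or more cleanly split $\beta_{n+1}-\beta_n>\ell$ into $\{\gamma_{n+1}-\gamma_n>\ell/2\}$ versus $\{\gamma_{n+1}-\gamma_n\leq\ell/2$ with some summand $>1\}$ and sum geometric-type tails. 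Collecting terms, $\P(\beta_{n+1}-\beta_n>\ell\mid\mathcal G_n)\leq\alpha e^{-\beta\ell}$ for new constants depending only on $k,d,p$ (absorbing the fixed $R=R_0$).

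The main obstacle is bookkeeping the nested conditioning correctly: the stopping times $\gamma_n$ live in the coarser filtration $(\mathcal S_i)$ while the good-step estimates are stated for $(\mathcal F_n)$, so one must carefully use $\mathcal S_i\subset\mathcal F_{\tau_{i+1}}$ (Lemma~\ref{lemma_sfiltration}) and Lemma~\ref{lemma_EGn} to move between them without losing measurability of the events $\{\gamma_{n+1}-\gamma_n>b\}$ at the right time. A mild secondary point is that Lemma~\ref{lemma_eta_stoppingtimes} already absorbs the combinatorial/renewal-probability content, so the only genuinely new work here is the convolution of a geometric number of exponential-tailed summands — a routine but slightly delicate estimate that the $e^R$ prefactor in Theorem~\ref{thm_good_steps} is designed to accommodate.
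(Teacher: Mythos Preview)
Your union-bound over individual good-step spacings does not deliver exponential decay. Concretely: with $L$ summands, the bound ``at least one $\tau_{j+1}-\tau_j>\ell/L$'' gives at best $L\cdot\alpha e^{-\beta\ell/L}$ for the second term, while the first term is $(1-p_0)^L$. If $L\sim\sqrt\ell$ both pieces are of order $e^{-c\sqrt\ell}$, which is \emph{sub}-exponential in $\ell$ (not super-exponential, as you wrote). If $L\sim c\ell$ the first term is fine but each summand in the union bound is a constant $\alpha e^{-\beta/c}$, so the sum is of order $\ell$ and diverges. Your alternative ``$\gamma_{n+1}-\gamma_n\le\ell/2$ with some summand $>1$'' has the same defect: the probability that one of $\ell/2$ summands exceeds a fixed constant is $O(\ell)$, not $o(1)$. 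No choice of $L$ rescues this, because a union bound on individual exceedances of a sum of $m$ light-tailed variables can never beat $e^{-c\sqrt\ell}$ when $m$ is allowed to grow with $\ell$.

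The paper fixes this by using an exponential \emph{moment} (Chernoff) bound on the whole sum rather than a union bound on its terms. From Theorem~\ref{thm_good_steps} one has $\E[e^{t(\tau_{j+1}-\tau_j)}\mid\mathcal F_{\tau_j}]\le C$ for some $t,C>0$, hence by iterated conditioning $\E[e^{t(\tau_{i+\ell}-\tau_i)}\mid\mathcal F_{\tau_i}]\le C^\ell$ and Markov gives $\P(\tau_{i+\ell}-\tau_i>A\ell\mid\mathcal F_{\tau_i})\le e^{-tA\ell}C^\ell$, which is genuinely exponential once $A$ is chosen with $Ce^{-tA}<1$. The paper then splits $\{\beta_{n+1}-\beta_n>A\ell\}\subset\{\gamma_{n+1}-\gamma_n>\ell\}\cup\{\tau_{\gamma_n+\ell}-\tau_{\gamma_n}>A\ell\}$, passes to $\mathcal F_{\tau_i}$-conditioning via Lemma~\ref{lemma_EGn} exactly as you anticipated, and combines with $(1-p_0)^\ell$ from Lemma~\ref{lemma_eta_stoppingtimes}. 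Your filtration bookkeeping and adaptedness argument are correct; the missing ingredient is replacing the per-summand union bound by this Chernoff step.
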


\begin{proof}
    First, observe that for all $j\geq 0$ we have
        \[\{\beta_n=j\}=\bigcup_{i\geq 0}\{\gamma_n=i,~\tau_i=j\},\]
    where, by definition, each event in the union is $\mathcal S_{\gamma_n}$ measurable. Hence $(\beta_n)_{n\geq 0}$ is $(\mathcal G_n)_{n\geq 0}$ adapted. Now for all $A>0$ and $\ell\geq 0$, observe that we can write
        \begin{equation}
            \label{eq_nudelta_decomp}
            \P(\beta_{n+1}-\beta_n>A\ell~|~\mathcal G_n)\leq\P(\gamma_{n+1}-\gamma_n>\ell~|~\mathcal G_n)+\P(\tau_{\gamma_n+m}-\tau_{\gamma_n}>A\ell~|~\mathcal G_n).
        \end{equation}
    Using Lemma \ref{lemma_EGn}, we have
        \begin{equation}
            \label{eq_gammaprobsum}
            \P(\tau_{\gamma_n+\ell}-\tau_{\gamma_n}>A\ell~|~\mathcal G_n)\leq\frac{1}{p_0}\sum_{i\geq 0}\mathds 1_{\gamma_n=i}\\P(\tau_{i+\ell}-\tau_i>A\ell~|~\mathcal F_{\tau_i}).
        \end{equation}
    From Theorem \ref{thm_good_steps}, there exist two constant $t, C>0$ such that, for all $n\geq 0$,
        \[\E[e^{t(\tau_{n+1}-\tau_n)}~|~\mathcal F_{\tau_n}]\leq C<\infty.\]
    Therefore, for all $i\geq 0$, using Markov's inequality we get
        \begin{equation*}
            \P(\tau_{i+\ell}-\tau_i>Am~|~\mathcal F_{\tau_i})
            \leq e^{-tA\ell}\E\left[\prod_{j=0}^{\ell-1}e^{t(\tau_{i+j+1}-\tau_{i+j})}~\middle|~\mathcal F_{\tau_i}\right]
            \leq e^{-tA\ell}C^\ell.
        \end{equation*}
    Now injecting in \eqref{eq_gammaprobsum} and then in \eqref{eq_nudelta_decomp}, using Lemma \ref{lemma_eta_stoppingtimes} we get that
        \[\P(\beta_{n+1}-\beta_n>A\ell~|~\mathcal G_n)\leq(1-p_0)^\ell+\frac{1}{p_0}e^{-tA\ell}C^\ell.\]
    The result follows then by fixing $A>0$ big enough so that $Ce^{-tA}<1$.
\end{proof}

\subsection{Block size}

\label{subsection_block_size}

In this subsection, we introduce a crucial quantity that will allow us to control the displacement of the trajectories between consecutive renewal times. For all integers $b>a\geq 0$ we define
    \[W(a,b)\coloneqq\sum_{i=a}^{b-1}\|x_i-\Psi(x_i)\|.\]
By construction, for each $i\in\range{1}{k}$, from triangle inequality,
    \[\|g_b(\mathbf u_i)-g_a(\mathbf u_i)\|\leq W(a, b).\]
The goal is to show the following proposition.

\begin{Proposition}
    \label{prop_block_size}
    For all $n\geq 0$, define 
        \[W_{n+1}\coloneqq W(\beta_n,\beta_{n+1}).\]
    Then there exist $\alpha=\alpha(k, d, p)>0$ and $\beta=\beta(k, d, p)>0$ such that for all $n\geq0$ and $t>0$,
        \[\P(W_{n+1}\geq t~|~\mathcal G_n)\leq\alpha e^{-\beta \sqrt t}.\]
\end{Proposition}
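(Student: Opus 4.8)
The plan is to bound $W_{n+1}$ by a product of two quantities, each having an exponential tail conditionally on $\mathcal G_n$, and then to use that the product of two exponentially–tailed variables has a tail of order $e^{-\beta\sqrt t}$.

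I would start with a deterministic observation. As in the proof of Lemma~\ref{lemma_proba_area}, adding the edge $(x_i,\Psi(x_i))$ moves the top of the history set according to $M_{i+1}=\max\{M_i,\ m_i+\|\Psi(x_i)-x_i\|\}$, so that for every $i$ one has $\|x_i-\Psi(x_i)\|\le L_i+(M_{i+1}-M_i)$ (equality when $\|x_i-\Psi(x_i)\|>L_i$, and the inequality is trivial otherwise since then $M_{i+1}=M_i$). Summing over $i\in\range{\beta_n}{\beta_{n+1}-1}$ and telescoping the $M$-increments,
\[
W_{n+1}\ \le\ \sum_{i=\beta_n}^{\beta_{n+1}-1}L_i\ +\ (M_{\beta_{n+1}}-M_{\beta_n}).
\]
Since $\beta_n=\tau_{\gamma_n}$ is a good step, $L_{\beta_n}\le\kappa$; and since $(m_j)$ and $(M_j)$ are non-decreasing, for $\beta_n\le i<\beta_{n+1}$ we get $L_i=M_i-m_i\le M_i-m_{\beta_n}\le\kappa+(M_{\beta_{n+1}}-M_{\beta_n})$. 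Writing $D:=\beta_{n+1}-\beta_n\ (\ge1)$ and $\Delta:=M_{\beta_{n+1}}-M_{\beta_n}\ (\ge0)$, this yields the clean bound $W_{n+1}\le 2(\kappa+1)\,D\,(1+\Delta)$, hence, with $t':=t/(2(\kappa+1))$,
\[
\{W_{n+1}\ge t\}\ \subset\ \{D\ge\sqrt{t'}\}\ \cup\ \{1+\Delta\ge\sqrt{t'}\}.
\]
It therefore suffices to show that $D$ and $\Delta$ each have an exponential tail given $\mathcal G_n$.

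For $D=\beta_{n+1}-\beta_n$ this is precisely Proposition~\ref{prop_nu_stoppingtimes}. The real work is the bound on $\Delta=M_{\beta_{n+1}}-M_{\beta_n}$. Conditioning on $\{\gamma_{n+1}-\gamma_n\le\ell_0\}$ and using monotonicity of $M$, on that event $\Delta\le M_{\tau_{\gamma_n+\ell_0}}-M_{\tau_{\gamma_n}}$; the complementary event has probability at most $(1-p_0)^{\ell_0}$ by Lemma~\ref{lemma_eta_stoppingtimes}, and by Lemma~\ref{lemma_EGn} it is enough to control $\P(M_{\tau_{i+\ell_0}}-M_{\tau_i}>s\mid\mathcal F_{\tau_i})$ uniformly in $i$. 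I would prove the one-block estimate $\E[e^{\lambda(M_{\tau_{r+1}}-M_{\tau_r})}\mid\mathcal F_{\tau_r}]\le C$, uniformly in $r$, for some $\lambda,C>0$ depending only on $(k,d,p)$: starting from
\[
M_{\tau_{r+1}}-M_{\tau_r}\ \le\ \kappa\,(\tau_{r+1}-\tau_r)\ +\ \sum_{j=\tau_r}^{\tau_{r+1}-1}Y(\tau_r,j),
\]
where $Y(\tau_r,j)$ is as in Lemma~\ref{lemma_GtL} (using $L_{\tau_r}\le\kappa$ to get $M_{j+1}-M_j\le Y(\tau_r,j)+\kappa$), one combines the exponential tail of the good-step gaps (Theorem~\ref{thm_good_steps}) with the per-step bound $\E[e^{tY(\tau_r,j)}\mid\mathcal F_j]\le G_t(\kappa)$ (Lemma~\ref{lemma_GtL}), via the same interleaving / Cauchy–Schwarz scheme already used in the proof of Proposition~\ref{prop_nu_stoppingtimes}; the resulting series converges as soon as $\lambda$ is small enough, because $G_\lambda(\kappa)\to1$ as $\lambda\to0^+$ by dominated convergence (point (b) of Proposition~\ref{prop_upper_control}). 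Iterating over $r=i,\dots,i+\ell_0-1$ gives $\E[e^{\lambda(M_{\tau_{i+\ell_0}}-M_{\tau_i})}\mid\mathcal F_{\tau_i}]\le C^{\ell_0}$, hence $\P(M_{\tau_{i+\ell_0}}-M_{\tau_i}>s\mid\mathcal F_{\tau_i})\le C^{\ell_0}e^{-\lambda s}$ by Markov.

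Assembling everything: choosing $\ell_0=\lceil\varepsilon\sqrt{t'}\rceil$ with $\varepsilon$ small enough that $\varepsilon\log C<\lambda$ yields
\[
\P(\Delta\ge\sqrt{t'}-1\mid\mathcal G_n)\ \le\ (1-p_0)^{\ell_0}+\tfrac1{p_0}\,C^{\ell_0}e^{-\lambda(\sqrt{t'}-1)}\ \le\ \alpha\, e^{-\beta\sqrt{t'}},
\]
and together with $\P(D\ge\sqrt{t'}\mid\mathcal G_n)\le\alpha e^{-\beta\sqrt{t'}}$ from Proposition~\ref{prop_nu_stoppingtimes}, the displayed inclusion gives $\P(W_{n+1}\ge t\mid\mathcal G_n)\le\alpha e^{-\beta\sqrt t}$ after relabeling constants. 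The main obstacle is exactly the uniform exponential-moment bound $\E[e^{\lambda(M_{\tau_{r+1}}-M_{\tau_r})}\mid\mathcal F_{\tau_r}]\le C$: this is where the random length of a good-step block must be played off against the conditional increments of $(M_j)$, and where the control $L_{\tau_r}\le\kappa$ furnished by Theorem~\ref{thm_good_steps} is what makes the per-step estimates uniform; the rest is bookkeeping with the filtrations $(\mathcal F_n)$, $(\mathcal S_n)$, $(\mathcal G_n)$ through Lemmas~\ref{lemma_EGn} and~\ref{lemma_eta_stoppingtimes}.
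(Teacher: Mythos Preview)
Your approach is correct but takes a different route from the paper. You factor $W_{n+1}\le C\,D(1+\Delta)$ with $D=\beta_{n+1}-\beta_n$ and $\Delta=M_{\beta_{n+1}}-M_{\beta_n}$, and then establish exponential tails for each factor separately; the bulk of the work is the uniform one-block moment $\E[e^{\lambda(M_{\tau_{r+1}}-M_{\tau_r})}\mid\mathcal F_{\tau_r}]\le C$, which you obtain by summing over the random length $\tau_{r+1}-\tau_r$ and applying Cauchy--Schwarz against the per-step bound from Lemma~\ref{lemma_GtL}. The paper avoids this entire detour: starting from the same inequality $\|x_i-\Psi(x_i)\|\le L_i+(M_{i+1}-M_i)$, it sharpens your $L_i\le\kappa+\Delta$ to the telescoped bound $\sum_{i=a}^{b-1}L_i\le(b-a)L_a+(b-a)(M_b-M_a)$, giving directly $W(a,b)/(b-a)\le L_a+2(M_b-M_a)$ and hence (Lemma~\ref{lemma_Wab}) $\E[\exp(W(a,b)/(b-a))\mid\mathcal F_a]\le e^{L_a}G_2(0)^{b-a}$. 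With $a=\beta_n$, $b=\beta_n+\ell$ and the split $\{W_{n+1}\ge A\ell^2\}\subset\{\beta_{n+1}-\beta_n>\ell\}\cup\{W(\beta_n,\beta_n+\ell)\ge A\ell^2\}$, Markov and Proposition~\ref{prop_nu_stoppingtimes} finish in a few lines, with no separate treatment of $\Delta$. Your decomposition is more modular and yields the auxiliary moment bound on $M_{\tau_{r+1}}-M_{\tau_r}$ as a by-product; the paper's is shorter. One small slip: Lemma~\ref{lemma_GtL} gives $\E[e^{tY(\tau_r,j)}\mid\mathcal F_j]\le G_t(L_{\tau_r})$, and since $G_t$ is non-increasing while $L_{\tau_r}\le\kappa$, you only get $\le G_t(0)$, not $\le G_t(\kappa)$; this is harmless because $G_\lambda(0)\to1$ as $\lambda\to0^+$ equally well, so your series still converges for small $\lambda$.
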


The random variables $(W_{n+1})_{n\geq 0}$ are called \emph{block sizes}, and they will later be used to control the range of interaction as well as the displacement of each trajectory between consecutive renewal steps. For each $n\geq 0$ and $i\in\range{1}{k}$ we have by construction that
    \[\|g_{\beta_{n+1}}(\mathbf u_i)-g_{\beta_n}(\mathbf u_i)\|\leq W_{n+1},\]
and furthermore,
    \[\bigcup_{j=\beta_n}^{\beta_{n+1}-1}B^+\big(g_j(\mathbf u_i), \|g_{j+1}(\mathbf u_i)-g_j(\mathbf u_i)\|\big)\subset B^+\big(g_{\beta_n}(\mathbf u_i), W_{n+1}\big).\]
In words, between step $\beta_n$ and $\beta_{n+1}$, each trajectory explores a region whose size is controlled by $W_{n+1}$. We start with the following lemma.

\begin{Lemma}
    \label{lemma_Wab}
    For all $b>a\geq 0$, we have
        \[\E\left[\exp\left(\frac{W(a, b)}{b-a}\right)~\middle\vert~\mathcal F_a\right]\leq e^{L_a}G_2(0)^{b-a}.\]
\end{Lemma}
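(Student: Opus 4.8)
The plan is to bound $W(a,b)=\sum_{i=a}^{b-1}\|x_i-\Psi(x_i)\|$ by relating each increment $\|x_i-\Psi(x_i)\|$ to the jumps of $(M_n)_{n\geq 0}$, which we already control via the exponential moment estimates of Subsection~\ref{subsection_upper_control}. Recall from the proof of Lemma~\ref{lemma_proba_area} that $M_{i+1}=\min\{M_i,\,m_i+\|\Psi(x_i)-x_i\|\}$, so that $\|\Psi(x_i)-x_i\|\geq M_{i+1}-m_i$. Unfortunately this lower bound goes the wrong way for an upper bound on $W$, so instead I would use the complementary fact that $\|\Psi(x_i)-x_i\| = (M_{i+1}-m_i)$ whenever the new vertex $\Psi(x_i)$ is the top of the history set, and more usefully that $\|\Psi(x_i)-x_i\|\leq (M_i - m_i) + (M_{i+1}-M_i)_+\cdot(\text{something})$; the cleanest route is: since $\Psi(x_i)\cdot e_d \le M_{i+1} \le M_i$ when the jump of $M$ is zero, and $\Psi(x_i)\cdot e_d = m_i + \|\Psi(x_i)-x_i\|$ when it is positive, one gets in all cases
\[
\|\Psi(x_i)-x_i\| \;\le\; \big(M_i - m_i\big) + \big(M_{i+1}-M_i\big) \;=\; L_i + (M_{i+1}-M_i),
\]
using that $m_i \ge m_a$ and $M_i$ is non-decreasing. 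Summing over $i\in\range{a}{b-1}$ and bounding each $L_i$ by telescoping $L_i \le L_a + \sum_{j=a}^{i-1}(M_{j+1}-M_j)$ (since $m$ is non-decreasing, $L_{j+1}-L_j \le M_{j+1}-M_j$), we obtain a bound of the form $W(a,b) \le (b-a)L_a + \sum_{i=a}^{b-1} c_i (M_{i+1}-M_i)$ with coefficients $c_i \le b-a$. Dividing by $b-a$, $\frac{W(a,b)}{b-a} \le L_a + \sum_{i=a}^{b-1}(M_{i+1}-M_i)$, and this last sum telescopes to $M_b - M_a$.

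So the key reduction is $\dfrac{W(a,b)}{b-a} \le L_a + (M_b - M_a)$, and it then suffices to show $\E[\exp(L_a + M_b - M_a)\mid \mathcal F_a] \le e^{L_a} G_2(0)^{b-a}$. Since $L_a$ is $\mathcal F_a$-measurable this factors out the $e^{L_a}$, leaving us to prove $\E[e^{M_b-M_a}\mid\mathcal F_a] \le G_2(0)^{b-a}$. I would prove this by iterating the one-step bound from Lemma~\ref{lemma_GtL}: writing $M_b - M_a = \sum_{i=a}^{b-1}(M_{i+1}-M_i)$ and noting $M_{i+1}-M_i = Y(a,i) + (L_i - L_a)_-$ (wait—rather, directly use that $(M_{i+1}-M_i) \le Y(a,a)$ is too crude); instead condition successively and use $\E[e^{M_{i+1}-M_i}\mid\mathcal F_i] \le G_1(L_i) \le G_1(0) = G_2(0)$-type bounds. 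Concretely, from Proposition~\ref{prop_upper_control}(b) and Fubini as in Lemma~\ref{lemma_GtL}, $\E[e^{t(M_{i+1}-M_i)}\mid\mathcal F_i] \le G_t(L_i) \le G_t(0)$ since $G_t$ is non-increasing; taking $t=1$ and applying the tower property $b-a$ times gives $\E[e^{M_b-M_a}\mid\mathcal F_a]\le G_1(0)^{b-a}$. The statement is phrased with $G_2(0)$, which dominates $G_1(0)$, so this is fine (or one adjusts to get exactly $G_2$).

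The main obstacle I anticipate is the first step: getting a clean, correct deterministic inequality bounding each displacement $\|\Psi(x_i)-x_i\|$ in terms of $L_i$ and the jump $M_{i+1}-M_i$, and then handling the telescoping of the $L_i$ terms so that the coefficients really are $\le b-a$ uniformly. One has to be careful about the case distinction (whether $\Psi(x_i)$ lies above or below $M_i$) and about the fact that $m_i$ changes at each step, so $L_i = M_i - m_i$ is not monotone. A safe way to organize this is to bound $\|\Psi(x_i)-x_i\| = \Psi(x_i)\cdot e_d - m_i \le M_{i+1}\cdot(\mathbf 1) \vee (m_i + \text{jump})$... — more precisely $\Psi(x_i)\cdot e_d \le \max(M_i, m_i + \|\Psi(x_i)-x_i\|)$ is automatic, and combined with $M_{i+1} = \min(M_i, m_i+\|\Psi(x_i)-x_i\|)$ one extracts $\|\Psi(x_i)-x_i\| \le (M_i - m_i) + (M_{i+1}-M_i)$ when the min is achieved at $M_i$, i.e. when there is no jump this reads $\le M_i - m_i = L_i$, and when there is a jump it reads $\|\Psi(x_i)-x_i\| = M_{i+1}-m_i = L_i + (M_{i+1}-M_i)$ exactly. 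In both cases $\|\Psi(x_i)-x_i\| \le L_i + (M_{i+1}-M_i)$. Everything after that is routine: telescope, divide by $b-a$, factor $e^{L_a}$, iterate the exponential moment bound.
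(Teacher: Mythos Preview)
Your approach is correct and follows essentially the same route as the paper: bound each step by $\|\Psi(x_i)-x_i\|\le L_i+(M_{i+1}-M_i)$, telescope the $L_i$'s using $L_{j+1}-L_j\le M_{j+1}-M_j$, and then iterate the one-step exponential moment bound on $M_{i+1}-M_i$ via the tower property. The only difference is bookkeeping: by tracking the coefficients $c_j=b-j\le b-a$ you obtain the sharper inequality $\tfrac{W(a,b)}{b-a}\le L_a+(M_b-M_a)$, whereas the paper bounds more crudely and gets $L_a+2(M_b-M_a)$, which is why $G_2$ appears in the statement; your bound needs only $G_1(0)\le G_2(0)$, so the stated inequality follows a fortiori.
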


\begin{proof}
    First, recall that for all $i\in\range{a}{b-1}$ we have $(\|x_i-\Psi(x_i)\|-L_i)_+=M_{i+1}-M_i$, giving
        \[\|x_i-\Psi(x_i)\|\leq L_i+M_{i+1}-M_i.\]
    Summing for $i$ between $a$ and $b-1$ we get
        \begin{equation}
            \label{eq_boundWab}
            W(a,b)\leq M_b-M_a+\sum_{i=a}^{b-1}L_i.
        \end{equation}
    Now observe that
        \begin{equation}
            \label{eq_sumLi}
            \begin{split}
                \sum_{i=a}^{b-1}L_i=\sum_{i=a}^{b-1}\left(L_a+\sum_{j=a}^{i-1}(L_{j+1}-L_j)\right)
                &=(b-a)L_a+\sum_{i=a}^{b-1}(b-1-i)(L_{i+1}-L_i)\\
                &\leq (b-a)L_a+(b-a)(M_b-M_a),
            \end{split}
        \end{equation}
    where in the last line we used that for all $i\in\range{a}{b-1}$,
        \[(b-1-i)(L_{i+1}-L_i)\leq(b-a)(M_{j+1}-M_j).\]
    Putting together \eqref{eq_boundWab} and \eqref{eq_sumLi}, we get that
        \[\frac{W(a, b)}{b-a}\leq \frac{M_b-M_a}{b-a}+L_a+M_b-M_a\leq L_a+2(M_b-M_a).\]
    Hence, taking the exponential and the expectation,
        \begin{equation*}
            \E\left[\exp\left(\frac{W(a, b)}{b-a}\right)~\middle\vert~\mathcal F_a\right]\leq e^{L_a}\E[e^{2(M_b-M_a)}~\vert~\mathcal F_a]\leq e^{L_a}G_2(0)^{b-a},
        \end{equation*}
    where for the last inequality we used Proposition \ref{lemma_GtL} to get that for all $i\in\range{a}{b-1}$, 
        \[\E[e^{2(M_{i+1}-M_i)}~|~\mathcal F_i]\leq G_2(L_i)\leq G_2(0).\]
    The proof is complete.
\end{proof}

We can now prove Proposition \ref{prop_block_size}.

\begin{proof}[Proof of Proposition \ref{prop_block_size}]
    Let $\ell\in\N$ and $A>0$. First, observe that we can write
        \begin{equation}
            \label{eq_probaWn}
            \P(W_{n+1}\geq A\ell^2~|~\mathcal G_n)\leq\P(\beta_{n+1}-\beta_n>\ell~|~\mathcal G_n)+\P(W(\beta_n,\beta_n+\ell)\geq A\ell^2~|~\mathcal G_n).
        \end{equation}
    Using Lemma \ref{lemma_EGn}, we can then write
        \begin{equation*}
            \begin{split}
                \P(W(\beta_n,\beta_n+\ell)\geq A\ell^2~|~\mathcal G_n)&\leq\frac{1}{p_0}\sum_{i\geq0}\mathds 1_{\gamma_n=i}\P(W(\tau_i,\tau_i+\ell)>A\ell^2~|~\mathcal F_{\tau_i})\\
                &\leq\frac{e^{-A\ell}}{p_0}\sum_{i\geq0}\mathds 1_{\gamma_n=i}\E\left[\exp\left(\frac{W(\tau_i,\tau_i+\ell)}{\ell}\right)~\middle|~\mathcal F_{\tau_i}\right]\\
                &\leq\frac{e^{-A\ell}G_2(0)^\ell}{p_0}\sum_{i\geq0}\mathds 1_{\gamma_n=i}e^{L_{\tau_i}}\\
                &\leq\frac{e^\kappa}{p_0}e^{-A\ell}G_2(0)^\ell,
            \end{split}
        \end{equation*}
    where the second inequality follows from Markov's inequality, the third one from Lemma \ref{lemma_Wab}, and the last inequality uses that $L_{\tau_i}\leq\kappa$ for all $i\geq0$ by definition of $\tau_i$. Now injecting in \eqref{eq_probaWn} and using Proposition \ref{prop_nu_stoppingtimes}, we get
        \[\P(W_{n+1}\geq A\ell^2~|~\mathcal G_n)\leq\alpha e^{-\beta \ell}+\frac{e^\kappa}{p_0}e^{-A\ell}G_2(0)^\ell.\]
    Finally the result follows by choosing $A>0$ big enough so that $e^{-A}G_2(0)<1$.
\end{proof}

\subsection{The independent process}

\label{subsection_independent_process}

In this subsection, we introduce a new process, referred to as the \emph{independent process}, which intuitively captures the behavior of trajectories when they evolve independently of one another. The rough idea behind this construction is inspired by \cite{DiscreteDsf}. Its definition closely parallels that of the original exploration process, with the key difference being that the $k$ trajectories evolve in \emph{independent environments} rather than a shared one. This process, together with renewal times and block sizes, will allow us to rigorously formalize the heuristic that trajectories behave approximately independently when they are sufficiently far apart.\\

Here is the construction of the independent process. Let $(\mathcal N_i)_{i\in\range{1}{k}}$ be i.i.d.\ random variables distributed as $\mathcal N'$ conditioned on the event
    \[\#\mathcal N'\cap B^+(0,R)=\#\mathcal N'\cap B(-\kappa e_d,\kappa+R)\cap\mathbb H^+(0)=1,\]
which is a centered version of the renewal event for a single trajectory. These random variables represent the independent environments in which each trajectory will evolve. We define the independent process $((g'_n(i))_{i\in\range{1}{k}})_{n\geq 0}$ by induction as follows. For each $i\in\range{1}{k}$, we set  $g_0'(i)\coloneqq 0$. Note that we have re-centered everything for simplicity: since the trajectories will not interact, their absolute positions play no role in the analysis, and placing them all at the origin serves to streamline the construction. Let $n\geq0$ and assume that $(g_n'(i))_{i\in\range{1}{k}}$ is constructed. We set
    \[m'_n\coloneqq\min_{1\leq i\leq k}\big(g'_n(i)\cdot e_d\big),\]
and we fix $i_n\in\range{1}{k}$ such that $g'_n(i_n)\cdot e_d=m'_n$. In case of multiple possibilities, we chose one deterministically. Then, for each $i\in\range{1}{k}$ and $n\geq0$ we define
    \[g'_{n+1}(i)=\begin{cases}
        \Psi_{\mathcal N_i}\big(g_n'(i)\big) & \text{if $i=i_n$}\\
        g'_n(i) & \text{otherwise,}
    \end{cases}\]
where $\Psi_{\mathcal N_i'}(g_n'(i))$ denotes the closest point to $g'_n(i)$ in $\mathbb H^+(m_n')\cap\mathcal N_i$ with respect to the $\ell^p$ distance. In words, analogously to the original exploration process, at each step we advance the trajectory whose current position has the lowest $e_d$-coordinate. Now that the independent exploration process is constructed, we define some quantities of interest. For all $n\geq 0$ and each $i\in\range{1}{k}$ we define an \emph{individual history set}
    \[H_n^i\coloneqq\mathbb H^+(m'_n)\cap\bigcup_{j=0}^{n-1}B^+\big(g_n'(i), \|g_{n+1}'(i)-g_n'(i)\|\big),\]
as well as an \emph{individual history size}
    \[L_n^i\coloneqq\max(\{0\}\cup\{x\cdot e_d-m'_n:x\in H_n^i\}).\]
For each $n\geq 0$ and $i\in\range{1}{k}$ we set
    \[g_n^+(i)=g_n(i)+(m_n+\kappa-g_n(i)\cdot e_d)e_d\quad\text{and}\quad g_n^-(i)=g_n(i)+(m_n-g_n(i)\cdot e_d)e_d,\]
and we define
    \begin{equation*}
        \begin{split}
            \zeta\coloneqq\inf\{n\geq 0:\quad &m'_n\geq R,~\forall i\in\range{1}{k},~L_n^i\leq\kappa,\\
            &\text{and}\quad\forall i\in\range{1}{k},~\#\mathcal N_i\cap B^+(g_n^-(i), \kappa+R)\setminus \overline {H}_n^i=\#B^+(g_n^+(i), R)=1\}.
        \end{split}
    \end{equation*}
This random index is the analog of the first renewal time in the exploration process. Finally for all $1\leq i<j\leq k$, we define
    \[\Delta^{i,j}\coloneqq\mathds 1_{\zeta<\infty} \mathbf p\big(g'_\zeta(i)-g'_\zeta(j)\big),\]
In words, $\Delta^{i,j}$ represent the drift from time $0$ to $\zeta$ between trajectory $i$ and $j$. We also set
    \[W\coloneqq\sum_{\substack{i\in\range{1}{k}\\0\leq n<\zeta}}\|g'_{n+1}(i)-g'_n(i)\|,\]
so that by construction, for all $1\leq i<j\leq k$,
    \[\|\Delta^{i, j}\|\leq W.\]
Before relating the independent process to the exploration process, we establish some properties on the distribution of $(\Delta^{i, j})_{1\leq i<j\leq k}$ in the following lemma.

\begin{Lemma}
    \label{lemma_Delta}
    The $\R^{d-1}$-valued random variables $(\Delta^{i, j})_{1\leq i<j\leq k}$ are identically distributed, and their common distribution is sign-symmetric and exchangeable. More precisely, for any permutation $\sigma$ of $\range{1}{d-1}$ and sign function $\varepsilon:\range{1}{d-1}\to\{-1,1\}$, the random variable
        \[\big((\varepsilon(s)\Delta^{1,2}\cdot e_{\sigma(s)})_{1\leq s<d}, W\big)\]
    is distributed as $(\Delta^{1,2}, W)$.
\end{Lemma}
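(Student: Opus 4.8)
The plan is to deduce all the symmetries from the symmetries of the underlying environment law, exploiting the fact that the construction of the independent process is equivariant under the relevant transformations.

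First I would record the key structural fact: the law of each environment $\mathcal N_i$, namely $\mathcal N'$ conditioned on $\#\mathcal N'\cap B^+(0,R)=\#\mathcal N'\cap B(-\kappa e_d,\kappa+R)\cap\mathbb H^+(0)=1$, is invariant under any Euclidean isometry of $\R^d$ that fixes the $e_d$-axis pointwise — in particular under the map $x\mapsto (\varepsilon(s)(x\cdot e_{\sigma(s)}))_{1\le s<d}+(x\cdot e_d)e_d$ for a permutation $\sigma$ of $\range{1}{d-1}$ and signs $\varepsilon$, since $\ell^p$ balls and half-spaces $\mathbb H^+(\cdot)$ are preserved by such maps. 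Call this transformation $\Theta=\Theta_{\sigma,\varepsilon}$; it is a linear bijection commuting with $\mathbf p$ (in the sense that $\mathbf p\circ\Theta=\theta\circ\mathbf p$ for the induced map $\theta$ on $\R^{d-1}$), fixing $e_d$, and preserving every $\ell^p$ norm.

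The second step is an equivariance claim for the independent process: if we replace each $\mathcal N_i$ by $\Theta(\mathcal N_i)$, then the entire process $((g'_n(i))_i)_n$ is replaced by $((\Theta(g'_n(i)))_i)_n$, the quantities $m'_n, i_n, H^i_n, L^i_n, \zeta$ are unchanged (they depend only on $e_d$-coordinates and $\ell^p$-distances, all preserved by $\Theta$), and consequently $\Delta^{1,2}$ becomes $\theta(\Delta^{1,2})$ while $W$ is unchanged. This is proved by a routine induction on $n$, using that $\Psi_{\Theta(\mathcal N_i)}(\Theta(x))=\Theta(\Psi_{\mathcal N_i}(x))$ because $\Theta$ preserves $\ell^p$-distances, $e_d$-coordinates, and the relevant half-spaces. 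Since $(\Theta(\mathcal N_i))_i\stackrel{d}{=}(\mathcal N_i)_i$ by the first step, we conclude $(\theta(\Delta^{1,2}),W)\stackrel{d}{=}(\Delta^{1,2},W)$, which is exactly the sign-symmetry and coordinate-exchangeability asserted in the statement. For the claim that all $(\Delta^{i,j})_{1\le i<j\le k}$ are identically distributed: since the environments are i.i.d., permuting the indices $\{1,\dots,k\}$ leaves the law of the whole process invariant up to relabelling, and $i_n$ and the deterministic tie-breaking rule can be chosen consistently; applying the transposition swapping, say, $\{i,j\}$ with $\{1,2\}$ gives $\Delta^{i,j}\stackrel{d}{=}\Delta^{1,2}$. (One should be slightly careful that the deterministic tie-breaking in the definition of $i_n$ is itself equivariant under the relabelling, or note that ties occur with probability zero so it does not matter.)

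The main obstacle — really the only subtlety — is bookkeeping around the deterministic choices (the selection of $i_n$ among minimizers, and the base-point shifts $g_n^\pm(i)$ versus $g_n^\uparrow,g_n^\downarrow$) and making sure the induction in the equivariance step is airtight: one must check that $\Theta$, being linear and fixing $e_d$, commutes with the affine projections $x\mapsto x+(c-x\cdot e_d)e_d$ onto level-$c$ hyperplanes, so that $g_n^\pm(i)$ transforms correctly and the renewal-type conditions defining $\zeta$ are genuinely $\Theta$-invariant. Everything else is immediate from the invariances listed above. I would present the proof as: (i) state the $\Theta$-invariance of the environment law; (ii) state and prove by induction the equivariance of the process; (iii) read off sign-symmetry and exchangeability; (iv) handle identical distribution via the i.i.d.\ relabelling argument.

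\begin{proof}[Proof sketch of Lemma \ref{lemma_Delta}]
    Fix a permutation $\sigma$ of $\range{1}{d-1}$ and a sign function $\varepsilon:\range{1}{d-1}\to\{-1,1\}$, and let $\Theta:\R^d\to\R^d$ be the linear map defined by $\Theta(x)\cdot e_{\sigma(s)}=\varepsilon(s)(x\cdot e_s)$ for $1\le s<d$ and $\Theta(x)\cdot e_d=x\cdot e_d$. Then $\Theta$ is a bijection, fixes $e_d$, preserves every $\ell^p$ norm and hence maps $\ell^p$ balls to $\ell^p$ balls with the same radius, and satisfies $\Theta(\mathbb H^+(h))=\mathbb H^+(h)$ and $\Theta(B(-\kappa e_d,\kappa+R))=B(-\kappa e_d,\kappa+R)$ for all $h$. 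Consequently the conditioning event defining the law of $\mathcal N'$ is $\Theta$-invariant, and since a homogeneous Poisson process is invariant under any linear isometry, $(\Theta(\mathcal N_i))_{i\in\range{1}{k}}\stackrel{d}{=}(\mathcal N_i)_{i\in\range{1}{k}}$.

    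We claim that running the independent-process construction on the environments $(\Theta(\mathcal N_i))_i$ yields exactly $((\Theta(g'_n(i)))_i)_n$. This is proved by induction on $n$: the base case is $\Theta(0)=0$; for the inductive step note that $m'_n$, the selected index $i_n$, and (up to the deterministic tie-breaking, which matters only on a null event) the update rule are determined by $e_d$-coordinates and by the points $\Psi_{\mathcal N_i}(\cdot)$, and one has $\Psi_{\Theta(\mathcal N_i)}(\Theta(x))=\Theta(\Psi_{\mathcal N_i}(x))$ because $\Theta$ preserves $\ell^p$-distances, the half-spaces $\mathbb H^+(\cdot)$, and maps $\mathcal N_i$ onto $\Theta(\mathcal N_i)$. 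Since $\Theta$ is linear, fixes $e_d$, and therefore commutes with every affine projection of the form $x\mapsto x+(c-x\cdot e_d)e_d$, the quantities $m'_n$, $H^i_n$, $L^i_n$, $g_n^\pm(i)$ and the renewal-type conditions defining $\zeta$ are all $\Theta$-equivariant, so $\zeta$ is unchanged, $W$ is unchanged, and $g'_\zeta(i)$ becomes $\Theta(g'_\zeta(i))$. Applying $\mathbf p$ and using $\mathbf p\circ\Theta=\theta\circ\mathbf p$, where $\theta(y)\cdot e_{\sigma(s)}=\varepsilon(s)(y\cdot e_s)$ on $\R^{d-1}$, we get that the configuration run on $(\Theta(\mathcal N_i))_i$ produces drift $\theta(\Delta^{1,2})$ and the same $W$. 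Combined with $(\Theta(\mathcal N_i))_i\stackrel{d}{=}(\mathcal N_i)_i$ this gives $(\theta(\Delta^{1,2}),W)\stackrel{d}{=}(\Delta^{1,2},W)$, which is the asserted sign-symmetry and exchangeability of $(\Delta^{1,2},W)$.

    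Finally, fix $1\le i<j\le k$ and let $\pi$ be a permutation of $\range{1}{k}$ with $\pi(1)=i$, $\pi(2)=j$. Since the environments $(\mathcal N_i)_i$ are i.i.d., $(\mathcal N_{\pi(i)})_i\stackrel{d}{=}(\mathcal N_i)_i$, and the whole construction is equivariant under relabelling the trajectories by $\pi$ (up to the deterministic tie-breaking in the choice of $i_n$, which affects only a null event), so $\Delta^{i,j}\stackrel{d}{=}\Delta^{1,2}$. Hence all the $(\Delta^{i,j})_{1\le i<j\le k}$ are identically distributed, completing the proof.
\end{proof}
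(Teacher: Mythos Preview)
Your approach is the same as the paper's: the environment law is invariant under the linear map $\Theta=\Theta_{\sigma,\varepsilon}$ (fixing $e_d$, preserving $\ell^p$-balls and half-spaces), and the construction is equivariant, yielding sign-symmetry and exchangeability of $(\Delta^{1,2},W)$; identical distribution of the $\Delta^{i,j}$ then follows from relabelling the i.i.d.\ environments. The one genuine gap is your handling of tie-breaking. You repeatedly assert that ties ``occur with probability zero'' or ``affect only a null event'', but since every trajectory starts at $g_0'(i)=0$, there is a $k$-way tie at step $0$ with probability one, and ties persist through step $k-1$. For the $\Theta$-equivariance this is harmless (because $\Theta$ fixes $e_d$-coordinates, the set of tied indices and hence any index-based tie-breaking coincide), but for the relabelling argument it is fatal as written: a rule like ``smallest index'' is not equivariant under a permutation $\pi$ of $\range{1}{k}$, and your alternative (choose a rule equivariant under relabelling) is unavailable since all tied positions equal $0$ and cannot be distinguished.

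The paper fills this gap with the short observation you are missing: in the first $k$ steps each trajectory moves exactly once (all start at level $0$, and once moved a trajectory sits strictly above $0$ until the rest catch up), so that $g_k'(i)=\Psi_{\mathcal N_i}(0)$ for every $i$, \emph{independently of the tie-breaking rule}; moreover $\zeta\ge k$ since $m_n'=0$ for $n<k$. The variables $(g_k'(i)\cdot e_d)_i$ are then i.i.d.\ and hence a.s.\ pairwise distinct, and by induction the $e_d$-coordinates remain a.s.\ distinct for all $n\ge k$, so no further ties occur. Thus from step $k$ onward the process---and in particular $\zeta$, $W$ and each $\Delta^{i,j}$---is a measurable function of the i.i.d.\ family $(\mathcal N_i)_i$ alone, and exchangeability of that family gives $\Delta^{i,j}\stackrel{d}{=}\Delta^{1,2}$.
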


\begin{proof}
    To begin, observe that the only element in the construction that could prevent the variables to be identically distributed is the dissymmetry introduced when choosing $i_n$ for $n\geq0$ when there is a tie, i.e.\ more than one index $i\in\range{1}{k}$ such that $g'_n(i)\cdot e_d=m'_n$. Then, observe that almost surely, ties occur only in the $k$ first steps. In fact, for each $i\in\range{1}{k}$, $g_0(i)\cdot e_d=0$, thus $\zeta\geq k$ and 
        \[g_k'(i)=\Psi_{\mathcal N_i}(0)\]
    for each $i\in\range{1}{k}$. The variables $(g_k(i)\cdot e_d)_{i\in\range{1}{k}}$ are thus i.i.d.\ and almost surely distinct. By induction, we then get that, almost surely, for each $n\geq k$, the random variables $(g_k'(i)\cdot e_d)_{i\in\range{1}{k}}$ are distinct. Hence, we deduce that the random variables $((g'_n(i))_{n\geq k})_{i\in\range{1}{k}}$ are identically distributed and the first statement follows. It remains to check that $\Delta^{1,2}$ is exchangeable and sign-symmetric. Let $\sigma$ be a permutation of $\range{1}{d-1}$ and $\varepsilon:\range{1}{d-1}\to\{-1,1\}$ be a sign function. Let us define
        \[S=S_{\sigma,\epsilon}:\R^d\to\R^d,~x\mapsto (x\cdot e_d)e_d+\sum_{s=1}^{d-1} \varepsilon(s)(x\cdot e_{\sigma(s)})e_s.\]
    Since $S$ preserves the Lebesgue measure, we have that $S(\mathcal N')$ is distributed as $\mathcal N'$. Hence, we deduce that
        \[\big(S(\mathcal N_i)\big)_{i\in\range{1}{k}}.\]
    is distributed as $(\mathcal N_i)_{i\in\range{1}{k}}$. Finally, we conclude the proof by observing that we obtain almost surely
        \[\big((\varepsilon(s)\Delta^{1,2}\cdot e_{\sigma(s)})_{1\leq s<d}, W\big)\]
    by constructing the independent process using the transformed environments $(S(\mathcal N_i))_{i\in\range{1}{k}}$.
\end{proof}

Using the independent process as well as renewal times and block sizes, we conclude the section by showing the following theorem.

\begin{Theorem}
    \label{thm_renewal_decomposition}
    Let $n\geq 0$. For all $1\leq i<j\leq k$, we define
        \[Z_n^{i, j}\coloneqq \mathbf p\big(g_{\beta_n}(\mathbf u_i)-g_{\beta_n}(\mathbf u_j)\big),\quad\text{and we set}\quad r_n\coloneqq\frac{1}{2}\min_{1\leq i<j\leq k}\|Z_n^{i,j}\|.\]
    Then, conditionally on $\mathcal G_n$, when $r_n\geq \kappa+R$,
        \[\big(W_{n+1}\wedge r_n,~\big((Z^{i,j}_{n+1}-Z^{i,j}_n)\mathds 1_{W_{n+1}<r_n}\big)_{1\leq i<j\leq k}\big)\quad\text{is distributed as}\quad\big(W\wedge r_n,~(\Delta^{i,j}\mathds 1_{W<r_n})_{1\leq i<j\leq k}\big).\]
    Furthermore, there exists $\alpha=\alpha(k, d, p)>0$ and $\beta=\beta(k, d, p)>0$ such that for all $t\geq0$,
        \[\P(W\geq t)\leq\alpha e^{-\beta \sqrt t}.\]
\end{Theorem}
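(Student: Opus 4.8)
The plan is to establish the two assertions separately, the distributional identity being the substantial one. First I would prove the identity by a resampling coupling, working conditionally on $\mathcal G_n=\mathcal S_{\gamma_n}$ on the event $\{r_n\geq\kappa+R\}$, on which the positions $(g_{\beta_n}(\mathbf u_i))_i$, the levels $m_{\beta_n},M_{\beta_n}$, the renewal event $A_{\gamma_n}$, and $r_n$ are all measurable. Three observations are key. (i) Since $g_{\beta_n}^\downarrow(\mathbf u_i)$ and $g_{\beta_n}^\downarrow(\mathbf u_j)$ share the level $m_{\beta_n}$, their $\ell^p$-distance equals $\|Z_n^{i,j}\|\geq 2r_n\geq 2(\kappa+R)$, so the balls $B(g_{\beta_n}^\downarrow(\mathbf u_i),\kappa+R)$, and a fortiori the balls $B^+(g_{\beta_n}(\mathbf u_i),r_n)$, are pairwise disjoint. (ii) Because $L_{\beta_n}\leq\kappa$ forces $H_{\beta_n}\subset\mathbb H^-(m_{\beta_n}+\kappa)$, the recentered history leaves $\mathbb H^+(0)$ untouched, so by Proposition \ref{prop_resampling} combined with $A_{\gamma_n}$ the recentered point processes $(\mathcal N-g_{\beta_n}^\uparrow(\mathbf u_i))\cap\mathbb H^+(0)$ are, conditionally on $\mathcal G_n$, independent and each distributed as $\mathcal N_i\cap\mathbb H^+(0)$: using $B^+(0,R)\subset B(-\kappa e_d,\kappa+R)$, the renewal event translates verbatim into the conditioning defining $\mathcal N_i$. (iii) Since $g_{\beta_n}^\uparrow(\mathbf u_i)-g_{\beta_n}(\mathbf u_i)$ is vertical, $A_{\gamma_n}$ also forces $\Psi(g_{\beta_n}(\mathbf u_i))=\Psi(g_{\beta_n}^\uparrow(\mathbf u_i))$ to be exactly the unique recentered point in $B^+(0,R)$, after which the trajectory stays above level $m_{\beta_n}+\kappa$.

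Then I would run, using these recentered copies of the $\mathcal N_i$, the joint exploration after step $\beta_n$ alongside the independent process, stopping both when the cumulative jump length first reaches $r_n$. On $\{W_{n+1}<r_n\}$ every trajectory stays inside $B^+(g_{\beta_n}(\mathbf u_i),W_{n+1})$, so by disjointness no coalescence can occur (current points of distinct trajectories stay at distance at least $2r_n-2W_{n+1}>0$), no trajectory ever meets $H_{\beta_n}$, and the joint exploration agrees trajectory by trajectory with the independent one. The crucial point is that within the block the joint history set reduces to the disjoint union of the individual histories, so $\{L_j\leq\kappa\}$ becomes $\{\forall i,\ L_j^i\leq\kappa\}$, the good-step progress requirement $m_j-m_{\beta_n}\geq\kappa+R$ becomes $m_j'\geq R$ after recentering, and $A_{\gamma_{n+1}}$ factorizes into the $k$ individual configurations appearing in the definition of $\zeta$. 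Hence $\{W_{n+1}<r_n\}=\{W<r_n\}$ under the coupling, on which $W_{n+1}=W$ and, since $\mathbf p(g_{\beta_n}^\uparrow(\mathbf u_i))=\mathbf p(g_{\beta_n}(\mathbf u_i))$, one has $Z_{n+1}^{i,j}-Z_n^{i,j}=\Delta^{i,j}$; on the complementary event both truncated tuples equal $(r_n,0,\dots,0)$. This yields the stated equality in law.

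For the tail bound on $W$, I would replay Section \ref{section_good_steps} and Subsections \ref{subsection_renewal_event}--\ref{subsection_block_size} for the independent process, which is in fact easier since its $k$ environments are independent. Each $\mathcal N_i$ is a Poisson process conditioned only inside the bounded set $B(-\kappa e_d,\kappa+R)$, so once a trajectory has left that set --- which happens after a number of steps with exponential tail --- Proposition \ref{prop_resampling} applies with the conditioned region absorbed into the explored history, and the individual histories obey the control of Theorem \ref{thm_good_steps}. By independence of the environments, at each joint good step of the independent process all $k$ individual renewal configurations occur simultaneously with probability at least $p_0^k>0$, whence $\zeta$ is almost surely finite with exponentially decaying tail, exactly as in Proposition \ref{prop_nu_stoppingtimes}; combined with the analogue of Lemma \ref{lemma_Wab} for the total jump length over a bounded number of steps, the proof of Proposition \ref{prop_block_size} carries over verbatim and yields $\P(W\geq t)\leq\alpha e^{-\beta\sqrt t}$.

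The hard part will be the correspondence in the second paragraph between the renewal bookkeeping of the two processes: the trajectories sit at possibly distinct levels in $[m_{\beta_n},m_{\beta_n}+\kappa]$ at time $\beta_n$, whereas the independent process starts them all at $0$. The resolution is that the vertical recentering $x\mapsto x-(m_{\beta_n}+\kappa)e_d$ is common to all $i$, that the first move sends every trajectory to its identically distributed renewal point above level $0$, erasing the discrepancy, and that the horizontal coordinates already agree. One must also observe that a trajectory confined to $B^+(g_{\beta_n}(\mathbf u_i),r_n)$ may reach parts of $\mathbb H^-(M_{\beta_n})$ outside the renewal ball where the resampled process and $\mathcal N_i$ need not agree, but this is harmless because those parts lie below the trajectory's renewal point, which $\Psi$ never inspects. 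These points are routine once the coupling is in place, but writing that coupling down cleanly is where the real work lies.
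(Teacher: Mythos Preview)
Your coupling idea for the distributional identity is in the right spirit and close to the paper's, but observation (ii) contains a genuine error: the recentered processes $(\mathcal N-g_{\beta_n}^\uparrow(\mathbf u_i))\cap\mathbb H^+(0)$ for different $i$ are \emph{not} independent conditionally on $\mathcal G_n$. They are all obtained from the single point process $\mathcal N\cap\mathbb H^+(m_{\beta_n}+\kappa)$ by distinct horizontal shifts, so they determine one another. Consequently, running the independent process with these as the environments does not produce a sample of $((\Delta^{i,j})_{i<j},W)$, and the coupling in your second paragraph breaks down. The paper repairs exactly this point by a splicing construction: for each $i$ it sets
\[\mathcal N_{\mathrm c}^i\coloneqq(\mathcal N_i'\setminus A)\cup\big((\mathcal N-g_{\beta_n}^\uparrow(\mathbf u_i))\cap A\big),\qquad A\coloneqq(-r_n,r_n]^{d-1}\times\R_+,\]
with $(\mathcal N_i')_i$ fresh i.i.d.\ copies of $\mathcal N$. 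Because $r_n\geq\kappa+R$ makes the translated boxes $g_{\beta_n}^\uparrow(\mathbf u_i)+A$ pairwise disjoint, Proposition~\ref{prop_resampling} together with the renewal event yields that the $\mathcal N_{\mathrm c}^i$ are genuinely i.i.d.\ with the conditioned law defining the $\mathcal N_i$. One then runs the independent process in these spliced environments and observes that on $\{W_{\mathrm c}<r_n\}$ no trajectory leaves the box $A$, so the independent and joint explorations coincide there; this gives $W_{n+1}\wedge r_n=W_{\mathrm c}\wedge r_n$ and $Z^{i,j}_{n+1}-Z^{i,j}_n=\Delta^{i,j}_{\mathrm c}$ on that event. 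Your second and fourth paragraphs contain the right geometric ingredients, but without the splicing step the argument does not go through.

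For the tail bound on $W$, your plan to replay Sections~\ref{section_good_steps}--\ref{subsection_block_size} for the independent process is defensible but unnecessarily heavy, and handling the conditioning of $\mathcal N_i$ inside $B(-\kappa e_d,\kappa+R)$ cleanly is not quite as routine as you suggest. The paper instead bootstraps from the first part of the theorem: take $k=2$, $n=0$, $\mathbf u_1=-Me_1$, $\mathbf u_2=Me_1$ with $M\geq\kappa+R$ arbitrary. On $A_0$ (which has probability at least $p_0$ by Lemma~\ref{lemma_renewalproba}) one has $\beta_0=0$ and $r_0=M$, so the identity just proved gives $\P(W\wedge M>t)=\P(W_1\wedge M>t\mid\mathcal G_0)\leq\P(W_1>t\mid\mathcal G_0)\leq\alpha e^{-\beta\sqrt t}$ by Proposition~\ref{prop_block_size}; letting $M\to\infty$ yields the bound on $W$ for free.
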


\begin{proof}
    Let $(\mathcal N_i')_{i\in\range{1}{k}}$ be i.i.d.\ copies of $\mathcal N$. From now on, we work conditionally on $\mathcal G_n$. Assume $r_n\geq\kappa+R$. For each $i\in\range{1}{k}$ we set
        \[\mathcal N_\mathrm c^i\coloneqq(\mathcal N_i'\setminus A)\cup \big((\mathcal N-g_{\beta_n}^\uparrow(\mathbf u_i))\cap A\big)\quad\text{where}\quad A\coloneqq (-r_n, r_n]^{d-1}\times\R_+.\]
    We refer to Figure \ref{fig_delta} for an illustration. Let us check that the random variables $(\mathcal N_\mathrm c^i)_{i\in\range{1}{k}}$ are independent and identically distributed as $\mathcal N'$ conditioned on the event
        \[\#\mathcal N'\cap B^+(0,R)=\#\mathcal N'\cap B(-\kappa e_d,\kappa+R)\cap\mathbb H^+(0)=1.\]
    Thanks to Lemma \ref{lemma_EGn} and Proposition \ref{prop_resampling},
        \[\mathcal N\cap\mathbb H^+(m_n+\kappa)\]
    is distributed as $\mathcal N'\cap\mathbb H^+(m_n+\kappa)$ conditioned on 
        \[\{\forall i\in\range{1}{k},~\#\mathcal N'\cap B^+(g_{\beta_n}^\uparrow(\mathbf u_i),R)=\#\mathcal N'\cap B(g_{\beta_n}^\downarrow(\mathbf u_i),\kappa+R)\cap\mathbb H^+(m_n+\kappa)=1\}.\]
    Since $r_n\geq\kappa+R$ and the sets $(g_{\beta_n}^\uparrow(\mathbf u_i)+A)_{i\in\range{1}{k}}$ are disjoints by construction, this condition transfers to the constructed variables $(\mathcal N_c^i)_{i\in\range{1}{k}}$, yielding the claimed distribution. Now, let $((\Delta^{i, j}_\mathrm c)_{1\leq i<j\leq k}, W_\mathrm c)$ be defined by constructing the independent process using the environments $(\mathcal N_\mathrm c^i)_{i\in\range{1}{k}}$. By construction, we have
        \[W_{n+1}\wedge r_n=W_\mathrm c\wedge r_n,\]
    and if $W_{n+1}<r_n$ then no trajectory will explore the resampled regions, hence for all $1\leq i<j\leq k$,
        \[Z_{n+1}^{i, j}-Z_n^{i, j}=\Delta^{i, j}_\mathrm c,\]
    implying the first part of the theorem. To get the last statement, let $t>0$,  $M\geq\kappa+R$ and consider $k=2$, $n=0$, $\mathbf u_1=-Me_1$ and $\mathbf u_2=Me_1$. The event $A_0$, on which $\beta_0=0$ and thus $r_0=M\geq\kappa+R$, occurs with positive probability thanks to Lemma \ref{lemma_renewalproba}. On that event, we then have
       \begin{equation*}
           \P(W\wedge M>t)=\P(W_0\wedge M>t~|~\mathcal G_0)\leq\P(W_0>t~|~\mathcal G_0)\leq\alpha e^{-\beta\sqrt t}.
       \end{equation*}
   Finally, since $M$ was arbitrary, we get the sought result. The proof is complete.
\end{proof}

\begin{figure}[h]
    \centering
    \includegraphics[width=\linewidth]{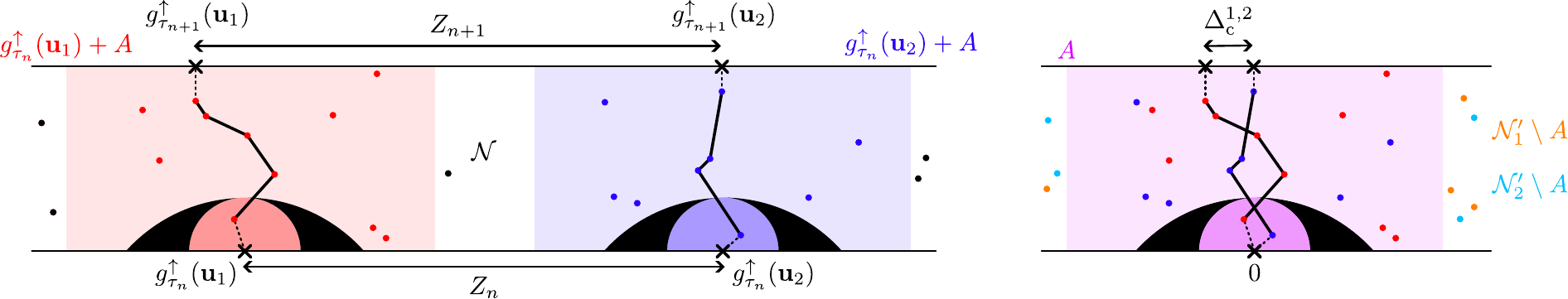}
    \caption{Illustration of the construction behind the proof of Theorem \ref{thm_renewal_decomposition}.}
    \label{fig_delta}
\end{figure}

\section{Coalescence in low dimensions}

\label{section_coalescence}

In this section, we establish the coalescence statement of our main result, stated precisely in the following theorem.

\begin{Theorem}
    \label{thm_coalescence}
    Assume that $d=2$ with $p\in[1, \infty]$ or $d=3$ with $p\in\{1, 2, \infty\}$. Then the DSF is almost surely a tree.
\end{Theorem}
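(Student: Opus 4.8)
The plan is to reduce the statement to a recurrence question for a random walk in $\R^{d-1}$, exploiting that $d-1\in\{1,2\}$ under the hypotheses. By a standard argument --- the number of trees in the DSF is translation invariant, hence a.s.\ constant by ergodicity of $\mathcal N$, so it suffices to rule out the existence of two non-coalescing trajectories, and by countability of $\mathcal N$ this reduces to a statement about trajectories started from fixed points --- it is enough to show that for $k=2$ and any deterministic $\mathbf u_1,\mathbf u_2$ with $\mathbf u_1\cdot e_d=\mathbf u_2\cdot e_d$, the two trajectories of the exploration process coalesce almost surely; we refer to \cite{Dsf2d,DiscreteDsf} for the details of this reduction. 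Set $Z_n\coloneqq Z_n^{1,2}=\mathbf p\big(g_{\beta_n}(\mathbf u_1)-g_{\beta_n}(\mathbf u_2)\big)\in\R^{d-1}$, evaluated along the renewal times $(\beta_n)_{n\geq0}$, which are a.s.\ finite by Proposition \ref{prop_nu_stoppingtimes}; the two trajectories coalesce if and only if $Z_n=0$ for some $n$, and otherwise $Z_n$ is a.s.\ never zero. Write $r_n\coloneqq\tfrac12\|Z_n\|$.

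By Theorem \ref{thm_renewal_decomposition}, conditionally on $\mathcal G_n$, whenever $r_n\geq\kappa+R$ the increment $(Z_{n+1}-Z_n)\mathds 1_{W_{n+1}<r_n}$ is distributed, jointly with $W_{n+1}\wedge r_n$, as $\Delta^{1,2}\mathds 1_{W<r_n}$, where the law of $\Delta^{1,2}$ is centred, sign-symmetric and exchangeable (Lemma \ref{lemma_Delta}) and satisfies $\|\Delta^{1,2}\|\leq W$ with $\P(W\geq t)\leq\alpha e^{-\beta\sqrt t}$; moreover $\|Z_{n+1}-Z_n\|\leq 2W_{n+1}$ always, with $\P(W_{n+1}\geq t\mid\mathcal G_n)\leq\alpha e^{-\beta\sqrt t}$ by Proposition \ref{prop_block_size}. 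Thus, as long as $\|Z_n\|$ stays large, $(Z_n)$ behaves like a mean-zero, finite-variance random walk on $\R^{d-1}$ whose increment law has support spanning all of $\R^{d-1}$ --- the latter because $\Delta^{1,2}\not\equiv0$, as the two independent environments generate genuinely different horizontal displacements, together with the sign and coordinate symmetries --- up to two perturbations: a truncation at the state-dependent scale $r_n$, and a rare event $\{W_{n+1}\geq r_n\}$ of conditional probability at most $\alpha e^{-\beta\sqrt{r_n}}$, on which the jump still has a stretched-exponential tail. We then pick a Lyapunov function $V$ tailored to the recurrence of such walks: $V(z)=\log(1+\|z\|)$ when $d=2$, and a standard refinement of $z\mapsto\log\|z\|$ at the critical dimension when $d=3$. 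Writing the drift and Taylor-expanding, $\E[V(Z_{n+1})-V(Z_n)\mid\mathcal G_n]\leq0$ once $\|Z_n\|$ exceeds some threshold $\rho_\star\geq 2(\kappa+R)$, since the negative drift of the random-walk part, of order $\|Z_n\|^{-2}$ (or smaller, in dimension $2$), dominates the $e^{-\beta\sqrt{\|Z_n\|}}$ contribution of the truncation and the rare jumps. Consequently $\big(V(Z_{n\wedge\theta})\big)_{n\geq0}$ with $\theta\coloneqq\inf\{n:\|Z_n\|\leq\rho_\star\}$ is a non-negative supermartingale; as $V\to\infty$ and the increments of $(Z_n)$ are non-degenerate --- so that $(Z_n)$ can neither converge nor stay forever in a bounded annulus --- this forces $\theta<\infty$ a.s. Iterating at the renewal times, using the conditional description of the dynamics given $\mathcal G_n$, we conclude that a.s.\ either the two trajectories coalesce, or $\|Z_n\|\leq\rho_\star$ for infinitely many $n$.

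It remains to show that each visit of $(Z_n)$ to $\overline B(0,\rho_\star)$ carries a probability, bounded below uniformly, of eventual coalescence. Conditionally on $\mathcal G_n$ with $\|Z_n\|\leq\rho_\star$, after the renewal step $\beta_n$ the two trajectories effectively restart from the points $g_{\beta_n}^\uparrow(\mathbf u_1)$ and $g_{\beta_n}^\uparrow(\mathbf u_2)$, which lie on the level $m_{\beta_n}+\kappa$ above an empty region and at horizontal distance at most $\rho_\star$; by the resampling property (Proposition \ref{prop_resampling}), on an event of probability at least some $\delta=\delta(k,d,p,\rho_\star)>0$ one may prescribe a Poisson configuration in a bounded region above these two points whose single relevant point is the common endpoint of both trajectories' next edges, thereby forcing coalescence. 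This is an explicit, if somewhat tedious, geometric construction, carried out exactly as in \cite{Dsf2d,DiscreteDsf}. It follows that on the event of non-coalescence there are a.s.\ infinitely many renewal indices $\sigma_1<\sigma_2<\cdots$ with $\|Z_{\sigma_j}\|\leq\rho_\star$, and at each of them the conditional probability, given the history up to that point, of never coalescing is at most $1-\delta$; hence $\P(\text{non-coalescence})\leq(1-\delta)^m$ for every $m\geq1$, so it vanishes. This proves almost sure coalescence of the two trajectories, and the theorem follows.

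The main obstacle is the random-walk comparison underlying the Lyapunov step. The process $(Z_n)$ is not an exact random walk: Theorem \ref{thm_renewal_decomposition} controls its increments only up to the truncation at the state-dependent scale $r_n$ and up to a rare event on which the direction of the jump is essentially uncontrolled. Making the drift estimate robust to these perturbations --- in particular checking that the tiny $e^{-\beta\sqrt{\|z\|}}$ error cannot overcome the $\|z\|^{-2}$-scale negative drift, and treating the critical dimension $d=3$, where $z\mapsto\log\|z\|$ is neutral to leading order and a finer Lyapunov function is required --- is the delicate point, along with the routine but not automatic verification that $\Delta^{1,2}$ is genuinely non-degenerate, which is what prevents the difference process from converging to, or being trapped away from, the coalescence region.
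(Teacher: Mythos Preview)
Your proposal is correct and follows essentially the same route as the paper: reduce to $k=2$, use the renewal decomposition and Theorem~\ref{thm_renewal_decomposition} to compare $(Z_n)$ to a centred random walk in $\R^{d-1}$, apply a Lyapunov function to show the difference process does not escape, and combine with a uniform positive probability of coalescence from bounded regions. The only tactical differences are that the paper uses the single Lyapunov function $V(z)=\log\log(e+\|z\|_2^2)$ for both $d\in\{2,3\}$ (so no separate treatment of the critical case is needed), and deduces recurrence via Fatou---showing only $\liminf_n\|Z_n\|<\infty$ a.s.\ and then letting the threshold $K\to\infty$---rather than via the non-degeneracy argument you sketch to force $\theta<\infty$.
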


In the remaining of this section, we assume that $d=2$ with $p\in[1, \infty]$ or $d=3$ with $p\in\{1, 2, \infty\}$.

To prove the theorem, we analyze the case $k=2$ of the exploration process, starting from two fixed deterministic points $\mathbf u_1, \mathbf u_2\in\R^d$ satisfying $\mathbf u_1\cdot e_d=\mathbf u_2\cdot e_d$. Our goal is to show that the two trajectories coalesce, which amounts to proving that
    \[(Z_n)_{n\geq 0}\coloneqq(Z_n^{1, 2})_{n\geq 0}\]
reaches eventually $0$, almost surely. We start by presenting a sketch of the proof. The behavior of the $(\mathcal G_n)_{n\geq 0}$-adapted process $(Z_n)_{n\geq 0}$ far from the origin can be approximated by a process with i.i.d.\ increments. More precisely, thanks to Theorem \ref{thm_renewal_decomposition}, for all $n\geq0$, conditionally on $\mathcal G_n$, if $\|Z_n\|\geq2(\kappa+R)$,
    \[(Z_{n+1}-Z_n)\mathds 1_{2W_{n+1}<\|Z_n\|}\quad\text{is distributed as}\quad\Delta\mathds 1_{2W<\|Z_n\|},\quad\text{where}\quad \Delta\coloneqq\Delta^{1,2}.\]
The distribution of the $\R^{d-1}$-valued random variable $\Delta$ is sign-symmetric and exchangeable by Lemma \ref{lemma_Delta} and $\|\Delta\|$ admits moments of all orders from Theorem \ref{thm_renewal_decomposition} as $\|\Delta\|\leq W$.

Then, the overall approach is similar to the strategy developed in \cite[Section 4]{DiscreteDsf}. We want to use the increment approximation far from $0$ in order to find a function $V:\R^{d-1}\to\R_+$ satisfying $V(z)\to\infty$ as $z\to\infty$, and a constant $M>0$ such that, for all $n\geq0$,
    \[\mathds 1_{\|Z_n\|>M}\E[V(Z_{n+1})-V(Z_n)~\vert~\mathcal G_n]\leq 0.\]
Such a function $V$ is called a \emph{Lyapunov function} (see e.g.\ \cite{meyn2012markov}). We will see that its existence ensures the process does not escape to infinity. Then, since for any fixed $K>0$, whenever $\|Z_n\|_p\leq K$ for some $n\geq0$, there is a probability uniformly bounded away from $0$ of coalescing in one step, we will deduce that coalescence occurs almost surely. In the case of simple random walks on $\Z$ or $\Z^2$, functions of the form $z\mapsto\log^\alpha(1+\|z\|_2)$ for $\alpha\in(0,1)$ can be used as Lyapounov function. In our case, for computational convenience, we choose instead
    \begin{equation}
        \label{eq_def_V}
        V:\R^{d-1}\to\R_+,~z\mapsto\log\log(e+\|z\|_2^2).
    \end{equation}
We emphasize that we use the Euclidean norm $\|\cdot\|_2$ in the definition of $V$, even when $p\neq 2$, as it simplifies later computations. In order to avoid ambiguity, we will often denote the $\ell^p$ norm by $\|\cdot\|_p$ rather than $\|\cdot\|$ in what follows. Note that for each $n\geq 0$, $V(Z_n)$ is integrable as $V(z)=o_{z\to\infty}(\|z\|_p)$ and
    \[\|Z_n\|_p\leq Z_0+\sum_{i=0}^{n-1}W_i.\]
To prepare for the Taylor expansion used in the analysis of the increments of the process $(V(Z_n))_{n\geq0}$, we begin by computing some expectations involving the asymptotic increment variable $\Delta$.

\begin{Lemma}
    \label{lemma_moment_Xy}
    Let $y\in\R^{d-1}$. Define 
        \[X(y)\coloneqq\|y+\Delta\|_2^2-\|y\|_2^2.\]
    Then we have 
        \begin{itemize}
            \item $E[X(y)]=(d-1)\E[(\Delta\cdot e_1)^2]$,
            \item $\E[X(y)^2]\geq 4\|y\|_2^2\E[(\Delta\cdot e_1)^2]$,
            \item and $\E[X(y)^3]=\mathcal O_{\|y\|_2\to\infty}(\|y\|_2^2)$.
        \end{itemize}
\end{Lemma}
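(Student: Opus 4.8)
The plan is to write $X(y)=\|y+\Delta\|_2^2-\|y\|_2^2=2\,y\cdot\Delta+\|\Delta\|_2^2$ (scalar products and norms in $\R^{d-1}$), expand the relevant power of this expression, and evaluate each term using only two inputs: the sign-symmetry and exchangeability of $\Delta=\Delta^{1,2}$ provided by Lemma~\ref{lemma_Delta}, and the tail bound $\P(W\geq t)\leq\alpha e^{-\beta\sqrt t}$ of Theorem~\ref{thm_renewal_decomposition}. The latter, together with $\|\Delta\|_2\leq C_{d}\|\Delta\|_p\leq C_d\,W$, guarantees $\E[\|\Delta\|_2^m]<\infty$ for every $m\geq1$, which legitimizes all the expansions and makes the error terms finite.

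\textbf{First and second moments.} For $\E[X(y)]$: by sign-symmetry $\E[\Delta\cdot e_s]=0$ for each $s$, so $\E[y\cdot\Delta]=0$, and by exchangeability $\E[\|\Delta\|_2^2]=\sum_{s=1}^{d-1}\E[(\Delta\cdot e_s)^2]=(d-1)\E[(\Delta\cdot e_1)^2]$; this gives the first identity exactly. For $\E[X(y)^2]$: expanding $X(y)^2=4(y\cdot\Delta)^2+4(y\cdot\Delta)\|\Delta\|_2^2+\|\Delta\|_2^4$, the cross term has zero expectation because flipping the sign of any single coordinate of $\Delta$ negates $y\cdot\Delta$ while fixing $\|\Delta\|_2^2$; and $\E[(y\cdot\Delta)^2]=\sum_{s,t}y_sy_t\,\E[(\Delta\cdot e_s)(\Delta\cdot e_t)]=\|y\|_2^2\,\E[(\Delta\cdot e_1)^2]$, the off-diagonal terms vanishing again by sign-symmetry and the diagonal ones being equal by exchangeability. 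Dropping the nonnegative term $\E[\|\Delta\|_2^4]$ then yields $\E[X(y)^2]\geq4\|y\|_2^2\,\E[(\Delta\cdot e_1)^2]$.

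\textbf{Third moment.} Expanding $X(y)^3=8(y\cdot\Delta)^3+12(y\cdot\Delta)^2\|\Delta\|_2^2+6(y\cdot\Delta)\|\Delta\|_2^4+\|\Delta\|_2^6$, the first and third terms vanish in expectation by sign-symmetry, since every monomial occurring in $(y\cdot\Delta)^3$ and in $(y\cdot\Delta)\|\Delta\|_2^4$ contains at least one coordinate of $\Delta$ raised to an odd power. For the second term, Cauchy--Schwarz gives $(y\cdot\Delta)^2\leq\|y\|_2^2\|\Delta\|_2^2$, hence $\E[12(y\cdot\Delta)^2\|\Delta\|_2^2]\leq12\|y\|_2^2\,\E[\|\Delta\|_2^4]$; the last term $\E[\|\Delta\|_2^6]$ is a finite constant. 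Summing, $0\leq\E[X(y)^3]\leq 12\,\E[\|\Delta\|_2^4]\,\|y\|_2^2+\E[\|\Delta\|_2^6]=\mathcal O_{\|y\|_2\to\infty}(\|y\|_2^2)$, as desired.

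\textbf{Main obstacle.} There is no real obstacle here: the statement is essentially a bookkeeping exercise. The only points requiring care are (i) confirming that $\Delta$ has finite moments of the needed orders, which is immediate from the sub-exponential tail of $W$ in Theorem~\ref{thm_renewal_decomposition} and the equivalence of $\ell^2$ and $\ell^p$ norms on $\R^{d-1}$, and (ii) correctly tracking which monomials in each expansion survive the sign-symmetry averaging. Everything else is direct computation.
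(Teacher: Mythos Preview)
Your proof is correct and follows essentially the same approach as the paper: expand $X(y)=2y\cdot\Delta+\|\Delta\|_2^2$, use sign-symmetry to kill the odd terms and exchangeability to identify the diagonal ones. The only minor difference is in the third moment, where the paper computes $\E[(y\cdot\Delta)^2\|\Delta\|_2^2]=\|y\|_2^2\,\E[(\Delta\cdot e_1)^2\|\Delta\|_2^2]$ exactly via symmetry, while you bound it with Cauchy--Schwarz; both give the $\mathcal O(\|y\|_2^2)$ conclusion directly.
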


\begin{proof}
    First, observe that we can write
        \begin{equation}
            \label{eq_Xy}
            X(y)=(y+\Delta)\cdot(y+\Delta)-y\cdot y=\|\Delta\|_2^2+2y\cdot\Delta.
        \end{equation}
    Taking the expectation, since $\Delta$ is symmetric and exchangeable, we get
        \[\E[X(y)]=\E[\|\Delta\|_2^2]=\sum_{s=1}^{d-1}\E[(\Delta\cdot e_s)^2]=(d-1)\E[(\Delta\cdot e_1)^2].\]
    Now, squaring \eqref{eq_Xy} and using symmetry, we get
        \begin{equation*}
            \begin{split}
                \E[X(y)^2]=\E[\|\Delta\|_2^4]+4\E[(y\cdot\Delta)^2]\geq4\E[(y\cdot\Delta)^2]=4\|y\|_2^2\E[(\Delta\cdot e_1)^2],
            \end{split}
        \end{equation*}
    where for the last inequality we used that $(\Delta\cdot e_{s_1})(\Delta\cdot e_{s_2})$ is symmetric for all $s_1,s_2\in\range{1}{d-1}$ with $s_1\neq s_2$ from Lemma \ref{lemma_Delta}. Similarly, from symmetry of $\Delta$ and using \eqref{eq_Xy}, we can write
        \begin{equation}
            \label{eq_Xy3}
            \begin{split}
                \E[X(y)^3]&=\E[(\|\Delta\|_2^2+2y\cdot\Delta)^3]\\
                &=\E[\|\Delta\|_2^6]+3\E[(2y\cdot\Delta)^2\|\Delta\|_2^2]\\
                &=\E[\|\Delta\|_2^6]+12\|y\|_2^2\E[(\Delta\cdot e_1)^2\|\Delta\|_2^2],
            \end{split}
        \end{equation}
    where for the last inequality we used that $(\Delta\cdot e_{s_1})(\Delta\cdot e_{s_2})\|\Delta\|_2^2$ is symmetric for all $s_1,s_2\in\range{1}{d-1}$ with $s_1\neq s_2$. The proof is complete.
\end{proof}

We now define the function
    \[\delta:\R^{d-1}\to\R,~y\mapsto\E[V(y+\Delta)-V(y)],\]
where $V$ is defined in \eqref{eq_def_V}. This function will be central to approximate $\E[V(Z_{n+1})-V(Z_n)~\vert~\mathcal G_n]$ when $\|Z_n\|_p$ is large. The following lemma gives an asymptotic estimate of $\delta$ using a Taylor expansion. It gives in particular that $\delta(y)$ is negative when $\|y\|_2$ is big enough. Note that, as in the case of simple walks in $\R^{d-1}$, this is where the fact that $d\leq 3$, or equivalently $d-1\leq2$, plays a role.

\begin{Lemma}
    \label{lemma_RW}
    Assume $d\leq 3$. Then we have that 
        \[\delta(y)\leq-\frac{2\E[(\Delta\cdot e_1)^2]+o_{y\to\infty}(1)}{\|y\|_2^2(\log\|y\|_2^2)^2}\]
\end{Lemma}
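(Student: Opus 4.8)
The plan is to perform a third-order Taylor expansion of $V$ along the random increment $\Delta$ and take expectations term by term, using the moment estimates of Lemma \ref{lemma_moment_Xy} together with the fact that $\|\Delta\|_2\leq W$ has exponential-type tails (Theorem \ref{thm_renewal_decomposition}), so all the moments we need are finite.

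\begin{proof}[Proof sketch]
    Write $V(z)=\varphi(\|z\|_2^2)$ where $\varphi(t)=\log\log(e+t)$, and recall the notation $X(y)=\|y+\Delta\|_2^2-\|y\|_2^2$ from Lemma \ref{lemma_moment_Xy}. A Taylor expansion of $\varphi$ around $t_0\coloneqq\|y\|_2^2$ gives, for some random $\xi$ between $t_0$ and $t_0+X(y)$,
        \[V(y+\Delta)-V(y)=\varphi'(t_0)X(y)+\tfrac12\varphi''(t_0)X(y)^2+\tfrac16\varphi'''(\xi)X(y)^3.\]
    One computes directly that $\varphi'(t)=\frac{1}{(e+t)\log(e+t)}$, so $\varphi'(t_0)\sim \|y\|_2^{-2}(\log\|y\|_2^2)^{-1}$, and $\varphi''(t_0)=-\frac{\log(e+t_0)+1}{(e+t_0)^2\log^2(e+t_0)}\sim-\|y\|_2^{-4}(\log\|y\|_2^2)^{-1}$ as $\|y\|_2\to\infty$; moreover $|\varphi'''(t)|=\mathcal{O}(t^{-3})$ uniformly for $t$ bounded below. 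Taking expectations and inserting the three estimates of Lemma \ref{lemma_moment_Xy}: the first term contributes $\varphi'(t_0)(d-1)\E[(\Delta\cdot e_1)^2]$, the second contributes at most $\tfrac12\varphi''(t_0)\cdot 4\|y\|_2^2\E[(\Delta\cdot e_1)^2]$ (here the sign of $\varphi''(t_0)<0$ makes the lower bound $\E[X(y)^2]\geq 4\|y\|_2^2\E[(\Delta\cdot e_1)^2]$ the useful direction), and the third is $\mathcal{O}(\|y\|_2^{-4}\E[X(y)^3])=\mathcal{O}(\|y\|_2^{-2})=o(\|y\|_2^{-2}(\log\|y\|_2^2)^{-2})$. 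Collecting the leading terms and using $d-1\leq 2$:
        \[\delta(y)\leq\frac{(d-1)\E[(\Delta\cdot e_1)^2]}{\|y\|_2^2\log\|y\|_2^2}-\frac{2\E[(\Delta\cdot e_1)^2]}{\|y\|_2^2\log\|y\|_2^2}+\frac{o(1)}{\|y\|_2^2(\log\|y\|_2^2)^2}.\]
    When $d-1<2$ the first two terms combine to something negative of order $\|y\|_2^{-2}(\log\|y\|_2^2)^{-1}$, which dominates and the claim is immediate (indeed with a stronger bound). When $d-1=2$ the first two terms cancel exactly, and one must push the expansion one order further: the genuine second-order term in $\varphi$ evaluated more carefully produces a term of order $\|y\|_2^{-2}(\log\|y\|_2^2)^{-2}$ with coefficient $-2\E[(\Delta\cdot e_1)^2]$, which is exactly the stated bound. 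Concretely, in the $d=3$ case one refines the middle term by writing $\varphi''(t_0)X(y)^2=\varphi''(t_0)\E[X(y)^2]$ and using the sharper asymptotics $\varphi'(t_0)=\|y\|_2^{-2}(\log\|y\|_2^2)^{-1}(1+o(1))$ against the \emph{exact} value $\E[X(y)]=2\E[(\Delta\cdot e_1)^2]$; the mismatch between $\varphi'(t_0)$ and $-t_0\varphi''(t_0)$ is precisely of order $\|y\|_2^{-2}(\log\|y\|_2^2)^{-2}$ with the right sign and constant.
\end{proof}

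The main obstacle is the borderline case $d=3$: there the naive first-order cancellation $(d-1)-2=0$ kills the leading term, so one cannot simply bound $\delta$ by the first two Taylor terms — a careful second-order analysis extracting the $(\log\|y\|_2^2)^{-2}$ correction with the correct negative sign is required, and this is where the precise form $\E[X(y)]=(d-1)\E[(\Delta\cdot e_1)^2]$ (an exact identity, not just an asymptotic) from Lemma \ref{lemma_moment_Xy} does the real work. The remaining care needed is purely bookkeeping: checking that the remainder term $\E[\varphi'''(\xi)X(y)^3]$ is genuinely $o(\|y\|_2^{-2}(\log\|y\|_2^2)^{-2})$, which follows from $\varphi'''$ being bounded (for $t$ bounded below, as $t_0\to\infty$) together with $\E[X(y)^3]=\mathcal O(\|y\|_2^2)$, and that the event $\xi$ could dip below a fixed level has negligible probability thanks to the exponential tail of $W\geq\|\Delta\|_2$.
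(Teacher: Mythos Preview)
Your approach is essentially the paper's: third-order Taylor expansion of $\varphi(t)=\log\log(e+t)$ in the variable $X(y)$, plug in the moment identities of Lemma~\ref{lemma_moment_Xy}, and in the critical case $d-1=2$ extract the $(\log)^{-2}$ term from the exact relation $f'(z)+zf''(z)=-1/(z\log^2 z)$ (with $z=e+\|y\|_2^2$). The one substantive difference is how the remainder is handled. The paper observes that $f^{(4)}\le 0$ on $(1,\infty)$, so the degree-$3$ Taylor polynomial is already an \emph{upper bound} for $f(z+X(y))-f(z)$, with all derivatives evaluated at the fixed point $z$. This avoids any random intermediate point $\xi$ and any good/bad event split: the third-order term is simply $\tfrac16 f^{(3)}(z)\,\E[X(y)^3]$, and since $f^{(3)}(z)\sim 2/(z^3\log z)$ and $\E[X(y)^3]=O(z)$ (the \emph{signed} moment from Lemma~\ref{lemma_moment_Xy}), the remainder is $O(z^{-2}/\log z)=o(1/(z\log^2 z))$ directly.

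Your Lagrange-remainder route can be pushed through, but it is heavier: with a random $\xi$ you must bound $\E[|\varphi'''(\xi)X(y)^3|]$, which forces $\E[|X(y)|^3]=O(\|y\|_2^{3})$ rather than $O(\|y\|_2^{2})$, and you then need the good/bad split you mention to get $|\varphi'''(\xi)|=O(t_0^{-3})$ on the typical event. Your displayed arithmetic also slips: ``$\mathcal O(\|y\|_2^{-4}\E[X(y)^3])=\mathcal O(\|y\|_2^{-2})=o(\|y\|_2^{-2}(\log\|y\|_2^2)^{-2})$'' is wrong on both counts --- $|\varphi'''|$ at $t_0\sim\|y\|_2^2$ is $O(\|y\|_2^{-6})$, not $O(\|y\|_2^{-4})$, and $O(\|y\|_2^{-2})$ is certainly not $o(\|y\|_2^{-2}(\log\|y\|_2^2)^{-2})$. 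The correct bound on the good event is $O(\|y\|_2^{-6})\cdot O(\|y\|_2^{3})=O(\|y\|_2^{-3})$, which is indeed negligible. So the idea is right; the $f^{(4)}\le 0$ trick just makes all of this bookkeeping disappear.
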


\begin{proof}
    Let $f:(1,\infty)\to\R,~ z\mapsto\log\log z$. From direct computations, for all $z>1$ we have
    \begin{equation}
        \label{eq_fderivatives}
        f^{(1)}(z)=\frac{1}{z\log z},\quad f^{(2)}(z)=-\frac{\log z+1}{(z\log z)^2}, \quad f^{(3)}(z)=\frac{2+o_{z\to\infty}(1)}{z^3\log z}\quad\text{and}\quad f^{(4)}(z)\leq 0.
    \end{equation}
    Let $y\in\R^{d-1}$. We set
        \[z\coloneqq e+\|y\|_2^2.\]
    Since $f^{(4)}$ is non-positive, using a Taylor expansion at order $3$ we get that
        \begin{equation}
            \label{eq_taylor}
            \begin{split}
                V(y+\Delta)-V(y)&=f[z+X(y)]-f(z)\\
                &\leq X(y) f^{(1)}(z)+\frac{X(y)^2}{2}f^{(2)}(z)+\frac{X(y)^3}{6}f^{(3)}(z).
            \end{split}
        \end{equation}
    Now taking the expectation on both sides and using Lemma \ref{lemma_moment_Xy} as well as $f^{(2)}\leq 0$, we get
        \begin{equation*}
            \begin{split}
                \delta(y)&=\E[V(y+\Delta)-V(y)]\\
                &\leq (d-1)\E[(\Delta\cdot e_1)^2]f^{(1)}(z)+2\|y\|_2^2\E[(\Delta\cdot e_1)^2]f^{(2)}(z)+f^{(3)}(z)\mathcal O_{z\to\infty}(z).
            \end{split}
        \end{equation*}
    Now since $\|y\|_2^2=z-e$ we can rewrite the inequality as
        \begin{equation}
            \label{eq_Eyineq}
            \delta(y)\leq 2\E[(\Delta\cdot e_1)^2]\left(\frac{d-1}{2}f^{(1)}(z)+zf^{(2)}(z)-ef^{(2)}(z)\right)+f^{(3)}(z)\mathcal O_{z\to\infty}(z).
        \end{equation}
    Now, observe that since $d-1\leq 2$ and $f^{(1)}\geq 0$, then 
        \[\frac{d-1}{2}f^{(1)}(z)+zf^{(2)}(z)\leq f^{(1)}(z)+zf^{(2)}(z)=-\frac{1}{z\log^2z}.\] 
    Moreover,
        \[f^{(2)}(z)=o_{z\to\infty}\left(\frac{1}{z\log^2z}\right)\quad\text{and}\quad zf^{(3)}(z)=o_{z\to\infty}\left(\frac{1}{z\log^2z}\right),\]
    Hence injecting in \eqref{eq_Eyineq} we get
        \[\delta(y)\leq-\frac{2\E[(\Delta\cdot e_1)^2]+o_{z\to\infty}(1)}{z\log^2z},\]
    and the sought result follows by definition of $z$. The proof is complete.
\end{proof}

Using the previous lemma, we now prove that $V$ is indeed a Lyapunov function.

\begin{Proposition}
    \label{prop_lyapounov}
    There exist a constant $M>0$ such that for all $n\geq0$ 
        \[\mathds 1_{\|Z_n\|_p>M}\E[V(Z_{n+1})-V(Z_n)~\vert~Z_n]\leq0.\]
\end{Proposition}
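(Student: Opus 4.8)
### Proof plan

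The plan is to exploit the renewal decomposition of Theorem~\ref{thm_renewal_decomposition} to reduce the computation of $\E[V(Z_{n+1})-V(Z_n)\mid\mathcal G_n]$, when $\|Z_n\|_p$ is large, to a quantity essentially equal to $\delta(Z_n)$, and then to invoke Lemma~\ref{lemma_RW} to conclude that this is negative once $\|Z_n\|_2$ — hence $\|Z_n\|_p$ — is large enough. Concretely, I would first fix $M$ large, in particular $M\geq 2(\kappa+R)$, so that $\|Z_n\|_p>M$ forces $r_n=\tfrac12\|Z_n^{1,2}\|_p\geq \kappa+R$, putting us in the regime where Theorem~\ref{thm_renewal_decomposition} applies. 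On the event $\{W_{n+1}<r_n\}$, that theorem gives, conditionally on $\mathcal G_n$, that $Z_{n+1}-Z_n$ is distributed as $\Delta\mathds 1_{W<r_n}$ (with $\Delta=\Delta^{1,2}$), so that on this event $V(Z_{n+1})-V(Z_n)$ has the same conditional law as $V(Z_n+\Delta)-V(Z_n)$ restricted to $\{W<r_n\}$. On the complementary event $\{W_{n+1}\geq r_n\}$, I would bound $V(Z_{n+1})-V(Z_n)$ crudely: since $V$ is $1$-Lipschitz-ish (more precisely $|V(z)-V(z')|\leq C$ uniformly when one argument is bounded, and in general $|V(z)-V(z')|\lesssim \log(1+\|z-z'\|_2)\leq \log(1+W_{n+1})$ using $\|Z_{n+1}-Z_n\|_p\leq W_{n+1}$ from the block-size bound), and the probability of $\{W_{n+1}\geq r_n\}$ is at most $\alpha e^{-\beta\sqrt{r_n}}$ by Proposition~\ref{prop_block_size}.

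The core estimate is then
\begin{equation*}
\E[V(Z_{n+1})-V(Z_n)\mid\mathcal G_n]\leq \E\big[(V(Z_n+\Delta)-V(Z_n))\mathds 1_{W<r_n}\big]\Big|_{Z_n} + \text{(error term)},
\end{equation*}
where the first term equals $\delta(Z_n)-\E[(V(Z_n+\Delta)-V(Z_n))\mathds 1_{W\geq r_n}]$ evaluated at $Z_n$. So I would split into three pieces: (i) the main term $\delta(Z_n)$, which by Lemma~\ref{lemma_RW} is at most $-\frac{2\E[(\Delta\cdot e_1)^2]+o(1)}{\|Z_n\|_2^2(\log\|Z_n\|_2^2)^2}$, hence of order $-\|Z_n\|_2^{-2}(\log\|Z_n\|_2^2)^{-2}$ and strictly negative for $\|Z_n\|_2$ large; (ii) the correction $\E[(V(Z_n+\Delta)-V(Z_n))\mathds 1_{W\geq r_n}]$, which I bound using $|V(Z_n+\Delta)-V(Z_n)|\leq \log(1+\|\Delta\|_2^2)\leq C(1+\log(1+W))$ together with Cauchy–Schwarz and the tail bound $\P(W\geq r_n)\leq\alpha e^{-\beta\sqrt{r_n}}$ from Theorem~\ref{thm_renewal_decomposition}; and (iii) the event $\{W_{n+1}\geq r_n\}$ contribution, bounded the same way via Proposition~\ref{prop_block_size}. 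Both (ii) and (iii) are super-polynomially small in $r_n$, hence in $\|Z_n\|_p$, so they are negligible compared to the polynomially small but negative main term (i). Choosing $M$ large enough that (i) dominates (ii)$+$(iii) in absolute value on $\{\|Z_n\|_p>M\}$ gives the claimed inequality. Finally I would remark that conditioning on $\mathcal G_n$ can be replaced by conditioning on $Z_n$ in the statement: the bound obtained is deterministic given $Z_n$ (the main term depends only on $\|Z_n\|_2$ and the error bounds only on $r_n$, hence on $\|Z_n\|_p$, which is a function of $Z_n$), so by the tower property the inequality also holds conditionally on the smaller $\sigma$-algebra $\sigma(Z_n)$.

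The main obstacle I anticipate is controlling the error terms (ii) and (iii) with the correct uniformity: one must make sure the comparison between the $\ell^p$ ball $\|Z_n\|_p>M$ and the Euclidean quantity $\|Z_n\|_2$ entering $\delta$ is handled cleanly (they are comparable up to dimension-dependent constants, so this is harmless but must be stated), and that the logarithmic bound on $|V(Z_n+\Delta)-V(Z_n)|$ combined with the stretched-exponential tail of $W$ indeed yields something $o(\|Z_n\|_2^{-2}(\log\|Z_n\|_2^2)^{-2})$ uniformly in $n$ — which it does, since $e^{-\beta\sqrt{r_n}}$ beats any negative power of $r_n\sim\|Z_n\|_p$. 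A secondary subtlety is that Theorem~\ref{thm_renewal_decomposition} only identifies the law of $(Z_{n+1}-Z_n)\mathds 1_{W_{n+1}<r_n}$, not of $Z_{n+1}-Z_n$ itself, so the decomposition into $\{W_{n+1}<r_n\}$ and its complement must be performed from the outset rather than applied to $V(Z_{n+1})-V(Z_n)$ wholesale; keeping track of which event one is on throughout the three-way split is the bookkeeping that requires care.
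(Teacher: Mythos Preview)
Your proposal is correct and follows essentially the same strategy as the paper: split on $\{W_{n+1}<r_n\}$ versus its complement, use Theorem~\ref{thm_renewal_decomposition} on the good event to relate the increment to $\delta(Z_n)$, apply Lemma~\ref{lemma_RW} for the main negative term, and control the bad event via Proposition~\ref{prop_block_size} and Cauchy--Schwarz. The paper streamlines your three-way split into two pieces by exploiting $V\geq 0$: writing $\E[V(Z_{n+1})\mathds 1_{2W_{n+1}<\|Z_n\|_p}\mid\mathcal G_n]=\E[V(Z_n+\Delta)\mathds 1_{2W<\|Z_n\|_p}]\leq\E[V(Z_n+\Delta)]$, so that subtracting $V(Z_n)$ gives $\leq\delta(Z_n)$ directly, eliminating your correction term (ii) altogether; on the complementary event it bounds $V(Z_{n+1})$ (rather than the difference) by $g(W_{n+1})=\log\log(e+9C^2W_{n+1}^2)$ using $\|Z_{n+1}\|_2\leq 3CW_{n+1}$, which is slightly cleaner than tracking $|V(Z_{n+1})-V(Z_n)|$.
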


\begin{proof}
    Let $n\geq 0$ and assume $\|Z_n\|_p\geq 2(\kappa+R)$. Using Theorem \ref{thm_renewal_decomposition}, we have that
        \[\E[V(Z_{n+1})\mathds 1_{2W_{n+1}<\|Z_n\|_p}~|~\mathcal G_n]=h(Z_n),\]
    where
        \[h:\R^{d-1}\to\R,~y\mapsto\E[V(y+\Delta)\mathds 1_{2W<r_n}]\leq\E[V(y+\Delta)],\]
    as $V\geq0$. Therefore, we deduce that
        \begin{equation}
            \label{eq_diffgoodevent}
            \E[V(Z_{n+1})\mathds 1_{2W_{n+1}<\|Z_n\|_p}-V(Z_n)~|~\mathcal G_n]\leq\delta(Z_n).
        \end{equation}
    By equivalence of norms on $\R^{d-1}$, there exist $C>0$ such that $\|\cdot\|_2\leq C\|\cdot\|_p$. By construction, we have $\|Z_{n+1}-Z_n\|_p\leq W_{n+1}$, hence using triangle inequality we get
        \[\|Z_{n+1}\|_2\leq C\|Z_{n+1}\|_p\leq C(\|Z_n\|_p+W_{n+1}).\]
    This imply that on the event $\{2W_{n+1}\geq\|Z_n\|_p\}$, $\|Z_{n+1}\|_2\leq 3CW_{n+1}$ hence using the definition of $V$, we get that
        \[V(Z_{n+1})\mathds 1_{2W_{n+1}\geq\|Z_n\|_p}\leq g(W_{n+1})\mathds 1_{2W_{n+1}\geq\|Z_n\|_p}.\]
    where
        \[g:\R_+\to\R_+,~z\mapsto\log\log(e+9C^2z^2).\]
    Using Proposition \ref{prop_block_size}, since $g(z)=o_{z\to\infty}(\log z)$, there exists $C'<\infty$ such that for all $m\geq 0$, $\E[g(W_m)^2~|~\mathcal G_m]\leq C'$. Then, using a Cauchy-Schwarz inequality, we get
        \begin{equation}
            \begin{split}
                \label{eq_diffbadevent}
                \E[g(W_{n+1})\mathds 1_{2W_{n+1}\geq \|Z_n\|_p}~|~\mathcal G_n]^2&\leq\P[2W_{n+1}\geq\|Z_n\|_p~|~\mathcal G_n]\E[g(W_{n+1})^2\mathds 1_{2W_{n+1}\geq\|Z_n\|_p}~|~\mathcal G_n]\\
                &\leq\alpha C'e^{-\beta\sqrt{\|Z_n\|_p/2}}=\tilde\delta(Z_n)^2,
            \end{split}
        \end{equation}
    where $\alpha$ and $\beta$ are given by Proposition \ref{prop_block_size} and 
        \[\tilde\delta:\R^{d-1}\to\R,~y\mapsto\sqrt{\alpha C'}e^{-\frac{\beta}{2}\sqrt{\|Z_n\|_p/2}}.\]
    Combining \eqref{eq_diffbadevent} with \eqref{eq_diffgoodevent} we get
        \[\E[V(Z_{n+1})-V(Z_n)~|~\mathcal G_n]\leq \delta(Z_n)+\tilde\delta(Z_n).\]
    Finally, since $\tilde\delta(y)=o_{y\to\infty}(\delta(y))$ and $\delta(y)\to 0^-$ as $y\to\infty$, we deduce that we can find $M\geq2(\kappa+R)$ such that for all $n\geq 0$, 
        \[\big(\delta(Z_n)+\tilde\delta(Z_n)\big)\mathds 1_{\|Z_n\|_p>M}\leq 0.\]
    The proof is complete.

\end{proof}

We now prove from the existence of the Lyapunov function that $(Z_n)_{n\geq 0}$ can not escape to infinity.

\begin{Lemma}
    \label{lemma_notescape}
    Almost surely, $Z_n\not\to\infty$ as $n\to\infty$.
\end{Lemma}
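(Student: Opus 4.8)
The goal is to show that almost surely the process $(Z_n)_{n\geq 0}$ does not converge to infinity. The natural tool is the supermartingale convergence theorem applied to $(V(Z_n))_{n\geq 0}$, combined with the fact that $V(z)\to\infty$ as $z\to\infty$. The subtlety is that the Lyapunov inequality from Proposition~\ref{prop_lyapounov} only holds on the event $\{\|Z_n\|_p>M\}$, so $(V(Z_n))_{n\geq0}$ itself need not be a supermartingale; one must localize.

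**Step 1: stopped process.** Let me first argue by contradiction: suppose that with positive probability $Z_n\to\infty$. On that event, there is (a random) $N$ such that $\|Z_n\|_p>M$ for all $n\geq N$, where $M$ is the constant of Proposition~\ref{prop_lyapounov}. Decomposing over the value of $N$ and over $\mathcal G_N$, it suffices to derive a contradiction conditionally on a fixed $\mathcal G_N$-event on which $\|Z_N\|_p>M$ and to show it is impossible that $\|Z_n\|_p>M$ for all $n\geq N$ and $Z_n\to\infty$. So fix $n_0\geq 0$ and work conditionally on $\mathcal G_{n_0}\cap\{\|Z_{n_0}\|_p>M\}$. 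Define the stopping time (relative to $(\mathcal G_n)_{n\geq n_0}$)
\[
\sigma\coloneqq\inf\{n\geq n_0:\|Z_n\|_p\leq M\},
\]
and consider the stopped process $\widetilde V_n\coloneqq V(Z_{n\wedge\sigma})$ for $n\geq n_0$. For $n<\sigma$ we have $\|Z_n\|_p>M$, so Proposition~\ref{prop_lyapounov} gives $\E[V(Z_{n+1})-V(Z_n)\mid\mathcal G_n]\leq 0$; for $n\geq\sigma$ the stopped process is constant. Hence $(\widetilde V_n)_{n\geq n_0}$ is a non-negative $(\mathcal G_n)_{n\geq n_0}$-supermartingale (each $\widetilde V_n$ is integrable, as noted in the paragraph preceding Lemma~\ref{lemma_moment_Xy}). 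By the martingale convergence theorem it converges almost surely to a finite limit $\widetilde V_\infty$.

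**Step 2: deriving the contradiction.** On the event $\{\sigma=\infty\}$ we have $\widetilde V_n=V(Z_n)$ for all $n\geq n_0$, so $V(Z_n)$ converges to a finite limit. But if additionally $Z_n\to\infty$, then $V(Z_n)=\log\log(e+\|Z_n\|_2^2)\to\infty$, a contradiction. Therefore, conditionally on $\mathcal G_{n_0}\cap\{\|Z_{n_0}\|_p>M\}$,
\[
\P\big(\sigma=\infty,\ Z_n\to\infty\big)=0.
\]
Summing over a countable family of events (values of $n_0$ and a generating family for $\mathcal G_{n_0}$, or more simply: the event $\{Z_n\to\infty\}$ is contained in $\bigcup_{n_0\geq 0}\{\|Z_m\|_p>M\ \forall m\geq n_0\}$, and on the inner event $\sigma=\infty$ with $\|Z_{n_0}\|_p>M$), we conclude $\P(Z_n\to\infty)=0$, which is the claim.

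**Main obstacle.** The only real point requiring care is the localization: making sure one correctly passes from the conditional Lyapunov inequality $\mathds 1_{\|Z_n\|_p>M}\E[V(Z_{n+1})-V(Z_n)\mid\mathcal G_n]\leq 0$ to a genuine supermartingale, which the stopping-time argument handles, and then correctly patching together the countably many conditional statements to get the unconditional conclusion. Integrability of $V(Z_n)$ is already granted by the bound $\|Z_n\|_p\leq \|Z_0\|_p+\sum_{i<n}W_i$ together with $V(z)=o(\|z\|_p)$ and the tail bound on $W_i$ from Proposition~\ref{prop_block_size}, so no new estimate is needed there.
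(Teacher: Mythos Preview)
Your proof is correct and follows essentially the same approach as the paper: both define the stopping time $\sigma=\inf\{n\geq n_0:\|Z_n\|_p\leq M\}$, use Proposition~\ref{prop_lyapounov} to show the stopped process $V(Z_{n\wedge\sigma})$ has non-increasing conditional expectations, and conclude via a union over $n_0$. The only cosmetic difference is that the paper bounds $\E[V(Z_{n\wedge\sigma})]$ and applies Fatou to control the $\liminf$, whereas you invoke the full supermartingale convergence theorem; your phrasing ``work conditionally on $\mathcal G_{n_0}\cap\{\|Z_{n_0}\|_p>M\}$'' is slightly awkward (the conditioning is unnecessary since the stopped process is a supermartingale unconditionally), but this does not affect the argument.
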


\begin{proof}
    For all $n_0\geq0$, we define
        \[\zeta_{n_0}\coloneqq\inf\{n\geq n_0:\|Z_n\|\leq M\},\]
    Where $M$ is given by Proposition \ref{prop_lyapounov}. Now observe that if $Z_n\to\infty$ as $n\to\infty$, there exist $n_0\in\N$ such that for all $n\geq n_0$, $\|Z_n\|>M$. In that case $\zeta_{n_0}=\infty$ and since $V(z)\to\infty$ as $z\to\infty$, 
        \[\liminf_{n\to\infty} V(Z_{n\wedge\zeta_{n_0}})=\liminf_{n\to\infty} V(Z_n)=\infty.\]
    To show that $Z_n\not\to\infty$ as $n\to\infty$, it is therefore enough to check that for all $n_0\in\N$, we have
        \begin{equation}
            \label{eq_liminf_finite}
            \liminf_{n\to\infty} V(Z_{n\wedge\zeta_{n_0}})<\infty\quad\text{almost surely}.
        \end{equation}
    To conclude the proof, let us show that it is indeed the case. Fix $n_0\in\N$. Let $n\geq n_0$. The event $\zeta_{n_0}>n$ is $\mathcal G_n$ measurable, hence we can write
        \begin{equation*}
            \E[V(Z_{(n+1)\wedge\zeta_{n_0}})-V(Z_{n\wedge\zeta_{n_0}})]=\E(\mathds 1_{\zeta_{n_0}>n}\E[V(Z_{n+1})-V(Z_n)~\vert~\mathcal G_n])\leq0
        \end{equation*}
    where the last inequality follows from Proposition \ref{prop_lyapounov} since $\|Z_n\|_p>M$ on the event $\zeta_{n_0}>n$ by definition. We deduce then by induction that for all $n\geq n_0$,
        \[\E[V(Z_{n\wedge\zeta_{n_0}})]\leq\E[V(Z_{n_0})].\]
    Finally using Fatou's Lemma, since $V\geq 0$, we get
        \[\E\left[\liminf_{n\to\infty}V(Z_{n\wedge\zeta_{n_0}})\right]\leq\liminf_{n\to\infty}\E[V(Z_{n\wedge\zeta_{n_0}})]\leq\E[V(Z_{n_0})]<\infty.\]
    This proves \eqref{eq_liminf_finite}. The proof is complete.
\end{proof}

We now show that for all $K>0$, coalescence in one step occurs with probability bounded away from zero at any step $n\geq 0$, provided $\|Z_n\|\leq K$.

\begin{Lemma}
    \label{lemma_coalescence_proba}
    Let $K>0$. There exist $q_K<1$ such that for all for all $n\geq 0$,
        \[\mathds 1_{\|Z_n\|\leq K}\P(Z_{n+1}\neq0~\vert~\mathcal G_n)\leq q_K.\]
\end{Lemma}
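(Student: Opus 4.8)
The plan is to show that with probability bounded below the two trajectories coalesce within a couple of steps after the renewal time $\beta_n$, and that such an early coalescence already forces $Z_{n+1}=0$. I first record two elementary reductions. Since two distinct points of $\mathcal N$ almost surely have distinct images under $\mathbf p$, and since once two trajectories share a vertex they remain equal forever, $\{Z_{n+1}=0\}$ coincides almost surely with the event that the two trajectories have coalesced by step $\beta_{n+1}$. Moreover, at step $\beta_n+1$ only one trajectory has moved, so the other still lies at level at most $M_{\beta_n}=m_{\beta_n}+L_{\beta_n}\le m_{\beta_n}+\kappa$; hence $m_{\beta_n+1}-m_{\beta_n}\le\kappa<\kappa+R$, so step $\beta_n+1$ is not a good step and $\beta_{n+1}=\tau_{\gamma_{n+1}}\ge\tau_{\gamma_n+1}\ge\beta_n+2$. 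More generally, if $s$ is a coalescence step (meaning $g_{s+1}(\mathbf u_1)=g_{s+1}(\mathbf u_2)$) and every step $j\in\{\beta_n+1,\dots,s\}$ satisfies $m_j-m_{\beta_n}<\kappa+R$, then none of those steps is good, so $\beta_{n+1}>s$ and $Z_{n+1}=0$.

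For the conditioning I would apply Lemma~\ref{lemma_EGn} to the indicator of $\{Z_{n+1}=0\}$, giving
\[
\P(Z_{n+1}=0\mid\mathcal G_n)=\sum_{i\ge0}\mathds 1_{\gamma_n=i}\,\P\big(Z_{n+1}=0\mid A_i,\mathcal F_{\tau_i}\big),
\]
so it suffices to find $c_K>0$ with $\P(Z_{n+1}=0\mid A_i,\mathcal F_{\tau_i})\ge c_K$ on $\{\gamma_n=i\}\cap\{\|Z_n\|\le K\}$; then $q_K:=1-c_K$ works. On $\{\gamma_n=i\}$ the renewal event $A_i$ holds, the heads $g_{\tau_i}(\mathbf u_1),g_{\tau_i}(\mathbf u_2)$ lie at levels in $[m_{\tau_i},m_{\tau_i}+\kappa]$ with the moving one at level exactly $m_{\tau_i}$, they are at horizontal distance $\|Z_n\|\le K$, and $L_{\tau_i}\le\kappa$. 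Conditionally on $\mathcal F_{\tau_i}$, Proposition~\ref{prop_resampling} makes $\mathcal N$ on $\mathbb H^+(m_{\tau_i})\setminus\overline H_{\tau_i}$ a unit-rate Poisson process, and further conditioning on $A_i$ only affects its restriction to a bounded region around the two heads.

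The core step is to build an event $\mathcal C$ depending only on $\mathcal N$ inside a ball $D$ of radius $O(K+\kappa+R)$ centred at the moving head (hence above level $m_{\tau_i}$), with $\mathcal C\subseteq A_i$, on which the trajectories coalesce within at most two steps with each intervening step advancing $m$ by strictly less than $\kappa+R$; by the first paragraph $\mathcal C\subseteq\{Z_{n+1}=0\}$. Concretely, $\mathcal C$ prescribes the locations of the $A_i$-points $v_1,v_2$ — one in each ball $B^+(g^\uparrow_{\tau_i}(\mathbf u_j),R)$, possibly a single common point when the heads are close — and forces $D\setminus(\overline H_{\tau_i}\cup\{v_1,v_2\})$ to be empty. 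On $\mathcal C$, the move of the moving head is already determined by $A_i$ (its $\Psi$ lies in $\{v_1,g_{\tau_i}(\mathbf u_2)\}$, since $B(g^\downarrow_{\tau_i}(\mathbf u_1),\kappa+R)\setminus\overline H_{\tau_i}$ contains only $v_1$), and the emptiness of $D$ then pins down each subsequent $\Psi$-step so that both trajectories reach a common point within two steps, every such step being short enough that $m$ stays below $m_{\tau_i}+\kappa+R$. Since $\mathcal C\subseteq A_i$ and $\mathcal C\subseteq\{Z_{n+1}=0\}$,
\[
\P(Z_{n+1}=0\mid A_i,\mathcal F_{\tau_i})\ge\P(\mathcal C\mid A_i,\mathcal F_{\tau_i})=\frac{\P(\mathcal C\mid\mathcal F_{\tau_i})}{\P(A_i\mid\mathcal F_{\tau_i})}\ge\P(\mathcal C\mid\mathcal F_{\tau_i}),
\]
and by the resampling and a standard Poisson estimate the last quantity is bounded below by a positive constant depending only on $k,d,p,K$, uniformly over admissible head positions and history sets, because $|D|=O((K+\kappa+R)^d)$ and all prescribed balls have measure bounded below (and lie above level $m_{\tau_i}+\kappa$, hence avoid $H_{\tau_i}$). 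This yields $c_K$, hence $q_K$.

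The main obstacle is the precise design of $\mathcal C$. One must check that the prescribed points can be placed so that $\mathcal C$ genuinely implies $A_i$: this forces a short case split on $\|Z_n\|$ relative to $\kappa$ and $R$, according to whether the two renewal balls $B^+(g^\uparrow_{\tau_i}(\mathbf u_j),R)$ overlap — one uses a single common point in the overlapping regime and two points sitting outside the other's ball $B(g^\downarrow_{\tau_i}(\cdot),\kappa+R)$ otherwise — and, in the intermediate range, a mixed choice. One must also verify that the emptiness of $D$ indeed pins down each $\Psi$-step along the intended path, bearing in mind that the competing candidate at each step is the other trajectory's current position and that part of $D$ lies inside $\overline H_{\tau_i}$, and do the bookkeeping that the one intervening step keeps $m$ below $m_{\tau_i}+\kappa+R$. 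None of this is deep — it is elementary convex geometry of $\ell^p$ balls confined to a bounded region — but it carries essentially all the work; the statement then follows from the Poisson lower bound above.
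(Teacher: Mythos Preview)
Your approach is essentially sound and would go through, but it is considerably more elaborate than the paper's argument. Both proofs share the reduction via Lemma~\ref{lemma_EGn} to a Poisson computation conditionally on $\mathcal F_{\tau_i}$ and $A_i$, and both exploit that the coalescing steps stay below level $m_{\beta_n}+\kappa+R$ so that $\beta_{n+1}$ has not yet occurred.

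The key simplification you are missing is that there is no need to \emph{prescribe} the renewal points. The conditioning on $\mathcal G_n$ already contains $A_{\gamma_n}$, so the two points $x\in B^+(g^\uparrow_{\beta_n}(\mathbf u_1),R)$ and $y\in B^+(g^\uparrow_{\beta_n}(\mathbf u_2),R)$ are handed to you for free, wherever they happen to lie. The paper then imposes a single additional event: that the annular set
\[
V_n\coloneqq\bigcup_{i=1}^2 B^+\big(g^\uparrow_{\beta_n}(\mathbf u_i),K+R\big)\setminus\bigcup_{i=1}^2 B^+\big(g^\uparrow_{\beta_n}(\mathbf u_i),R\big)
\]
contains no Poisson points. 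Because $V_n$ lies above level $m_{\beta_n}+\kappa$ and outside $U_n$, the conditioning on $A_{\gamma_n}$ does not obstruct this, and one gets directly $\P(\#\mathcal N\cap V_n=0\mid\mathcal G_n)\geq e^{-2|B^+(0,K+R)|}$. When $\|Z_n\|\le K$ the two small $R$-balls each sit inside the other's $(K+R)$-ball, so emptiness of $V_n$ forces $\{x,y\}$ to be the only points around, hence $\Psi(x)=y$ or $\Psi(y)=x$ or $x=y$, and coalescence follows by time $\beta_n+3\le\beta_{n+1}$.

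Thus the whole ``main obstacle'' you identify --- the case split on $\|Z_n\|$ relative to $\kappa,R$, the placement of $v_1,v_2$ compatible with $A_i$, and the verification that each $\Psi$-step follows the intended path --- simply evaporates: one does not build $\mathcal C\subseteq A_i$ at all, one works \emph{on top of} $A_i$ with whatever $x,y$ it produced. Your route buys nothing extra and costs the geometric casework; the paper's route is a two-line Poisson bound.
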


\begin{proof}
    Let $n\geq 0$. We set
        \[U_n=\bigcup_{i\in\{1,2\}}B^+\big(g_{\beta_n}^\uparrow(\mathbf u_i),R\big),\]
    and
        \[V_n\coloneqq\bigcup_{i\in\{1,2\}}B^+\big(g_{\beta_n}^\uparrow(\mathbf u_i),K+R\big)\setminus U_n.\]
    Thanks to Lemma \ref{lemma_EGn} and Proposition \ref{prop_resampling}, conditionally on $\mathcal G_n$,
        \[\mathcal N\cap\mathbb H^+(m_n+\kappa)\setminus U_n\]
    is distributed as $\mathcal N'\cap\mathbb H^+(m_n+\kappa)\setminus U_n$. Therefore, we have that
        \[\P(\#\mathcal N\cap V_n=0~|~\mathcal G_n)=e^{-|V_n|}\geq e^{-2|B^+(0,K+R)|}.\]
    Now observe that if $Z_n\leq K$, then 
        \[B^+\big(g_{\beta_n}^\uparrow(\mathbf u_2),R\big)\subset B^+\big(g_{\beta_n}^\uparrow(\mathbf u_1),K+R\big)\quad\text{and}\quad B^+\big(g_{\beta_n}^\uparrow(\mathbf u_1),R\big)\subset B^+\big(g_{\beta_n}^\uparrow(\mathbf u_2),K+R\big).\]
    Therefore, if $Z_n\leq K$ and $\#\mathcal N\cap V_n=0$ and if $x$ and $y$ respectively denotes the only points of $\mathcal N$ in $B^+(g_{\beta_n}^\uparrow(\mathbf u_1),R)$ and $B^+(g_{\beta_n}^\uparrow(\mathbf u_2),R)$, then
        \[\Psi(x)=y\quad\text{or}\quad\Psi(y)=x\quad\text{or}\quad x=y.\]
    This ensures that when $Z_n\leq K$ and $\#\mathcal N\cap V_n=0$, either coalescence has occurred before time $\beta_n+2$, either $x\neq y$ and coalescence will occur at time $\beta_n+3\leq\beta_{n+1}$. It follows that,
        \begin{equation*}
            \begin{split}
                \mathds 1_{\|Z_n\|\leq K}\P(Z_{n+1}\neq0~\vert~\mathcal G_n)&\leq\mathds 1_{\|Z_n\|\leq K}\P(\#\mathcal N\cap V_n\neq0~\vert~\mathcal G_n)\\
                &\leq(1-e^{-2|B^+(0,K+R)|})\eqqcolon q_K<1.
            \end{split}
        \end{equation*}
    The proof is complete.
\end{proof}

We now derive that almost surely, the trajectories eventually coalesce.

\begin{Proposition}
    \label{prop_coalescence}
    Almost surely, the process $(Z_n)_{n\geq 0}$ eventually hits $0$, meaning that the trajectories from $\mathbf u_1$ and $\mathbf u_2$ coalesce.
\end{Proposition}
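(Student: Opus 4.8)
The plan is to combine the three ingredients we have just assembled: the Lyapunov inequality of Proposition~\ref{prop_lyapounov}, the fact that $(Z_n)_{n\geq0}$ cannot escape to infinity (Lemma~\ref{lemma_notescape}), and the uniform one-step coalescence probability below any threshold (Lemma~\ref{lemma_coalescence_proba}). The overall architecture is the standard recurrence-plus-positive-probability argument: first show that $(\|Z_n\|)_{n\geq0}$ returns below the level $M$ of Proposition~\ref{prop_lyapounov} infinitely often (almost surely, on the event that coalescence has not yet happened), and then use a conditional Borel--Cantelli argument to show that among these infinitely many visits, one of them is eventually followed by a coalescing step.

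First I would argue the infinitely-many-returns statement. Fix $M$ as in Proposition~\ref{prop_lyapounov} and suppose, towards the goal, that $(Z_n)$ never hits $0$; we must show $\{n:\|Z_n\|\leq M\}$ is a.s.\ infinite. If it were finite, there would be a (random) $n_0$ with $\|Z_n\|>M$ for all $n\geq n_0$, so the stopping time $\zeta_{n_0}$ of Lemma~\ref{lemma_notescape} would be infinite. But Lemma~\ref{lemma_notescape} established $\liminf_n V(Z_{n\wedge\zeta_{n_0}})<\infty$ a.s.\ for every $n_0$; since $V(z)\to\infty$ as $z\to\infty$, on $\{\zeta_{n_0}=\infty\}$ this forces $\liminf_n\|Z_n\|<\infty$, contradicting $\|Z_n\|>M$ for $n\geq n_0$ (more precisely, combined with the fact that $Z_n\not\to\infty$ from Lemma~\ref{lemma_notescape} directly). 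So, almost surely, either $(Z_n)$ hits $0$ or it returns below $M$ infinitely often. It therefore suffices to show that on the event of infinitely many returns below $M$, coalescence happens almost surely.

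Next, define the successive return times $\sigma_0\coloneqq\inf\{n\geq0:\|Z_n\|\leq M\}$ and $\sigma_{j+1}\coloneqq\inf\{n>\sigma_j:\|Z_n\|\leq M\}$; on the event under consideration these are all finite and are $(\mathcal G_n)_{n\geq0}$-stopping times (since $\{\|Z_n\|\leq M\}\in\mathcal G_n$, as $Z_n$ is $\mathcal G_n$-measurable). Apply Lemma~\ref{lemma_coalescence_proba} with $K=M$: at each return time $\sigma_j$, conditionally on $\mathcal G_{\sigma_j}$, we have $\P(Z_{\sigma_j+1}=0\mid\mathcal G_{\sigma_j})\geq 1-q_M>0$ on $\{\sigma_j<\infty\}$. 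Since $Z_{\sigma_j+1}=0$ means coalescence has occurred (and then $Z_n=0$ for all larger $n$), a conditional Borel--Cantelli / Lévy-type argument gives that a.s.\ the event $\{Z_{\sigma_j+1}=0\}$ occurs for some $j$: on the event $\{\sigma_j<\infty \text{ for all } j\}$, for each $N$, $\P(\bigcap_{j\leq N}\{Z_{\sigma_j+1}\neq0\}\mid\mathcal G_{\sigma_0})\leq q_M^{N+1}\to0$, so with probability one at least one $\sigma_j$ is followed by a coalescing step. Hence $(Z_n)_{n\geq0}$ hits $0$ almost surely, which is exactly the claim.

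The only delicate point — and the one I would write most carefully — is the first step: cleanly deriving "infinitely many returns below $M$, or else already coalesced" from Lemma~\ref{lemma_notescape}. The subtlety is that Lemma~\ref{lemma_notescape} is phrased as $Z_n\not\to\infty$, which does not literally say $\liminf\|Z_n\|\leq M$; one needs to supplement it, noting that between any two times where $\|Z_n\|>M$, the increments $W_{n+1}$ have exponential tails (Proposition~\ref{prop_block_size}), so the process cannot "jump over" the band $\{\|z\|\leq M\}$ from above to above while its liminf stays bounded — or, more simply, one reruns the supermartingale argument of Lemma~\ref{lemma_notescape} with $M$ replaced by any larger level $M'$ and lets $M'\to\infty$, to conclude $\liminf\|Z_n\|\leq M$ on the non-coalescence event. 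Everything else is bookkeeping with stopping times and the Markov-type conditioning already justified by Theorem~\ref{thm_renewal_decomposition} and Lemma~\ref{lemma_EGn}.
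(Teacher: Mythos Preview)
Your overall architecture matches the paper's proof: use Lemma~\ref{lemma_notescape} to get that $\liminf_n\|Z_n\|<\infty$ almost surely, define successive return times below a threshold, and apply Lemma~\ref{lemma_coalescence_proba} in a conditional Borel--Cantelli scheme to force coalescence. The stopping-time bookkeeping you describe in the second paragraph is exactly what the paper writes, with the same geometric bound $q_K^{\ell}$.

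The one genuine gap is the one you yourself flag in the last paragraph, and neither of your proposed patches actually closes it. Lemma~\ref{lemma_notescape} gives only $\liminf_n\|Z_n\|<\infty$ a.s.; it does \emph{not} give $\liminf_n\|Z_n\|\leq M$ for the specific Lyapunov threshold $M$. Your first fix (exponential tails prevent ``jumping over'' the band $\{\|z\|\leq M\}$) is false: the increments, while light-tailed, can certainly exceed $2M$, so the process can hop from above $M$ to above $M$ indefinitely while its liminf sits at, say, $M+1$. Your second fix (rerun the supermartingale argument with a larger threshold $M'$) shows, for each $M'\geq M$, that the process a.s.\ returns below $M'$ infinitely often, but letting $M'\to\infty$ does not squeeze the liminf down to $M$; it simply recovers $\liminf<\infty$.

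The clean resolution, which is precisely what the paper does, is to \emph{not} insist on the specific threshold $M$ at all. Since Lemma~\ref{lemma_coalescence_proba} is stated for an arbitrary $K>0$, run your entire second paragraph with $K$ in place of $M$: on the event $\{\liminf_n\|Z_n\|<K\}$ the return times below $K$ are all finite, and the Borel--Cantelli bound gives
\[
\P\big(\liminf_n\|Z_n\|<K,\ \forall n\ Z_n\neq 0\big)=0.
\]
Then use $\liminf_n\|Z_n\|<\infty$ a.s.\ to write
\[
\P(\forall n\ Z_n\neq 0)=\lim_{K\to\infty}\P\big(\liminf_n\|Z_n\|<K,\ \forall n\ Z_n\neq 0\big)=0.
\]
This replaces your problematic first step entirely and removes any need for the Lyapunov constant $M$ in this part of the argument.
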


\begin{proof}
    Since $\liminf_{n\to\infty}\|Z_n\|$ is almost surely finite from Lemma \ref{lemma_notescape}, we have
        \begin{equation}
            \label{eq_coal_liminfK}
            \P(\forall n\geq 0,~Z_n\neq0)=\lim_{K\to\infty}\P\left(\liminf_{n\to\infty}\|Z_n\|<K,~\forall n\geq 0,~Z_n\neq0\right).
        \end{equation}
    Fix $K>0$. Denote $\theta_0=\inf\{n\geq 0~:~\|Z_n\|\leq K\}$ and for all $\ell\geq 0$,
        \[\theta_{\ell+1}=\inf\{n>\theta_\ell~:~\|Z_n\|\leq K\}.\]
    Observe that for all $\ell\geq 1$ we can write
        \begin{equation}
            \begin{split}
                \P(\theta_\ell<\infty,~\forall i\in\range{0}{\ell},~Z_{\theta_i+1}\neq0)&=\E[\mathds 1_{\{\theta_\ell<\infty,~\forall i\in\range{0}{\ell-1},~Z_{\theta_i+1}\neq 0\}}\P(Z_{\theta_l+1}\neq0~\vert~\mathcal G_{\theta_\ell})]\\
                &\leq\E[\mathds 1_{\{\theta_\ell<\infty,~\forall i\in\range{0}{\ell-1},~Z_{\theta_i+1}\neq0\}}q_K]\\
                &\leq q_K\P(\theta_{l-1}<\infty,~\forall i\in\range{0}{l-1},~Z_{\theta_i+1}\neq0),
            \end{split}
        \end{equation}
    where the first inequality comes from Lemma \ref{lemma_coalescence_proba} as $\|Z_{\theta_\ell}\|\leq K$ by construction, and the last line uses that $\theta_l<\infty$ implies $\theta_{l-1}<\infty$. Hence, by immediate induction, we deduce that for all $\ell\geq 0$
        \[\P(\theta_\ell<\infty,~\forall i\in\range{0}{\ell}~Z_{\theta_i+1}\neq0)\leq {q_K}^\ell.\]
    Now since $\liminf_{n\to\infty}\|Z_n\|<K$ implies that $\theta_l<\infty$ for all $\ell\geq 0$, we get that
        \[\P\left(\liminf_{n\to\infty}\|Z_n\|<K,~\forall n\geq 0,~Z_n\neq0\right)\leq \P(\theta_\ell<\infty,~\forall i\in\range{0}{\ell},~Z_{\theta_i+1}\neq0)\leq {q_K}^\ell,\]
    for each $\ell\geq 0$, and thus letting $l\to\infty$,
        \[\P\left(\liminf_{n\to\infty}\|Z_n\|<K,~\forall n\geq 0,~Z_n\neq0\right)=0.\]
    Finally, injecting in \eqref{eq_coal_liminfK} we deduce that $\P(\forall n\geq 0,~\|Z_n\|>0)=0$, i.e.\ the process will almost surely hit $0$. The proof is complete.
\end{proof}

We are now ready to prove the main result of the section.

\begin{proof}[Proof of Theorem \ref{thm_coalescence}]
    From Proposition \ref{prop_coalescence}, we have that almost surely, for all $\mathbf u_1,\mathbf u_2\in\Q^d$ with $\mathbf u_1\cdot e_d=\mathbf u_2\cdot e_d$, the two trajectories explored from $\mathbf u_1$ and $\mathbf u_2$ eventually coalesce. We now conclude using a density argument. Let $x,y\in\mathcal N$. Fix $h\in\Q$ be such that $h>\max\{x\cdot e_d,~y\cdot e_d\}$. Since $h>x\cdot e_d$ and $\Psi^{\circ n}(x)\cdot e_d\to\infty$, there exist $n_0\in\N$ such that
        \[\Psi^{n_0}(x)\cdot e_d\leq h\quad\text{and}\quad\Psi^{n_0+1}(x)\cdot e_d>h.\]
    Then, we can almost surely choose $\mathbf u_1\in\Q^d$ with $\mathbf u_1\cdot e_d=h$ close enough to the segment $[\Psi^{n_0}(x), \Psi^{n_0+1}(x)]$ so that $\Psi(\mathbf u_1)=\Psi^{n_0+1}(x)$. Similarly, we can choose $\mathbf u_2\in\Q^d$ with $\mathbf u_2\cdot e_d=h$ so that $\Psi(\mathbf u_2)=\Psi^{n_1}(y)$ for some $n_1\in\N$. Since the trajectories explored from $\mathbf u_1$, $\mathbf u_2$ coalesce, we have that the trajectories from $x$ and $y$ coalesce. The proof is complete.
\end{proof}

\section{Non-coalescence in high dimensions}

\label{section_non_coalescence}

In this section, we prove the non-coalescence part of our main result, stated precisely in the following theorem.

\begin{Theorem}
    \label{thm_non_coalescence}
    Assume $p\in\{1, 2, \infty\}$ and $d\geq 4$. Then the DSF consists almost surely of infinitely many disjoint trees.
\end{Theorem}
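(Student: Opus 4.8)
The strategy is to show that, for $k=2$ trajectories started far enough apart, there is positive probability that they never coalesce, and then to upgrade this to "infinitely many disjoint trees" by an ergodicity/translation argument. The heart of the matter is the renewal decomposition from Section~\ref{section_renewal}. Consider again the process $(Z_n)_{n\ge0}=(Z_n^{1,2})_{n\ge0}=\mathbf p(g_{\beta_n}(\mathbf u_1)-g_{\beta_n}(\mathbf u_2))$, which is $(\mathcal G_n)_{n\ge0}$-adapted. By Theorem~\ref{thm_renewal_decomposition}, as long as $\|Z_n\|\ge 2(\kappa+R)$, the conditional law of $(Z_{n+1}-Z_n)\mathds 1_{2W_{n+1}<\|Z_n\|}$ given $\mathcal G_n$ equals the law of $\Delta\mathds 1_{2W<\|Z_n\|}$, where $\Delta=\Delta^{1,2}$ is the symmetric, exchangeable $\R^{d-1}$-valued increment whose norm has exponential-type tails ($\P(\|\Delta\|\ge t)\le\P(W\ge t)\le\alpha e^{-\beta\sqrt t}$). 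Since $d\ge4$, we have $d-1\ge3$, and a genuinely centered random walk in dimension $\ge3$ with good moments is transient. So the independent process $(\tilde Z_n)_{n\ge0}$ with i.i.d.\ increments distributed as $\Delta$ is transient, and moreover, by Kesten's result \cite{KestenErickson} on the rate of escape of random walks, $\|\tilde Z_n\|\to\infty$ fast enough that $\sum_n \P(\tilde Z_n \in B(0, c\log n))<\infty$ or a similar summability holds. The aim is to transfer this transience to $(Z_n)$ and to rule out coalescence for the coupled model.

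The key step is a coupling/comparison between $(Z_n)$ and the independent walk $(\tilde Z_n)$ that remains valid for all time with high probability. Here the block sizes $W_{n+1}$ are crucial: the renewal description only transfers the increment exactly on the event $\{2W_{n+1}<\|Z_n\|\}$. So I would proceed as follows. First, pick the starting separation $\|\mathbf p(\mathbf u_1-\mathbf u_2)\|=:D$ large. Run the independent walk $\tilde Z$; by Kesten's theorem there is a (random but a.s.\ finite) constant and a nondecreasing function $\phi(n)\to\infty$ (roughly $\phi(n)\sim n^{1/2-\epsilon}$, valid because $d-1\ge 3$) such that $\P(\exists n:\ \|\tilde Z_n\|\le \phi(n)\ \text{after escaping})$ is small when $D$ is large; more usefully, $\P(\tilde Z\ \text{ever returns to a fixed ball of radius }M)\to 0$ as $D\to\infty$. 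Second, construct $(Z_n)$ and $(\tilde Z_n)$ on a common space so that they agree until the first time either $\|Z_n\|<2(\kappa+R)$ or $2W_{n+1}\ge\|Z_n\|$ — call this time $\sigma$. On $\{\sigma=\infty\}$ the two processes coincide forever, so if additionally $\tilde Z$ never hits the ball $B(0,M)$ (with $M$ the coalescence radius), then $Z$ never coalesces. Third, bound $\P(\sigma<\infty)$: using $\P(2W_{n+1}\ge\|Z_n\|\mid\mathcal G_n)\le \alpha e^{-\beta\sqrt{\|Z_n\|/2}}$ from Proposition~\ref{prop_block_size} together with the lower bound $\|\tilde Z_n\|\ge \phi(n)$ valid on the transience event, one gets $\sum_n \alpha e^{-\beta\sqrt{\phi(n)/2}}<\infty$ provided $\phi(n)$ grows at least logarithmically squared — which it does, since in dimension $\ge3$ the walk escapes polynomially. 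A Borel–Cantelli argument then shows that, conditionally on the escape event for $\tilde Z$, we have $\sigma=\infty$ with probability close to $1$ when $D$ is large. Combining: for $D$ large enough, $\P(\text{trajectories from }\mathbf u_1,\mathbf u_2\text{ never coalesce})>0$.

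The final step is to conclude that the DSF a.s.\ has infinitely many trees. Having established $\P(\text{non-coalescence for some pair of deterministic starting points})>0$, I would use the translation-invariance and ergodicity (mixing) of the Poisson point process. The event "the DSF is a single tree" is translation-invariant, hence has probability $0$ or $1$; since we just showed it is not $1$, it is $0$, so a.s.\ the DSF has at least $2$ trees. To get infinitely many: the number of distinct trees is a translation-invariant random variable, hence a.s.\ constant (equal to some $N\in\N\cup\{\infty\}$); if $N$ were finite, one could tile space into $N+1$ well-separated far-apart regions and, using the (quantitative, via the block-size tails) asymptotic independence of the DSF restricted to distant slabs together with the positive-probability non-coalescence bound, produce $N+1$ mutually non-coalescing trajectories with positive probability, contradicting $N<\infty$. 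Hence $N=\infty$ a.s.

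\textbf{Main obstacle.} The delicate point is the long-time coupling in the second step: one must simultaneously keep $(Z_n)$ equal to the independent walk \emph{and} ensure the independent walk escapes fast enough that the "bad" events $\{2W_{n+1}\ge\|Z_n\|\}$ are summable. This requires the quantitative rate-of-escape input from Kesten \cite{KestenErickson} (to know $\|\tilde Z_n\|$ grows faster than $(\log n)^2$, which holds comfortably in dimension $d-1\ge3$), combined with the $e^{-\beta\sqrt{\cdot}}$ tail of the block sizes from Proposition~\ref{prop_block_size}. Getting these two estimates to interlock cleanly — and making the Borel–Cantelli bookkeeping uniform enough in the starting separation $D$ — is where the real work lies; the ergodic-theoretic upgrade to infinitely many trees, while requiring care with the multi-trajectory independence, is comparatively standard.
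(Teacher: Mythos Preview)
Your core strategy---couple the process $(Z_n)$ to an independent random walk with increments $\Delta$, invoke Kesten's rate-of-escape theorem in dimension $d-1\ge 3$, and control the coupling failure via the stretched-exponential tails of the block sizes $W_{n+1}$---is exactly what the paper does. The coupling you describe is the content of Lemma~\ref{lemma_indep} and inequality~\eqref{eq_probadrift}, and your Borel--Cantelli bookkeeping on $\sum_n\P(2W_{n+1}\ge\|Z_n\|)$ matches the paper's \eqref{eq_pwn} combined with the $A+n^{1/3}$ lower envelope from Lemma~\ref{lemma_kesten}.

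The one substantive difference, and the place where your plan has a gap, is the restriction to $k=2$. The paper runs the entire argument for an \emph{arbitrary} number $k$ of trajectories from the outset (the renewal machinery of Section~\ref{section_renewal} is already set up for general $k$), applies Kesten pairwise in Lemma~\ref{lemma_kesten} to locate starting points for which all $\binom{k}{2}$ difference walks simultaneously escape, and then invokes tail-$\sigma$-field triviality of the Poisson process to conclude that for each fixed $k$ there are a.s.\ at least $k$ non-coalescing trajectories. Your bootstrap from $k=2$ to infinitely many is not justified as written: knowing only that two well-separated trajectories fail to coalesce with positive probability does not yield $N+1$ \emph{mutually} non-coalescing trajectories. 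The appeal to ``asymptotic independence of the DSF restricted to distant slabs'' is precisely the nontrivial statement whose rigorous form is the general-$k$ renewal decomposition plus block-size control---so making it precise amounts to redoing the paper's argument for general $k$. Since Theorem~\ref{thm_renewal_decomposition} and Proposition~\ref{prop_block_size} already apply to arbitrary $k$, the clean fix is simply to drop the $k=2$ restriction and argue directly for $k$ trajectories.
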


For the remainder of this section, we assume that $p\in\{1, 2, \infty\}$ and $d\geq 4$. The strategy is the following. Fix an arbitrary $k\geq 0$. Our goal is to find deterministic starting points $(\mathbf u_i)_{i\in\range{1}{k}}\in\R^k$ with $\mathbf u_i\cdot e_d=0$ for each $i\in\range{1}{k}$, such that the probability that the corresponding explored trajectories never coalesce is positive. That is
    \[\P(\forall n\geq0,~r_n>0)>0.\]
To this end, consider a sequence $((\Delta^{i, j}_n)_{1\leq i<j\leq k},D_{n+1})_{n\geq 0}$ of i.i.d.\ copies of $((\Delta^{i, j})_{1\leq i<j\leq k},W)$, independent of the Poisson point process $\mathcal N$. For each $n\geq 0$ and pair $1\leq i<j\leq k$, define
    \[S_n^{i, j}\coloneqq\sum_{l=1}^n \Delta_l^{i, j}\quad\text{and we set}\quad\tilde r_n\coloneqq\frac{1}{2}\max_{1\leq i<j\leq k}\|Z_0^{i, j}+S_n^{i, j}\|.\]
The processes $((Z_n^{i, j})_{1\leq i<j\leq k})_{n\geq 0}$ and $((Z_0^{i, j}+S_n^{i, j})_{1\leq i<j\leq k})_{n\geq 0}$ are linked via Theorem~\ref{thm_renewal_decomposition}. The following lemma leverages this connection to prove a key intermediate result used in the next step of the argument.

\begin{Lemma}
    \label{lemma_indep}
    For all $n\geq 0$, let us define
        \begin{itemize}
            \item $G_n\coloneqq\{\forall k\leq n-1,~r_n>W_{n+1}\}\cap\{\forall k\leq n,~r_n\geq\kappa+R\}$,
            \item $\tilde G_n\coloneqq\{\forall k\leq n-1,~\tilde r_n>D_{n+1}\}\cap\{\forall k\leq n,~\tilde r_n\geq\kappa+R\}$.
        \end{itemize}
    Then,
        \[(\mathds 1_{G_n}~,(Z_n^{i, j}\mathds 1_{G_n})_{1\leq i<j\leq k})\quad\text{and}\quad(\mathds 1_{\tilde G_n},~((Z_0^{i, j}+S_n^{i, j})\mathds 1_{\tilde G_n})_{1\leq i<j\leq k})\]
    are identically distributed. In particular, $\P(G_n)=\P(\tilde G_n)$.
\end{Lemma}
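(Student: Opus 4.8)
The natural approach is an induction on $n$, comparing the joint exploration process with the idealized process built from i.i.d.\ copies of $((\Delta^{i,j})_{1\le i<j\le k}, W)$, using Theorem~\ref{thm_renewal_decomposition} as the inductive engine. For $n=0$ the two indicators $\mathds 1_{G_0}$, $\mathds 1_{\tilde G_0}$ depend only on the event $\{r_0\ge\kappa+R\}$ (resp.\ $\{\tilde r_0\ge\kappa+R\}$), and since $Z_0^{i,j}$ and $Z_0^{i,j}+S_0^{i,j}=Z_0^{i,j}$ are the same deterministic quantities, the base case is immediate. The plan is then: assuming the identity in law at step $n$, condition on $\mathcal G_n$ on the event $\{r_n\ge\kappa+R\}$, apply Theorem~\ref{thm_renewal_decomposition} to replace $(W_{n+1}\wedge r_n, ((Z^{i,j}_{n+1}-Z^{i,j}_n)\mathds 1_{W_{n+1}<r_n})_{i<j})$ by an independent copy of $(W\wedge r_n, (\Delta^{i,j}\mathds 1_{W<r_n})_{i<j})$, and check that the events $G_{n+1}$, $\tilde G_{n+1}$ and the decorated positions $Z^{i,j}_{n+1}\mathds 1_{G_{n+1}}$, $(Z_0^{i,j}+S^{i,j}_{n+1})\mathds 1_{\tilde G_{n+1}}$ are measurable functions of exactly the data whose joint law we control.

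The key observation making the bookkeeping work is that on the event $G_n$ (resp.\ $\tilde G_n$) the relevant quantities at step $n+1$ can be expressed using only the \emph{truncated} increments. Indeed, $G_{n+1}\subset G_n$, and on $G_n$ we have $W_{n+1}<r_n$, so $Z^{i,j}_{n+1}=Z^{i,j}_n+(Z^{i,j}_{n+1}-Z^{i,j}_n)\mathds 1_{W_{n+1}<r_n}$ and $W_{n+1}=W_{n+1}\wedge r_n$; moreover the new half-radius $r_{n+1}$ is a function of $(Z^{i,j}_{n+1})_{i<j}$, and the condition $r_{n+1}>W_{n+2}$ enters $G_{n+1}$ only through $r_{n+1}$ together with the step-$(n+2)$ data, which is handled at the next stage of the induction. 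Thus, writing $\Phi$ for the deterministic map that takes $\big((Z^{i,j}_n\mathds 1_{G_n})_{i<j}, \mathds 1_{G_n}\big)$ together with $\big(W_{n+1}\wedge r_n, (\Delta^{i,j}_{n+1}\mathds 1_{W_{n+1}<r_n})_{i<j}\big)$ to $\big((Z^{i,j}_{n+1}\mathds 1_{G_{n+1}})_{i<j}, \mathds 1_{G_{n+1}}\big)$ — and noting the \emph{same} map $\Phi$ produces the tilded objects from the tilded inputs — the result follows by combining the inductive hypothesis (equality in law of the ``old'' decorated data) with Theorem~\ref{thm_renewal_decomposition} (conditional equality in law of the ``new'' truncated increment data, independently of $\mathcal G_n$) and the analogous independence of $((\Delta^{i,j}_{n+1})_{i<j}, D_{n+2})$ from the $\sigma$-field generated by the first $n$ increments on the tilded side.

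The step I expect to be the main obstacle is verifying carefully that $G_{n+1}$ and $Z^{i,j}_{n+1}\mathds 1_{G_{n+1}}$ are genuinely expressible through the truncated increment $(Z^{i,j}_{n+1}-Z^{i,j}_n)\mathds 1_{W_{n+1}<r_n}$ and $W_{n+1}\wedge r_n$ rather than the untruncated ones — in other words, that no information ``leaks'' outside the coupling provided by Theorem~\ref{thm_renewal_decomposition} on the bad event $\{W_{n+1}\ge r_n\}$. This is where the precise shape of $G_n$ matters: the condition $\forall k\le n-1,\ r_n>W_{n+1}$ is designed so that on $G_{n+1}$ one has $W_{j+1}<r_j$ for every $j\le n$, hence all increments up to step $n+1$ are in fact equal to their truncations, and the event $G_{n+1}$ can be rewritten purely in terms of the truncated data and the half-radii it determines. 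Once this measurability point is nailed down, the passage of the induction is a routine application of the two equality-in-law inputs together with the independence structure, and $\P(G_n)=\P(\tilde G_n)$ follows by taking expectations of the indicators.
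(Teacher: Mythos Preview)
Your proposal is correct and follows exactly the same route as the paper: the paper's proof is literally the one-line remark ``obtained by induction over $n\ge 0$; the initialization is clear since $r_0=\tilde r_0$ and the heredity is ensured by Theorem~\ref{thm_renewal_decomposition},'' and you have spelled out that induction in detail. Two small slips worth cleaning up: (i) the condition $r_{n+1}>W_{n+2}$ does \emph{not} enter $G_{n+1}$ at all (it first appears in $G_{n+2}$), so that sentence can simply be deleted; (ii) the assertion ``on $G_n$ we have $W_{n+1}<r_n$'' should read ``on $G_{n+1}$ we have $W_{n+1}<r_n$,'' since $G_{n+1}=G_n\cap\{r_n>W_{n+1}\}\cap\{r_{n+1}\ge\kappa+R\}$.
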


\begin{proof}
    Obtained by induction over $n\geq 0$. The initialization is clear since $r_0=\tilde r_0$ and the heredity is ensured by Theorem \ref{thm_renewal_decomposition}.
\end{proof}

Using Lemma \ref{lemma_indep}, we can write that
    \begin{equation}
        \label{eq_probadrift}
        \begin{split}
            \P(\forall n\geq0,~r_n>0)&\geq\P\left(\bigcap_{n\geq 0}G_n\right)
            =\lim_{n\to\infty}\P(G_n)\\
            &=\lim_{n\to\infty}\P(\tilde G_n)
            =\P\left(\bigcap_{n\geq 0}\tilde G_n\right)
            \geq\P(\forall n\geq 0,~\tilde r_n\geq D_{n+1}+\kappa+R).
        \end{split}
    \end{equation}
Then, observing that on the renewal event $A_0$, we have $\beta_0=\tau_0=0$ and thus $Z_0^{i, j}=\mathbf p(\mathbf u_i-\mathbf u_j)$ for all $1\leq i<j\leq k$, we derive from \eqref{eq_probadrift} that
    \begin{equation*}
        \begin{split}
            &\P(\forall n\geq0,~r_n>0)\\
            &\quad\geq\P(A_0,~\forall n\geq 0,~\tilde r_n\geq D_{n+1}+\kappa+R)\\
            &\quad=\P(A_0,~\forall n\geq0,~\forall 1\leq i<j\leq k,~\|\mathbf p(\mathbf u_i-\mathbf u_j)+S_n^{i, j}\|\geq2(D_{n+1}+\kappa+R)).
        \end{split}
    \end{equation*}
It therefore suffices to verify that we can choose $(\mathbf u_i)_{i\in\range{1}{k}}$ so that
    \begin{equation}
        \label{eq_non-coal_proba}
        \P(A_0,~\forall n\geq0,~\forall 1\leq i<j\leq k,~\|\mathbf p(\mathbf u_i-\mathbf u_j)+S_n^{i, j}\|_p\geq2(D_{n+1}+\kappa+R))>0.
    \end{equation}
The advantage of this reformulation is that we are now working with a system built on i.i.d.\ random variables. Before proceeding with the proof of Theorem~\ref{thm_non_coalescence}, we establish the following lemma, which is based on a general result of Kesten~\cite{KestenErickson} concerning the rate of escape to infinity for transient random walks in dimension larger than three. Note that this is where we use that $d-1\geq 3$.

\begin{Lemma}
    \label{lemma_kesten}
    For all $A>0$ and $\epsilon>0$, there exists $(s_i)_{i\in\range{1}{k}}\in(\R^{d-1})^k$ such that
        \[\P(\forall n\geq0,~\forall 1\leq i<j\leq k,~\|s_i-s_j+S_n^{i, j}\|\geq A+n^{1/3})\geq 1-\epsilon.\]

    %Let $f:\R_+\to\R_+^*$ be an increasing function such that $f(t)t^{-1/2}$ decreases to $0$ as $t\to\infty$ and such that
    %    \[\int_1^\infty f(t)^{d-3}t^{-\frac{d-1}{2}}\mathrm dt<\infty.\]
    %Then for all $\epsilon>0$, there exists $(s_i)_{i\in\range{1}{k}}\in(\R^{d-1})^k$ such that
    %    \[\P[\forall n\geq0~\forall 1\leq i<j\leq k~\|s_i-s_j+S_n^{i, j}\|\geq f(n)]\geq 1-\epsilon.\]
\end{Lemma}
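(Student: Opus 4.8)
The idea is to make the $k$ random walks escape to infinity fast enough, along well-separated directions, so that no pair ever gets close. Write $d-1 \eqqcolon D \geq 3$. Each increment $\Delta^{i,j}$ is sign-symmetric and exchangeable with all moments finite (by Lemma \ref{lemma_Delta} and Theorem \ref{thm_renewal_decomposition}, since $\|\Delta^{i,j}\| \le W$), so in particular it is centered. The crucial input is Kesten's result \cite{KestenErickson}: for a genuinely $D$-dimensional centered random walk $(S_n)$ with $D \ge 3$, one has $\|S_n\|/n^{1/3} \to \infty$ almost surely (in fact $\|S_n\| \gg n^{1/D - \varepsilon}$ and $1/D \le 1/3$; the exponent $1/3$ is a convenient weak lower bound valid in all dimensions $\geq 3$). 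Here I must be a little careful: $\Delta^{i,j}$ could a priori be supported on a proper subspace of $\R^D$; but by sign-symmetry and exchangeability its covariance matrix is a scalar multiple $\sigma^2 I_D$ of the identity, and $\sigma^2 = \E[(\Delta^{1,2}\cdot e_1)^2] > 0$ — indeed if $\sigma^2 = 0$ then $\Delta^{1,2} = 0$ a.s., which is impossible since with positive probability the two trajectories of the independent process drift apart before the renewal time $\zeta$. So each $S_n^{i,j}$ is a genuinely $D$-dimensional centered walk, and Kesten applies.

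Concretely, I would proceed as follows. First, fix $i<j$. By Kesten's theorem, $\|S_n^{i,j}\|/n^{1/3} \to \infty$ a.s., hence $\liminf_n (\|S_n^{i,j}\| - 2n^{1/3}) = +\infty$ a.s.; in particular there is a constant $A_0 < \infty$ (depending only on the law of $\Delta^{1,2}$, since all pairs are identically distributed by Lemma \ref{lemma_Delta}) and, for any $\eta > 0$, an integer $N = N(\eta)$ such that $\P(\forall n \ge N,\ \|S_n^{i,j}\| \ge 2A_0 + 2n^{1/3}) \ge 1 - \eta$. Next I handle the finitely many small times $n < N$: the walk $S_n^{i,j}$ visits only a bounded region up to time $N$ with high probability, so choosing $\eta$ small and then a large constant $B = B(\eta)$ we get $\P(\forall n \le N,\ \|S_n^{i,j}\| \le B) \ge 1 - \eta$. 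Now set $s_i$ to be the vectors $R e_1, 2R e_1, \dots, kR e_1$ (or any $k$ points pairwise at distance $> R$ along a single axis) for a parameter $R$ to be chosen large. On the good event above, for $n \le N$ we have $\|s_i - s_j + S_n^{i,j}\| \ge \|s_i - s_j\| - B \ge R - B$, and for $n \ge N$ we have $\|s_i - s_j + S_n^{i,j}\| \ge \|S_n^{i,j}\| - \|s_i - s_j\| \ge 2A_0 + 2n^{1/3} - kR$... this last bound degrades as $R$ grows, so the two regimes pull in opposite directions in $R$. The clean fix is to NOT spread the $s_i$ far apart at all: take all $s_i$ equal, say $s_i = 0$, so $\|s_i - s_j + S_n^{i,j}\| = \|S_n^{i,j}\|$, and absorb everything into the Kesten estimate plus a separate bound for small $n$. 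For small $n \ge 1$, $\|S_n^{i,j}\|$ can be $0$ with positive probability, so equal starting points will not work directly; instead choose the $s_i$ separated by a distance $\rho$ that is large compared to $A$ but we simultaneously let Kesten's threshold absorb it: more precisely, apply Kesten to get, for each pair, an $N$ and an a.s.\ lower bound $\|S_n^{i,j}\| \ge n^{1/3} - C$ valid for all $n$ (for a random but a.s.-finite $C$), tune $\rho := $ a large constant so that $\rho - C - (\text{max}_{n \le N}\|S_n^{i,j}\|) \ge A + N^{1/3}$ on an event of probability $\ge 1 - \epsilon/\binom{k}{2}$, and use the reverse triangle inequality $\|s_i - s_j + S_n^{i,j}\| \ge \rho - \|S_n^{i,j}\|$ for small $n$ (keeping $S_n^{i,j}$ bounded) and $\ge \|S_n^{i,j}\| - \rho \ge n^{1/3} - C - \rho$... again the sign issue.

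The genuinely correct packaging, which I would write up, is this: by Kesten, for each pair $(i,j)$ there is an a.s.\ finite random variable $T_{i,j}$ with $\|S_n^{i,j}\| \ge 2(A + n^{1/3})$ for all $n \ge T_{i,j}$; choose a deterministic $N$ with $\P(T_{i,j} \le N) \ge 1 - \epsilon/(2\binom{k}{2})$ for every pair (possible since all pairs are equidistributed), and a deterministic $B$ with $\P(\max_{n \le N}\|S_n^{i,j}\| \le B) \ge 1 - \epsilon/(2\binom{k}{2})$ for every pair. Then pick the $(s_i)$ pairwise separated: $\|s_i - s_j\| \ge 2(A + B + N^{1/3})$ for all $i \ne j$ — e.g.\ $s_i = 2i(A + B + N^{1/3}) e_1$. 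On the intersection of all $2\binom{k}{2}$ good events (probability $\ge 1 - \epsilon$), for $n \le N$ the reverse triangle inequality gives $\|s_i - s_j + S_n^{i,j}\| \ge \|s_i - s_j\| - \|S_n^{i,j}\| \ge 2(A + B + N^{1/3}) - B \ge A + n^{1/3}$, and for $n \ge N \ge T_{i,j}$ we have... here the separation hurts: $\|s_i - s_j + S_n^{i,j}\| \ge \|S_n^{i,j}\| - \|s_i - s_j\| \ge 2(A+n^{1/3}) - 2k(A+B+N^{1/3})$, which is not obviously $\ge A + n^{1/3}$. So I must strengthen Kesten's threshold: apply it to get $\|S_n^{i,j}\| \ge 2k(A+B+N^{1/3}) + A + n^{1/3}$ for $n \ge T_{i,j}'$ — but $N$ depends on $T'$, a circular dependence. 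I break the circularity by first fixing $B$ and $N_0$ for the crude small-time bound with a provisional threshold $A' := A$, then applying Kesten with target level $2k(A + B + N_0^{1/3}) + A + n^{1/3}$ to obtain a (possibly larger) $N_1 \ge N_0$; if necessary iterate once more, or simply note $N \mapsto N^{1/3}$ grows sublinearly so a fixed point exists. The \textbf{main obstacle} is precisely this bookkeeping — arranging the separation of the $s_i$ to dominate the small-time fluctuations while keeping it negligible against Kesten's eventual $n^{1/3}$ lower bound — together with verifying the non-degeneracy $\sigma^2 > 0$ needed to invoke Kesten's theorem in the first place. Both are routine once set up carefully, so I would present the argument at the level of: (1) non-degeneracy of $\Delta$; (2) Kesten's rate-of-escape bound with an arbitrarily large prescribed lower envelope $c\,n^{1/3}$; (3) a union bound over the $\binom{k}{2}$ pairs, plus a crude bound on an initial segment; (4) choice of the $(s_i)$ along a line with gaps chosen to make the reverse triangle inequality close in both regimes.
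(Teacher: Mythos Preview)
Your identification of Kesten's rate-of-escape theorem as the key input is correct, as is the non-degeneracy check for $\Delta$. However, the explicit construction of the $(s_i)$ along a line has a genuine gap that your proposed fixes do not close. The issue is quantitative: a high-probability bound $B=B(N)$ on $\max_{n\le N}\|S_n^{i,j}\|$ is necessarily of order $N^{1/2}$, so the separation you need satisfies $\rho\gtrsim N^{1/2}$. But for $n$ just above $N$, the reverse triangle inequality gives only $\|s_i-s_j+S_n^{i,j}\|\ge \|S_n^{i,j}\|-k\rho$, and Kesten's lower bound on $\|S_n^{i,j}\|$ is at best $o(n^{1/2})$ (for $d-1=3$ the Dvoretzky--Erd\H{o}s rate is $(n/\log n)^{1/2}$, and the integral test in \cite{KestenErickson} rules out $f(n)\sim n^{1/2}$). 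Hence $\|S_n^{i,j}\|-k\rho$ is negative at $n=N$, and the two regimes cannot be matched. Your iteration $N_0\to N_1\to\cdots$ diverges for the same reason: writing $N(A')$ for the Kesten threshold at level $A'$ and $\rho_m\sim B(N_m)\sim N_m^{1/2}$, one gets $N_{m+1}\sim (k\rho_m)^2\sim k^2 N_m$, so no fixed point exists. The claim that this bookkeeping is ``routine'' is therefore incorrect.

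The paper's proof avoids the difficulty entirely by a Markov restart. From Kesten one first gets $\P\bigl(\forall n\ge n_0,\ \forall i<j,\ \|S_n^{i,j}\|\ge A+n^{1/3}\bigr)\ge 1-\epsilon$ for some large $n_0$. Conditioning on time $n_0$ and using that $(S_{n_0+m}^{i,j}-S_{n_0}^{i,j})_{m\ge 0}$ is a fresh copy of the walk, this reads $\E\bigl[P\bigl((S_{n_0}^{i,j})_{i<j}\bigr)\bigr]\ge 1-\epsilon$ for the function $P(s_{\bullet})=\P\bigl(\forall m\ge 0,\ \forall i<j,\ \|s_{i,j}+S_m^{i,j}\|\ge A+(n_0+m)^{1/3}\bigr)$. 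Hence some deterministic realization $(s_{i,j})$ in the support of $(S_{n_0}^{i,j})$ achieves $P\ge 1-\epsilon$; the cocycle identity $S_{n_0}^{i,j}+S_{n_0}^{j,\ell}=S_{n_0}^{i,\ell}$ (inherited from $\Delta^{i,j}=\mathbf p(g'_\zeta(i)-g'_\zeta(j))$) then lets one write $s_{i,j}=s_i-s_j$. This sidesteps the small-$n$/large-$n$ matching problem altogether, because the ``starting offset'' is chosen on the walk's own trajectory rather than imposed externally.
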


\begin{proof}
    For each $1\leq i<j\leq k$, $(S_n^{i,j})_{n\geq 0}$ is a random walk with i.i.d.\ increments distributed as $\Delta$, whose support spans $\R^{d-1}$ as it is exchangeable and non-trivial. Let $f:t\mapsto A+t^{1/3}$. The function $f$ is increasing and the function $t\mapsto f(t)t^{-1/2}$ decreases to $0$ as $t\to\infty$. Moreover as $t\to\infty$, since $d\geq 4$, we have
        \[f(t)^{d-3}t^{-\frac{d-1}{2}}\sim t^{\frac{d-3}{3}-\frac{d-1}{2}}=t^{-\frac{1}{2}-\frac{d}{6}}=o\left(\frac{1}{t^{1+1/6}}\right),\]
    implying that
        \[\int_{[1,\infty]}f(t)^{(d-1)-2}t^{-\frac{d-1}{2}}\mathrm dt<\infty.\]
    These are precisely the conditions required to apply the main theorem of \cite{KestenErickson}, which implies that, almost surely, for each $1\leq i<j\leq k$,
        \[\frac{\|S_n^{i,j}\|}{f(n)}\to\infty.\]
    Therefore,
        \[\P(\forall n\geq n_0~\forall 1\leq i<j\leq k,~\|S_n^{i,j}\|\geq f(n))\to 1\quad\text{as}\quad n_0\to\infty.\]
    Now, fixing $n_0\geq 0$ big enough and using that the process $((S_n^{i,j})_{1\leq i<j\leq k})_{n\geq0}$ is a random walk with i.i.d.\ increments in $(\R^{d-1})^{k(k+1)/2}$, we can write
        \begin{equation}
            \label{eq_markov_n0}
            1-\epsilon\leq\P(\forall n\geq n_0~\forall 1\leq i<j\leq k~\|S_n^{i,j}\|\geq f(n))
            =\E[P((S_{n_0}^{i,j})_{1\leq i<j\leq k})],
        \end{equation}
    where
        \[P:(\R^{d-1})^{k(k+1)/2}\to[0, 1],~(s_{i,j})_{1\leq i<j\leq k}\mapsto\P(\forall n\geq0~\forall i\in\range{1}{k}~\|s_{i,j}+S_n^{i,j}\|\geq f(n_0+n)).\]
    Since for each $1\leq i<j<\ell\leq k$ we have $S_{n_0}^{i,j}+S_{n_0}^{j,\ell}=S_{n_0}^{i,\ell}$, then \eqref{eq_markov_n0} implies that there exists $(s_{i,j})_{1\leq i<j\leq k}\in(\R^{d-1})^{k(k+1)/2}$ such that
        \[1-\epsilon\leq P((s_{i,j})_{1\leq i<j\leq k})\]
    with $s_{i,j}+s_{j, \ell}=s_{i, \ell}$ for all $1\leq i<j<\ell\leq k$. Then, setting $s_1\coloneqq 0$ and $s_i\coloneqq s_{1, i}$ for each $1< i\leq k$, we have $s_{i,j}=s_i-s_j$ for all $1\leq i<j\leq k$ and finally,
        \[\P(\forall n\geq0~\forall 1\leq i<j\leq k~\|s_i-s_j+S_n^{i, j}\|\geq f(n+n_0)\geq f(n)=A+n^{1/3})\geq 1-\epsilon.\]
    The proof is complete.
\end{proof}

We now have all the ingredients to prove the main result of the section.

\begin{proof}[Proof of Theorem \ref{thm_non_coalescence}]
    For any function $f:\R_+\to\R_+^*$, using a union bound, we can write,
        \begin{equation}
            \label{eq_proba_non_coal}
            \begin{split}
                &\P(A_0,~\forall n\geq 0~\forall 1\leq i<j\leq k,~\|p(\mathbf u_i-\mathbf u_j)+S_n^{i,j}\|\geq 2(D_{n+1}+\kappa+R))\\
                &\quad\geq\P(A_0,~\forall n\geq 0~\forall 1\leq i<j\leq k,~\|p(\mathbf u_i-\mathbf u_j)+S_n^{i,j}\|\geq f(n)\geq 2(D_{n+1}+\kappa+R))\\
                &\quad\geq\P(\forall n\geq 0~\forall 1\leq i<j\leq k,~\|p(\mathbf u_i-\mathbf u_j)+S_n^{i,j}\|\leq f(n))\\
                &\qquad\quad\quad+\P(A_0)-1-\P(\exists n\geq 0,~f(n)<2(D_{n+1}+\kappa+R)).
            \end{split}
        \end{equation}
    By Lemma \ref{lemma_renewalproba}, we have
        \[\P(A_0)=\P(A_0~|~\mathcal F_0)\geq p_0>0,\]
    where $p_0$ does not depend on the configuration $(\mathbf u_i)_{i\in\range{1}{k}}$. By an union bound and using proposition \ref{prop_block_size}, we have
        \[\P(\exists n\geq 0,~f(n)<2(W_{n+1}+\kappa+R))\leq\sum_{n\geq 0}\alpha e^{-\beta \sqrt{\left(\frac{f(n)}{2}-\kappa-R\right)_+}}.\]
    We can thus choose $f:t\mapsto A+t^{1/3}$ with $A>0$ big enough so that by the previous display,
        \begin{equation}
            \label{eq_pwn}
            \P(\exists n\geq 0,~f(n)<2(W_{n+1}+\kappa+R))\leq\frac{p_0}{3}.
        \end{equation}
    Applying Lemma \ref{lemma_kesten}, we get $(s_i)_{i\in\range{1}{k}}\in(\R^{d-1})^k$ such that
        \begin{equation}
            \label{eq_proba_escape}
            \P(\forall n\geq0,~\forall 1\leq i<j\leq k,~\|s_i-s_j+S_n^{i, j}\|\geq f(n))\geq 1-\frac{p_0}{3}.
        \end{equation}
    Then, taking $(\mathbf u_i)_{i\in\range{1}{k}}$ such that $\mathbf p(\mathbf u_i)=s_i$ and $\mathbf u_i\cdot e_d=0$ for each $i\in\range{1}{k}$, injecting \eqref{eq_proba_escape} and \eqref{eq_pwn} in \eqref{eq_proba_non_coal}, we get
        \[\P(A_0,~\forall n\geq 0~\forall 1\leq i<j\leq k,~\|p(\mathbf u_i-\mathbf u_j)+S_n^{i,j}\|>2(W_{n+1}+\kappa))\geq\frac{p_0}{3}>0,\]
    which implies that $\P(\forall n\geq 0,~r_n>0)>0$. Therefore, the probability that the DSF contains more than $k$ non-coalescing trajectories is positive. Since this event is asymptotic and the asymptotic sigma field of a Poisson point process is trivial, we deduce that almost surely, we can find $k$ non-coalescing trajectories. Since $k$ was arbitrary, we have that the DSF consists almost surely of infinitely many disjoint trees. The proof is complete.
\end{proof}

\section{Convergence toward the Brownian web}

\label{section_BW}

In this section, we complete the proof of our main result by precisely formulating the convergence of the DSF under diffusive scaling toward the Brownian web in dimension $d=2$, and explaining how this follows from the results of \cite{Dsf2d}. Unlike the previous section, this part involves a direct adaptation of their argument, and we therefore limit ourselves to recalling the main steps.\\

Introduced by Arratia \cite{arratia1979coalescing}, the Brownian web is an object that appears as a scale limit of a certain class of random geometric graphs (e.g.\ \cite{newmanconvergence}). We recall the framework of \cite{BrownianWeb} in minimal detail to define this object and state properly our result. We call a \emph{path} any continuous application $\pi:[\sigma_\pi,\infty)\to\R$ with $\sigma_\pi\in[-\infty,\infty]$ and we denote by $\Pi$ the collection of such paths. For all $\pi,\pi'\in\Pi$, we define
    \[d_\Pi(\pi,\pi')\coloneqq|\tanh\sigma_\pi-\tanh\sigma_{\pi'}|\vee \sup_{t>\sigma_\pi\wedge\sigma_{\pi'}}\left|\frac{\tanh\pi(t\vee\sigma_\pi)-\tanh\pi'(t\vee\sigma_{\pi'})}{1+|t|}\right|,\]
with the conventions $\sup\emptyset=-\infty$ and $\tanh(\infty)=-\tanh(-\infty)=1$. Then, the space $(\Pi,d_\Pi)$ is complete and separable. Let $\mathcal H$ be the collection of all non-empty compact subset of $\Pi$. We endow $\mathcal H$ with the Hausdorff distance $d_\mathcal H$, defined for all $K,K'\in\mathcal H$ by
    \[d_{\mathcal H}(K, K')\coloneqq\sup_{x\in K}d_\Pi(x, K')\vee\sup_{x'\in K'}d_\Pi(x', K).\]
Since $(\Pi, d_\Pi)$ is a complete and separable, then so does $(\mathcal H,d_\mathcal H)$. The next theorem of \cite{BrownianWeb} defines the Brownian web as a random variable taking value in the Polish space $(\mathcal H,d_\mathcal H)$.

\begin{Theorem}[Theorem 2.1 of \cite{BrownianWeb}]
    There exists an $(\mathcal H,d_\mathcal H)$-valued random variable $\mathcal W$ whose distribution is uniquely determined by the following properties:
        \begin{itemize}
            \item For any deterministic $(x,t)\in\R^2$, there is almost surely a unique path $\pi^{x,t}$ in $\mathcal W$ starting at the space time point $(x,t)$, i.e.\ such that $\sigma_{\pi^{x,t}}=t$ and $\pi^{x,t}(t)=x$.
            \item For any deterministic $n\geq1$ and finite sequence $[(x_i, t_i)]_{1\leq i\leq n}\in\mathcal(\R^2)^n$, $(\pi^{x_i, t_i})_{1\leq i\leq n}$ is distributed as coalescing Brownian motion started from space time points $[(x_i, t_i)]_{1\leq i\leq n}$.
            \item For any deterministic dense subset $\mathcal D\subset\R^2$, $\mathcal W$ is almost surely the closure of 
                \[\{\pi^{x, t}:(x, t)\in\mathcal D\}\]
            in $(\Pi,d_\Pi)$.
        \end{itemize}
    This variable $\mathcal W$ is called the Brownian web.
\end{Theorem}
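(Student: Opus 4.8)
This is the foundational characterization of the Brownian web taken from \cite{BrownianWeb}; the paper invokes it rather than reproving it, but here is how I would establish it. The argument splits into \emph{existence} and \emph{uniqueness}.

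For existence, the plan is to build $\mathcal W$ explicitly as a closure. First I would fix a countable dense set $\mathcal D=\{(x_i,t_i):i\ge 1\}\subset\R^2$ and construct, on a single probability space, a consistent family of coalescing Brownian motions $(\pi^{x_i,t_i})_{i\ge 1}$ indexed by $\mathcal D$: let $\pi^{x_1,t_1}$ be a Brownian motion started at $(x_1,t_1)$ and, inductively, let $\pi^{x_n,t_n}$ be an independent Brownian motion from $(x_n,t_n)$ run until it first meets $\bigcup_{j<n}\pi^{x_j,t_j}$, after which it follows that path; a routine consistency check shows that every finite subfamily is distributed as coalescing Brownian motions. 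I would then set $\mathcal W\coloneqq\overline{\{\pi^{x_i,t_i}:i\ge 1\}}$, the closure in $(\Pi,d_\Pi)$, and verify that $\mathcal W$ is almost surely a \emph{compact} subset of $\Pi$. This is the technical core of the existence half: it relies on a uniform modulus-of-continuity estimate for the Brownian paths, together with the fact that the $\tanh$/$t\mapsto t/(1+|t|)$ compactification used in the metric $d_\Pi$ turns a tightness statement for paths on compact space--time windows into compactness in $(\mathcal H,d_{\mathcal H})$. Measurability of $\mathcal W$ as an $(\mathcal H,d_{\mathcal H})$-valued random variable is then straightforward.

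Next I would check the three listed properties. Property~2 (finite subfamilies from deterministic space--time points are coalescing Brownian motions) follows by approximating a deterministic point by points of $\mathcal D$ and using continuity of the law of coalescing Brownian motions under perturbation of the starting points. Property~1 (almost surely a unique path of $\mathcal W$ from each deterministic $(x,t)$) follows because any two subsequential $d_\Pi$-limits of approximating paths $\pi^{x^{(m)}_i,t^{(m)}_i}$ with $(x^{(m)}_i,t^{(m)}_i)\to(x,t)$ are Brownian motions started at $(x,t)$ which, inside this coalescing family, merge immediately and hence coincide. Property~3 (for an \emph{arbitrary} dense $\mathcal D'$, $\mathcal W$ is the closure of $\{\pi^{x,t}:(x,t)\in\mathcal D'\}$) follows by showing every path of $\mathcal W$ is a $d_\Pi$-limit of paths emanating from $\mathcal D'$, which again rests on the absence of ``extra'' limit paths and on immediate coalescence. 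For uniqueness, suppose $\mathcal W$ and $\mathcal W'$ both satisfy 1--3 and fix a countable dense $\mathcal D$: by property~1 the paths $\pi^{x_i,t_i}\in\mathcal W$ are almost surely well defined, by property~2 the joint law of every finite subcollection, hence the law of the entire countable family $\{\pi^{x_i,t_i}:i\ge1\}$, is pinned down (a consistent family of coalescing Brownian motions), and by property~3 both $\mathcal W$ and $\mathcal W'$ equal the same measurable functional (the closure map) applied to families with the same law; therefore $\mathcal W$ and $\mathcal W'$ are equal in distribution.

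The step I expect to be the main obstacle is the compactness/tightness claim in the existence half: one must control, uniformly over the infinitely many starting points of $\mathcal D$, the fluctuations of the coalescing Brownian paths on every space--time window so that the closure remains compact in $(\mathcal H,d_{\mathcal H})$. This is precisely the estimate which, in the paper's application, gets replaced after diffusive rescaling by the block-size tail bound of Proposition~\ref{prop_block_size} together with the renewal estimates of Section~\ref{section_renewal} and the coalescence result of Section~\ref{section_coalescence} --- which is why establishing Theorem~\ref{thm_main2} amounts to checking the convergence criteria of \cite{BrownianWeb} rather than re-proving this theorem.
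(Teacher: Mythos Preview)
Your opening observation is exactly right: the paper does not prove this statement at all---it is quoted verbatim as Theorem~2.1 of \cite{BrownianWeb} purely to fix the definition of the Brownian web as an $(\mathcal H,d_{\mathcal H})$-valued random variable, so there is no ``paper's own proof'' to compare against. Your sketch of existence via the closure of a countable coalescing skeleton and uniqueness via properties 1--3 is the standard route taken in \cite{BrownianWeb}, and your identification of the compactness/tightness estimate as the technical crux is accurate.
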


Now that the Brownian web is defined as a random variable taking value in $\mathcal H$, we define scaling of the DSF in $\mathcal H$ in order to set up the convergence. For any $u\in\mathcal N$, we define $\pi^u:[u\cdot e_2,\infty)\to\R$ as the path that satisfies
    \[\pi^u(\Psi^{ i}(u)\cdot e_2)=\Psi^{ i}(u)\cdot e_1\]
for all $i\geq 0$ and that is linear on every segment $[\Psi^{ i}(u)\cdot e_2, \Psi^{ {i+1}}(u)\cdot e_2]$ for all $i\geq0$. For all $u\in\mathcal N$, $n\geq 0$, $\gamma>0$ and $\sigma>0$, we define
    \[\pi^u_{n,\gamma,\sigma}:\left[\frac{u\cdot e_2}{\gamma n^2},\infty\right)\to\R^d,~t\mapsto \frac{\pi^u(\gamma n^2t)}{\sigma n},\]
which is also a path. For each $n\geq 0$, $\gamma>0$ and $\sigma>0$ we set
    \[\mathcal X_n(\gamma,\sigma)\coloneqq\{\pi^u_{n,\gamma,\sigma}:u\in\mathcal N\}.\]
We can now state formally the convergence result of our main theorem.

\begin{Theorem}
    \label{thm_convergence_BW}
    If $d=2$ and $p\in[1, \infty]$, there exists $\gamma=\gamma(p)$ and $\sigma=\sigma(p)$ such that the sequence $[\mathcal X_n(\gamma, \sigma)]_{n\geq 0}$ of $(\mathcal H, d_{\mathcal H})$-valued random variables converge in distribution toward the Brownian web $\mathcal W$.
\end{Theorem}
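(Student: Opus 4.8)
The plan is to verify, within the framework built in the previous sections, the standard sufficient conditions for convergence to the Brownian web of Fontes, Isopi, Newman and Ravishankar (in the form refined by Newman, Ravishankar and Sun), exactly along the lines of \cite{Dsf2d} for $p=2$. The whole point is that every probabilistic ingredient their proof requires has now been reestablished here for all $p\in[1,\infty]$: the renewal decomposition with exponentially tight inter-arrival times (Proposition \ref{prop_nu_stoppingtimes}), the block-size tail $\alpha e^{-\beta\sqrt t}$ (Proposition \ref{prop_block_size}), the comparison with the independent process (Theorem \ref{thm_renewal_decomposition}), and almost sure coalescence in $d=2$ (Proposition \ref{prop_coalescence} / Theorem \ref{thm_coalescence}). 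So the task reduces to assembling these outputs into the convergence criteria.

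First I would fix the constants $\gamma=\gamma(p)$ and $\sigma=\sigma(p)$. Taking $k=1$ and a deterministic starting point $\mathbf u$ with $\mathbf u\cdot e_2=0$, the renewal heuristic (the discussion following Lemma \ref{lemma_sfiltration}) shows that $(\mathbf p(g_{\beta_n}(\mathbf u)))_{n\geq0}$ is a random walk on $\R$ with i.i.d.\ symmetric increments of finite variance $\sigma_0^2:=\E[(\Delta\cdot e_1)^2]$, finite since $|\Delta|\leq W$ has all moments by Theorem \ref{thm_renewal_decomposition}. The $e_2$-coordinate $m_{\beta_n}$ also grows linearly in $n$ with fluctuations controlled by the block sizes $W_{n+1}$. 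Donsker's invariance principle together with a renewal/time-change argument then gives that, for $\gamma$ equal to the mean $e_2$-progress per renewal block and $\sigma$ built from $\gamma$ and $\sigma_0$, the rescaled single path $\pi^{\mathbf u}_{n,\gamma,\sigma}$ converges in distribution to a Brownian motion started at $(\mathbf u\cdot e_1,0)$. A small extra step replaces a deterministic space-time point $(x,t)$ by a nearby Poisson point whose trajectory passes close to $(x,t)$, exactly as in the density argument at the end of the proof of Theorem \ref{thm_coalescence}.

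Second I would establish condition $(I)$: for a countable dense $\mathcal D\subset\R^2$ and any finite family of points, the rescaled DSF paths started there converge jointly to coalescing Brownian motions. The single-path limit is the previous step; for two or more paths, before they meet they are asymptotically independent because, on the event $\{2W_{n+1}<\|Z_n\|\}$, the increments of $(Z_n^{i,j})$ coincide in law with the i.i.d.\ increments $\Delta^{i,j}$ of the independent process (Theorem \ref{thm_renewal_decomposition}), and since $d=2$ each difference process is a recurrent one-dimensional walk so paths do merge. The quantitative input here is the coalescence-time estimate: two trajectories at initial $\ell^p$-distance $r$ coalesce before $e_2$-height $O(r^2)$ with probability tending to $1$ as $r\to\infty$. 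This follows from the renewal decomposition plus a standard hitting-time bound for the finite-variance walk with increments $\Delta$, together with the block-size tail of Proposition \ref{prop_block_size} to control excursions between renewals and to rule out the error events $\{2W_{n+1}\geq\|Z_n\|\}$ accumulating. Third, I would verify the non-crossing/tightness conditions $(B_1)$ and $(B_2)$ — that the expected number of distinct rescaled paths passing through a spatial window of width $\epsilon$ at a fixed time and surviving a macroscopic time is $o(1)$ for $\geq2$ and $o(\epsilon)$ for $\geq3$ — again using that nearby distinct paths merge within rescaled time $O(\epsilon^2)$ by the same coalescence-time estimate, that Proposition \ref{prop_block_size} forbids anomalously large jumps out of the window, and a union-bound count of surviving paths as in \cite{Dsf2d}. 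The FINR characterization theorem then yields $\mathcal X_n(\gamma,\sigma)\Rightarrow\mathcal W$.

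The main obstacle is the coalescence-time estimate on the correct diffusive scale $r^2$, which is the technical heart of \cite{Dsf2d}: one must transfer a hitting-time bound for the idealized random walk with increments $\Delta$ back to the genuinely dependent difference process $(Z_n)_{n\geq0}$, carefully handling the error events $\{2W_{n+1}\geq\|Z_n\|\}$ on which the renewal comparison of Theorem \ref{thm_renewal_decomposition} breaks down, absorbing their contribution via the $e^{-\beta\sqrt t}$ block-size tail. Once this is in place, the remaining verifications of $(I)$, $(B_1)$ and $(B_2)$ are either direct invocations of results already proved in this paper or verbatim adaptations of the corresponding arguments in \cite{Dsf2d}, which is why we only recall the main steps here.
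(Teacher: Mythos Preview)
Your proposal follows essentially the same route as the paper, which likewise reduces everything to a coalescence-time estimate for two paths and then defers entirely to \cite{Dsf2d} for the verification of the FINR/NRS criteria. The one place to sharpen is the form of that estimate: the paper packages conditions on $(Z_n)$ into Proposition \ref{prop_assumption} so as to invoke \cite[Corollary~5.6]{Dsf2d} and obtain the precise tail $\P(T(\mathbf u_1,\mathbf u_2)>t)\leq K'\max\{1,|(\mathbf u_1-\mathbf u_2)\cdot e_1|\}/\sqrt t$; your qualitative version (coalescence before $e_2$-height $O(r^2)$ with probability tending to $1$) is not by itself sufficient for $(B_2)$, which needs the uniform $r/\sqrt t$ bound.
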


To prove Theorem \ref{thm_convergence_BW}, we apply \cite{Dsf2d} arguments without modifications. First, we consider $k=2$ in the exploration process with arbitrary $\mathbf u_1,\mathbf u_2$ with $\mathbf u_1\cdot e_2=\mathbf u_2\cdot e_2=0$ and $\mathbf u_1\cdot e_1>\mathbf u_2\cdot e_1$. We denote
    \[(Z_n)_{n\geq 0}\coloneqq (Z^{1, 2}_n)_{n\geq 0}.\]
By planarity, it is a non-negative process. In order to apply \cite[Corollary 5.6]{Dsf2d} to this process, we need to verify some assumptions, that we gather in the following proposition.

\begin{Proposition}
    \label{prop_assumption}
    There exists constants $M_0, C_0, C_1, C_2, C_3>0$ such that for all $n\geq 0$
    \begin{itemize}
        \item[(i)] For all $n\geq 0$, there exists an event $F_n$, such that, on the event $\{Z_n>M_0\}$,
            \[\P(F_n~|~\mathcal G_n)\geq 1-\frac{C_0}{(Z_n)^3}\quad\text{and}\quad\E[(Z_{n+1}-Z_n)\mathds 1_{F_n}~|~\mathcal G_n]=0.\]
        \item [(ii)] For all $n\geq 0$, on the event $\{Z_n\leq M_0\}$,
            \[\E[Z_{n+1}-Z_n~|~\mathcal G_n]\leq C_1.\]
        \item [(iii)] For all $m>0$, there exists $c_m>0$ such that for all $n\geq 0$, on the event $\{Z_n<m\}$,
            \[\P(Z_{n+1}=0~|~\mathcal G_n)\geq c_m.\]
        \item[(iv)] For all $n\geq 0$, on the event $\{Z_n>M_0\}$,
            \[\E[(Z_{n+1}-Z_n)^2~|~\mathcal G_n]\geq C_2\quad\text{and}\quad\E[|Z_{n+1}-Z_n|^3~|~\mathcal G_n]\leq C_3\]
    \end{itemize}
\end{Proposition}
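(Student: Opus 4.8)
The plan is to take, for each $n\ge 0$, the truncation event $F_n\coloneqq\{W_{n+1}<r_n\}$, where $r_n=\tfrac12\|Z_n\|$ is $\mathcal G_n$-measurable (and equals $Z_n/2$ by planarity), and to fix $M_0\ge 2(\kappa+R)$ large enough that $r_n\ge\kappa+R$ on $\{Z_n>M_0\}$, so that Theorem~\ref{thm_renewal_decomposition} applies there. All four items then reduce to that theorem, the sub-exponential tail of Proposition~\ref{prop_block_size}, the symmetry of $\Delta$ from Lemma~\ref{lemma_Delta}, and Lemma~\ref{lemma_coalescence_proba}.

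For (i): on $\{Z_n>M_0\}$, Theorem~\ref{thm_renewal_decomposition} gives that conditionally on $\mathcal G_n$ the law of $W_{n+1}\wedge r_n$ is that of $W\wedge r_n$, hence $\P(F_n^c\mid\mathcal G_n)=\P(W\ge r_n)\le\alpha e^{-\beta\sqrt{Z_n/2}}$; since $t\mapsto\alpha e^{-\beta\sqrt{t/2}}$ is eventually dominated by $C_0 t^{-3}$, this yields the first bound once $M_0$ is enlarged. For the zero-mean property, Theorem~\ref{thm_renewal_decomposition} also gives that, conditionally on $\mathcal G_n$, $(Z_{n+1}-Z_n)\mathds 1_{F_n}$ is distributed as $\Delta\,\mathds 1_{W<t}$ with $t=r_n$; because $d-1=1$, applying Lemma~\ref{lemma_Delta} with the sign $\varepsilon\equiv-1$ shows that $(\Delta,W)$ is distributed as $(-\Delta,W)$, and $|\Delta|\le W$ is integrable by Theorem~\ref{thm_renewal_decomposition}, so $\E[\Delta\,\mathds 1_{W<t}]=0$ for every $t$, whence $\E[(Z_{n+1}-Z_n)\mathds 1_{F_n}\mid\mathcal G_n]=0$.

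For (ii) and the upper bounds in (iv): the triangle inequality together with the block-size bound of Subsection~\ref{subsection_block_size} gives $|Z_{n+1}-Z_n|\le 2W_{n+1}$ for all $n$, so $\E[Z_{n+1}-Z_n\mid\mathcal G_n]$ and $\E[|Z_{n+1}-Z_n|^3\mid\mathcal G_n]$ are bounded, uniformly in $n$, by fixed multiples of $\int_0^\infty\P(W_{n+1}\ge t\mid\mathcal G_n)\,\mathrm dt$ and $\int_0^\infty 3t^2\,\P(W_{n+1}\ge t\mid\mathcal G_n)\,\mathrm dt$, both finite by Proposition~\ref{prop_block_size}; this produces $C_1$ and $C_3$ (in fact without any restriction on $Z_n$). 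Item (iii) is exactly Lemma~\ref{lemma_coalescence_proba}: with $K=m$ it gives $\P(Z_{n+1}=0\mid\mathcal G_n)\ge 1-q_m=:c_m>0$ on $\{Z_n<m\}$.

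The only genuinely delicate point is the conditional-variance lower bound in (iv). On $\{Z_n>M_0\}$, restricting the expectation to $F_n$ and using Theorem~\ref{thm_renewal_decomposition},
\[
\E\big[(Z_{n+1}-Z_n)^2\mid\mathcal G_n\big]\ \ge\ \E\big[(Z_{n+1}-Z_n)^2\mathds 1_{F_n}\mid\mathcal G_n\big]\ =\ \psi(r_n),\qquad \psi(t)\coloneqq\E[\Delta^2\mathds 1_{W<t}],
\]
and $\psi(t)\uparrow\E[\Delta^2]$ as $t\to\infty$ by monotone convergence (finiteness being ensured by $|\Delta|\le W$). The crucial fact is $\E[\Delta^2]>0$: with positive probability the first renewal time $\zeta$ of the independent process is finite, and on that event the two independent trajectories a.s.\ occupy distinct positions, so $\P(\Delta\neq0)>0$. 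Hence there is a threshold $t_0$ with $\psi(t)\ge\tfrac12\E[\Delta^2]$ for $t\ge t_0$, and enlarging $M_0$ once more so that $r_n=Z_n/2\ge t_0$ on $\{Z_n>M_0\}$ gives $C_2\coloneqq\tfrac12\E[\Delta^2]$. Taking $M_0$ to be the largest of the finitely many thresholds used above finishes the proof. The main obstacle is thus really just establishing the non-degeneracy of $\Delta$ and keeping every constant uniform in $n$; the latter is automatic from the resampling structure of Proposition~\ref{prop_resampling} and the distributional identity of Theorem~\ref{thm_renewal_decomposition}.
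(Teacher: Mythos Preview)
Your proposal is correct and follows essentially the same route as the paper: the same truncation event $F_n=\{W_{n+1}<r_n\}$ (the paper writes it as $\{2(W_{n+1}\vee(\kappa+R))<Z_n\}$, which coincides on $\{Z_n>M_0\}$), the same appeal to Theorem~\ref{thm_renewal_decomposition} together with the symmetry of $(\Delta,W)$ from Lemma~\ref{lemma_Delta} for (i), the block-size moment bounds from Proposition~\ref{prop_block_size} for (ii) and the third-moment part of (iv), Lemma~\ref{lemma_coalescence_proba} for (iii), and the monotone-convergence argument $\E[\Delta^2\mathds 1_{W<t}]\uparrow\E[\Delta^2]>0$ for the variance lower bound in (iv). The only visible differences are cosmetic (you use $|Z_{n+1}-Z_n|\le 2W_{n+1}$ where the paper uses the sharper $\le W_{n+1}$, and you spell out the non-degeneracy of $\Delta$ while the paper leaves it implicit).
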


\begin{proof}
    We start with $(iv)$. For all $n\geq0$ we set
        \[F_n\coloneqq\{2(W_{n+1}\vee(\kappa+R))<Z_n\}.\]
    If $M>0$ and $n\geq 0$, using Theorem \ref{thm_renewal_decomposition}, on the event $\{Z_n>M\}$, we have
        \begin{equation*}
            \begin{split}
                \E[(Z_{n+1}-Z_n)^2~|~\mathcal G_n]&\geq\E[(Z_{n+1}-Z_n)^2\mathds 1_{F_n}~|~\mathcal G_n]\\
                &=\E[(\Delta^{1, 2})^2\mathds 1_{2(W\vee(\kappa+R))<Z_n}~|~\mathcal G_n]\\
                &\geq\E[(\Delta^{1, 2})^2\mathds 1_{2(W\vee(\kappa+R))<M}],
            \end{split}
        \end{equation*}
    hence we can choose $M_0>0$ big enough so that on the event $\{Z_n>M_0\}$,
        \[\E[(Z_{n+1}-Z_n)^2~|~\mathcal G_n]\geq\E[(\Delta^{1, 2})^2\mathds 1_{2(W\vee(\kappa+R))<M_0}]\eqqcolon C_2>0.\]
    Furthermore, from Proposition \ref{prop_block_size} there exists $C_3>0$ such that for all $n\geq 0$,
        \[\E[|Z_{n+1}-Z_n|^3~|~\mathcal G_n]\leq\E[(W_{n+1})^3~|~\mathcal G_n]\leq C_3<\infty.\]
    We now turn to $(i)$. Thanks to Theorem \ref{thm_renewal_decomposition}, we have that
        \[\E[(Z_{n+1}-Z_n)\mathds 1_{F_n}~|~\mathcal G_n]=\E[\Delta^{1, 2}\mathds 1_{W\vee(\kappa+R)<r_n}~|~\mathcal G_n]=0,\]
    where we used that $\Delta^{1, 2}\mathds 1_{W\vee(\kappa+R)<r_n}$ is symmetric from Lemma \ref{lemma_Delta}. From Proposition \ref{prop_block_size},
        \[\P(F_n~|~\mathcal G_n)\geq 1-\P(2W_{n+1}>Z_n~|~\mathcal G_n)\geq 1-\alpha e^{-\beta\sqrt{\frac{Z_n}{2}}}.\]
    We can thus fix $C_0>0$ such that $(i)$ is satisfied. For $(ii)$, observe that using Proposition \ref{prop_block_size}, there exists $C_1>0$ such that for all $n\geq 0$,
        \[\E[Z_{n+1}-Z_n~|~\mathcal G_n]\leq\E[|Z_{n+1}-Z_n|~|~\mathcal G_n]\leq \E[W_{n+1}~|~\mathcal G_n]\leq C_1<\infty.\]
    Finally, point (iii) is given by Lemma \ref{lemma_coalescence_proba}. The proof is complete.
\end{proof}

Proposition \ref{prop_assumption} ensure that the assumptions of \cite[Corollary 5.6]{Dsf2d} are satisfied for the process $(Z_n)_{n\geq 0}$, and thus, denoting
    \[\nu\coloneqq\inf\{n\geq 0:Z_n=0\},\]
we get that there exists a constant $ K>0$ independent of $\mathbf u_1,\mathbf u_1$, such that for all $n\geq 0$,
    \[\P(\nu\geq n)\leq\frac{K}{\sqrt n}\max\{1, (\mathbf u_1-\mathbf u_2)\cdot e_1\}.\]
Then, following the proof of \cite[Theorem 5.1]{Dsf2d}, we get the following theorem, which gives a key asymptotic estimate on the coalescing time of two trajectories.

\begin{Theorem}
    For any $\mathbf u_1,\mathbf u_2\in\R^d$ such that $\mathbf u_1\cdot e_2=\mathbf u_2\cdot e_2=0$, we define
        \[T(\mathbf u_1,\mathbf u_2)\coloneqq\inf\{t\geq 0:\pi^{\mathbf u_1}(t)=\pi^{\mathbf u_2}(t)\}.\]
    Then, there exists a constant $K'>0$ such that for all $\mathbf u_1,\mathbf u_2\in\mathbb H(0)$ and $t>0$,
        \[\P(T(\mathbf u_1,\mathbf u_2)>t)\leq\frac{K'}{\sqrt t}\max\{1, |(\mathbf u_1-\mathbf u_2)\cdot e_1|\}.\]
\end{Theorem}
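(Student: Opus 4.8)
The plan is to deduce the continuous coalescence estimate from the discrete one established just above — namely that, with $\nu\coloneqq\inf\{n\ge0:Z_n=0\}$, one has $\P(\nu\ge n)\le (K/\sqrt n)\max\{1,(\mathbf u_1-\mathbf u_2)\cdot e_1\}$ (via Proposition~\ref{prop_assumption} and \cite[Corollary~5.6]{Dsf2d}) — by converting the number of renewal steps into the $e_2$-level actually reached, exactly as in the proof of \cite[Theorem~5.1]{Dsf2d}. By relabelling $\mathbf u_1,\mathbf u_2$ we may assume $\mathbf u_1\cdot e_1\ge\mathbf u_2\cdot e_1$, so that $\max\{1,(\mathbf u_1-\mathbf u_2)\cdot e_1\}=\max\{1,|(\mathbf u_1-\mathbf u_2)\cdot e_1|\}$; and it suffices to treat $t$ large, since for bounded $t$ the bound $\P(T>t)\le1\le K'/\sqrt t$ holds once $K'$ is large enough. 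We run the $k=2$ exploration process from $\mathbf u_1,\mathbf u_2$; since the successive positions $g_n(\mathbf u_i)$ visit exactly the iterates $\Psi^j(\mathbf u_i)$, $j\ge0$, and $\pi^{\mathbf u_i}$ interpolates these iterates linearly, the two DSF paths coincide above the $e_2$-level of any common vertex of the two explored trajectories.

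First I would establish the geometric bound $T(\mathbf u_1,\mathbf u_2)\le m_{\beta_\nu}+\kappa$. The renewal step $\beta_\nu=\tau_{\gamma_\nu}$ is a good step, so $L_{\beta_\nu}\le\kappa$ and both positions $g_{\beta_\nu}(\mathbf u_1),g_{\beta_\nu}(\mathbf u_2)$ have $e_2$-coordinate in $[m_{\beta_\nu},m_{\beta_\nu}+\kappa]$; moreover, for $\beta_\nu\ge1$ these positions are points of $\mathcal N$, which in the plane almost surely has pairwise distinct $e_1$-coordinates, so $Z_\nu=\mathbf p\big(g_{\beta_\nu}(\mathbf u_1)-g_{\beta_\nu}(\mathbf u_2)\big)=0$ forces $g_{\beta_\nu}(\mathbf u_1)=g_{\beta_\nu}(\mathbf u_2)$, i.e.\ the trajectories have already coalesced at an $e_2$-level at most $m_{\beta_\nu}+\kappa$ (the case $\mathbf u_1=\mathbf u_2$ being trivial). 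Hence, on $\{T>t\}$ one must have $Z_n\ne0$ for every renewal step $n$ with $m_{\beta_n}+\kappa\le t$; since $(m_{\beta_n})_{n\ge0}$ is non-decreasing, it follows that for every $n_0\ge0$,
\[\{T(\mathbf u_1,\mathbf u_2)>t\}\subseteq\{\nu>n_0\}\cup\{m_{\beta_{n_0}}+\kappa>t\}.\]

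It then remains to choose $n_0$ of order $t$ and bound the two events. By Theorem~\ref{thm_renewal_decomposition} and Proposition~\ref{prop_block_size}, each increment $m_{\beta_i}-m_{\beta_{i-1}}$ is at most the block size $W_i$ (the $e_2$-displacement being dominated by the $\ell^p$-arc length), hence has conditional mean at most a universal constant $\mu_0$ and uniformly bounded conditional second moment, while $m_{\beta_i}-m_{\beta_{i-1}}\ge\kappa+R$ deterministically. Decomposing $m_{\beta_{n_0}}-m_{\beta_0}$ into the sum of the conditional means $\E[m_{\beta_i}-m_{\beta_{i-1}}~|~\mathcal G_{i-1}]$, which is deterministically at most $\mu_0 n_0$, and the associated martingale differences, whose total variance is $\sum_i\E[(m_{\beta_i}-m_{\beta_{i-1}})^2~|~\mathcal G_{i-1}]=O(n_0)$, and using that $m_{\beta_0}$ has finite moments by the tail estimates of Section~\ref{section_renewal}, a Chebyshev bound with $n_0\coloneqq\lfloor t/(8\mu_0)\rfloor$ gives $\P(m_{\beta_{n_0}}+\kappa>t)=O(1/t)$ for $t$ large; on the other hand, the discrete estimate gives $\P(\nu>n_0)\le (K/\sqrt{n_0})\max\{1,(\mathbf u_1-\mathbf u_2)\cdot e_1\}\le (K''/\sqrt t)\max\{1,|(\mathbf u_1-\mathbf u_2)\cdot e_1|\}$. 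Summing the two and absorbing the $O(1/t)=o(1/\sqrt t)$ term into the constant yields the theorem for $t$ large, hence for all $t$. I expect no real difficulty here: the delicate input is the estimate on $\nu$, proved above, and the only point in this final step requiring genuine care is the planar identification $T\le m_{\beta_\nu}+\kappa$, which rests on the almost sure distinctness of the $e_1$-coordinates of the Poisson points (so that a vanishing $e_1$-gap at a renewal step really means the trajectories have merged); the passage from the renewal count to the $e_2$-level is then a routine concentration argument.
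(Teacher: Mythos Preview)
Your proposal is correct and takes essentially the same approach as the paper: the paper simply states that the theorem follows ``following the proof of \cite[Theorem 5.1]{Dsf2d}'' from the discrete bound $\P(\nu\ge n)\le (K/\sqrt n)\max\{1,(\mathbf u_1-\mathbf u_2)\cdot e_1\}$, and your outline (coalescence forces $T\le m_{\beta_\nu}+\kappa$, then split $\{T>t\}\subset\{\nu>n_0\}\cup\{m_{\beta_{n_0}}>t-\kappa\}$ with $n_0\asymp t$ and control the second event by moments of the block sizes) is exactly that argument spelled out. The only minor remark is that the reference to Theorem~\ref{thm_renewal_decomposition} is not needed here --- Proposition~\ref{prop_block_size} alone supplies the uniform moment bounds on $W_i\ge m_{\beta_i}-m_{\beta_{i-1}}$.
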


Then, \cite[Section 6]{Dsf2d} gives all the details to prove Theorem \ref{thm_convergence_BW}. No modifications are needed, as it relies only on the coalescing time estimate of two trajectories given by the previous theorem, the renewal decomposition of the exploration process, the block size tail decay estimate and planarity. More precisely, \cite[Theorem 6.3]{Dsf2d} provides a set of conditions whose verification ensures convergence to the Brownian web under diffusive scaling. We refer the reader to \cite{Dsf2d} for further details.

\appendix

\section{Obstructions to generalization of Theorem \ref{thm_sto_dom_poisson}}

\label{section_counter_example}

The validity of the stochastic domination given by Theorem \ref{thm_sto_dom_poisson} for any $p\in[1, \infty]$ when $d=2$, and for any $d\geq 2$ when $p\in\{1, 2, \infty\}$, initially seem somewhat arbitrary. This naturally raises the question of whether the result remains valid beyond these specific assumptions.

A key component of the proof is Lemma \ref{lemma_section}, which plays a central role in establishing Proposition \ref{prop_sto_dom}, the core intermediate stochastic domination result. However, Lemma \ref{lemma_section} does not hold in full generality outside the specified parameter ranges. We illustrate this with a counterexample.

Consider the case $p=d=3$, and let $H$ the half-ball $B^+(c, \|c\|)$ centered at $c=(3, 3, 0)$. Since $0\in\partial B(c,\|c\|_3)$, $H$ is a valid re-centered history set. Recall that for any $h\in[0, 1]$ we have $\rho(h)=(1-h^3)^{\frac{1}{3}}$. Define the point
    \[x\coloneqq\left(\frac{3}{4},~ -\frac{1}{2},~ 0\right).\]
We claim that $x\in\Phi(S_{\rho(2/3)}\cap H)$ but $x\notin\Phi(S_0\cap H)$, violating the inclusion $\Phi(S_{\rho(2/3)}\cap H)\subset \Phi(S_0\cap H)$ even though $\rho(2/3)>0$. Indeed, using the fact that $\rho\circ\rho=\mathrm{Id}$ and $e_d\cdot x=e_d\cdot c=0$, we compute
\begin{equation*}
    \begin{split}
        \left\|\rho\circ\rho\left(\frac{2}{3}\right)x+\rho\left(\frac{2}{3}\right)e_d-c\right\|^3&=\left\|\frac{2}{3}x-c\right\|^3+\rho\left(\frac{2}{3}\right)^3\\&=\left(3-\frac{2}{3}\cdot\frac{3}{4}\right)^3+\left(\frac{2}{3}\cdot\frac{1}{2}+3\right)^3+1-\left(\frac{2}{3}\right)^3\\
    &=\frac{11527}{216}<2\cdot3^3=\|c\|^3,
    \end{split}
\end{equation*}
which shows $x\in\Phi(S_{\rho(2/3)}\cap H)$. On the other hand,
\begin{align*}
    \|x - c||^3 &= \left(3 - \frac{3}{4}\right)^3 + \left(3 + \frac{1}{2}\right)^3 = \frac{3473}{64} > 2 \cdot 3^3 = \|c\|^3,
\end{align*}
hence $x\notin\Phi(S_0\cap H)$. This counterexample demonstrates that Lemma \ref{lemma_section} does not extend in general when both $d\neq 2$ and $p\notin\{1, 2,\infty\}$. Interestingly, we have not identified any counterexample to the generalization in the case $p\in(1, 2)$, which may suggest that, while the stated assumptions cannot be entirely removed, they might be further sharpened.

Moreover, in the setting $d=3$, $p=4$ and  $H=B^+(c, \|c\|)$ with $c=(3, 3, 0)$,  numerical evidence indicates that the function $h\mapsto\alpha_h$ is strictly decreasing. This would imply that $U^H \cdot e_d$ and $U^\emptyset \cdot e_d$ are not identically distributed, yet the reverse stochastic domination to that of Proposition \ref{prop_sto_dom}, namely $U^H \cdot e_d \preceq_{\mathrm{sto}}U^\emptyset \cdot e_d$, holds. This is somewhat surprising and points to a genuine delicacy in the structure of the general case. It strongly suggests that Proposition \ref{prop_sto_dom} may not hold once all assumptions on $p$ and $d$ are removed. Whether Theorem \ref{thm_sto_dom_poisson} itself remains valid in full generality is an open question. At present, there is no compelling evidence either confirming or ruling out such an extension. Whether Theorem \ref{thm_sto_dom_poisson} itself extends remains an open and subtle question. There is, at this stage, no clear reason why it should or should not.

\section{Proof of Proposition \ref{prop_combinatorial}}

\label{section_balls}

This section aims to prove Proposition \ref{prop_combinatorial}. From now on, we fix $\kappa>0$, $k\geq 1$, $d\geq 1$ and $p\in[1,\infty]$. We want to show that there exists two constants $a_0, R_0>0$ such that for all configuration $(c_i)_{i\in\range{1}{k}}\in\R^k$ with $c_i\cdot e_d=0$ for each $i\in\range{1}{k}$, there exist a partition $\Pi$ of $\range{1}{k}$ such that for all $\pi\in\Pi$,
    \[|P_\pi^{R_0}|\geq a_0\quad\text{where}\quad P_\pi^{R_0}\coloneqq\bigcap_{i\in\pi} B^+(c_i, R_0)\setminus\bigcup_{j\in\range{1}{k}\setminus\pi} B(c_j-\kappa e_d, \kappa+R_0).\]
As an intermediate problem, we will show the following proposition. The cases $p<\infty$ and $p=\infty$ will be proved separately in two distinct subsections. We will then derive Proposition \ref{prop_combinatorial} from it.

\begin{Proposition}
    \label{prop_ball}
    There exist $\eta=\eta(d, p)>0$ such that for all $(c_i)_{i\in\range{1}{k}}\in\R^k$ with $c_i\cdot e_d=0$ for each $i\in\range{1}{k}$, there exist a partition $\Pi$ of $\range{1}{k}$ and $(\alpha_\pi)_{\pi\in\Pi}\in(\R^d)^\Pi$ such that for all $\pi\in\Pi$,
        \[B(\alpha_\pi,\eta)\subset P_\pi\quad\text{where}\quad P_\pi\coloneqq\bigcap_{i\in\pi}B^+(c_i, 1)\setminus\bigcup_{j\in\range{1}{k}\setminus\pi}B(c_j, 1).\]
    In particular, $|P_\pi|\geq|B(0,\eta)|$ for all $\pi\in\Pi$.
\end{Proposition}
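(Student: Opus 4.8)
The plan is to proceed by induction on $k$, building the partition $\Pi$ greedily. Given a configuration $(c_i)_{i\in\range{1}{k}}$ with all $c_i\cdot e_d=0$, the natural idea is to look at how spread out the centers are. If all the $c_i$ lie within bounded $\ell^p$-distance of one another (say within distance $\eta_0$ of $c_1$ for a suitable small constant), then I would try to take $\Pi=\{\range{1}{k}\}$ as a single block: I need to exhibit a ball $B(\alpha,\eta)$ contained in $\bigcap_i B^+(c_i,1)$ and avoiding all the $B(c_j,1)$—but since the block is everything, the second constraint is vacuous, so I just need a small ball inside $\bigcap_i B^+(c_i,1)$. Picking $\alpha = c_1 + (1-\eta_0-\eta)e_d$ (pushed up in the $e_d$ direction so as to sit strictly inside every half-ball $B^+(c_i,1)$ once $\eta,\eta_0$ are small), a triangle-inequality computation shows $B(\alpha,\eta)\subset B^+(c_i,1)$ for each $i$, since $\|\alpha-c_i\|\le \|\alpha-c_1-(1-\eta_0-\eta)e_d\|+\|c_1-c_i\| + \dots \le 1-\eta$ and the $e_d$-coordinate of everything in $B(\alpha,\eta)$ stays above $0$. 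This fixes the value of $\eta=\eta(d,p)$.

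In the general case, I would partition the index set into \emph{clusters}: define a graph on $\range{1}{k}$ joining $i\sim j$ when $\|c_i-c_j\|\le \eta_0$, and let $\Pi$ be its connected components — or, better for the induction, peel off one cluster at a time. The key point is that for two indices $i,j$ in \emph{different} clusters there is still a definite separation, but within a single connected component the centers can be chained together and hence lie within distance $(k-1)\eta_0$ of each other, which is still bounded (here $k$ is fixed, so this is harmless and only affects the constant). For a fixed block $\pi\in\Pi$ I then need a ball $B(\alpha_\pi,\eta)\subset \bigcap_{i\in\pi}B^+(c_i,1)$ that additionally avoids every $B(c_j,1)$ with $j\notin\pi$. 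The first inclusion is handled exactly as in the base case (all centers in $\pi$ are close together). For the avoidance, I would again push $\alpha_\pi$ up in the $e_d$ direction far enough: any $c_j$ with $j\notin\pi$ has $c_j\cdot e_d=0$, so if $\alpha_\pi\cdot e_d$ is bounded below by a constant $>\eta$, then every point of $B(\alpha_\pi,\eta)$ has $e_d$-coordinate bounded below by a positive constant, whereas we need it to escape $B(c_j,1)$; this is the delicate comparison, because $B(c_j,1)$ also extends upward. The way out is that $\alpha_\pi$ only needs $e_d$-coordinate slightly less than $1$ to sit in $\bigcap_{i\in\pi}B^+(c_i,1)$, so I must instead use the \emph{horizontal} separation: there is $j_0\notin\pi$... — actually the cleanest route is to choose the reference cluster to be an \emph{extreme} one (e.g. maximizing some linear functional of the $c_i$), so that all the other clusters lie on one side, and then place $\alpha_\pi$ slightly on the far side of the extreme cluster's centers, inside the half-balls but outside the reach of the interior clusters.

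The main obstacle I anticipate is precisely this geometric balancing act: simultaneously staying inside $k_0\coloneqq|\pi|$ half-balls of radius $1$ while avoiding up to $k-k_0$ full balls of radius $1$ whose centers may be arbitrarily far away in some directions but arbitrarily close in others. The resolution requires carefully exploiting that the ``moving up in $e_d$'' direction is free (all the $B^+(c_i,1)$ open upward while the $B(c_j,1)$ are bounded) together with choosing an extremal cluster so the remaining centers are confined to a half-space. I would also need to check that the separation $\eta_0$ can be taken uniform in the configuration — this is fine since only finitely many ($k$) clusters arise and the constant can be taken to depend on $k$. Once Proposition~\ref{prop_ball} is established, Proposition~\ref{prop_combinatorial} follows by the scaling $c_i\mapsto (\kappa+R_0)^{-1}c_i$ combined with the observation that $B(c_j-\kappa e_d,\kappa+R_0)\supset$ a translate related to $B(c_j,1)$ under the appropriate affine change of variables, with $R_0$ and $a_0$ read off from $\eta$.
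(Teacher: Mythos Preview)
Your clustering via connected components of the graph $\{i\sim j:\|c_i-c_j\|\le\eta_0\}$ does not suffice, and the gap is not merely technical. Take $d\ge 3$, $p=2$, and let $c_1,\dots,c_{k-1}$ be the vertices of a regular $(k-1)$-gon of side $\eta_0$ in the hyperplane $\{x\cdot e_d=0\}$, with $c_k$ at its center. For $k\ge 8$ the circumradius $\rho=\eta_0/(2\sin\tfrac{\pi}{k-1})$ exceeds $\eta_0$, so $c_k$ is at distance $\rho>\eta_0$ from every vertex and forms its own component, while $\{1,\dots,k-1\}$ is the other. But then $P_{\{1,\dots,k-1\}}=\bigcap_{i<k}B^+(c_i,1)\setminus B(c_k,1)$ is \emph{empty}: for $x\in\bigcap_{i<k}B(c_i,1)$ one has $\sum_{i<k}\|x-c_i\|_2^2=(k-1)(\|x\|_2^2+\rho^2)<k-1$ since $\sum_{i<k}c_i=0$, whence $\|x\|_2<1$ and $x\in B(c_k,1)$. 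So no ball of any radius fits, and your partition is simply the wrong one for this configuration. The underlying issue is that pairwise separation $>\eta_0$ between members of different clusters does not prevent an outside point from sitting at the barycenter of a cluster, which is exactly where any ``push up in $e_d$'' placement wants to live. Your extremality suggestion only rescues one cluster at a time; for an interior cluster the problem remains, and you acknowledge this is unresolved.

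The paper's fix is a different clustering, engineered to control distance to the barycenter rather than to individual members. Lemma~\ref{lemma_clustering} grows a cluster from a seed by repeatedly absorbing any point within $r_\pi+\xi$ of the \emph{current barycenter}; iterating this with shrinking $\xi$ (Lemma~\ref{lemma_eps_delta_grouping}) yields a partition in which every $\pi$ satisfies $r_\pi\le\delta$ and $\|c_j-c_\pi\|>r_\pi+\epsilon$ for all $j\notin\pi$, with $\epsilon=\epsilon(k,\delta)$ uniform in the configuration. One then places $\alpha_\pi=c_\pi+h_\pi e_d$ at height $h_\pi=(1-(r_\pi+\epsilon/2)^p)^{1/p}$: for $i\in\pi$, $\|\alpha_\pi-c_i\|^p=h_\pi^p+\|c_\pi-c_i\|^p\le h_\pi^p+r_\pi^p<1$, while for $j\notin\pi$, $\|\alpha_\pi-c_j\|^p\ge h_\pi^p+(r_\pi+\epsilon)^p>1$, both with a uniform margin that yields the ball of radius $\eta$. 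Note finally that this height computation uses $p<\infty$; the case $p=\infty$ (which your sketch does not isolate) is handled in the paper by a separate coordinate-wise argument, reducing to the one-dimensional problem of Lemma~\ref{lemma_dim_1} and intersecting the resulting partitions across coordinates.
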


\subsection{The case $p<\infty$}

In this section, we prove Proposition \ref{prop_ball} when $p<\infty$.

We start with some notation. Let $(c_i)_{1\leq i\leq k}\in\R^k$ with $c_i\cdot e_d=0$ for each $i\in\range{1}{k}$. For every non-empty subset $I$ of $\range{1}{k}$, we define the barycenter of $(c_i)_{i\in I}$ by
    \[c_I\coloneqq\frac{1}{|I|}\sum_{i\in I}c_i,\]
and the radius of $(c_i)_{i\in I}$ with respect to $c_I$ by
    \[r_I\coloneqq\max_{i\in I}\|c_i-c_I\|.\]
As a first step toward Proposition \ref{prop_ball}, we want to group the centers into clusters, represented by a partition $\Pi$ of $\range{1}{k}$ such that every cluster is small in radius and all the clusters are far from each other. We start with the following lemma.

\begin{Lemma}
    \label{lemma_clustering}
    There exist $C_k\geq 1$ such that for all $(c_i)_{1\leq i\leq k}\in\R^d$, $i_0\in\range{1}{k}$ and $\xi>0$ there exist $\pi\subset\range{1}{k}$ such that $i_0\in\pi$ and
        \begin{itemize}
            \item $r_\pi\leq C_k\xi$,
            \item for all $j\in\range{1}{k}\setminus\pi$, $\|c_j-c_\pi\|>r_\pi+\xi$.
        \end{itemize}
\end{Lemma}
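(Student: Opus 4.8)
The plan is to construct $\pi$ by a greedy single-linkage agglomeration starting from the singleton $\{i_0\}$ and repeatedly absorbing any index whose center is too close to the current barycenter. Concretely, I would set $\pi^{(0)}\coloneqq\{i_0\}$ and, given $\pi^{(m)}$, ask whether there exists $j\in\range{1}{k}\setminus\pi^{(m)}$ with $\|c_j-c_{\pi^{(m)}}\|\leq r_{\pi^{(m)}}+\xi$; if so, pick one such $j$ and set $\pi^{(m+1)}\coloneqq\pi^{(m)}\cup\{j\}$, and otherwise stop and output $\pi\coloneqq\pi^{(m)}$. Each step strictly increases the cardinality, so the procedure terminates after at most $k-1$ steps, and $i_0\in\pi$ since we only ever add indices. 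When it stops, either $\pi=\range{1}{k}$, in which case the second bullet is vacuous, or no $j\in\range{1}{k}\setminus\pi$ satisfies the absorption condition, which is precisely the statement that $\|c_j-c_\pi\|>r_\pi+\xi$ for every such $j$. Thus the only thing left to check is that the output satisfies $r_\pi\leq C_k\xi$ for a constant $C_k=C_k(k)\geq 1$.

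For that, I would track how the radius grows under a single absorption. Suppose $\pi'=\pi\cup\{j\}$ with $\|c_j-c_\pi\|\leq r_\pi+\xi$. From $c_{\pi'}=\frac{1}{|\pi|+1}\bigl(|\pi|\,c_\pi+c_j\bigr)$ one gets $c_{\pi'}-c_\pi=\frac{c_j-c_\pi}{|\pi|+1}$, hence $\|c_{\pi'}-c_\pi\|\leq\frac{r_\pi+\xi}{|\pi|+1}\leq\frac{r_\pi+\xi}{2}$ since $|\pi|\geq 1$. Applying the triangle inequality to the old centers $c_i$ ($i\in\pi$) and to the new center $c_j$ with respect to $c_{\pi'}$ then yields $\|c_i-c_{\pi'}\|\leq r_\pi+\frac{r_\pi+\xi}{2}$ and $\|c_j-c_{\pi'}\|\leq (r_\pi+\xi)+\frac{r_\pi+\xi}{2}$, so $r_{\pi'}\leq\frac{3}{2}(r_\pi+\xi)$. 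Writing $a_m\coloneqq r_{\pi^{(m)}}$ with $a_0=0$, this gives $a_{m+1}+3\xi\leq\frac{3}{2}(a_m+3\xi)$, whence $a_m\leq 3\xi\bigl((3/2)^m-1\bigr)$; since $m\leq k-1$ at termination, the constant $C_k\coloneqq 3\,(3/2)^{k-1}$ works and is $\geq 1$.

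There is no genuine obstacle here: the only point that requires care is that $r_\pi$ is always measured relative to the \emph{current} barycenter, so each absorption both enlarges the cluster and displaces the barycenter, and the displacement must be estimated explicitly — that is exactly the role of the factor $1/(|\pi|+1)$ above. Beyond that the argument is a routine induction on the number of absorbed indices, and it uses nothing about the norm except the triangle inequality, so it is valid uniformly for all $p\in[1,\infty]$ and all $d$.
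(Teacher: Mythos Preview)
Your proof is correct and follows essentially the same route as the paper: the same greedy absorption procedure starting from $\{i_0\}$, the same termination argument by cardinality, and the same control of the radius via the barycenter displacement formula $c_{\pi'}-c_\pi=\frac{c_j-c_\pi}{|\pi|+1}$. The only cosmetic difference is that the paper keeps the sharper shift bound $\tfrac{1}{|\pi|+1}(r_\pi+\xi)$ and obtains the recursion $r_{n+1}\le\bigl(1+\tfrac{1}{n+1}\bigr)(r_n+\xi)$, whereas you coarsen $\tfrac{1}{|\pi|+1}\le\tfrac12$ to get $r_{\pi'}\le\tfrac32(r_\pi+\xi)$; either way one lands on a constant $C_k$ depending only on $k$.
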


%The rough idea is the following: start with $\pi=\{i_0\}$, which has radius $0$. If there is a point that contradicts the second point of the statement, we add it to $\pi$ and the radius can not grow too much as such a point should be close to the barycenter. Doing so until there is no more of such points, we obtain a cluster of point whose radius is controlled by $\xi$ and that satisfies the second point of the statement by construction.

\begin{proof}
     We define a non-decreasing sequence of sets $(\pi_{n})_{n\geq 1}\in\mathcal P(\range{1}{k})^{\N^*}$ with
        \begin{itemize}
            \item $\pi_0\coloneqq\{i_0\}$,
            \item for $n\geq1$, let $I_n\coloneqq\{i\in \range{1}{k}\setminus\pi_n~:~\|c_i-c_{\pi_n}\|\leq r_{\pi_n}+\xi\}$ and set
                $$\pi_{n+1}\coloneqq
                    \begin{cases}
                        \pi_n  & \text{if } I_n=\emptyset \\
                        \pi_n\cup\{\min I_n\} & \text{otherwise}
                    \end{cases}.$$
        \end{itemize}
    By construction, $(\pi_n)_{n\geq 1}$ increases and then stays stationary. Since the size of the sets is bounded by $k$, we have that $\pi_k=\pi_{k+1}$, that is $I_k=\emptyset$. We set $\pi\coloneqq \pi_n$. By definition of $I_k=\emptyset$ we have that for all $j\in\range{1}{k}\setminus\pi_k$,
        $$\|c_j-c_\pi\|>r_\pi+\epsilon.$$
    Now it remains to check that we can control $r_\pi$. 
    
    Let $n\geq 1$. If $I_n=\emptyset$, then $r_{\pi_n}=r_{\pi_{n+1}}$. Otherwise we can set $j_n\coloneqq\min I_n$ and write
        $$c_{\pi_{n+1}}=\frac{1}{n+1}\left(c_{j_n}+\sum_{i\in\pi_n}c_i\right)=\frac{c_{j_n}}{n+1}+\frac{n}{n+1}c_{\pi_n},$$
    which implies that
    \begin{equation}
        \label{eq_barycenter_delta}
        \|c_{\pi_{n+1}}-c_{\pi_n}\|=\frac{1}{n+1}\|c_{j_n}-c_{\pi_n}\|\leq \frac{1}{n+1}(r_{\pi_n}+\xi).
    \end{equation}
    We deduce then that when $I_n\neq\emptyset$, we have
    \begin{equation}
        \label{eq_radius_growth}
        \begin{split}
            r_{\pi_{n+1}}&=\max_{j\in\pi_n\cup\{j_n\}}\|c_j-c_{\pi_{n+1}}\|\\
            &\leq\|c_{\pi_n}-c_{\pi_{n+1}}\|+\max\left(\|c_{j_n}-c_\pi\|,r_{\pi_n}\right)\\
            &\leq\frac{1}{n+1}(r_{\pi_n}+\xi)+\max(r_{\pi_n}+\xi,r_{\pi_n})\\
            &=\left(1+\frac{1}{n+1}\right)(r_{\pi_n}+\xi),
        \end{split}
    \end{equation}
    where the first inequality comes from the triangle inequality and definition of $r_{\pi_n}$ and the second one from (\ref{eq_barycenter_delta}) and definition of $j_n\in I_n$. Now if by induction we set $C_1\coloneqq 1$ and for all $n\geq1$ we define 
        $$C_{n+1}\coloneqq\left(1+\frac{1}{n+1}\right)(C_n+1),$$
    then since $r_{\pi_0}=0\leq C_0\xi$, the inequality (\ref{eq_radius_growth}) allows us to obtain by induction that for all $n\geq 1$, $r_{\pi_n}\leq C_n\xi$. In particular
        $$r_{\pi_k}\leq C_k\xi,$$
    the proof is complete.
\end{proof}

Next, applying Lemma \ref{lemma_clustering} inductively, we can show the following clustering property.

\begin{Lemma}
    \label{lemma_eps_delta_grouping}
    Let $\delta>0$. There exists $\epsilon>0$ such that for any $(c_i)_{1\leq i\leq k}\in\R^d$ with $c_i\cdot e_d=0$ for each $i\in\range{1}{k}$, we can construct a partition $\Pi$ of $\range{1}{k}$ such that for all $\pi\in\Pi$
        \begin{itemize}
            \item $r_\pi\leq\delta$,
            \item and for all $j\in\range{1}{k}\setminus\pi$ we have $\|c_j-c_\pi\|>r_\pi+\epsilon$.
        \end{itemize}
\end{Lemma}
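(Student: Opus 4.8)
The plan is to iterate Lemma~\ref{lemma_clustering} with a careful bookkeeping of how the allowed scale shrinks at each application, and to choose $\epsilon$ small enough at the end so that every resulting cluster has radius at most $\delta$ while all clusters remain separated by more than $r_\pi+\epsilon$. The key point is that Lemma~\ref{lemma_clustering} produces, for a given index $i_0$ and a given scale $\xi$, a cluster $\pi\ni i_0$ of radius at most $C_k\xi$ whose complement sits at distance more than $r_\pi+\xi$ from $c_\pi$; this separation, however, is only guaranteed for the points $j\notin\pi$ relative to the \emph{current} cluster $\pi$, not for points already placed in earlier clusters. So I must run the clustering greedily over the set of not-yet-assigned indices and then argue that the clusters are pairwise far apart.

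First I would fix $\delta>0$ and set $\xi\coloneqq\delta/C_k$, so that any cluster produced by Lemma~\ref{lemma_clustering} at scale $\xi$ has radius $\leq\delta$. I would then build the partition $\Pi$ inductively: start with $J_1=\range{1}{k}$, pick any $i_0\in J_1$, apply Lemma~\ref{lemma_clustering} (with the ambient index set restricted to $J_1$, noting the proof only uses the sub-configuration) to get a cluster $\pi_1\ni i_0$ with $r_{\pi_1}\leq\delta$ and $\|c_j-c_{\pi_1}\|>r_{\pi_1}+\xi$ for all $j\in J_1\setminus\pi_1$; then set $J_2\coloneqq J_1\setminus\pi_1$ and repeat. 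After at most $k$ steps $J$ is empty and we obtain a partition $\Pi=\{\pi_1,\dots,\pi_m\}$, each of radius $\leq\delta$.

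The remaining — and main — point is to convert the ``$j$ far from $c_{\pi}$ for $j$ not yet assigned'' guarantee into a genuine pairwise separation statement valid for the \emph{final} partition, since when cluster $\pi_s$ is formed we only control its distance to indices in $J_s\setminus\pi_s$, and those indices later get grouped into clusters $\pi_{s+1},\dots,\pi_m$. For two clusters $\pi_s,\pi_t$ with $s<t$, every $j\in\pi_t$ lies in $J_s\setminus\pi_s$, hence $\|c_j-c_{\pi_s}\|>r_{\pi_s}+\xi$; combining this over all $j\in\pi_t$ with the triangle inequality and the bound $\|c_{i}-c_{\pi_t}\|\leq r_{\pi_t}$ for $i\in\pi_t$ gives a lower bound on the distance between any point of $\pi_s$ and any point of $\pi_t$ of the form $\xi - r_{\pi_t} + \dots$, which is not automatically positive. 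To fix this I would instead take $\xi$ larger relative to $\delta$ — more precisely, first choose $\delta$ itself to play the role of the target radius, then run the clustering at a scale $\xi_0$ chosen so that $C_k\xi_0\leq\delta$ \emph{and} $\xi_0$ exceeds, say, $3\delta$ is impossible simultaneously, so the correct move is: re-read Lemma~\ref{lemma_clustering} as giving radius $\leq C_k\xi$ and gap $>\xi$, apply it with $\xi$ a fixed small constant, obtain $\delta_0\coloneqq C_k\xi$ as the radius bound, and then \emph{rescale the whole configuration} by $\delta/\delta_0$ so that radii become $\leq\delta$ and the gap becomes $>\xi\delta/\delta_0=\delta/C_k$; setting $\epsilon\coloneqq\delta/C_k$ then yields, for $j\in\pi_t\subset J_s\setminus\pi_s$, $\|c_j-c_{\pi_s}\|>r_{\pi_s}+\epsilon$, which after the same triangle-inequality manipulation gives that any point of $\pi_t$ is at distance $>\epsilon-2\delta$ hmm — this is still the obstacle. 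The clean resolution I expect to use is to choose $\delta$ freely but then pick $\epsilon$ \emph{much smaller} than the gap scale, not equal to it: apply Lemma~\ref{lemma_clustering} with $\xi$ such that $C_k\xi=\delta$, set $\epsilon\coloneqq\xi=\delta/C_k$, and observe that what one actually needs for Proposition~\ref{prop_ball} downstream is only that distinct clusters' centers $c_\pi$ satisfy the stated per-cluster inequality $\|c_j-c_\pi\|>r_\pi+\epsilon$ for $j\notin\pi$ — which is \emph{exactly} what the greedy construction guarantees, since for the final partition and any $j\notin\pi_s$ we have $j\in J_s\setminus\pi_s$ (as $j$ is assigned to a later cluster or, if to an earlier one, we swap the roles of $s$ and that earlier index), hence the Lemma's conclusion applies verbatim. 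Thus the statement to be proved is precisely the conjunction of the two bullet points for the greedily built partition, and no extra pairwise-distance strengthening is needed — that observation is the crux, and I would present the induction, the restriction of Lemma~\ref{lemma_clustering} to $J_s$, and the case analysis $s<t$ versus $s>t$ to verify the second bullet, then note $r_\pi\leq C_k\xi=\delta$ for the first.
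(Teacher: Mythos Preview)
Your proposal has a genuine gap at precisely the point you yourself flag as ``the crux.'' When $j\in\pi_t$ with $t<s$, the swap-roles move gives you that every $i\in\pi_s$ satisfies $\|c_i-c_{\pi_t}\|>r_{\pi_t}+\xi$, because $i\in J_t\setminus\pi_t$. But the second bullet requires $\|c_j-c_{\pi_s}\|>r_{\pi_s}+\epsilon$: the barycenter has switched from $c_{\pi_t}$ to $c_{\pi_s}$ and the radius from $r_{\pi_t}$ to $r_{\pi_s}$. These are not the same inequality, and the lemma's conclusion does \emph{not} apply verbatim. Converting one into the other via the triangle inequality (through $c_{\pi_t}$ and any $i\in\pi_s$) yields only $\|c_j-c_{\pi_s}\|>\xi-r_{\pi_s}$; since Lemma~\ref{lemma_clustering} guarantees merely $r_{\pi_s}\leq C_k\xi$ with $C_k\geq 1$, the quantity $\xi-r_{\pi_s}$ may well fall below $r_{\pi_s}$, leaving no room for any positive $\epsilon$. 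Your own computation a few lines earlier hit the same obstacle (``$\epsilon-2\delta$ hmm''); the closing paragraph does not resolve it.

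The paper's proof fixes this with two changes. First, at each step it applies Lemma~\ref{lemma_clustering} to the \emph{full} index set $\range{1}{k}$, not to the remaining indices; the second bullet for the newly formed cluster $\pi_n$ is then immediate from the lemma, which asserts separation from all $j\in\range{1}{k}\setminus\pi_n$, including already-assigned indices. The price is that $\pi_n$ might a priori overlap an earlier cluster. Second, to rule out overlap, the scale is decreased geometrically, $\xi_{n+1}=\xi_n/(2C_k)$, so that $r_{\pi_n}\leq C_k\xi_{n+1}=\xi_n/2$; for $j$ in an earlier cluster $\pi$ one then uses $\|c_{i_n}-c_{\pi}\|>r_{\pi}+\xi_n$ (the second bullet for $\pi$, valid because $i_n\notin\pi$) and the triangle inequality to obtain $\|c_j-c_{\pi_n}\|-r_{\pi_n}>\xi_n-2r_{\pi_n}\geq 0$, forcing $j\notin\pi_n$. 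The final $\epsilon$ is $\xi_k=\delta/\bigl(C_k(2C_k)^k\bigr)$.
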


%The idea is to use Lemma \ref{lemma_clustering} multiple times with different well chosen $\xi$ and $i_0$ to get clusters that will form the elements of the partition.

\begin{proof}
    Let us construct by induction some sequences $(X_n)_{n\geq 0}$, $(\Pi_n)_{n\geq 0}$ and $(\xi_n)_{n\geq 0}$ such that $(X_n)_{n\geq 0}$ is a sequence of subset of $\range{1}{k}$ increasing to $\range{1}{k}$ and for all $n\geq0$, $\xi_n\leq\delta/C_k$ and $\Pi_n$ is a partition of $X_n$ satisfying for all $\pi\in\Pi_n$
        \begin{itemize}
            \item[(A)] $r_\pi\leq\delta$,
            \item[(B)] for all $j\in\range{1}{k}\setminus\pi$ we have $\|c_j-c_\pi\|_p>r_\pi+\xi_n$.
        \end{itemize}
    We start by setting $\Pi_0\coloneqq\emptyset$ and $\xi_0\coloneqq\delta/C_k$. Since $\Pi_0$ is empty, the two points are verified, and the initialization is verified. 
    
    Now let $n\geq0$ and assume we have constructed $X_n$, $\Pi_n$ and $\xi_n$. If $X_n=\range{1}{k}$, we set $\Pi_{n+1}=\Pi_n$, $\xi_{n+1}=\xi_n$ and the heredity holds. Otherwise, we need to enlarge $X_n$. To do so we fix $i_n\coloneqq\min X_n\setminus\range{1}{k}$ and we set
        \[\xi_{n+1}\coloneqq\frac{\xi_n}{2C_k}.\]
    We have $\xi_{n+1}\leq\xi_n\leq\delta/C_k$. Applying Lemma \ref{lemma_clustering} to get a subset $\pi_n\subset\range{1}{k}$ such that $i_n\in\pi_n$, $r_\pi\leq C_k\xi_{n+1}\leq\delta$ and for all $j\in\range{1}{k}\setminus\pi_n$, $\|c_j-c_{\pi_n}\|>r_\pi+\xi_{n+1}$. We set $\Pi_{n+1}\coloneqq\Pi_n\cup\{\pi_n\}$ and $X_{n+1}\coloneqq X_n\cup\pi_n$. Since $\xi_{n+1}\leq\xi_n$, then (A) and (B) are verified. 
    
    Now it remains to check that $\Pi_{n+1}$ is a partition, i.e.\ that $\pi_n\cap X_n=\emptyset$. Let $j\in X_n$. By definition, there exists $\pi\in\Pi_n$ such that $j\in\pi$. Since $i_n\notin\pi$, we can write
        \begin{equation*}
            \begin{split}
                r_{\pi'}+\xi_n&<\|c_{i_n}-c_{\pi'}\|\\
                &\leq\|c_{i_n}-c_{\pi_n}\|+\|c_{\pi_n}-c_j\|+\|c_j-c_{\pi}\|\\
                &\leq r_{\pi_n}+\|c_\pi-c_{j}\|+r_\pi,
            \end{split}
        \end{equation*}
    which implies that
        \begin{equation*}
                \|c_{\pi_n}-c_j\|-r_{\pi_n}>\xi_n-2r_{\pi_n}\geq\xi_n-2C_k\xi_{n+1}=0.
        \end{equation*}
    By definition of $r_{\pi_n}$, this implies $j\notin\pi_n$. We then have $\pi_n\cap X_n=\emptyset$, and thus $\Pi_{n+1}$ is indeed a partition; the construction is complete.

    Now since $(X_n)_{n\geq0}$ increases to $\range{1}{k}$, we always have $X_k=\range{1}{k}$. Therefore, setting $\Pi=\Pi_k$, from (A) and (B), the proposition holds with 
        \[\epsilon\coloneqq\xi_k=\frac{\delta}{C_k(2C_k)^k}>0.\]
\end{proof}

We now prove the sought proposition.

\begin{proof}[Proof of Proposition \ref{prop_ball} in the case $p<\infty$]
    Fix $\delta<\frac{1}{2}$. Let $\epsilon>0$ and $\Pi$ be given by Lemma \ref{lemma_eps_delta_grouping}. Up to choosing a smaller, $\epsilon$, we can assume $\epsilon<\delta$. Fix $\pi\in\Pi$. Let $x\in\R^d$ be such that $x\cdot e_d=0$ and $h\in\R$. Define $\alpha(\pi, x, h)\coloneqq c_\pi+x+he_d$ and observe that for $i\in\pi$, since $\|c_i-c_\pi\|\leq r_\pi$ we have
        \begin{equation*}
            \begin{split}
                \|\alpha(\pi, x, h)-c_i\|^p&=\|c_\pi-c_i+x+he_d\|^p\\
                &=|h|^p+\|c_\pi-c_i+x\|^p\\
                &=|h|^p+(\|c_\pi-c_i\|+\|x\|)^p\\
                &\leq |h|^p+(r_\pi+\|x\|)^p.
            \end{split}
        \end{equation*}
    We deduce then that
        \begin{equation}
            \label{eq_inside_cond}
            |h|^p+(r_\pi+\|x\|)^p<1\implies\alpha(\pi, x, h)\in \bigcap_{i\in\pi} B(c_i, 1).
        \end{equation}
    Similarly, if $j\in\range{1}{k}\setminus\pi$, since $\|c_j-c_\pi\|>r_\pi+\epsilon$, we have 
        \begin{equation*}
            \begin{split}
                \|\alpha(\pi, x, h)-c_i\|^p&=|h|^p+\|c_\pi-c_i+x\|^p\\
                &\geq |h|^p+(\|c_\pi-c_i\|-\|x\|)_+^p\\
                &> |h|^p+(r_\pi+\epsilon-\|x\|)_+^p,
          \end{split}
        \end{equation*}
    and thus we deduce that
        \begin{equation}
            \label{eq_outside_cond}
            h^p+(r_\pi+\epsilon-\|x\|)_+^p\geq1\implies\alpha(\pi, x, h)\notin \bigcup_{j\in\range{1}{k}\setminus\pi} B(c_j, 1).
        \end{equation}
    Now set
        \[h_\pi\coloneqq\left(1-\left(r_\pi+\frac{\epsilon}{2}\right)^p\right)^{1/p}\quad\text{and}\quad\alpha_\pi\coloneqq \alpha(\pi, 0,h_\pi).\]
    Note that $h_\pi$ is well defined as $r_\pi+\frac{\epsilon}{2}\leq\delta+\epsilon\leq 1$. Let us check that we can choose $\eta>0$ sufficiently small independently on the configuration so that $B(\alpha_\pi,\eta)\subset P_\pi$ using (\ref{eq_inside_cond}) and (\ref{eq_outside_cond}). Let $z\in\R^d$. There exists $x\in\R^d$ with $x\cdot e_d=0$ and $h\in\R$ such that $z=\alpha(\pi, x, h)$. Denote $\ell\coloneqq \|z-\alpha_h\|$. When $\ell\leq\frac{\epsilon}{2}$, we have
        \begin{equation}
            \label{eq_inside_cv}
            \begin{split}
                |h|^p+(r_\pi+\|x\|)^p&=1+|{h_\pi}|^p-|h|^p+(r_\pi+\|x\|)^p-(r_\pi+\epsilon/2)^p\\
                &\leq 1+|{h_\pi}|^p-|h|^p+\|x\|^p-(\epsilon/2)^p\\
                &\leq 1+p\ell(1+\ell)^{p-1}+\ell^p-(\epsilon/2)^p\\
                &\rightarrow 1-(\epsilon/2)^p<1 \quad\text{as}\quad \ell\rightarrow0,
            \end{split}
        \end{equation}
    where the first inequality comes from the convexity of $y\mapsto y^p$, uses that $\|x\|\leq\ell\leq\frac{\epsilon}{2}$, and the second from an application of the mean value Theorem using $|h_\pi|\leq 1$. Note that convergence only depends on $\epsilon$, which is fixed and not on the configuration of centers. Similarly, we have
    have
        \begin{equation}
            \label{eq_outside_cv}
            \begin{split}
                |h|^p+(r_\pi+\epsilon-\|x\|)_+^p &=1+|{h_\pi}|^p-|h|^p+(r_\pi+\epsilon-\|x\|)_+^p-(r_\pi+\epsilon/2)^p \\
                &\geq 1-\ell^p+(r_\pi+\epsilon-\|x\|)_+^p-(r_\pi+\epsilon/2)_+^p\\
                &\geq 1-\ell^p+(\epsilon-\ell)_+^p-(\epsilon/2)^p\\
                &\rightarrow 1+\epsilon^p-(\epsilon/2)^p>1\quad\text{as}\quad \ell\rightarrow0,
            \end{split}
        \end{equation}
    where the first and second inequalities come from the convexity of $y\mapsto y^p_+$. Here, the last convergence only depends on $\epsilon$. From (\ref{eq_inside_cv}) and (\ref{eq_outside_cv}) we deduce that we can choose $\eta=\eta(\epsilon)>0$ small enough so that $B(\alpha_\pi,\eta)\subset P_\pi$. Since $\eta$ only depends on $\epsilon$, the proof is complete.
\end{proof}

\subsection{The case $p=\infty$}

Throughout this subsection, we assume $p=\infty$. The goal is to show Proposition \ref{prop_ball}. We start with the following lemma, which addresses the case $d=1$.

\begin{Lemma}
    \label{lemma_dim_1} 
    Assume $d=1$. Let $(c_i)_{i\in\range{1}{k}}\in\R^k$. There exists a partition $\Pi$ of $\range{1}{k}$ such that for all $\pi\in\Pi$,
        $$|P_\pi|\geq\frac{2}{k!}.$$
    In particular, since all $(P_\pi)_{\pi\in\Pi}$ are segments, there exist real points $(\alpha_\pi)_{\pi\in\Pi}$ such that for all $\pi\in\Pi$, we have
        $$B\left(\alpha_\pi,\frac{1}{k!}\right)\subset P_\pi.$$
\end{Lemma}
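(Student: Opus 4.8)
The plan is to prove Lemma~\ref{lemma_dim_1} by strong induction on $k$, after a preliminary reduction to a one‑dimensional combinatorial problem about unit intervals.

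First I would relabel so that $c_1\le\dots\le c_k$ and record the key structural fact: since $d=1$ and $p=\infty$, every ball $B(c_i,1)$ is an open interval of length exactly $2$, so the configuration $(c_i)$ has the structure of a \emph{unit} interval system. In particular no ball is strictly contained in another, and for every $x\in\R$ the set $S(x):=\{i:x\in B(c_i,1)\}$ consists of consecutive indices, because $i\in S(x)$ precisely while $x$ ranges over the interval $B(c_i,1)$. As $P_\pi=\{x:S(x)=\pi\}$, the set $P_\pi$ can be non‑empty only when $\pi$ is a block $\{a,a+1,\dots,b\}$ of consecutive indices, and in that case $P_\pi=\bigl(\max_{i\in\pi}c_i-1,\ \min_{i\in\pi}c_i+1\bigr)\setminus\bigcup_{j\notin\pi}B(c_j,1)$ is itself an interval (a short computation, using that all the balls have the same length, rules out a "hole" in the middle). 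Hence it suffices to produce consecutive‑block cut points $0=m_0<m_1<\dots<m_r=k$ such that each resulting block $\pi_\ell=\{m_{\ell-1}+1,\dots,m_\ell\}$ has $|P_{\pi_\ell}|\ge 2/k!$; the final assertion of the lemma then follows at once by taking $\alpha_\pi$ to be the midpoint of the interval $P_\pi$.

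For the induction, the base case $k=1$ is immediate ($\Pi=\{\{1\}\}$, $P_{\{1\}}=B(c_1,1)$ of length $2$). For $k\ge2$ I would split according to the total spread $c_k-c_1$. If $c_k-c_1\le 2-2/k!$, the trivial partition $\Pi=\{\range{1}{k}\}$ works: $P_{\range{1}{k}}=(c_k-1,c_1+1)$ has length $2-(c_k-c_1)\ge 2/k!$ and there is nothing to subtract. Otherwise the points are spread out by more than (essentially) $2$; then I would cut the sorted sequence at a sufficiently wide gap $g_m=c_{m+1}-c_m$ — at worst the largest gap, which satisfies $g_m\ge (c_k-c_1)/(k-1)\ge 2/k!$ (this last inequality reduces to $(k-1)!\ge1$) — apply the induction hypothesis to the blocks $\{1,\dots,m\}$ and $\{m+1,\dots,k\}$, obtaining witnessing intervals of length $\ge 2/m!$ and $\ge 2/(k-m)!$ (both $\ge 2/k!$), and output the union of the two sub‑partitions. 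The point is that the centres of the right block all lie to the right of $c_{m+1}$, so their balls can only erode the left sub‑partition from the \emph{right} (and symmetrically), so that after erosion only the two blocks adjacent to the cut need attention, all other witnessing intervals being untouched. To close this one must strengthen the induction hypothesis to record \emph{where inside its range} the witnessing interval of the extreme block of each sub‑partition sits, so that across a cut it survives the erosion with length still $\ge 2/k!$; the slack between $2/k!$ and the larger $2/m!,\,2/(k-m)!$ provided by the recursion is exactly what absorbs this loss, and this is where the factorial bound is spent.

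I expect the main obstacle to be precisely this control of the interference across a cut. In the case $p<\infty$ treated in the previous subsection the rounded shape of the balls guarantees that a lens of positive volume survives whenever two clusters are separated by any positive amount, which is why a clustering with an astronomically small separation (Lemma~\ref{lemma_eps_delta_grouping}) suffices there. For $p=\infty$ this fails drastically: a single out‑of‑cluster centre lying within distance $<2$ of a block can swallow essentially all of that block's $P_\pi$, so one cannot afford small separations and must instead cut only at genuinely large gaps while carefully tracking the position of the witnessing intervals. Choosing the strengthened induction hypothesis so that it closes — balancing "block diameter small enough" against "the adjacent gaps and the block's span large enough" — is the delicate part, and it is the reason the $p=\infty$ analysis is substantially more intricate than the $p<\infty$ one and only yields the rapidly decaying constant $2/k!$.
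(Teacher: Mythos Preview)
Your reduction to consecutive blocks and the observation that each $P_\pi$ is an interval are correct, and indeed the paper begins the same way. But the divide--and--conquer scheme you outline has a genuine gap that is not merely a matter of writing down the strengthened hypothesis.

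Take $k=3$ with centres $c_1=0$, $c_2=1.1$, $c_3=2.15$. The spread is $2.15>5/3=2-2/3!$, so you cut; the gaps are $1.1$ and $1.05$, so you cut between $c_1$ and $c_2$. On the right block $\{2,3\}$ the spread is $1.05>1=2-2/2!$, so you cut again, and your recursion returns the partition $\{\{1\},\{2\},\{3\}\}$. But then
\[
P_{\{2\}}=B(1.1,1)\setminus\bigl(B(0,1)\cup B(2.15,1)\bigr)=[1,1.15],
\]
which has length $0.15<1/3=2/3!$. Cutting at the other gap leads to the same partition. So ``cut at the largest gap and recurse'' simply produces the wrong partition here --- the valid partitions are $\{\{1\},\{2,3\}\}$ and $\{\{1,2\},\{3\}\}$, and your algorithm reaches neither. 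Moreover, your claim that the slack between $2/k!$ and $2/m!$, $2/(k-m)!$ ``is exactly what absorbs this loss'' is false in this example: the rightmost piece of the right sub-partition has length $1.05$ (so slack $1.05-1/3\approx 0.72$), whereas the erosion is $1.05-0.15=0.9$. The loss exceeds the slack, so no bookkeeping of witness positions rescues the scheme as written; the recursion is too eager to cut.

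The paper avoids this entirely by proceeding greedily from left to right rather than by divide--and--conquer. It first notes that the non-empty pieces $P_{\range{1}{a}}$, $a\in\range{1}{k}$, tile $B(c_1,1)$ (length $2$), so one of them has length $\ge 2/k$; this fixes the first block $\range{1}{k_1}$. Then a short telescoping computation shows that the total length of the candidate pieces $P_{\range{k_1+1}{a}}$, $a\in\range{k_1+1}{k}$, equals $|B(c_{k_1},1)\setminus B(c_{k_1+1},1)|\ge |P_{\range{1}{k_1}}|$; since there are at most $k-1$ such pieces, one has length $\ge |P_{\range{1}{k_1}}|/(k-1)\ge 2/\bigl(k(k-1)\bigr)$, and so on. Each $P_\pi$ in this argument is computed with respect to \emph{all} $k$ centres from the start, so there is no erosion to track. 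The bound $2/k!$ is spent through the product $\frac{1}{k}\cdot\frac{1}{k-1}\cdots$, not through slack across cuts.
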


\begin{proof}
    Without loss of generality, we can assume that the $(c_i)_{i\in\range{1}{k}}$ are ordered in a non-decreasing manner. First observe that if $I\subset\range{1}{k}$, then $I\neq\range{\min I}{\max I}$, then $P_I=\emptyset$. Indeed, if $i\in\range{\min I}{\max I}$, from the ordering if $x\in B(c_{\min I},1)\cap B(c_{\max I}, 1)$ then $x<c_{\min I}+1\leq c_i+1$ and $x>c_{\max I}-1\geq c_i+1$ and thus $x\in B(c_i,1)$, giving us $P_I\subset B(c_{\min I},1)\cap B(c_{\max I}, 1)\subset B(c_i,1)$ and finally $i\in I$ if $P_I\neq\emptyset$.

    We now construct by induction a sequence $(k_n)_{1\leq n\leq k}\in{\N^*}^k$ increasing to $k$ and a sequence $(\Pi_n)_{1\leq n\leq k}$ such that for all $n\in\range{1}{k}$, $\Pi_n$ is a partition of $\range{1}{k_n}$ and
        \begin{equation}
            \label{eq_piece_size}
            \min_{\pi\in\Pi_n}|P_\pi|\geq2\prod_{i=0}^{n-1}\frac{1}{k-i}.
        \end{equation}
    For the initialization, since $B(c_1, 1)$ has size $2$ and is composed of at most $k$ non-empty pieces, that are the $P_{\range{1}{a}}$ for $a\in\range{1}{k}$, at least one of them has size bigger than $\frac{2}{k}$. We set then $k_1$ to be the smallest index in $\range{1}{k}$ such that
        $$|P_{\range{1}{k_1}}|\geq\frac{2}{k}.$$
    By construction, the initialization is then verified, letting $\Pi_1=\{\range{1}{k_1}\}$.

    Now let $n\in\range{0}{k-1}$. If $k_n=k$, we set $\Pi_{n+1}\coloneqq\Pi_n$, $k_{n+1}=k_n$ and the heredity is clearly verified. If not, then $n\leq k_n<k$ and we want to select a range of indices containing $k_n+1$ to make the heredity work. To do that, let $\pi_n$ be the element of $\Pi_n$ containing $k_n$ and write
        \begin{equation}
            \label{eq_sum_pieces}
            \begin{split}
                \sum_{i=k_n+1}^k|P_{\range{i_0+1}{i}}|&=\sum_{\substack{1\leq a\leq b\leq k\\k_n+1\in\range{a}{b}}}|P_{\range{a}{b}}|-\sum_{\substack{1\leq a\leq b\leq k\\\{k_n,k_n+1\}\subset\range{a}{b}}}|P_{\range{a}{b}}|\\
                &=|B(c_{k_n+1}, 1)|-|B(c_{k_n}, 1)\cap B(c_{k_n+1}, 1)|\\
                &=|B(c_{k_n}, 1)|-|B(c_{k_n}, 1)\cap B(c_{k_n+1}, 1)|\\
                &=|B(c_{k_n+1}, 1)|-|B(c_{k_n}, 1)\cap B(c_{k_n+1}, 1)|\\
                &=|B(c_{k_n}, 1)\setminus B(c_{k_n+1}, 1)|\\
                &\geq |P_{\pi_n}|\\
                &\geq \min_{\pi\in\Pi_n}|P_\pi|\\
                &\geq2\prod_{i=0}^{n-1}\frac{1}{k-i}.
            \end{split}
        \end{equation}
    No observe that the sum in the LHS of (\ref{eq_sum_pieces}) has $k-k_n\leq k-n$ terms, so one of them is bigger than $\frac{1}{k-n}$ of the total sum. Using the inequality, we can thus choose $k_{n+1}$ to be the minimal index in $\range{k_n+1}{k}$ such that 
        $$|P_{\range{k_n+1}{k_{n+1}}}|\geq\frac{2}{k-n}\prod_{i=0}^{n-1}\frac{1}{k-i}=2\prod_{i=0}^{n}\frac{1}{k-i}.$$
    Then letting $\Pi_{n+1}\coloneqq\Pi_n\cup\{\range{k_n+1}{k_{n+1}}\}$, the heredity follows.
    
    To conclude, we set $\Pi=\Pi_k$. By construction $\Pi$ is a partition of $\range{1}{k}$ and 
        $$\min_{\pi\in\Pi}|P_\pi|\geq2\prod_{i=0}^{k-1}\frac{1}{k-i}=\frac{2}{k!}.$$
\end{proof}

\begin{Remark}
    In practice, it seems that the optimal constant in Lemma \ref{lemma_dim_1} is $\frac{2}{k}$ but is not reached by the algorithm we propose in the proof.
\end{Remark}

We now prove the main proposition in the case $p=\infty$.

\begin{proof}[Proof of Proposition \ref{prop_ball} in the case $p=\infty$]
    The idea is to decompose into the different dimensions. For each $s\in\range{1}{d-1}$, we apply Lemma \ref{lemma_dim_1} to get a partition $\Pi_s$ of $\range{1}{k}$ and real numbers $(\alpha_{s, \pi})_{\pi\in\Pi_s}$ such that for all $\pi\in\Pi_s$, and $x\in\R$,
    \begin{equation}
        \label{eq_one_dim_res}
        |x-\alpha_{s, \pi}|<\frac{1}{k!}\implies
            \begin{cases}
                \forall i\in\pi,~|x\cdot e_s-c_i\cdot e_s|<1,~\text{and}\\
                \forall j\in\range{1}{k}\setminus\pi,~|x\cdot e_s-c_j\cdot e_s|\geq1.
            \end{cases}
    \end{equation}
    We set
        \[\Pi\coloneqq\left\lbrace\bigcap_{s=1}^{d-1}\pi_s,~(\pi_s)_{1\leq s\leq d}\in\prod_{s=1}^{d-1} \Pi_s\right\rbrace.\]
    Let us check that for each element $\pi$ of this partition we can find a center $\alpha_\pi\in\R^d$ such that $B\left(\alpha_\pi,\frac{1}{2k!}\right)\subset P_\pi$. Let $\pi\in\Pi$. By definition there exists $(\pi_s)_{1\leq s\leq d}\in\prod_{s=1}^{d-1} \Pi_s$ such that $\pi=\bigcap_{s=1}^{d-1}\pi_s$. Set
        \[\alpha_\pi\coloneqq\frac{1}{2}e_d+\sum_{s=1}^{d-1} \alpha_{s,\pi_s}e_s.\]
    Now let $x\in B\left(\alpha_\pi, \frac{1}{2k!}\right)$ and $i\in\pi$. By construction, for each $s\in\range{1}{d-1}$ we have $i\in\pi_s$ and $|x\cdot e_s-\alpha_\pi\cdot e_s|<\frac{1}{k!}$ and thus from (\ref{eq_one_dim_res}), $|c_i\cdot e_s-x\cdot e_s|<1$. We deduce than $\|x-c_i\|<1$, that is $x\in B(c_i,1)$.
    
    Now let $j\in\range{1}{k}\setminus\pi$. By definition, there exists $s\in\range{1}{d-1}$ such that $j\notin\pi_s$. Since $|x\cdot e_s-\alpha_\pi\cdot e_s|<\frac{1}{k!}$ we have then from (\ref{eq_one_dim_res}) that $|x\cdot e_s-c_ij\cdot e_s|\geq1$. This implies $\|x-c_j\|\geq 1$, i.e.\ $x\notin B(c_j,1)$. 
    
    Finally, we have by construction that $\frac{1}{2}-\frac{1}{2k!}\geq0$, hence $x\in\mathbb H^+(0)$. We just proved that $x\in P_\pi$, the proof is complete.
\end{proof}

\subsection{Conclusion}

In this subsection, we use Proposition \ref{prop_ball} to derive Proposition \ref{prop_combinatorial}. We start with the following lemma, which will be useful.

\begin{Lemma}
    \label{lemma_shrink}
    Let $z, c\in\R^d$ and $r>0$, then
        \[B(z, r)\setminus B(c, 1)=\emptyset\implies B\left(z, \frac{r}{2}\right)\setminus B\left(c, 1+\frac{r}{2}\right)=\emptyset.\]
\end{Lemma}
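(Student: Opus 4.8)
\textbf{Proof proposal for Lemma \ref{lemma_shrink}.}

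The statement is purely geometric and elementary, so the plan is to argue by contraposition: assuming there is a point $z'$ in the small ball $B(z, r/2)$ that lies outside the slightly enlarged ball $B(c, 1 + r/2)$, I will produce a point in $B(z, r)$ that lies outside $B(c, 1)$, contradicting the hypothesis. The natural candidate is the point obtained by pushing $z'$ further away from $c$ along the direction from $c$ to $z'$, precisely by a distance $r/2$. Concretely, set $w \coloneqq z' + \frac{r}{2} \cdot \frac{z' - c}{\|z' - c\|}$ (this is well-defined since $\|z' - c\| \geq 1 + r/2 > 0$).

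The two verifications to carry out are then: first, $w \in B(z, r)$, which follows from the triangle inequality since $\|w - z\| \leq \|w - z'\| + \|z' - z\| < \frac{r}{2} + \frac{r}{2} = r$, using $\|w - z'\| = r/2$ and $\|z' - z\| < r/2$; second, $w \notin B(c, 1)$, which follows because $w$ lies on the ray from $c$ through $z'$ at distance $\|z' - c\| + r/2 \geq 1 + r/2 + r/2 > 1$ from $c$ — here one uses that for $p \in [1, \infty]$ the $\ell^p$ norm satisfies $\|u + t\frac{u}{\|u\|}\| = (1+t)\|u\| = \|u\| + t$ for any $u \neq 0$ and $t \geq 0$, which is immediate from homogeneity and the triangle inequality (equality case). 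Combining, $w \in B(z,r) \setminus B(c, 1)$, so this set is non-empty, contradicting the assumption.

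The only mild subtlety — and the step I would be most careful about — is the normalization $\frac{z'-c}{\|z'-c\|}$ and the identity $\|u + t u/\|u\|\| = \|u\| + t$: this is valid for every $\ell^p$ norm with $p \in [1,\infty]$ because it is the equality case of the triangle inequality for collinear vectors pointing in the same direction, and it does not require strict convexity. One should also note the edge case $z' = c$ cannot occur since $\|z' - c\| \geq 1 + r/2$. No other difficulty is expected; the argument is a two-line triangle-inequality computation once the candidate point $w$ is written down.
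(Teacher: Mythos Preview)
Your proof is correct and follows essentially the same approach as the paper: both arguments push a point along the radial direction from $c$ by a distance $r/2$ and use the triangle inequality together with the identity $\bigl\|u + t\,u/\|u\|\bigr\| = \|u\| + t$. The only cosmetic difference is that the paper argues directly (take $x\in B(z,r/2)$, push \emph{toward} $c$, land in $B(z,r)\subset B(c,1)$, conclude $\|x-c\|<1+r/2$), whereas you argue by contraposition and push \emph{away} from $c$; your version is arguably a touch cleaner since it sidesteps the degenerate case $\|x-c\|\le r/2$.
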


\begin{proof}
    Let $c\neq x\in B(z, \frac{r}{2})$. Define
        \[x'\coloneqq x-\frac{r}{2}\frac{x-c}{\|x-c\|}.\]
    Observe that by the triangle inequality,
        \[\|x'-z\|\leq \|x-z\|+\frac{r}{2}<r,\]
    hence $x'\in B(z, r)$ and thus
        \[1\leq\|x'-c\|=\left(1-\frac{r}{2}\right)\|x-c\|=\|x-c\|-\frac{r}{2}\|x-c\|\leq\|x-c\|-\frac{r}{2}.\]
    Where the last inequality comes from $x\notin B(c,1)$. We deduce then $\|x-c\|\geq1+\frac{r}{2}$, that is $x\notin B(c,1+\frac{r}{2})$, the proof is complete.
\end{proof}

Finally, we prove the proposition.

\begin{proof}[Proof of Proposition \ref{prop_combinatorial}]
    From Proposition \ref{prop_ball}, there exists $\eta>0$ such that for every configuration, fixing 
        \[R_0\coloneqq\frac{4\kappa}{\eta},\]
    we can find a partition $\Pi$ of $\range{1}{k}$ and $(\alpha_\pi)_{\pi\in\Pi}\in(\R^d)^k$ such that for each $\pi\in\Pi$, 
        \[B(\alpha_\pi,\eta)\subset\bigcap_{i\in\pi} B^+\left(\frac{c_i}{R_0}, 1\right)\setminus\bigcup_{j\in\range{1}{k}\setminus\pi} B\left(\frac{c_j}{R_0}, 1\right).\]
    From Lemma \ref{lemma_shrink} since for all $\pi\in\Pi$ $j\in\range{1}{k}\setminus\pi$ we have $B(\alpha_\pi, \eta)\subset \R^d\setminus B(\frac{c_j}{R_0},1)$, then we have $B(\alpha_\pi, \frac{\eta}{2})\subset\R^d \setminus B(\frac{c_j}{R_0},1+\frac{\eta}{2})$. We then deduce that for all $\pi\in\Pi$,
        \[B\left(\alpha_\pi,\frac{\eta}{2}\right)\subset\bigcap_{i\in\pi} B^+\left(\frac{c_i}{R_0}, 1\right)\setminus\bigcup_{j\in\range{1}{k}\setminus\pi} B\left(\frac{c_j}{R_0}, 1+\frac{\eta}{2}\right).\]
    Now scaling everything by $R_0$ we deduce for all $\pi\in\Pi$,
        \begin{equation}
            \label{eq_ball_inclusion}
            B\left(R_0\alpha_\pi,\frac{R_0\eta}{2}\right)\subset\bigcap_{i\in\pi} B^+\left(c_i, R_0\right)\setminus\bigcup_{j\in\range{1}{k}\setminus\pi} B\left(c_j, R_0+\frac{R_0\eta}{2}\right).
        \end{equation}
    Finally observe that from triangular inequality for all $j\in\range{1}{k}$,
        \[B(c_j-\kappa e_d,\kappa+R_0)\subset B(c_j, R_0+2\kappa)=B\left(c_j, R_0+\frac{R_0\eta}{2}\right),\]
    where the last equality comes from the definition of $R_0$. Finally injecting in (\ref{eq_ball_inclusion}) we get
        \[B\left(R_0\alpha_\pi,\frac{R_0\eta}{2}\right)\subset\bigcap_{i\in\pi} B^+(c_i, R_0)\setminus\bigcup_{j\in\range{1}{k}\setminus\pi} B(c_j-\kappa e_d, \kappa+R_0).\]
    This shows that for all $\pi\in\Pi$, we have
        \[|P^{\kappa, R_0}_\pi|\geq\left|B\left(R_0\alpha_\pi,\frac{R_0\eta}{2}\right)\right|\eqqcolon a_0>0.\]
    The proof is complete.
\end{proof}

\bibliographystyle{amsalpha}
\bibliography{bibliography}

\end{document}